\documentclass{article}

\PassOptionsToPackage{numbers, compress}{natbib}

\usepackage[preprint]{neurips_2025}




\usepackage[utf8]{inputenc} 
\usepackage[T1]{fontenc}    
\usepackage{hyperref}       
\usepackage{url}            
\usepackage{booktabs}       
\usepackage{amsfonts}       
\usepackage{nicefrac}       
\usepackage{microtype}      
\usepackage{xcolor}         

\title{A stochastic gradient method for trilevel optimization}

%

\usepackage{array} 
\usepackage{multirow}
\usepackage{float}
\usepackage{caption}
\usepackage{subcaption}
\usepackage{wrapfig}
\bibliographystyle{plainnat}
\usepackage{comment}
\usepackage{graphicx}
\usepackage{amsmath}
\usepackage{amsfonts}
\usepackage{amssymb}
\usepackage{enumerate}
\usepackage{lscape}
\usepackage{longtable}
\usepackage{rotating}
\usepackage{color}
\usepackage{url}
\usepackage{rotating}
\usepackage{algpseudocode}
\usepackage[ruled]{algorithm}
\usepackage{pgf,tikz}
\usepackage[titletoc]{appendix}
\usepackage{mathrsfs}
\usepackage{pgfplots}
\usetikzlibrary{arrows}
\usepackage{color}
\usepackage{bbm}
\usepackage{eqparbox}%

\numberwithin{equation}{section}
\usepackage[symbol]{footmisc}

\newtheorem{theorem}{Theorem}[section]
\newtheorem{proposition}{Proposition}[section]

\newtheorem{lemma}[theorem]{Lemma}

\newtheorem{remark}{Remark}[section]

\newtheorem{assumption}{Assumption}[section]

\newenvironment{proof}[1][Proof]{\noindent \textbf{#1.} }{\hfill$\Box$\par\medskip}
\DeclareMathOperator*{\argmax}{argmax}
\DeclareMathOperator*{\argmin}{argmin}

\DeclareMathOperator{\train}{train}
\DeclareMathOperator{\val}{val}

\newcommand{\beqn}[1]{\begin{equation}\label{#1}}
	\newcommand{\eeqn}{\end{equation}}

\definecolor{darkgreen}{rgb}{0,0.6,0}
\definecolor{aau2}{rgb}{0.0, 0.5, 0.69}
\definecolor{aau3}{rgb}{0.0, 0.53, 0.74}
\definecolor{aau4}{rgb}{0.0, 0.48, 0.65}
\definecolor{aau5}{rgb}{0.0, 0.45, 0.73}
\definecolor{rsap}{RGB}{130, 36, 51}
\definecolor{gsap}{RGB}{112, 164, 137}
\definecolor{tud}{rgb}{0.43,0.73,0.11}
\definecolor{verde}{rgb}{0.33,0.53,0.11}
\definecolor{ttffqq}{rgb}{0.0, 0.48, 0.65}
\definecolor{ffqqqq}{rgb}{0.0, 0.5, 0.69}

\newcommand{\tcg}{\textcolor{darkgreen}}

\author{%
	T. Giovannelli \\
	University of Cincinnati\\
	\texttt{giovanto@ucmail.uc.edu} \\
	\And
	G. D. Kent \\
	Lehigh University \\
	\texttt{gdk220@lehigh.edu} \\
	\And
	L. N. Vicente \\
	Lehigh University \\
	\texttt{lnv@lehigh.edu} 
}

\begin{document}

	\maketitle

	\begin{abstract}
		With the success that the field of bilevel optimization has seen in recent years, similar methodologies have started being applied to solving more difficult applications that arise in trilevel optimization. At the helm of these applications are new machine learning formulations that have been proposed in the trilevel context and, as a result, efficient and theoretically sound stochastic methods are required. In this work, we propose the first-ever stochastic gradient descent method for solving unconstrained trilevel optimization problems and provide a convergence theory that covers all forms of inexactness of the trilevel adjoint gradient, such as the inexact solutions of the middle-level and lower-level problems, inexact computation of the trilevel adjoint formula, and noisy estimates of the gradients, Hessians, Jacobians, and tensors of third-order derivatives involved.
		We also demonstrate the promise of our approach by providing numerical results on both synthetic trilevel problems and trilevel formulations for hyperparameter adversarial tuning.

	\end{abstract}


	\section{Introduction}\label{sec:introduction}
	Multi-level optimization~(MLO) is a general class of problems with the goal of optimizing an upper-level objective while requiring subsets of the considered variables to satisfy optimality principles for some number of nested sub-problems. Hierarchical in nature, these MLO problems have a variety of applications that appear in fields such as defense industry~\cite{BArguello_ESJohnson_JLGearhart_2023, KLai_MIllindala_KSubraminiam_2019, YYao_etal_2007, XWu_AJConejo_2017, YGuo_CGuo_JYang_2023}, signal recovery and power control~\cite{HLiduka_2011, LCCang_APetrusel_2010}, supply chain networks~\cite{XXu_ZMeng_RShen_2013, MRahdar_etal_2018, AFard_MKeshteli_SMirjalili_2018}, and more recently in the field of machine learning~\cite{YJiao_etal_2024, SKeun_etal_2023, MGuo_etal_2020, YJiao_KYang_CJian_2024, HLiu_KSimonyan_YYang_2019, TGiovannelli_GKent_LNVicente_2022, XJin_etal_2019}. Due to the difficulty of these~MLO problems, most of the algorithms have largely only been developed for solving the bilevel case. However, the trilevel case has recently seen further interest by applying similar methodologies that have been utilized in the bilevel case. With this interest comes the aim of developing efficient and theoretically sound first-order stochastic gradient methods for handling large-scale applications of trilevel optimization problems that arise in the field of machine learning. As far as we know, this is the first work that addresses the stochastic setting of a trilevel problem, both theoretically and numerically.

	In this paper, we consider the general trilevel optimization~(TLO) problem formulation
	\begin{equation}\label{prob:trilevel}
		\tag*{TLO}
		\begin{split}
			\min_{x \in \mathbb{R}^n, \, y \in \mathbb{R}^m, \, z \in \mathbb{R}^t} ~~ & f_1(x,y,z) \\
			\mbox{s.t.}~~ & x \in X \\
			& y, \, z \in \argmin_{y \in Y(x), \, z\in\mathbb{R}^t} ~~ f_2(x,y,z) \\
			& \quad\quad\mbox{s.t.}~~  z \in \argmin_{z \in Z(x,y)} ~~ f_3(x,y,z). \\
		\end{split}
	\end{equation}
	The goal of the upper-level~(UL) problem is to determine the optimal value of the~UL function~$f_1: \mathbb{R}^n\times\mathbb{R}^m\times\mathbb{R}^t \to \mathbb{R}$, where the~UL variables~$x$ are subjected to~UL constraints~($x \in X$), the middle-level~(ML) variables~$y$ are subjected to being an optimal solution of the~ML problem, and the lower-level~(LL) variables~$z$ are subjected to being an optimal solution of the~LL problem. In the~ML problem, the~ML function~$f_2: \mathbb{R}^n\times\mathbb{R}^m\times\mathbb{R}^t \to \mathbb{R}$ is optimized in the~ML variables~$y$, subject to the~ML constraints~$y \in Y(x)$. Similarly, in the~LL problem, the~LL function~$f_3: \mathbb{R}^n\times\mathbb{R}^m\times\mathbb{R}^t \to \mathbb{R}$ is optimized in the~LL variables~$z$, subject to the constraints~$z\in Z(x,y)$. In this paper, we will assume that the ML and LL problems are strongly convex (see Subsection~\ref{subsec:assumptions} below) and that the UL problem is possibly nonconvex (see Theorem~\ref{th:TSG_convergence} below).

	\subsection{Trilevel optimization in the literature}\label{lit_rev}
	Trilevel and multi-level optimization has been studied as early as the~1980s (see~\cite{CBlair_1992, JFBard_JEFalk_1982,JFBard_1984,WUepying_WFBialas_1986,HPBenson_1989}), but in many of the aforementioned fields (e.g., defense industry, supply chain networks, etc.), problem-specific formulations typically lack general solution methodologies. We mention here a few notable exceptions that do consider general methodologies. The authors of~\cite{SLTilahun_etal_2012} introduced an evolutionary strategy to update each level sequentially, but without convergence guarantees, as their method failed to account for the hierarchical dependencies present in the MLO problem. In contrast, the authors of~\cite{AShafiei_etal_2024} proposed a proximal gradient method for TLO problems with convex objective functions, offering convergence guarantees but lacking numerical validation, leaving the method's practicality uncertain. For thorough reviews of the development of multi-level optimization, see the surveys~\cite{LNVicente_PHCalamai_1994,JLu_etal_2016,RLiu_JGao_etal_2021,CChen_etal_2023}.

	\textbf{Trilevel optimization for machine learning.} More recently, TLO (also referred to as \emph{trilevel learning} when taking on applications in a machine learning context) and~MLO problems have seen utilization in being applied to solving large-scale hierarchical machine learning problems with applications of hyperparameter tuning, adversarial learning, and federated learning. In~\cite{RSato_etal_2021}, the authors developed a gradient-based method for solving an approximate formulation of the general~MLO problem, as well as presenting convergence guarantees and numeric results for their method in the deterministic case. Such a paper builds on pre-existing methods utilized in~\cite{LFranceschi_etal_2017} for the bilevel case that approximate the solution to each of the lower-level problems with an iterative method. Complimenting this development, the authors of~\cite{SKeun_etal_2023} introduced BETTY, an automatic differentiation library for general multi-level optimization, which has helped facilitate applications like neural architecture search (NAS) with adversarial robustness~\cite{MGuo_etal_2020}. Trilevel optimization has also been further extended to decentralized learning environments in~\cite{YJiao_etal_2024, YJiao_KYang_CJian_2024}, where the authors aim at developing methods with convergence guarantees for federated trilevel learning problems. However, it bears mentioning that all of the aforementioned papers only consider the deterministic setting in their analysis.
	

	\subsection{Contributions of this paper}
	
	The field of bilevel optimization has seen a rich development of first-order descent methods for solving large-scale problems that arise in the field of machine learning (e.g., see~\cite{TChen_YSun_WYin_2021, TChen_YSun_WYin_2021_closingGap, HLiu_KSimonyan_YYang_2019, TGiovannelli_GKent_LNVicente_2022, TGiovannelli_GKent_LNVicente_2024, XJin_etal_2019}). However, as we have seen in the existing literature, no works have yet begun extending the theory and implementation of stochastic methods to trilevel and higher-level problems. In this paper, we propose TSG, the first stochastic gradient method for solving trilevel optimization problems, along with an extensive convergence analysis with general nonlinear and nonconvex UL functions. This is done by extending the concepts and methodologies developed for first-order bilevel optimization methods that utilize the so-called adjoint gradient (or hyper-gradient) via implicit differentiation, and adapting them to the trilevel setting. To address the significant difficulties imposed by the presence of second-order and third-order derivatives in handling these problems, we also propose practical and efficient strategies for implementing our TSG method and demonstrate its performance on a series of trilevel problems through numerical results.

	\section{Trilevel optimization}\label{sec:trilevel_TSG}
	In this paper, we will only focus on the unconstrained~ML and~LL cases of problem~\ref{prob:trilevel}, i.e., $Y(x) = \mathbb{R}^m$ and~$Z(x,y) = \mathbb{R}^t$.
	Since our goal is to propose and analyze a general optimization methodology for a stochastic~TLO, the~LL problem is assumed to be well-defined, in the sense of having a unique solution~$z(x,y)$ for all~$x\in\mathbb{R}^n$ and~$y\in\mathbb{R}^m$. Thus, problem~\ref{prob:trilevel} is equivalent to the following bilevel optimization~(BLO) problem, which is defined solely in terms of the~UL and~ML variables:
	\begin{equation} \label{bi_reduced}
		\tag*{BLO}
		\begin{split}
			\min_{x \in \mathbb{R}^n, \, y \in \mathbb{R}^m} ~~ & f_1(x,y,z(x,y)) \\
			\mbox{s.t.}~~ & y \in \argmin_{y \in \mathbb{R}^m} ~~ \Bar{f}(x,y) := f_2(x,y,z(x,y)).
		\end{split}
	\end{equation}
	
	Similarly, problem~\ref{bi_reduced} can be even further reduced to a single-level optimization problem under the assumption that the lower-level problem in~\ref{bi_reduced} also has a unique solution~$y(x)$. In this way, since~$y(x)$ is solely determined by~$x$, it is clear that the unique solution~$z(x,y(x))$ is solely determined by~$x$ as well, which we denote simply as $z(x)$. Thus, problem~\ref{prob:trilevel} ultimately reduces to the single-level optimization problem given by
	\begin{equation} \label{reduced}
		\min_{x \in \mathbb{R}^n} \; f(x)=f_1(x,y(x),z(x,y(x))) \quad
		\mbox{s.t.} \quad x \in X.
	\end{equation}
	We define the trilevel adjoint gradient of~$f$ at~$x$ as
	\begin{equation}\label{adjoint}
		\nabla f \; = \; ( \nabla_x f_1 - \nabla_{xz}^2 f_3 \nabla_{zz}^2 f_3^{-1} \nabla_z f_1 ) - \nabla_{xy}^2\Bar{f} \nabla_{yy}^2 \Bar{f}^{-1} ( \nabla_y f_1  - \nabla_{yz}^2 f_3 \nabla_{zz}^2 f_3^{-1} \nabla_z f_1 ),
	\end{equation}
	where all of the gradient and Hessian terms involved are evaluated at the point $(x,y(x),z(x))$. Notice that this is essentially a classical adjoint gradient calculation applied to problem~\ref{bi_reduced}. The complete statement, along with all term definitions and full derivation, is given by Proposition~\ref{prop:adjoint_grad} in Appendix~\ref{app:prop:adjoint_grad}.

	\subsection{The trilevel stochastic gradient method}
	
	The stochastic algorithm developed in this paper proceeds by iteratively updating the~LL variables first, followed by the~ML variables, and lastly the~UL variables. The iterations corresponding to the~UL, ML, and~LL problems are denoted by~$i$, $j$, and~$k$, respectively, with the total number of iterations denoted as~$I$, $J$, and~$K$, respectively. Let~$\{\xi^{i}\}$, $\{\xi^{i,j}\}$, and~$\{\xi^{i,j,k}\}$ denote sequences of random variables defined in a probability space (with probability measure independent from~$x$, $y$, and~$z$) such that~i.i.d.~samples can be observed or generated. Such random variables are introduced for gradient, Jacobian, and Hessian evaluations, and their realizations can be interpreted as a single sample or a batch of samples for a mini-batch stochastic gradient~(SG). For simplicity, we also adopt the following terminology throughout this paper: $z^{i,j} = z^{i,j,0}$, $z^{i,j+1} = z^{i,j+1,0} = z^{i,j,K}$, $z^{i} = z^{i,0,0}$, and $z^{i+1} = z^{i+1,0,0} = z^{i,J,K}$ for the LL iterations and $y^{i} = y^{i,0}$ and~$y^{i+1} = y^{i+1,0} = y^{i,J}$ for the ML iterations. Most of this terminology is merely notation; however, by letting $z^{i+1}=z^{i,J,K}$, $z^{i,j+1}=z^{i,j,K}$, and $y^{i+1} = y^{i,J}$, we are saying that the initial iterates for new cycles are the last ones of the previous corresponding cycles.
	
	Given the current iterate~$(x^i,y^{i,j},z^{i,j,k})$, the update direction that is used for the~LL problem is simply the stochastic gradient of the~LL objective function~$f_3$, denoted as~$g_{f_3}^{i,j,k}$ and given by $g_{f_3}^{i,j,k} \; = \; \nabla_z f_3(x^i,y^{i,j},z^{i,j,k};\xi^{i,j,k})$.
	Letting $\gamma_i\in(0,1]$ denote the step size for the~LL problem at the~UL iteration~$i$, the update of the~LL variables is given by $z^{i,j,k+1} \; = \; z^{i,j,k} - \gamma_i g_{f_3}^{i,j,k}$. The SG algorithm used to obtain the approximate solution $z^{i,j+1} \approx z(x^i,y^{i,j})$ is stated by Algorithm~\ref{alg:TSG_LLP}.

	The exact gradient for the~ML problem is computed via the following standard adjoint gradient (by combining equations~\eqref{eq:exact_grad_MLP_y} and~\eqref{eq:jacobian_z_ret_y} in Appendix~\ref{app:prop:adjoint_grad}):
	\begin{equation}\label{eq:adjoint_ML}
		\nabla_y \Bar{f}(x,y) \; = \; \nabla_y f_2 - \nabla_{yz}^2 f_3 \nabla_{zz}^2 f_3^{-1}  \nabla_z f_2,
	\end{equation}
	where all gradients and Hessians are evaluated at the point $(x,y,z(x,y))$. However, since we solve the~LL problem inexactly to obtain an approximate solution~$z^{i,j+1} \approx z(x^i,y^{i,j})$, the~ML adjoint gradient~\eqref{eq:adjoint_ML} now becomes ``inexact''. Thus, given the current iterate~$(x^i,y^{i,j},z^{i,j+1})$, the update direction that is used for the~ML problem is the inexact stochastic gradient of the function~$\bar{f}$, denoted as~$\tilde g_{f_2}^{i,j}$ and given by
	\begin{alignat}{2}
		\tilde g_{f_2}^{i,j} \;=\; \nabla_y \Bar{f}(x^i,y^{i,j},z^{i,j+1};\xi^{i,j}) \;=\; \nabla_y f_2 - \nabla_{yz}^2 f_3 \nabla_{zz}^2 f_3^{-1}  \nabla_z f_2,\label{eq:TSG_ML_stoch_dir}
	\end{alignat}
	where all gradients and Hessians are evaluated at the point $(x^i,y^{i,j},z^{i,j+1};\xi^{i,j})$. 
	We highlight this slight abuse of notation, since $\bar f$ is a function of $(x,y)$ and not $(x,y,z)$, as we are utilizing the approximation $z^{i,j+1}\;\approx\; z(x,y^{i,j})$ in computing the gradient $\nabla_y\bar f$. It is for this reason that we adopt the notation $\tilde g_2$ to denote an ``inexact'' SG (as opposed to simply $g_2$, which would denote the ``exact'' SG $\nabla \Bar{f}(x^i,y^{i,j})$).
	Letting $\beta_i\in~(0,1]$ denote the step size for the~ML problem at the~UL iteration~$i$, the update of the~ML variables is given by $y^{i,j+1} \; = \; y^{i,j} - \beta_i \tilde g_{f_2}^{i,j}$. The bilevel SG algorithm that is used to obtain the approximate solution $y^{i+1} \approx y(x^i)$ is stated by Algorithm~\ref{alg:TSG_MLP}. It bears mentioning that after every~ML iteration, we will perform another~LL update to obtain an approximation~$z^{i+1}$ to~$z(x^i,y^{i+1})$.
	
	\vspace{-0.7cm}
	\begin{figure}[H]
		\centering
		\begin{minipage}[c]{0.49\textwidth}
			\begin{algorithm}[H]
				\caption{SG (LL Problem)}\label{alg:TSG_LLP}
				\begin{algorithmic}[1]
					\item[] {\bf Input:} Initial $z^{i,j,0}$, \ $\gamma_i\in(0,1]$.
					\item[] {\bf For $k = 0, 1, 2, \ldots, K-1$ \bf do}
					\item[] \quad\quad {\bf 1.} Compute an~SG~$g_{f_3}^{i,j,k}$.
					\item[] \quad\quad {\bf 2.} Update $z^{i,j,k+1} \;=\; z^{i,j,k} - \gamma_i \, g_{f_3}^{i,j,k}$.
					\item[] {\bf Return} $z^{i,j+1} \;=\; z^{i,j,K}$.
				\end{algorithmic}
			\end{algorithm}
		\end{minipage}\hspace{0.01\textwidth}
		\begin{minipage}[c]{0.49\textwidth}
			\begin{algorithm}[H]
				\caption{Bilevel SG (ML Problem)}\label{alg:TSG_MLP}
				\begin{algorithmic}[1]
					\item[] {\bf Input:} Initial $y^{i,0}$, \ $\beta_i \in (0,1]$, \ $\gamma_i \in(0,1]$.
					\item[] {\bf For $j = 0, 1, 2, \ldots, J-1$ \bf do}
					\item[] \quad\quad {\bf 1.} Compute $z^{i,j+1}$ via Algorithm~\ref{alg:TSG_LLP} 
					\item[] \quad\quad {\bf 2.} Compute an approximation $\tilde{g}_{f_2}^{i,j}$.
					\item[] \quad\quad {\bf 3.} Update $y^{i,j+1} \;=\; y^{i,j} - \beta_i \, \tilde{g}_{f_2}^{i,j}$.
					\item[] {\bf Return} $(y^{i+1} \;=\; y^{i,J},\; z^{i,J})$.
				\end{algorithmic}
			\end{algorithm}
		\end{minipage}
	\end{figure}

	\vspace{-0.3cm}
	
	
	\begin{wrapfigure}{r}{0.6\textwidth}  
		\begin{minipage}{0.98\linewidth}
			\begin{algorithm}[H]
				\caption{Trilevel Stochastic Gradient (TSG)}\label{alg:TSG}
				\begin{algorithmic}[1]
					\medskip
					\item[] {\bf Input:} Initial $(x^0, y^{0,0}, z^{0,0,0})$, \ $\alpha_i \in (0,1]$, \ $\beta_i \in (0,1]$, \ $\gamma_i \in(0,1]$.
					\medskip
					\item[] {\bf For $i = 0, 1, 2, \ldots, I-1$ \bf do}
					\item[] \quad\quad {\bf 1.} Compute $y^{i+1}\;=\;y^{i,J}$ and $z^{i,J,0}$ via Algorithm~\ref{alg:TSG_MLP}.
					\item[] \quad\quad {\bf 2.} Compute $z^{i+1}\;=\;z^{i,J,K}$ via Algorithm~\ref{alg:TSG_LLP}.
					\item[] \quad\quad {\bf 3.} Compute an approximation $\tilde{g}_{f_1}^{i}$.
					\item[] \quad\quad {\bf 4.} Update $x^{i+1} \;=\; x^{i} - \alpha_i \, \tilde{g}_{f_1}^{i}$.
					\item[] {\bf Return} $x^{I}$.
				\end{algorithmic}
			\end{algorithm}
		\end{minipage}
	\end{wrapfigure}
	
	Now, recall that the exact gradient for the~UL problem is computed via the trilevel adjoint gradient given by equation~\eqref{adjoint}. Since we only solve the~ML problem inexactly to obtain an approximate solution~$y^{i+1} \approx y(x^i)$, the trilevel adjoint gradient~\eqref{adjoint} also becomes ``inexact''. Notice that the inexactness here comes from two sources: one related to the inexactness of the~LL variables and the other to the inexactness of the~ML variables. The first source of inexactness arises from the two Hessian terms of the true~ML problem, i.e., $\nabla_{xy}^2\Bar{f}$ and~$\nabla_{yy}^2 \Bar{f}^{-1}$, due to them being evaluated at the approximate solution~$z^{i+1}$ instead of~$z(x^i,y(x^i))$. The second source of inexactness comes from all of the terms involved being evaluated at the approximate solution~$y^{i+1}$ instead of~$y(x^i)$.
	Thus, given the current iterate~$(x^i,y^{i+1},z^{i+1})$, the update direction that is used for the~UL problem is the inexact stochastic gradient of~$f$, denoted as~$\tilde g_{f_1}^{i}$ and given by
	\begin{equation}\label{eq:TSG_UL_stoch_dir}
		\tilde g_{f_1}^{i} \;=\; \nabla f (x^i,y^{i+1},z^{i+1};\xi^i).
	\end{equation}
	We again highlight this slight abuse of notation, since $f$ is a function of $(x)$ and not $(x,y,z)$, as we are utilizing the approximations $y^{i+1}\approx y(x^i)$ and $z^{i+1}\approx z(x^i,y(x^i))$ in computing the gradient $\bar f$. It is again for this reason that we adopt the notation $\tilde g_1$ to denote an ``inexact'' SG (as opposed to simply $g_1$, which would denote the ``exact'' SG $\nabla f(x^i)$).
	Letting~$\alpha_i\in(0,1]$ denote the step size for the~UL problem in the~UL iteration~$i$, the update of the~UL variables is given by $x^{i+1} \;=\; x^{i} - \alpha_i \tilde g_{f_1}^{i}$. Finally, the schema of the resulting trilevel stochastic gradient~(TSG) algorithm developed in this paper is given by Algorithm~\ref{alg:TSG}.

	\section{Convergence analysis of the~TSG method}\label{sec:assumptions_convergence_rate_TSG_method}
	Throughout this section, to simplify notation when there are no ambiguities, we will write functions, gradients, Jacobians, and Hessians by omitting their arguments~$(x,y,z)$. When dealing with stochastic estimates, we will replace the arguments~$(x,y,z;\xi)$ with an~$\xi$-superscript. For example, we denote~$\nabla_{zz}^2 f_3^\xi = \nabla_{zz}^2 f_3(x,y,z;\xi)$. It also bears mentioning that in the following assumptions, we will omit the iterates~$(i,j,k)$ for the evaluated point~$(x,y,z)$ and the iterate~$i$ for the step sizes~$\alpha$, $\beta$, and~$\gamma$, as the results are required to hold true for any iterate. For convenience throughout the analysis, we utilize the following composite step-size:
	\begin{equation}\label{eq:theta}
		\theta_i \; := \; \alpha_{i}\beta_{i}\gamma_{i} \quad (\text{or } \theta \; := \; \alpha\beta\gamma \text{ in the general case}).
	\end{equation}

	Further, we define the expectations to be taken over $\sigma$-algebras generated by the sets of the relevant random variables. For simplicity, we define a general $\sigma$-algebra~$\mathcal{F}_\xi$ that includes all the events up to the generation of a general point~$(x,y,z)$, before observing a realization of~$\xi$. Further, $\mathbb{E}[\cdot|\mathcal{F}_\xi]$ denotes the expectation taken with respect to the probability distribution of~$\xi$ given~$\mathcal{F}_\xi$. We will also use~$\mathbb{E}[\cdot]$ to denote the \textit{total expectation}, i.e., the expected value with respect to the joint distribution of all the random variables. For a full description of all $\sigma$-algebras used in the analysis, see Section~\ref{sec:def_sig_algebra} of Appendix~\ref{app:conv_theory_discussion}.

	\subsection{Assumptions on the trilevel problem}\label{subsec:assumptions}
	
	We now provide all of the assumptions that are required for the convergence analysis of Algorithm~\ref{alg:TSG}. It bears mentioning that throughout this paper, we use $\|\cdot\|$ to denote the $\ell_2$-Euclidean norm when dealing with vectors and the spectral norm when dealing with matrices. We begin by imposing Assumption~\ref{as:tri_lip_cont} below which ensures that the functions of interest are differentiable and satisfy appropriate smoothness requirements on the functions, gradients, Jacobians, Hessians, and tensors of third-order derivatives involved in problem~\ref{prob:trilevel}.
	\begin{assumption}[Differentiability and Lipschitz continuity]\label{as:tri_lip_cont}
		The function $f_1$ is once continuously differentiable, $f_2$ is twice continuously differentiable, and $f_3$ is thrice continuously differentiable. Further, the functions $f_1$, $\nabla f_1$, $f_2$, $\nabla f_2$, $\nabla^2 f_2$, $\nabla f_3$, $\nabla^2 f_3$, and $\nabla^3 f_3$ are Lipschitz continuous with constants $L_{f_1}$, $L_{\nabla f_1}$, $L_{f_2}$, $L_{\nabla f_2}$, $L_{\nabla^2 f_2}$, $L_{\nabla f_3}$, $L_{\nabla^2 f_3}$, and $L_{\nabla^3 f_3}$, respectively.
	\end{assumption}
	
	To ensure that problem~\ref{prob:trilevel} is well-defined, Assumptions~\ref{as:strong_conv_f3_z}--\ref{as:strong_conv_fbar_y} below require that the LL function~$f_3$ as well as the true ML function~$\bar f$ are strongly convex. These kind of assumptions are standard in the stochastic approximation literature (e.g., see~\cite{SGhadimi_MWang_2018}) and will guarantee the existence and uniqueness of the ML and LL optimal solutions $y(x)$ and $z(x)$, respectively, for any fixed value of $x$. Further, the constants $\mu_z$ and $\mu_y$ defined in these assumptions are positive.
	\begin{assumption}[Strong convexity of $f_3$ in $z$]\label{as:strong_conv_f3_z}
		For any fixed $x$ and $y$, $f_3$ is $\mu_z$-strongly convex in $z$, i.e., $f_3(x,y,z_1) \; \geq \; f_3(x,y,z_2) + \nabla_z f_3(x,y,z_2)^\top(z_1-z_2) + \frac{\mu_z}{2}\|z_1-z_2\|^2$, for all $(z_1,z_2)$.
	\end{assumption}
	\begin{assumption}[Strong convexity of $\bar f$ in $y$]\label{as:strong_conv_fbar_y}
		For any fixed $x$, $\bar f$ is $\mu_y$-strongly convex in $y$, i.e., $\bar f(x,y_1) \; \geq \; \bar f(x,y_2) + \nabla_y \bar f(x,y_2)^\top(y_1-y_2) + \frac{\mu_y}{2}\|y_1-y_2\|^2$, for all $(y_1,y_2)$. 
	\end{assumption}
	In practice, $\bar{f}$ will be strongly convex when~$f_2$ is strongly convex in~$(y,z)$ and~$z(x,y)$ is an affine function in~$(x,y)$. Hence, assuming strong convexity of~$\bar{f}$ covers cases where the~LL problem is a~QP problem or even certain special cases of polynomial functions of even order, such as the squared norm of a quadratic function (see~\eqref{prob:trilevel_synthetic_ll_2} in Subsection~\ref{sec:num_results_synth_trilevel_probs}).
	
	Next, as is standard in the stochastic approximation literature, we require that all stochastic estimates be unbiased with bounded variances and that all random variables that are sampled are independent and identically distributed, stated in Assumption~\ref{as:TSG_unbiased_estimators} below. This ensures that the stochastic terms that are used to approximate the gradients, Hessians, Jacobians, and third-order tensors are reliable approximations of their corresponding deterministic counter-parts. In applications of empirical risk minimization like machine learning, such an assumption can easily be satisfied in practice by taking larger sample sizes when approximating these terms.
	\begin{assumption}[Stochastic estimates]\label{as:TSG_unbiased_estimators}
		The stochastic derivatives~$\nabla f_1^\xi$, $\nabla f_2^\xi$, $\nabla^2 f_2^\xi$, $\nabla f_3^\xi$, $\nabla^2 f_3^\xi$, and~$\nabla^3 f_3^\xi$ are unbiased estimators of $\nabla f_1$, $\nabla f_2$, $\nabla^2 f_2$, $\nabla f_3$, $\nabla^2 f_3$, and $\nabla^3 f_3$, respectively. Further, the variances of the stochastic derivatives are bounded by constants $\sigma_{\nabla f_1}^2$, $\sigma_{\nabla f_2}^2$, $\sigma_{\nabla^2 f_2}^2$, $\sigma_{\nabla f_3}^2$, $\sigma_{\nabla^2 f_3}^2$, and $\sigma_{\nabla^3 f_3}^2$, respectively. Further, all of the random variables $\xi$ that are sampled are independent and idententically distributed (i.i.d.).
	\end{assumption}
	
	Although Assumptions~\ref{as:strong_conv_f3_z}--\ref{as:strong_conv_fbar_y} ensure that the Hessian sub-matrices $\nabla_{zz}^2 f_3$ and $\nabla_{yy}^2 \Bar{f}$ are bounded away from singularity, we also require that their stochastic estimates be bounded away from singularity, stated as Assumption~\ref{as:bound_on_hess_inv_true_MLP} below, which ensures that these estimates provide a robust measure of the curvature of the functions $f_3$ and $\bar f$.
	\begin{assumption}[Uniform bound on inverted stochastic Hessians]\label{as:bound_on_hess_inv_true_MLP}
		The stochastic principal sub-matrices $[ \nabla_{zz}^2 f_3^\xi ]^{-1}$ and $[ \nabla_{yy}^2 \Bar{f}^\xi ]^{-1}$ are upper-bounded in norm at all points by the positive constants $b_{zz}$ and $b_{yy}$, respectively. 
	\end{assumption}
	In the stochastic gradient literature concerning second-order derivatives, it is common to assume that a Hessian matrix, stochastic or not, is uniformly bounded below~\cite{RBollapragada_etal_2018}, implying that its inverse is uniformly bounded above. The motivation is that, if the Hessian matrix is not uniformly bounded below, a regularization term can be added to such a matrix to ensure it is non-singular.
	
	Lastly, Assumption~\ref{as:TSG_bounded_var} below is imposed to ensure that the bias of the inverted stochastic estimates~$[\nabla_{zz}^2 f_3^\xi]^{-1}$ and $[ \nabla_{yy}^2 \Bar{f}^{\xi} ]^{-1}$ approach zero on the order $\mathcal{O}(\theta)$. It is known that such an assumption can be satisfied in practice, e.g., by utilizing a truncated-Neumann series (see~\cite{SGhadimi_MWang_2018}) and incrementally increasing the number of samples used when approximating the terms $\nabla_{zz}^2 f_3$ and $\nabla_{yy}^2 \Bar{f}$ (the authors in~\cite{TChen_YSun_WYin_2021_closingGap} utilize such a property to establish a similar bound; though they do not state it as an assumption, but instead leave the number of samples as a parameter in their analysis that they choose to yield their desired convergence result). 
	\begin{assumption}[Bounded bias of inverted stochastic Hessians]\label{as:TSG_bounded_var}
		The stochastic principal sub-matrices $[\nabla_{zz}^2 f_3^\xi]^{-1}$ and $[ \nabla_{yy}^2 \Bar{f}^{\xi} ]^{-1}$ are estimators of $[\nabla_{zz}^2 f_3]^{-1}$ and $[ \nabla_{yy}^2 \Bar{f} ]^{-1}$, respectively, with biases that are bounded on the order of $\mathcal{O}( \theta )$, i.e., there exist positive constants $W_{zz}$ and $W_{yy}$ such that $\|[ \nabla_{zz}^2 f_3 ]^{-1} - \mathbb{E}[ [ \nabla_{zz}^2 f_3^\xi ]^{-1} \vert \mathcal{F}_\xi ] \| \leq W_{zz}\theta$ and $\| [ \nabla_{yy}^2 \Bar{f}]^{-1} - \mathbb{E}[ [ \nabla_{yy}^2 \Bar{f}^{\xi}]^{-1} \vert \mathcal{F}_\xi ] \| \leq W_{yy}\theta$, respectively.
	\end{assumption}
	

	\subsection{Convergence of the TSG method}\label{sec:TSG_convergence}
	
	We now present the convergence result of Algorithm~\ref{alg:TSG} in Theorem~\ref{th:TSG_convergence} below, in which we consider the general case where the true UL function $f$ is possibly nonconvex. For a full discussion of the analysis, see Appendix~\ref{app:conv_theory_discussion}. Further, for the full proof of this theorem, see Appendix~\ref{app:th:TSG_convergence}.
	\begin{theorem}[Convergence of TSG -- Nonconvex $f$]\label{th:TSG_convergence}
		Under Assumptions~\ref{as:tri_lip_cont}--\ref{as:TSG_bounded_var}, choose the step-sizes $\alpha_i = 1/\sqrt{I}$, $\beta_i = (1/\sqrt{J}) \alpha_i$, and $\gamma_i = (1/(\sqrt{J}\sqrt{K}))\alpha_i$.
		Then, the iterates $\{x^i\}_{i\geq0}$ generated by Algorithm~\ref{alg:TSG} satisfy $\frac{1}{I}\sum_{i=0}^{I-1} \mathbb{E}[ \|\nabla f(x^i)\|^2 ] = \mathcal{O}( J/\sqrt{I} )$,
		when choosing any $I\in\mathbb{N}_+$, $J\in\mathbb{N}_+$, and $K\in\mathbb{N}_+$ such that $\varsigma\leq J$, $\varpi\leq I$, and $\Xi(I,J)=\mathcal{O}(J^3I)\leq K$,
		where $\varsigma\in\mathbb{R}_+$ is defined by~\eqref{eq:varsigma_choice}, $\varpi\in\mathbb{R}_+$ is defined by~\eqref{eq:varpi_def}, and $\Xi(I,J):\mathbb{N}_+\times\mathbb{N}_+\rightarrow \mathbb{R}_+$ is defined by~\eqref{eq:Xi_def}, all in Appendix~\ref{app:th:TSG_convergence}.
	\end{theorem}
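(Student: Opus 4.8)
The plan is to follow the standard template for nonconvex stochastic gradient methods, adapted to absorb the three nested layers of inexactness. First I would establish that the reduced objective $f$ from~\eqref{reduced} is $L_f$-smooth; this follows from Assumption~\ref{as:tri_lip_cont} by differentiating the adjoint formula~\eqref{adjoint} and using that the solution maps $y(x)$ and $z(x)$ are Lipschitz (a consequence of strong convexity, Assumptions~\ref{as:strong_conv_f3_z}--\ref{as:strong_conv_fbar_y}, together with the implicit function theorem). With $L_f$-smoothness in hand, the descent lemma applied to the update $x^{i+1}=x^i-\alpha_i\tilde g_{f_1}^i$ gives, after taking conditional expectation,
\begin{equation*}
\mathbb{E}[f(x^{i+1})\mid\mathcal{F}] \;\le\; f(x^i) - \alpha_i\,\nabla f(x^i)^\top \mathbb{E}[\tilde g_{f_1}^i\mid\mathcal{F}] + \tfrac{L_f\alpha_i^2}{2}\,\mathbb{E}[\|\tilde g_{f_1}^i\|^2\mid\mathcal{F}].
\end{equation*}
The crux is then to write $\mathbb{E}[\tilde g_{f_1}^i\mid\mathcal{F}] = \nabla f(x^i) + b^i$, where the bias $b^i$ collects the errors from using the inexact inner solutions $y^{i+1}\approx y(x^i)$ and $z^{i+1}\approx z(x^i)$ in place of the exact ones, together with the bias of the inverted stochastic Hessians from Assumption~\ref{as:TSG_bounded_var}. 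A Young-type split of the inner-product term then yields a clean $-\tfrac{\alpha_i}{2}\|\nabla f(x^i)\|^2$ descent together with a $\tfrac{\alpha_i}{2}\|b^i\|^2$ remainder, while the second-moment term is controlled by the variance bounds of Assumption~\ref{as:TSG_unbiased_estimators} plus $\|\nabla f(x^i)\|^2+\|b^i\|^2$.

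The main supporting work is to bound $\|b^i\|$ through a chain of nested estimates. At the innermost level, since $f_3$ is $\mu_z$-strongly convex (Assumption~\ref{as:strong_conv_f3_z}), the $K$ warm-started SGD steps of Algorithm~\ref{alg:TSG_LLP} contract the LL error geometrically, giving a bound of the form $\mathbb{E}[\|z^{i,j+1}-z(x^i,y^{i,j})\|^2]\le(1-\gamma_i\mu_z)^{K}\,(\text{initial error})+\mathcal{O}(\gamma_i\sigma_{\nabla f_3}^2/\mu_z)$. At the middle level, the ML update uses the inexact adjoint gradient~\eqref{eq:TSG_ML_stoch_dir}; using the $\mu_y$-strong convexity of $\Bar{f}$ (Assumption~\ref{as:strong_conv_fbar_y}) and the Lipschitz continuity of the ML adjoint map in $z$ (to convert the LL error into gradient bias), the $J$ steps of Algorithm~\ref{alg:TSG_MLP} yield a bound on $\mathbb{E}[\|y^{i+1}-y(x^i)\|^2]$ in terms of the LL errors, the ML variance, and the Hessian-inverse bias $W_{zz}\theta$. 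Finally, the UL bias $b^i$ is bounded by feeding these two inner-error bounds into the Lipschitz continuity of the trilevel adjoint gradient~\eqref{adjoint} viewed as a function of $(y,z)$; here Assumption~\ref{as:tri_lip_cont} (in particular the Lipschitz third derivatives of $f_3$, which enter through the Hessians of $\Bar{f}$), Assumption~\ref{as:bound_on_hess_inv_true_MLP}, and Assumption~\ref{as:TSG_bounded_var} are all used to keep the matrix products and inverses under control.

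With the bias and second-moment estimates assembled, I would substitute into the descent inequality, take total expectation, sum over $i=0,\dots,I-1$, and telescope the $f(x^i)$ terms using a lower bound on $f$. Dividing by $\sum_i\alpha_i=\sqrt{I}$ produces $\tfrac{1}{I}\sum_i\mathbb{E}[\|\nabla f(x^i)\|^2]$ on the left. Plugging in $\alpha_i=1/\sqrt{I}$, $\beta_i=\alpha_i/\sqrt{J}$, and $\gamma_i=\alpha_i/(\sqrt{J}\sqrt{K})$ (so that $\theta_i=\alpha_i\beta_i\gamma_i=1/(I^{3/2}J\sqrt{K})$), the leading initialization term is $\mathcal{O}(1/\sqrt{I})$ and the accumulated ML variance contributes the factor $J$ in the numerator, yielding the stated $\mathcal{O}(J/\sqrt{I})$ rate. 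The requirement $K=\Xi(I,J)=\mathcal{O}(J^3I)$ is exactly what is needed to drive the LL contraction factor and the residual LL noise below the $\mathcal{O}(J/\sqrt{I})$ threshold, while $\varsigma\le J$ and $\varpi\le I$ absorb the lower-order cross terms. I expect the main obstacle to be the middle-level analysis and its coupling to the two adjacent levels: the ML gradient is itself an inexact, implicitly-defined adjoint quantity, so bounding $\mathbb{E}[\|y^{i+1}-y(x^i)\|^2]$ requires simultaneously tracking the LL error (which changes at every ML step through the warm start), the variance of~\eqref{eq:TSG_ML_stoch_dir}, and the Hessian-inverse bias, and then showing these errors do not blow up as they are propagated outward into $b^i$. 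Getting the bookkeeping of the $\sigma$-algebras right so that conditional unbiasedness is invoked at the correct level, and verifying that the specific step-size schedule renders every resulting error term $\mathcal{O}(J/\sqrt{I})$ or smaller, is where the bulk of the technical effort will lie.
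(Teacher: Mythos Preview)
Your high-level skeleton (descent lemma on $f$, decompose the stochastic direction into $\nabla f(x^i)+b^i$, bound $\|b^i\|$ via nested contraction estimates, telescope) is sound and matches the spirit of the paper's argument. But the paper organizes the proof around a device you do not mention: the Lyapunov function
\[
\mathbb{V}^i \;=\; f(x^i) \;+\; \|y^i - y(x^i)\|^2 \;+\; \|z^i - z(x^i)\|^2 \;+\; \|z^i - z(x^i,y^i)\|^2,
\]
and telescopes $\mathbb{V}^{i+1}-\mathbb{V}^i$ rather than $f(x^{i+1})-f(x^i)$. This is not cosmetic. Because the inner loops are \emph{warm-started}, the ``initial error'' in your LL contraction bound, e.g.\ $\|z^{i,j}-z(x^i,y^{i,j})\|^2$, is the \emph{final} error of the previous cycle, shifted by the drift of the target when $x^{i-1}\to x^i$ and $y^{i,j-1}\to y^{i,j}$. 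Your proposal bounds the error \emph{within} one inner cycle but never explains how the tracking errors $\|y^i-y(x^i)\|^2$, $\|z^i-z(x^i)\|^2$, $\|z^i-z(x^i,y^i)\|^2$ are prevented from accumulating across UL iterations. In the paper, including these three terms in $\mathbb{V}^i$ makes them telescope automatically; what remains is to choose $I,J,K$ so that the coefficients multiplying them in the one-step difference are nonpositive (the paper's coefficients $A_2,A_3,A_4$). That is exactly where the thresholds $\varsigma\le J$, $\varpi\le I$, and $\Xi(I,J)\le K$ come from---they are the conditions under which the inner contraction beats the target drift. In your outline, by contrast, $\varsigma,\varpi,\Xi$ have no role to play, which is a signal that the cross-iteration coupling is missing.

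A second, smaller discrepancy: the $A_1$ analysis in the paper also produces a \emph{negative} $\|\bar g_{f_1}^i\|^2$ term that is then used to absorb the positive $\alpha_i^2\|\bar g_{f_1}^i\|^2$ contributions coming from the drift terms in Lemmas~\ref{lem:TSG_LL_descent}--\ref{lem:TSG_ML_descent} (this is what forces the lower bound $\varpi\le I$). Your Young-inequality split keeps only $-\tfrac{\alpha_i}{2}\|\nabla f(x^i)\|^2$, so you would need a separate mechanism to control those drift terms. If you want to avoid the Lyapunov function, you can instead unroll the coupled recursion for the tracking errors and sum geometrically (as in some bilevel analyses), but that is substantially more bookkeeping than your proposal indicates, and you should state it explicitly.
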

	We state this theorem as our primary convergence result due to the intuitive choice of its step-sizes and the appeal to its direct implementability. It bears mentioning that a tighter rate, matching the best that has been derived for general nonconvex bilevel optimization (see~\cite{TChen_YSun_WYin_2021_closingGap}), can be derived by choosing more complex step-sizes dependent on unknown Lipschitz constants, as stated in Remark~\ref{rem:alternate_conv_result} below.
	\begin{remark}\label{rem:alternate_conv_result}
		Under Assumptions~\ref{as:tri_lip_cont}--\ref{as:TSG_bounded_var}, when choosing the step-sizes $\alpha_i$, $\beta_i$, and $\gamma_i$ to incorporate more problem-specific information, a stronger convergence rate of $\frac{1}{I}\sum_{i=0}^{I-1} \mathbb{E}[ \|\nabla f(x^i)\|^2 ] = \mathcal{O}( 1/\sqrt{I} )$ can be obtained. Such a result also does not require lower-bounds on the UL or ML variables $I$ and $J$, but requires $K\geq\mathcal{O}(J^4I)$. The formal statement of this result is given by Theorem~B.2.5 in the PhD thesis~\cite{GKent_2025}.
	\end{remark}
	
	Notice that both of these results share a common constraint: the LL iterations $K$ must scale linearly in $I$ and polynomially in $J$. We argue that such a requirement follows intuitively, as the accuracy of the LL solution directly impacts the inexactness of the bilevel adjoint gradient for the ML problem. Further, this constraint reveals the hierarchical interplay within trilevel problems, i.e., more LL iterations are required to obtain a higher accuracy in the ML problem than in the UL problem. This implies that the trilevel adjoint gradient $\nabla f$ tolerates more inexactness from the ML problem than the bilevel adjoint gradient $\nabla_y \bar f$ does from the LL problem. Such a relationship underscores how errors in the LL propagate upward through the levels: greater accuracy at any sub-upper level necessitates significantly higher precision in the LL solution. Whether this pattern extends to all sup-upper levels in general multi-level problems or entirely shifts the computational burden to the lowest level remains an open question for future research. Lastly, we highlight that the extra $J$ present in the iteration complexity on $K$ in Remark~\ref{rem:alternate_conv_result} (i.e., $K\geq\mathcal{O}((J\times J^3)I)$ can be thought of as the $J$ that is present in the numerator of the convergence bound $\mathcal{O}(J/\sqrt{I})$ from Theorem~\ref{th:TSG_convergence}.

	
	
	\section{Numerical experiments}
	
	The experimental results were obtained on a desktop workstation with~128GB of~RAM, an Intel(R) Core(TM) i9-13950HX processor (24 cores, 32 threads) running at~2200 MHz under Windows 11. 
	Our code is available at~\url{https://github.com/GdKent/TSG}.
	
	\subsection{Our practical~TSG methods}
	
	A major difficulty in the adjoint gradient~\eqref{adjoint} is the need for second-order derivatives of~$\bar{f}$ (a challenge that also arises in the adjoint gradient of a~BLO problem), and, in particular, the presence of third-order derivative tensors in~$\nabla_{yx} \bar{f}$ and~$\nabla_{yy} \bar{f}$ in~\eqref{hess_yx_fbar} and~\eqref{hess_yy_fbar}, due to~\eqref{eq:partial_x} and~\eqref{eq:partial_y}, respectively. We consider two approaches to address this issue (see Appendix~\ref{subsec:computing_TSG_adj_grad_inexactly}), leading to two practical versions of the~TSG method, referred to as~TSG-N-FD and~TSG-AD. In the numerical experiments, we are mainly interested in testing these two practical implementations (Algorithms~\ref{alg:TSG-N-FD} and~\ref{alg:TSG-AD} in Appendices~\ref{subsec:TSG-N-FD} and~\ref{subsec:TSG-AD}, respectively) rather than the method we refer to as~TSG-H, which uses the true Hessians and third-order derivative tensors (Algorithm~\ref{alg:TSG} in Section~\ref{sec:trilevel_TSG}).
	
	The first algorithm we propose, TSG-N-FD, is based on the adjoint equation approach and involves solving any adjoint system arising in~\eqref{adjoint} and~\eqref{eq:nablay_barf} by using the linear~CG method, where each Hessian-vector product is approximated via a finite-difference~(FD) scheme. 
	When using~TSG-H, we will apply the linear~CG method to solve any adjoint system arising in~\eqref{adjoint} and~\eqref{eq:nablay_barf} until non-positive curvature is detected.
	The second algorithm we propose, TSG-AD, is based on the truncated Neumann series approach and consists of approximating each Hessian-vector product by using automatic differentiation~(AD).
	Note that~TSG-H is not suited for practical optimization problems, but we include it in the experiments for completeness. For very large problems, one must use~TSG-N-FD or~TSG-AD. 
	
	
	
	To determine the~ML and~LL iteration iterations~$J$ and~$K$, we used an increasing accuracy strategy inspired by~\cite{TGiovannelli_GKent_LNVicente_2022}: the number of~ML iterations increases by one when the change in~$f_1$ between two consecutive~UL iterations drops below~$10^{-2}$, and the number of~LL iterations increases similarly when the change in~$f_2$ between two consecutive~ML iterations drops below~$10^{-1}$.
	
	\subsection{Numerical results for synthetic trilevel problems}\label{sec:num_results_synth_trilevel_probs}
	
	We first report results for two synthetic trilevel problems that differ in their~LL problem formulations (see Appendix~\ref{subsec:synthetic_trilevel_problems}). In the first, all levels have quadratic objective functions, leading to a quadratic trilevel problem (with zero third-order derivatives). In the second, the~UL and~ML objective functions are quadratic, while the~LL objective is quartic (resulting in non-zero third-order derivatives). For simplicity, we refer to the second trilevel problem as quartic.
	
	
	Figures~\ref{fig:quad_prob_det}, \ref{fig:quart_prob_det}, \ref{fig:quad_prob_stoch}, and~\ref{fig:quart_prob_stoch} compare the sequences of~$f(x^i)$ values obtained by~TSG-H, TSG-N-FD, and~TSG-AD over~UL iterations and running time. 
	In the stochastic case, we computed the stochastic gradients and Hessians by adding Gaussian noise with mean zero to the corresponding deterministic quantities. We did not add noise to the third-order tensors, as these are not used in the practical algorithms~TSG-N-FD and TSG-AD.
	All figures involving stochasticity include 95\% confidence intervals computed using the t-distribution over~10 runs.
	
	\begin{figure}[t]
		\centering
		\begin{minipage}[b]{0.48\linewidth}
			\centering
			\includegraphics[width=0.49\linewidth]{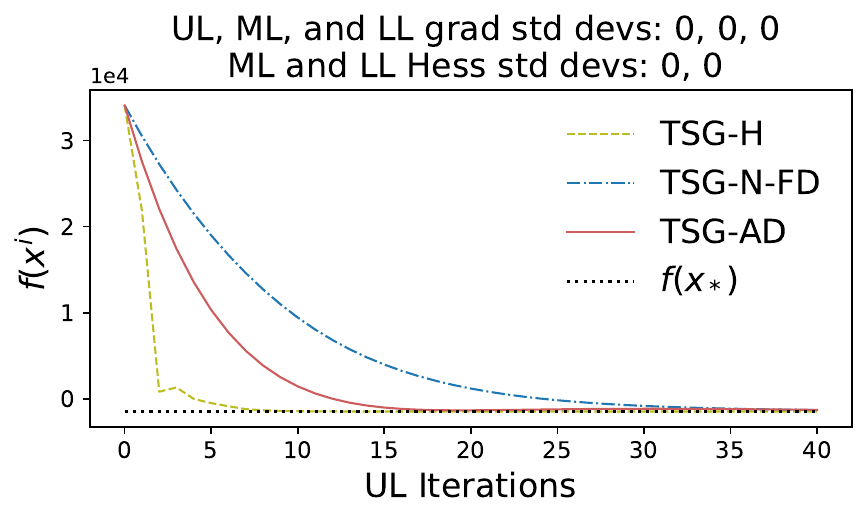}
			\includegraphics[width=0.49\linewidth]{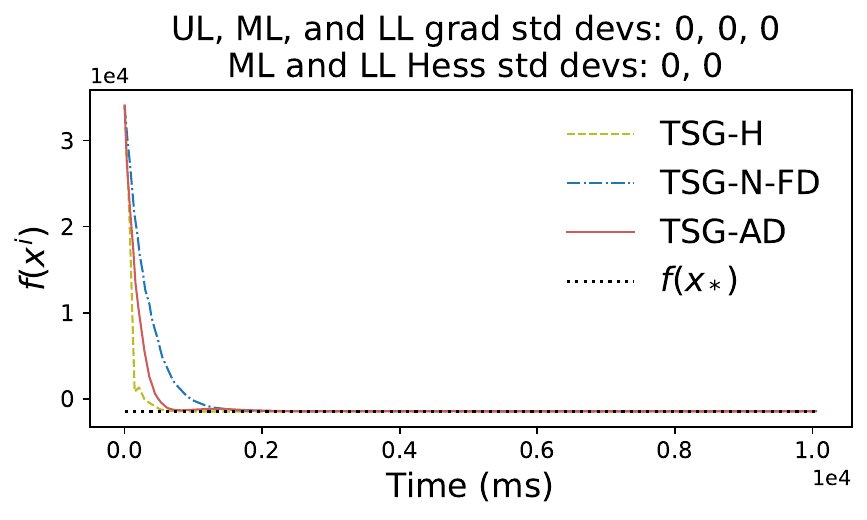}      
			\captionof{figure}{Quadratic problem, deterministic case.}
			\label{fig:quad_prob_det}
		\end{minipage}
		\hfill
		\begin{minipage}[b]{0.48\linewidth}
			\centering
			\includegraphics[width=0.49\linewidth]{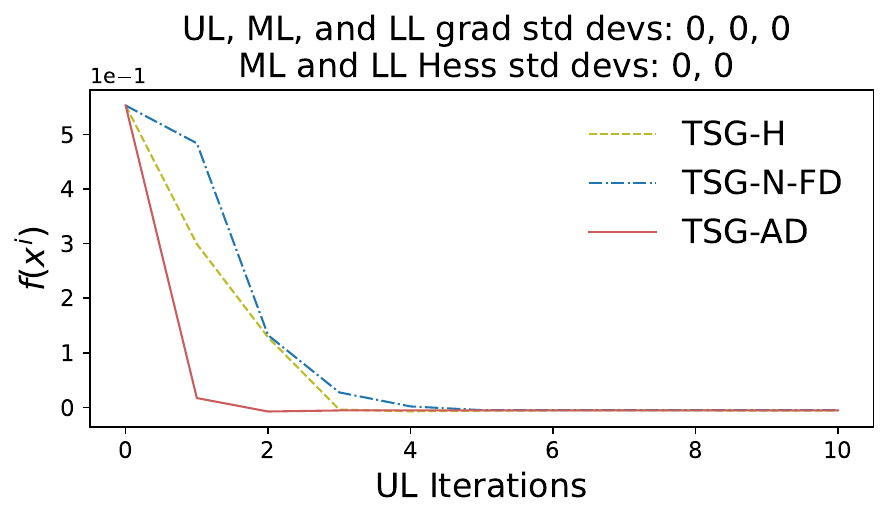}
			\includegraphics[width=0.49\linewidth]{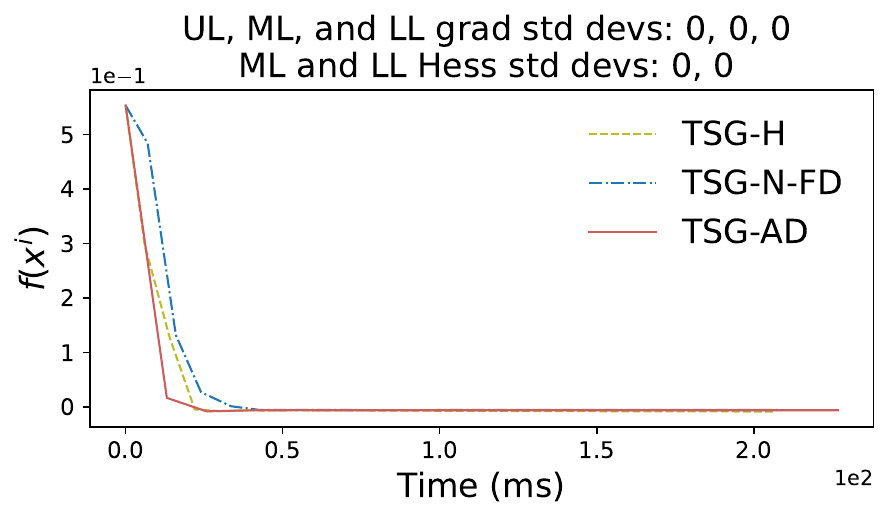}
			\captionof{figure}{Quartic problem, deterministic case.}
			\label{fig:quart_prob_det}
		\end{minipage}
	\end{figure}
	
	
	
	\begin{figure}
		\centering
		\includegraphics[scale=0.23]{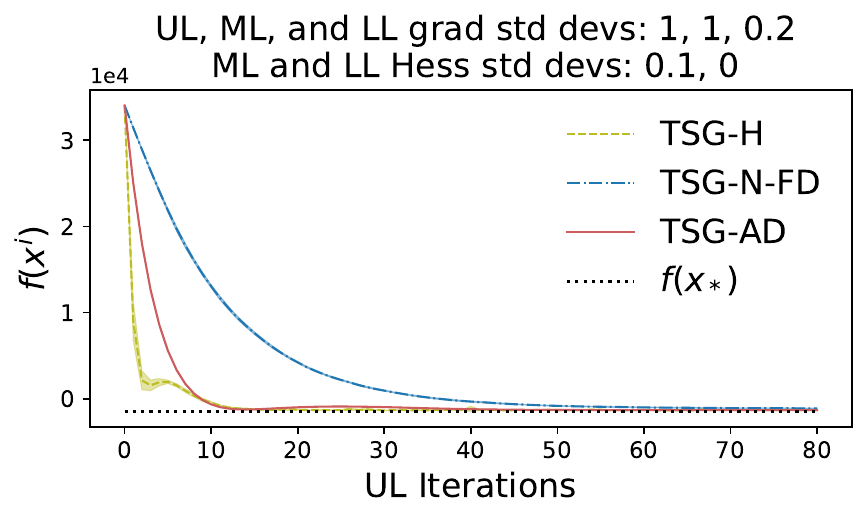}
		\includegraphics[scale=0.23]{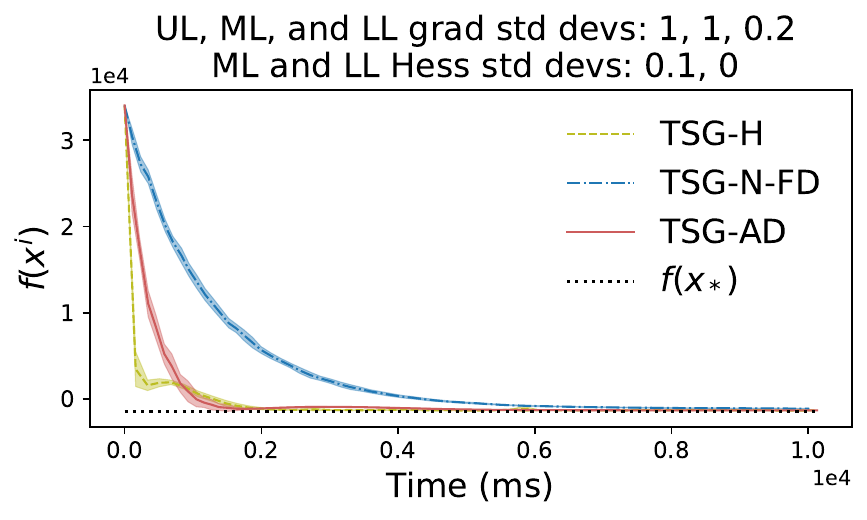}
		\vspace{0.2cm}
		\includegraphics[scale=0.23]{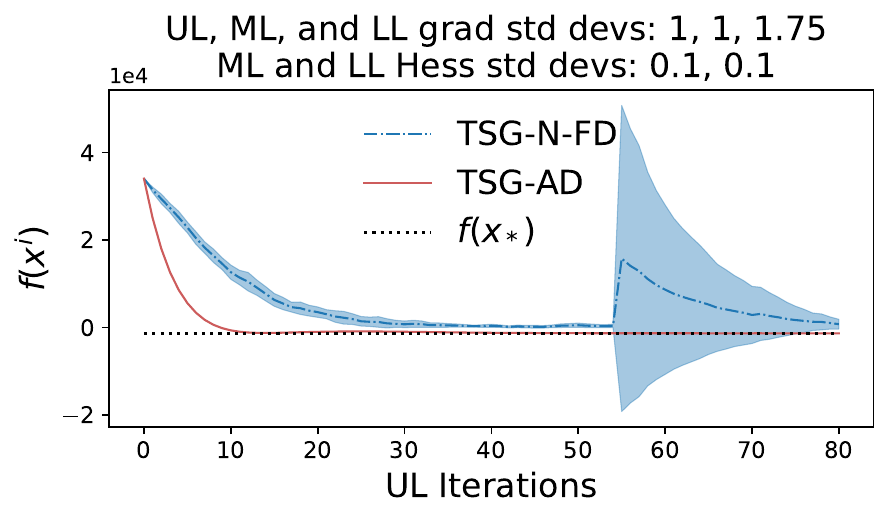}
		\includegraphics[scale=0.23]{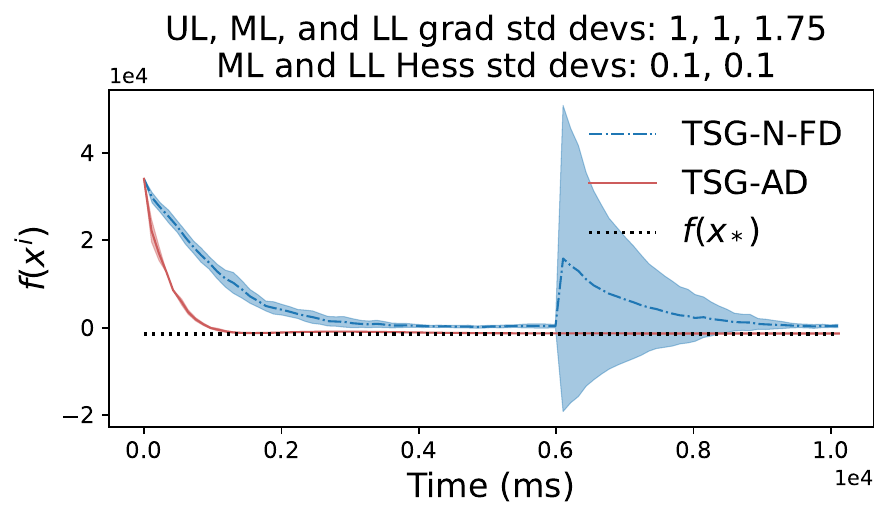}                   
		\caption{Quadratic problem, stochastic case (low noise: two left plots; high noise: two right plots).}\label{fig:quad_prob_stoch}
	\end{figure}
	
	For the quadratic problem, Figure~\ref{fig:quad_prob_det} shows that~TSG-H, which uses Hessians and third-order tensors, outperforms~TSG-N-FD and~TSG-AD in terms of both~UL iterations and time in the deterministic case. Figure~\ref{fig:quad_prob_stoch} shows the plots for the stochastic case. Note that TSG-N-FD and TSG-AD are not affected by the noise in the Hessians of~$f_2$ and~$f_3$, as they rely only on first-order derivatives. TSG-H is highly sensitive to the standard deviation of the Hessian of~$f_3$ (which appears in the trilevel adjoint gradient~\eqref{adjoint}), and its performance deteriorates significantly when this value exceeds~$0.1$. Such behavior aligns with the well-known fact that stochastic Hessians require lower noise levels (i.e., larger mini-batch sizes when noise arises from sampling finite-sum Hessians in~SG contexts) than stochastic gradients to perform well~\cite[Section 6.1.1]{LBottou_FECurtis_JNocedal_2018}.
	For this reason, we omit~TSG-H from the two right plots. As noise levels increase, the performance of~TSG-N-FD deteriorates, whereas~TSG-AD remains more robust. The most critical source of noise for~TSG-N-FD is that added to~$\nabla f_3$, which is used to approximate the matrix-vector products involving the Hessian of~$f_3$ via the~FD scheme in~\eqref{eq:TSGNFD_FD0}. Note that such an~FD scheme affects the computation of both~\eqref{adjoint_NFD} and~\eqref{eq:nablay_barf_NFD}.
	
	For the quartic problem, in the deterministic case, Figure~\ref{fig:quart_prob_det} shows that~TSG-H is the least competitive algorithm in terms of time, as the computation of third-order tensors slows it down. In the stochastic case, shown in Figure~\ref{fig:quart_prob_stoch}, increasing noise levels lead to performance deterioration for both~TSG-N-FD and~TSG-H, whereas~TSG-AD remains the most robust. We can conclude that when third-order derivatives are non-zero, the~FD approximations used in~TSG-N-FD become less accurate.  
	
	
	
	\begin{figure}
		\centering
		\includegraphics[scale=0.22]{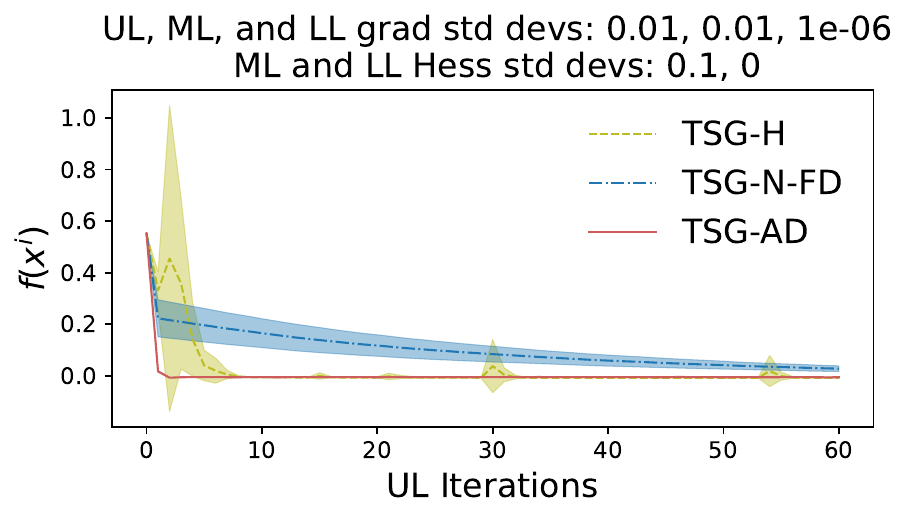}
		\includegraphics[scale=0.22]{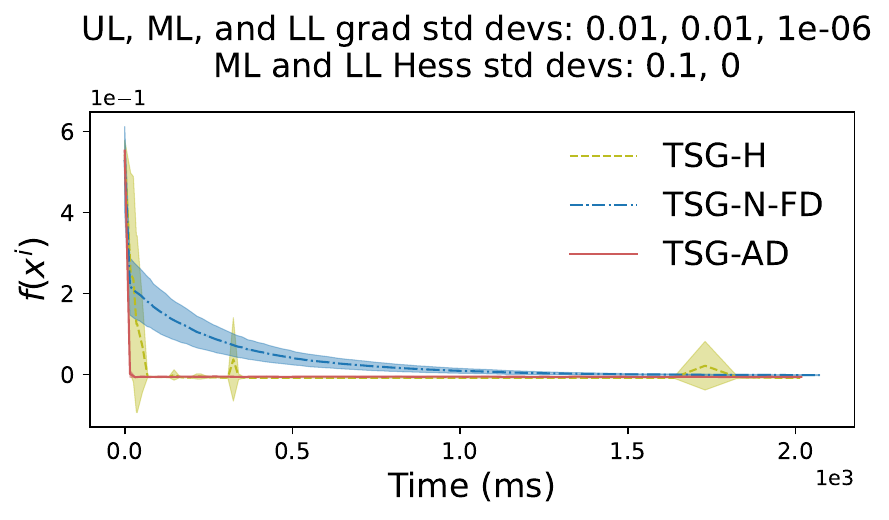}
		\vspace{0.2cm}
		\includegraphics[scale=0.22]{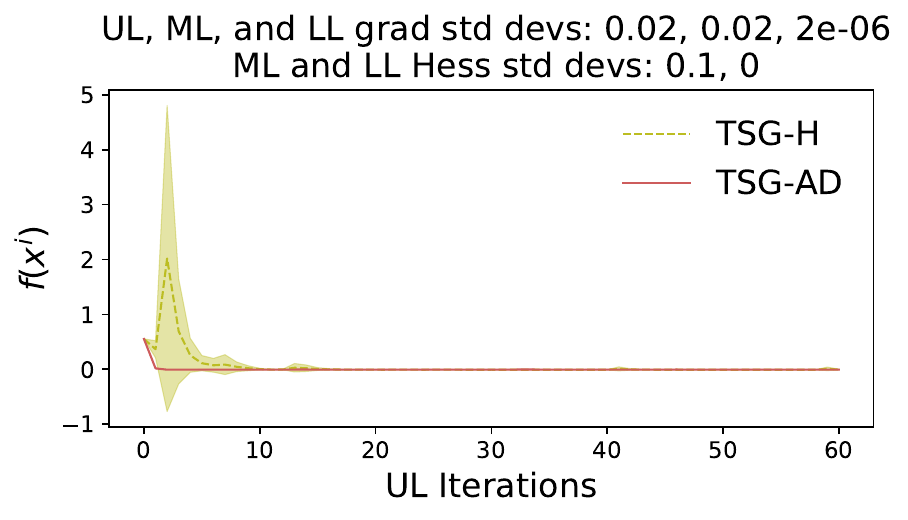}
		\includegraphics[scale=0.22]{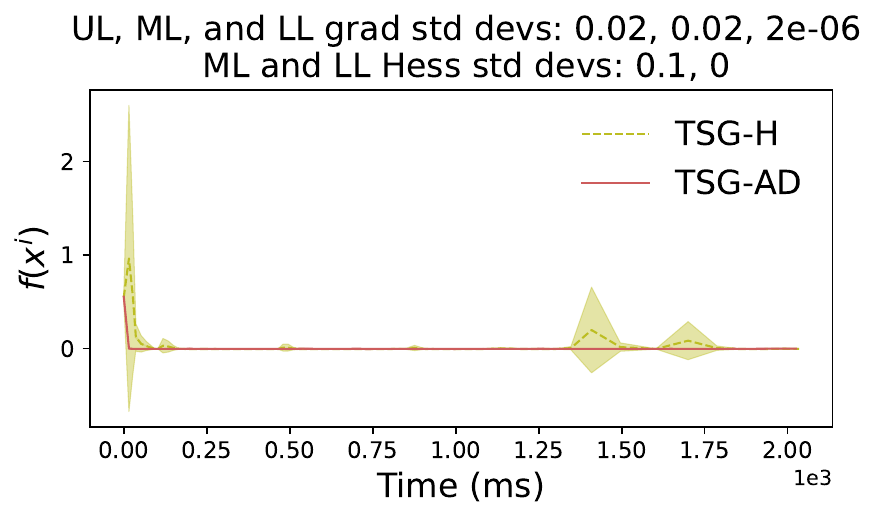}                   
		\caption{Quartic problem, stochastic case (low noise: two left plots; high noise: two right plots).}\label{fig:quart_prob_stoch}
	\end{figure}

	\subsection{Numerical results for trilevel adversarial hyperparameter tuning}
	
	In the~TLO formulation we propose for adversarial hyperparameter tuning (see problem~\ref{prob:trilevel_adv_train} in Appendix~\ref{subsec:trilevel_adversarial_problems} for the rigorous formulation), the~UL problem aims to minimize the validation loss over a regularization parameter used in the training loss, the~ML problem minimizes the training loss over the model parameters, and the~LL problem is posed on the variables that perturb the data in a worst-case fashion. In the formulation proposed in~\cite{RSato_etal_2021}, the~ML and~LL problems are swapped compared to our formulation in problem~\eqref{prob:trilevel_adv_train}. We adopt~\eqref{prob:trilevel_adv_train} because it more accurately reflects the original minimax formulation for adversarial training~\eqref{prob:adv_prob_minmax}, and indeed leads to improved performance (see Appendix~\ref{subsubsec:trilevel_adversarial_additional_figs}).
	We will also evaluate~BLO formulations obtained by removing either the~UL or~LL problem from~\eqref{prob:trilevel_adv_train}. Removing the~UL problem yields a~BLO problem similar in spirit to the original minimax formulation of adversarial learning, while removing the~LL problem results in a~BLO problem for hyperparameter tuning without adversarial learning.
	
	The~BLO problems obtained from~\eqref{prob:trilevel_adv_train} are solved using corresponding bilevel algorithms (denoted as~BSG-AD) derived from~TSG-AD. Such algorithms are essentially equivalent to the well-known~StocBiO~\cite{KJi_JYang_YLiang_2020}. In this section, we will not test~TSG-H, as it requires second and third-order derivatives, which are impractical to compute in applications involving large-scale datasets. Similarly, we will not test the trilevel algorithm proposed in~\cite{RSato_etal_2021}, as it is designed specifically for the deterministic setting. When using~\eqref{prob:trilevel_adv_train}, TSG-N-FD does not perform well and is therefore excluded from further analysis (see Appendix~\ref{subsubsec:trilevel_adversarial_additional_figs} for a discussion). 
	
	For the experiments, we consider three popular tabular datasets: the red and white wine quality datasets~\cite{PCortez_ALCerdeira_2009} and the California housing dataset~\cite{RKPace_RBarry_1997}. To assess the performance of the algorithms and formulations on these datasets, we compute the test~MSE after adding Gaussian noise (with a standard deviation of~5) to the features of the test data, averaged over~100 realizations of the noise. The optimal solution obtained from the trilevel formulation~\eqref{prob:trilevel_adv_train} is expected to yield a model robust to such noise. 

	The results for our~TLO formulation in~\eqref{prob:trilevel_adv_train}, along with those for the~BLO formulations obtained by removing the~UL and~LL problems from~\eqref{prob:trilevel_adv_train}, are shown in Figures~\ref{fig:nonSato_redwine}--\ref{fig:nonSato_california}. 
	The TLO formulation in~\eqref{prob:trilevel_adv_train} proves to be the most consistently effective for adversarial hyperparameter tuning, with the BLO variants demonstrating competitive runtime but greater sensitivity to the nature of the dataset, reflected in the contrasting dependencies observed across the datasets. 
	In fact, the superior performance of BSG-AD (without LL) over BSG-AD (without UL) on the red and white wine datasets is an indication of the reliance of these datasets on hyperparameter tuning, whereas the inverted performance of the BSG algorithms on the California housing dataset is a symptom of this dataset's dependence on adversarial learning.
	Overall, TSG-AD, which leverages both adversarial and hyperparameter tuning components during model training, consistently yields the most robust performance across all the tested datasets and will likely deliver further performance improvements in settings where both components are jointly critical.

	\begin{figure}[t]
		\centering
		\begin{minipage}[b]{0.48\linewidth}
			\centering
			\includegraphics[width=0.49\linewidth]{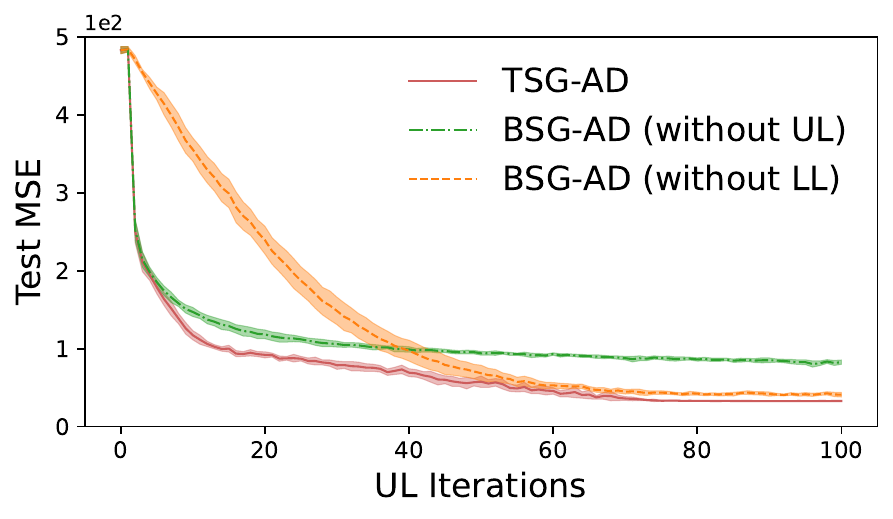}
			\includegraphics[width=0.49\linewidth]{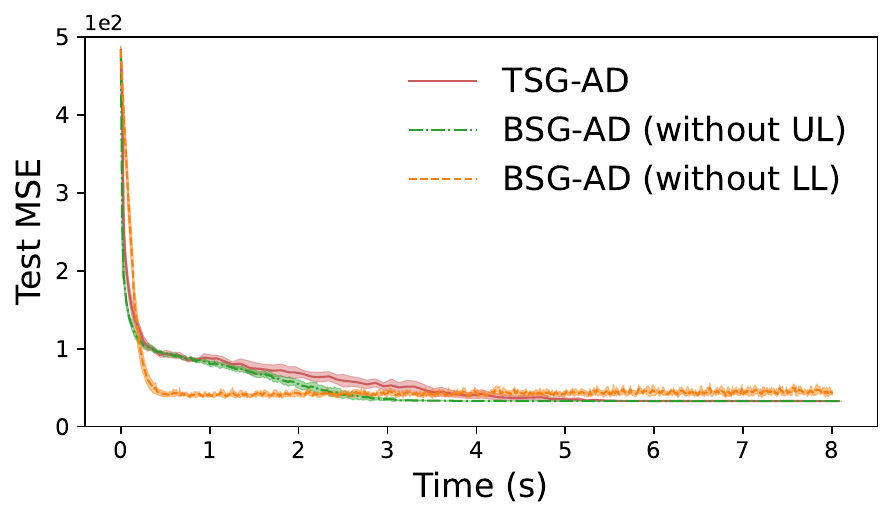}
			\caption{Adversarial learning formulation~\eqref{prob:trilevel_adv_train}, red wine quality dataset.}\label{fig:nonSato_redwine}
		\end{minipage}
		\hfill
		\begin{minipage}[b]{0.48\linewidth}
			\centering
			\includegraphics[width=0.49\linewidth]{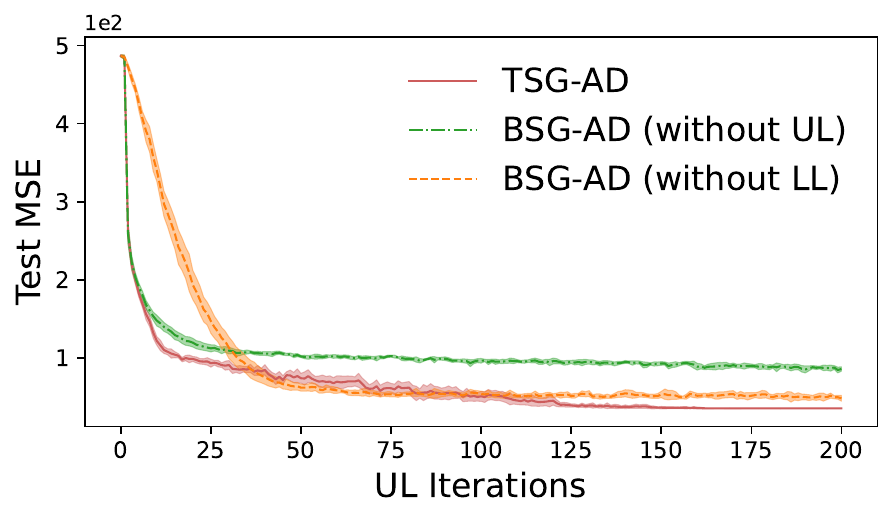}
			\includegraphics[width=0.49\linewidth]{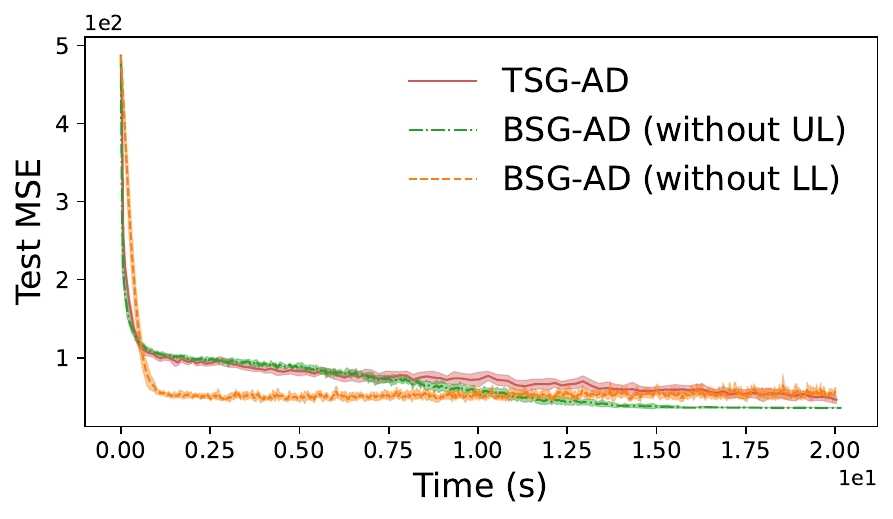}                 
			\caption{Adversarial learning formulation~\eqref{prob:trilevel_adv_train}, white wine quality dataset.}\label{fig:nonSato_whitewine}
		\end{minipage}
	\end{figure}

	\begin{wrapfigure}{r}{0.50\textwidth}
		\centering
		\includegraphics[width=0.49\linewidth]{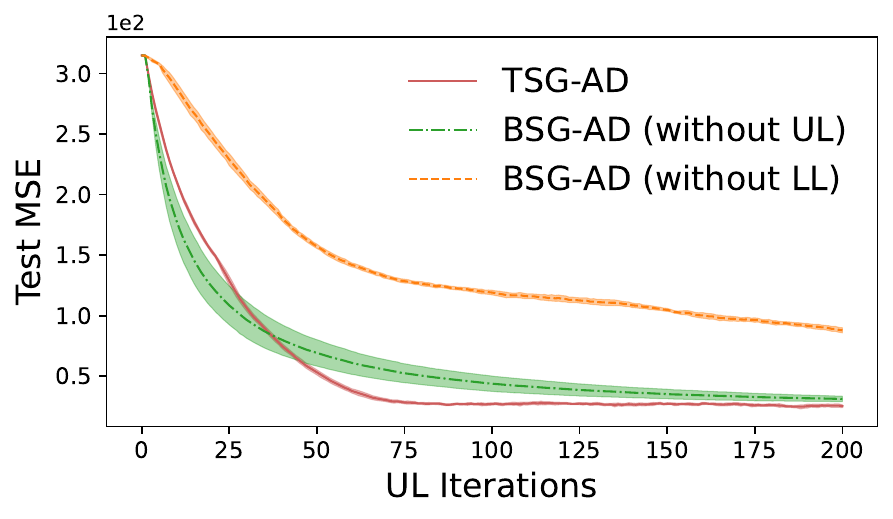}
		\includegraphics[width=0.49\linewidth]{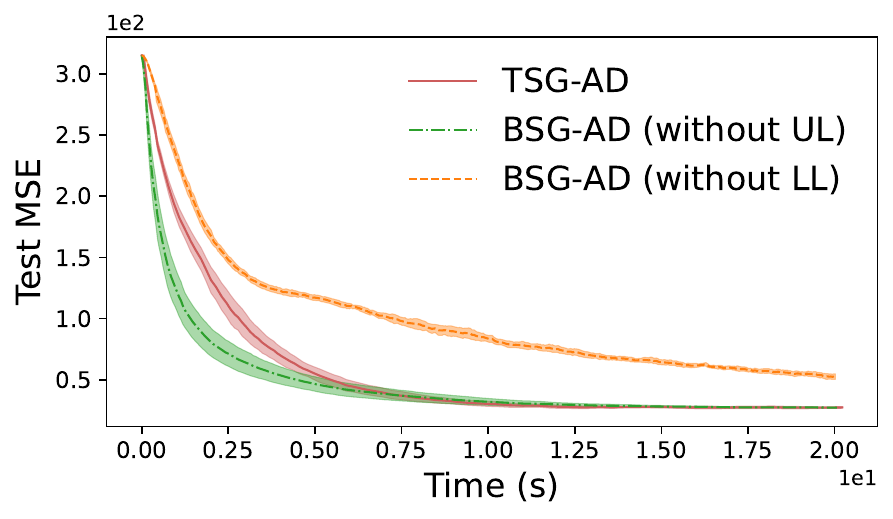}              
		\caption{Adversarial learning formulation~\eqref{prob:trilevel_adv_train}, California housing dataset.}
		\label{fig:nonSato_california}
	\end{wrapfigure}

	\section{Conclusion}
	
	In this paper, we proposed the first stochastic first-order method for trilevel optimization along with a rigorous convergence theory for the nonconvex setting. The proposed theory also covers all forms of inexactness that arise within the trilevel adjoint gradient, such as the inexact solutions of the middle and lower-level problems, inexact computation of the trilevel adjoint formula, and noisy estimates of the gradients, Hessians, Jacobians, and tensors of third-order derivatives involved. 
	Our experiments demonstrate that the proposed~TLO formulation can be more robust than the~BLO formulations corresponding to its~UL and~ML (i.e., hyperparameter tuning without adversarial learning), or its~ML and~LL (i.e., the original minimax adversarial training), as well as the~TLO formulation in~\cite{RSato_etal_2021}, where the~ML and~LL are swapped compared to ours.
	A natural direction left for future research lies in thoroughly exploring how the accuracy at any given intermediate level relates to the precision required at lower levels within general multi-level optimization problems. Specifically, such an investigation would seek to clarify whether increasing the accuracy at a particular level necessitates higher precision at all subsequent lower levels, or if the computational burden entirely shifts to the lowest level.
	
	Potential limitations of our work include the strong convexity assumptions on the~ML and~LL objective functions, which constrain the applicability of our~TSG method. Following directions similar to those emerging in the~BLO literature, one avenue to relax such assumptions would be to explore penalization techniques that allow for non-convex objectives at lower levels. Furthermore, although our experiments on trilevel adversarial hyperparameter tuning demonstrate that a~TLO formulation can outperform a~BLO formulation in terms of iterations, BLO formulations remain competitive in terms of running time. Finally, we only evaluated the algorithms on regression tasks with tabular data, but we expect similar performance on other tasks, such as image classification.
	

	\section*{Acknowledgments}
	This work is partially supported by the U.S. Air Force Office of Scientific Research (AFOSR) award FA9550-23-1-0217 and the U.S. Office of Naval Research~(ONR) award~N000142412656.
	
	\newpage
	


	
	\newpage
	
	\textbf{\large Technical Appendices}
	
	\appendix
	
	\section{Derivation of the trilevel adjoint gradient}\label{app:prop:adjoint_grad}
	This appendix contains the formal statement and derivation of the trilevel adjoint gradient given by equation~\eqref{adjoint}.
	
	\begin{proposition}[Trilevel adjoint gradient]\label{prop:adjoint_grad}
		Under assumptions that will ensure all terms are well-defined (specifically, Assumptions \ref{as:tri_lip_cont}--\ref{as:strong_conv_fbar_y}), we define the adjoint gradient of~$f$ as (referenced as~\eqref{adjoint})
		\begin{equation}
			\begin{split}
				\nabla f \; = \; ( \nabla_x f_1 - \nabla_{xz}^2 f_3 \nabla_{zz}^2 f_3^{-1} \nabla_z f_1 ) - \nabla_{xy}^2\Bar{f} \nabla_{yy}^2 \Bar{f}^{-1} ( \nabla_y f_1  - \nabla_{yz}^2 f_3 \nabla_{zz}^2 f_3^{-1} \nabla_z f_1 ), \nonumber
			\end{split}
		\end{equation}
		where all of the gradient and Hessian terms of $f_1$ and $f_3$ on the right-hand side are evaluated at~$(x,y(x),z(x))$. Further, the $\bar{f}$ terms are evaluated at~$(x,y(x))$ where
		\begin{alignat}{2}
			&\nabla_{yx}^2 \Bar{f}(x,y) &\; = \; \nabla_{yx}^2 f_2 + \nabla_{yz}^2 f_2 \nabla_x z^\top + \frac{\partial}{\partial x}\left[ \nabla_y z \nabla_z f_2 \right],\label{hess_yx_fbar}\\
			&\nabla_{yy}^2 \Bar{f}(x,y) &\; = \; \nabla_{yy}^2 f_2 + \nabla_{yz}^2 f_2 \nabla_y z^\top + \frac{\partial}{\partial y}\left[ \nabla_y z \nabla_z f_2 \right],\label{hess_yy_fbar}
		\end{alignat}
		with
		\begin{alignat}{2}
			\nabla_x z(x,y)^\top &&\; = \; -\nabla_{zz}^2 f_3^{-1} \nabla_{zx}^2 f_3,\label{eq:jacobian_z_ret_x}\\
			\nabla_y z(x,y)^\top &&\; = \; -\nabla_{zz}^2 f_3^{-1} \nabla_{zy}^2 f_3,\label{eq:jacobian_z_ret_y}
		\end{alignat}
		\begin{alignat}{2}
			\frac{\partial}{\partial x}[ \nabla_y z \nabla_z f_2 ] \; &= \; -[ \nabla_{yzx}^3 f_3 + \nabla_{yzz}^3 f_3 \nabla_x z^\top - \nabla_{yz}^2f_3 \nabla_{zz}^2 f_3^{-1} ( \nabla_{zzx}^3 f_3 + \nabla_{zzz}^3 f_3 \nabla_x z^\top ) ]\nabla_{zz}^2 f_3^{-1} \nabla_z f_2 \nonumber \\
			&\quad - \nabla_{yz}^2 f_3 \nabla_{zz}^2f_3^{-1} ( \nabla_{zx}^2 f_2 + \nabla_{zz}^2 f_2 \nabla_x z^\top ), \label{eq:partial_x}\\
			\frac{\partial}{\partial y}[ \nabla_y z \nabla_z f_2 ] \; &= \; -[ \nabla_{yzy}^3 f_3 + \nabla_{yzz}^3 f_3 \nabla_y z^\top - \nabla_{yz}^2f_3 \nabla_{zz}^2 f_3^{-1} ( \nabla_{zzy}^3 f_3 + \nabla_{zzz}^3 f_3 \nabla_y z^\top ) ]\nabla_{zz}^2 f_3^{-1} \nabla_z f_2 \nonumber \\
			&\quad - \nabla_{yz}^2 f_3 \nabla_{zz}^2f_3^{-1} ( \nabla_{zy}^2 f_2 + \nabla_{zz}^2 f_2 \nabla_y z^\top ). \label{eq:partial_y}
		\end{alignat}
		Notice that all of the gradients and Hessians of $f_2$ and the gradients, Hessians, and tensors of third-order derivatives (which we denote by $\nabla^3$)\footnote{To clarify the notation for third-order derivatives, consider the following example: given an~$m \times t \times n$ tensor~$\nabla^3_{yzx} f_3$ and a~$t \times t$ matrix~$\nabla^2_{zz}f_3^{-1}$, the product~$\nabla^3_{yzx} f_3 \nabla^2_{zz}f_3^{-1}$ yields an~$m \times t \times n$ matrix. Left-multiplying a~$t$-dimensional vector~$\nabla_z f_2$ by~$\nabla^3_{yzx} f_3 \nabla^2_{zz}f_3^{-1}$ results in an~$m \times 1 \times n$ matrix (or~$m \times n$, for brevity).} of~$f_3$ in~\eqref{hess_yx_fbar}--\eqref{eq:partial_y} are evaluated at the point $(x,y,z(x,y))$ and all of the $\nabla z$ terms are evaluated at the point $(x,y)$.
	\end{proposition}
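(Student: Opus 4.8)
The plan is to derive \eqref{adjoint} by nested implicit differentiation, treating the trilevel problem as the reduced single-level map $f(x) = f_1(x, y(x), z(x,y(x)))$ and peeling off one level of dependence at a time. Since the LL and ML problems are strongly convex (Assumptions~\ref{as:strong_conv_f3_z}--\ref{as:strong_conv_fbar_y}), the Hessians $\nabla_{zz}^2 f_3$ and $\nabla_{yy}^2 \bar f$ are invertible, so the implicit solution maps $z(x,y)$ and $y(x)$ are well-defined and, by the implicit function theorem together with the smoothness in Assumption~\ref{as:tri_lip_cont}, continuously differentiable. Structurally, the computation is just a classical bilevel adjoint calculation applied to the reduced problem \ref{bi_reduced}, except that the inner objective $\bar f(x,y)=f_2(x,y,z(x,y))$ is itself defined implicitly through $z(x,y)$, which is what forces third-order derivatives of $f_3$ to appear.

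First I would handle the LL level. Differentiating the LL stationarity condition $\nabla_z f_3(x,y,z(x,y)) = 0$ with respect to $x$ and $y$ and applying the chain rule gives $\nabla_{zx}^2 f_3 + \nabla_{zz}^2 f_3\, \nabla_x z^\top = 0$ and $\nabla_{zy}^2 f_3 + \nabla_{zz}^2 f_3\, \nabla_y z^\top = 0$, which solve to the Jacobian formulas \eqref{eq:jacobian_z_ret_x} and \eqref{eq:jacobian_z_ret_y}. The ML reduced gradient then follows from a single application of the chain rule, $\nabla_y \bar f = \nabla_y f_2 + \nabla_y z\, \nabla_z f_2$, and substituting \eqref{eq:jacobian_z_ret_y} recovers the ML adjoint gradient \eqref{eq:adjoint_ML}.

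Next I would handle the ML level. Differentiating the ML stationarity condition $\nabla_y \bar f(x, y(x)) = 0$ in $x$ yields $\nabla_{yx}^2 \bar f + \nabla_{yy}^2 \bar f\, \nabla y(x)^\top = 0$, hence $\nabla y(x)^\top = -[\nabla_{yy}^2 \bar f]^{-1}\nabla_{yx}^2 \bar f$. To make this explicit I must compute the two Hessians of $\bar f$ by differentiating $\nabla_y \bar f = \nabla_y f_2 + \nabla_y z\, \nabla_z f_2$ once more in $x$ and in $y$. The contributions $\nabla_{yx}^2 f_2$, $\nabla_{yy}^2 f_2$, and those entering through $z(x,y)$ in $\nabla_y f_2$ are routine and give the leading terms of \eqref{hess_yx_fbar} and \eqref{hess_yy_fbar}. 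The delicate piece is the derivative of the coupling term $\nabla_y z\, \nabla_z f_2$, which I would expand using $\nabla_y z^\top = -\nabla_{zz}^2 f_3^{-1}\nabla_{zy}^2 f_3$ together with the matrix-inverse derivative identity $\frac{\partial}{\partial x}[\nabla_{zz}^2 f_3^{-1}] = -\nabla_{zz}^2 f_3^{-1}\bigl(\frac{\partial}{\partial x}\nabla_{zz}^2 f_3\bigr)\nabla_{zz}^2 f_3^{-1}$; the total derivatives of $\nabla_{zz}^2 f_3$ and $\nabla_{yz}^2 f_3$ along the path $z(x,y)$ then bring in the third-order tensors $\nabla^3_{zzx} f_3$, $\nabla^3_{zzz} f_3$, $\nabla^3_{yzx} f_3$, $\nabla^3_{yzz} f_3$ (and their $y$-analogues), producing \eqref{eq:partial_x} and \eqref{eq:partial_y} after collecting terms.

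Finally I would assemble the UL gradient. Applying the multivariate chain rule to $f(x) = f_1(x, y(x), z(x, y(x)))$, with $\frac{d}{dx} z(x,y(x)) = \nabla_x z + \nabla y\, \nabla_y z$, gives $\nabla f = \nabla_x f_1 + \nabla_x z\, \nabla_z f_1 + \nabla y\,(\nabla_y f_1 + \nabla_y z\, \nabla_z f_1)$. Substituting $\nabla y(x)^\top = -[\nabla_{yy}^2 \bar f]^{-1}\nabla_{yx}^2 \bar f$ together with \eqref{eq:jacobian_z_ret_x}--\eqref{eq:jacobian_z_ret_y}, and grouping the $z$-dependent contributions into the two bracketed factors, produces \eqref{adjoint}. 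I expect the main obstacle to be the bookkeeping in the ML-Hessian step: tracking tensor orders and contraction modes (e.g., distinguishing the mode along which $\nabla^3 f_3$ contracts with $\nabla_{zz}^2 f_3^{-1}$ from the one that later contracts against $\nabla_z f_2$), and consistently separating total derivatives through $z(x,y)$ from partial derivatives, so that \eqref{eq:partial_x} and \eqref{eq:partial_y} emerge with the correct signs and factor placement.
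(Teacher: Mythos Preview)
Your proposal is correct and follows essentially the same approach as the paper: both derive \eqref{adjoint} via the multivariate chain rule on $f(x)=f_1(x,y(x),z(x,y(x)))$, obtain $\nabla_x z$, $\nabla_y z$, and $\nabla y$ by implicitly differentiating the LL and ML stationarity conditions, and compute the Hessians of $\bar f$ (and hence \eqref{eq:partial_x}--\eqref{eq:partial_y}) using the product rule together with the matrix-inverse derivative identity. The only difference is organizational---you work bottom-up (LL, then ML, then assemble the UL gradient), whereas the paper first writes the full chain-rule expansion and then fills in the Jacobians---but the ingredients and substance are identical.
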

	\begin{proof}
		One arrives at the adjoint formula~\eqref{adjoint} by first applying the multivariate chain rule \linebreak to $f_1(x,y(x),z(x,y(x)))$ in the following manner:
		\begin{alignat}{2}
			\nabla f \; &= \; \frac{d}{dx} f_1(x,y(x),z(x,y(x))) \; = \; \frac{\partial f_1}{\partial x} + \frac{d y}{d x} \frac{\partial f_1}{\partial y} +  \frac{d}{d x} z(x,y(x)) \frac{\partial f_1}{\partial z} \nonumber,
		\end{alignat}
		where
		\[\frac{d}{d x} z(x,y(x)) \; = \; \frac{\partial z}{\partial x} + \frac{d y}{d x} \frac{\partial z}{\partial y}.\]
		Thus, we have
		\begin{alignat}{2}
			\nabla f \; &= \; \frac{\partial f_1}{\partial x} + \frac{d y}{d x} \frac{\partial f_1}{\partial y}  +  ( \frac{\partial z}{\partial x} + \frac{d y}{d x} \frac{\partial z}{\partial y}  ) \frac{\partial f_1}{\partial z} \nonumber \\
			&= \; \nabla_x f_1 + \nabla y \nabla_y f_1 + \left( \nabla_x z + \nabla y \nabla_y z \right) \nabla_z f_1 \nonumber \\
			& = \; \nabla_x f_1 + \nabla_x z \nabla_z f_1 + \nabla y \left( \nabla_y f_1  + \nabla_y z \nabla_z f_1 \right). \label{adjoint_jacobian}
		\end{alignat}
		
		The Jacobian of~$y(x)$, i.e.,~$\nabla y(x)^{\top}\in\mathbb{R}^{m\times n}$,  can be computed from the first-order necessary optimality conditions of the ML problem, defined by~$\nabla_y \Bar{f}(x,y(x)) = 0$. In particular, taking the derivative of both sides with respect to~$x$, utilizing the chain rule and the implicit function theorem (which ensures $y(\cdot)$ to be continuously differentiable~\cite{Ruding_1953}), we obtain~$\nabla_{yx}^2 \Bar{f} + \nabla_{yy}^2 \Bar{f}\nabla y(x)^\top = 0$ (where all Hessians are evaluated at~$(x,y(x))$), which yields
		\begin{equation}\label{eq:jacobian_y}
			\nabla y(x)^\top \; = \; - \nabla_{yy}^2 \Bar{f}(x,y(x))^{-1} \nabla_{yx}^2 \Bar{f}(x,y(x)).
		\end{equation}
		
		Since
		\begin{equation}\label{eq:exact_grad_MLP_y}
			\nabla_y \Bar{f}(x,y) = \nabla_y f_2(x,y,z(x,y)) + \nabla_y z \nabla_z f_2(x,y,z(x,y)),
		\end{equation}
		taking the derivative of both sides with respect to~$x$ and~$y$ and utilizing the chain rule, we obtain the expressions for~$\nabla_{yx}^2 \Bar{f}(x,y)$ and~$\nabla_{yy}^2 \Bar{f}(x,y)$ in~\eqref{hess_yx_fbar} and~\eqref{hess_yy_fbar}, respectively.
		
		Similarly, we can derive expressions for both of the Jacobians of~$z(x,y)$, i.e., $\nabla_x z(x,y)^\top\in\mathbb{R}^{t\times n}$ and~$\nabla_y z(x,y)^\top\in\mathbb{R}^{t\times m}$, respectively, from the first-order necessary optimality conditions of the LL problem, defined by~$\nabla_z f_3(x,y,z(x,y)) = 0$. In particular, taking derivatives of both sides with respect to~$x$ will yield $\nabla_{zx}^2 f_3(x,y,z(x,y)) + \nabla_{zz}^2 f_3(x,y,z(x,y)) \nabla_x z(x,y)^\top  = 0$, whereas taking derivatives with respect to~$y$ will yield~$\nabla_{zy}^2 f_3(x,y,z(x,y)) + \nabla_{zz}^2 f_3(x,y,z(x,y)) \nabla_y z(x,y)^\top = 0$. Solving these two equations for both~$\nabla_x z(x,y)^\top$ and~$\nabla_y z(x,y)^\top$, respectively, we obtain the expressions for~$\nabla_x z(x,y)^\top$ and~$\nabla_y z(x,y)^\top$ in~\eqref{eq:jacobian_z_ret_x} and~\eqref{eq:jacobian_z_ret_y}, respectively. Now, substituting~\eqref{eq:jacobian_y}, \eqref{eq:jacobian_z_ret_x}, and~\eqref{eq:jacobian_z_ret_y} into~\eqref{adjoint_jacobian}, we obtain the adjoint gradient defined by~\eqref{adjoint}.
		
		It remains to derive~\eqref{eq:partial_x} and~\eqref{eq:partial_y}. Using the property that the derivative of the inverse of a matrix~$K\left(g(x)\right)$ with respect to~$x$, where~$g$ is a vector-valued function of~$x$, is given by
		\[
		\frac{\partial}{\partial x}K\left(g(x)\right)^{-1} = -K\left(g(x)\right)^{-1} \left[\frac{\partial}{\partial x}K\left(g(x)\right)\right] K\left(g(x)\right)^{-1},
		\]
		and applying the product rule twice, it follows that the last term in~\eqref{hess_yx_fbar} can be written as
		\begin{alignat}{2}
			\frac{\partial}{\partial x}\left[ \nabla_y z \nabla_z f_2 \right] \; &= \; \frac{\partial}{\partial x}\left[-\nabla_{yz}^2 f_3 \nabla_{zz}^2 f_3^{-1} \nabla_z f_2 \right] \nonumber \\
			& = - \left(\frac{\partial}{\partial x} \nabla_{yz}^2 f_3\right) \nabla_{zz}^2 f_3^{-1} \nabla_z f_2 - \nabla_{yz}^2 f_3 \left( \frac{\partial}{\partial x}\nabla_{zz}^2 f_3^{-1} \nabla_z f_2 \right) \nonumber \\
			& = - \left(\frac{\partial}{\partial x} \nabla_{yz}^2 f_3 \right)\nabla_{zz}^2 f_3^{-1} \nabla_z f_2 - \nabla_{yz}^2 f_3 \left[ \left( \frac{\partial}{\partial x}\nabla_{zz}^2 f_3^{-1} \right)\nabla_z f_2 + \nabla_{zz}^2 f_3^{-1}\left( \frac{\partial}{\partial x}\nabla_z f_2 \right) \right], \label{eq:intermediate}
		\end{alignat}
		where
		\begin{alignat}{7}
			&\frac{\partial}{\partial x} \nabla_{yz}^2 f_3 \; &&= \; \nabla_{yzx}^3 f_3 + \nabla_{yzz}^3 f_3 \nabla_x z(x,y)^\top, \quad&&\frac{\partial}{\partial x}\nabla_z f_2 \; &&= \; \nabla_{zx}^2 f_2 + \nabla_{zz}^2 f_2 \nabla_x z(x,y)^\top,\nonumber\\
			&\frac{\partial}{\partial x}\nabla_{zz}^2 f_3^{-1} \; &&= \; -\nabla_{zz}^2 f_3^{-1} \left( \frac{\partial}{\partial x}\nabla_{zz}^2 f_3 \right) \nabla_{zz}^2 f_3^{-1}, \qquad&&\frac{\partial}{\partial x} \nabla_{zz}^2 f_3 \; &&= \; \nabla_{zzx}^3 f_3 + \nabla_{zzz}^3 f_3 \nabla_x z(x,y)^\top.
		\end{alignat}
		Substituting these equations into \eqref{eq:intermediate} and simplifying, we obtain~\eqref{eq:partial_x}.
		Through a similar process, we obtain that the right-most term of \eqref{hess_yy_fbar} is given by~\eqref{eq:partial_y}.
	\end{proof}

	\section{Discussion on the convergence analysis of the TSG method}\label{app:conv_theory_discussion}
	In this appendix, we outline the convergence analysis for the TSG method (Algorithm~\ref{alg:TSG}) and highlight all of the relevant results and notation involved. For simplicity of the convergence analysis, we utilize the following Lyapunov function:
	\begin{equation}\label{eq:lyapunov}
		\mathbb{V}^i := f(x^i) + \| y^{i} - y( x^{i} ) \|^2 + \| z^{i} - z( x^{i} ) \|^2 + \| z^{i} - z( x^{i}, y^{i} ) \|^2.
	\end{equation}
	There is no particular property that is required from Lyapunov functions for our analysis. Rather, \eqref{eq:lyapunov} is defined to allow for appropriate telescoping cancellations in the proof of Theorem~\ref{th:TSG_convergence} (which is an extension of the methodology utilized in~\cite{TChen_YSun_WYin_2021_closingGap} for bilevel problems). Further, the difference between two consecutive Lyapunov evaluations can by quantified as
	\begin{alignat}{2}
		&\mathbb{V}^{i+1} - \mathbb{V}^i \nonumber\\
		& = \underbrace{f(x^{i+1}) - f(x^i) }_{\text{Lemma~\ref{lem:TSG_UL_descent}}} + \underbrace{\| y^{i+1} - y( x^{i+1} ) \|^2 - \| y^{i} - y( x^{i} ) \|^2}_{\text{Lemma~\ref{lem:TSG_ML_descent}}}  \nonumber\\
		&\quad+  \underbrace{\| z^{i+1} - z( x^{i+1} ) \|^2 - \| z^{i} - z( x^{i} ) \|^2}_{\text{Lemma~\ref{lem:TSG_LL_descent}}}  + 
		\underbrace{\| z^{i+1} - z( x^{i+1}, y^{i+1} ) \|^2  - \| z^{i} - z( x^{i}, y^{i} ) \|^2}_{\text{Lemma~\ref{lem:TSG_aux_LL_descent}}}.\label{eq:lyapunov_diff}
	\end{alignat}
	Notice that this consists of four differences: the first difference measures the amount of descent that is achieved in the~UL problem, the second and third differences correspond to the error present in the~ML and~LL problems, respectively, and the fourth difference is an auxiliary term that corresponds to the inexact~LL error relative to the~ML variables. Further, it bears mentioning that Appendix~\ref{app:conv_theory_proofs} contains the proofs of Lemmas~\ref{lem:TSG_UL_descent}--\ref{lem:TSG_ML_descent} and Theorem~\ref{th:TSG_convergence}, and Appendix~\ref{app:conv_theory_proofsi_var_inex} contains the statements and proofs of intermediary results that are required for the arguments used in Appendix~\ref{app:conv_theory_proofs}. Lastly, Appendix~\ref{app:lipschitz_properties} includes auxiliary lemmas proving Lipschitz continuity properties for the following functions, gradients, and Jacobians:~$z(x)$, $z(x,y)$, $y(x)$, $\nabla_y \bar f$, $\nabla_{xy}^2\Bar{f}$, $\nabla_{yy}^2\Bar{f}$, $\nabla f$, $\nabla z$, and~$\nabla y$. For ease of reference, Table~\ref{tab:reference_table_of_constants} below compiles all the relevant constants utilized throughout the theory which are not defined in Lemmas~\ref{lem:TSG_UL_descent}--\ref{lem:TSG_ML_descent}.

	\begin{table}[h!]
		\caption{Reference table of constants associated with derived bounds.}
		\label{tab:reference_table_of_constants}
		\centering
		\begin{tabular}{ccc}
			\toprule
			Descriptions & Constants & References \\
			\midrule
			Bounds on bias \& variance  &  $U_x$, $U_y$, $U_{xy}$, $U_{yy}$, $V_{xy}$, $V_{yy}$  &  Lemmas~\ref{lem:TSG_derived_unbiasedness} -- \ref{lem:TSG_derived_variance_bounds_of_f_bar} \\
			\hline
			Bounds on UL inexactness & $\omega$, $\tau$, $\zeta$ & Lemmas~\ref{lem:TSG_new_inexact_variance_bound} -- \ref{lem:TSG_size_inexact_g} \\
			\hline
			Bounds on ML inexactness & $\hat\omega$, $\hat\tau$, $\Upsilon$ & Lemmas~\ref{lem:TSG_var_bound_ML_dir} -- \ref{lem:TSG_size_inexact_g_ML} \\
			\hline
			Derived Lipschitz properties & \parbox[c]{6cm}{ $L_z$, $L_{z_{xy}}$, $L_{z_y}$, $L_y$, $L_{\nabla_z}$, $L_{\bar F}$, $L_{\bar F_y}$ \\ $L_{\bar F_z}$, $L_{\nabla^2_{yx} \bar f}$, $L_{\nabla^2_{yy} \bar f}$, $L_F$, $L_{F_{yz}}$, $L_{\nabla y}$ }  & Equations~\ref{eq:lip_prop_1} -- \ref{eq:lip_prop_13}\\
			\bottomrule
		\end{tabular}
	\end{table}

	\subsection{Descriptions of $\sigma$-algebras}\label{sec:def_sig_algebra}
	We denote three auxiliary sets $\Sigma_i$, $\Sigma_{i,j}$, and $\Sigma_{i,j,k}$, each corresponding to the set of iterates generated by Algorithm~\ref{alg:TSG} for the UL update, ML update, and LL update, respectively. We define these sets explicitly in the following way:
	\[
	\Sigma_i \;:=\; \{ x^{\hat i}, y^{\hat i}, z^{\hat i} ~\vert~ \forall \hat i\in\{0,1,...,i\} \},
	\]
	\[
	\Sigma_{i,j} \;:=\; \{ x^{\hat i}, y^{\hat i, \hat j}, z^{\hat i, \hat j} ~\vert~ \forall \hat i\in\{0,1,...,i\} \text{ and } \forall \hat j\in\{0,1,...,j\} \},
	\]
	\[
	\Sigma_{i,j,k} \;:=\; \{ x^{\hat i}, y^{\hat i, \hat j}, z^{\hat i, \hat j, \hat k} ~\vert~ \forall \hat i\in\{0,1,...,i\} \text{ and } \forall \hat j\in\{0,1,...,j\} \text{ and } \forall \hat k\in\{0,1,...,k\} \}.
	\]
	Now, we define the corresponding~$\sigma$-algebras generated as~$\mathcal{F}_i := \sigma\left( \Sigma_i \cup \{y^{i+1},z^{i+1}\} \right)$, \linebreak$\mathcal{F}_{i,j} := \sigma\left( \Sigma_{i,j} \cup \{z^{i,j+1}\} \right)$, and~$\mathcal{F}_{i,j,k} := \sigma\left( \Sigma_{i,j,k} \right)$, respectively. Further, we will use the expressions~$\mathbb{E}\left[ \cdot\vert \mathcal{F}_i \right]$, $\mathbb{E}\left[ \cdot\vert \mathcal{F}_{i,j} \right]$, and~$\mathbb{E}\left[ \cdot\vert \mathcal{F}_{i,j,k} \right]$ to denote the conditional expectations taken with respect to the probability distributions of~$\xi^i$, $\xi^{i,j}$, and~$\xi^{i,j,k}$ given~$\mathcal{F}_i$, $\mathcal{F}_{i,j}$, and~$\mathcal{F}_{i,j,k}$, respectively. Recalling from the beginning of Section~\ref{sec:assumptions_convergence_rate_TSG_method}, we also define a general sigma-algebra~$\mathcal{F}_\xi$ that includes all the events up to the generation of a general point~$(x,y,z)$, before observing a realization of~$\xi$; similarly, $\mathbb{E}[\cdot|\mathcal{F}_\xi]$ denotes the expectation taken with respect to the probability distribution of~$\xi$ given~$\mathcal{F}_\xi$. We also use~$\mathbb{E}[\cdot]$ to denote the \textit{total expectation}, i.e., the expected value with respect to the joint distribution of all the random variables.

	\subsection{Statements of descent and error bound results}\label{sec:statements_of_conv_results}
	We now provide the statements of Lemmas~\ref{lem:TSG_UL_descent}--\ref{lem:TSG_ML_descent} below, that bound the terms in the Lyapunov difference given by~\eqref{eq:lyapunov_diff}, and which are ultimately required to prove the main convergence result of Algorithm~\ref{alg:TSG}, presented in Theorem~\ref{th:TSG_convergence}. The proofs of such lemmas and the theorem are provided in Appendix~\ref{app:conv_theory_proofs}. They required a non-trivial adaptation of the proofs in~\cite{TChen_YSun_WYin_2021_closingGap}, which were specific for bilevel problems. 
	
	\begin{lemma}[Descent of the true trilevel UL problem]\label{lem:TSG_UL_descent}
		Recalling~$\bar g_{f_1}^{i} = \mathbb{E} [ \tilde g_{f_1}^{i} \vert \mathcal{F}_{i} ]$, under Assumptions~\ref{as:tri_lip_cont}--\ref{as:TSG_bounded_var}, the sequence of iterates $\{ x^i \}_{i\geq0}$ generated by Algorithm~\ref{alg:TSG} satisfies
		\begin{alignat}{2}
			\mathbb{E}[ f(x^{i+1}) ] - \mathbb{E}[ f(x^i) ] \;&\leq\; -\frac{\alpha_i}{2} \mathbb{E}[ \| \nabla f(x^i) \|^2 ] - \left( \frac{\alpha_i}{2} - \frac{L_F\alpha_i^2}{2} \right) \mathbb{E}[ \| \bar g_{f_1}^{i} \|^2 ] + \tilde\omega\alpha_i^2\nonumber\\
			&\quad+  \alpha_iL_{F_{yz}}^2( \mathbb{E}[ \| y(x^i) - y^{i+1} \|^2 ] + \mathbb{E}[ \| z(x^i) - z^{i+1} \|^2 ] ),
		\end{alignat}
		where~$\tilde{\omega}$ is given by~\eqref{eq:1020} in Appendix~\ref{app:lem:TSG_UL_descent}.
	\end{lemma}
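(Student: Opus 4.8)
The plan is to derive a standard stochastic descent inequality for the reduced upper-level objective $f$, treating the inexactness of $(y^{i+1},z^{i+1})$ and the stochastic bias of the trilevel adjoint gradient as controlled perturbations of an otherwise exact SGD step. First I would invoke the $L_F$-smoothness of $f$ (the Lipschitz continuity of $\nabla f$ with constant $L_F$ derived in Appendix~\ref{app:lipschitz_properties}) together with the update $x^{i+1}=x^i-\alpha_i\tilde g_{f_1}^i$ to obtain the quadratic upper bound
\[
f(x^{i+1}) \;\leq\; f(x^i) - \alpha_i\,\nabla f(x^i)^\top \tilde g_{f_1}^i + \frac{L_F\alpha_i^2}{2}\,\|\tilde g_{f_1}^i\|^2 .
\]
Taking $\mathbb{E}[\,\cdot\mid\mathcal{F}_i]$ --- under which $x^i$, $\nabla f(x^i)$, $y^{i+1}$, and $z^{i+1}$ are all measurable, so that the only residual randomness is $\xi^i$ --- replaces $\tilde g_{f_1}^i$ in the linear term by $\bar g_{f_1}^i$, and the variance decomposition $\mathbb{E}[\|\tilde g_{f_1}^i\|^2\mid\mathcal{F}_i] = \|\bar g_{f_1}^i\|^2 + \mathbb{E}[\|\tilde g_{f_1}^i - \bar g_{f_1}^i\|^2\mid\mathcal{F}_i]$ splits the quadratic term. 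The bounded-variance Assumption~\ref{as:TSG_unbiased_estimators}, propagated through the adjoint formula via the UL variance bounds compiled in Table~\ref{tab:reference_table_of_constants}, caps the conditional variance by a constant, so its $\tfrac{L_F\alpha_i^2}{2}$-weighted contribution is absorbed into the additive $\tilde\omega\alpha_i^2$ term, while the surviving $\tfrac{L_F\alpha_i^2}{2}\|\bar g_{f_1}^i\|^2$ combines with the polarization term below to yield the stated $\|\bar g_{f_1}^i\|^2$ coefficient.

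Next I would handle the inner product through the polarization identity
\[
-\nabla f(x^i)^\top \bar g_{f_1}^i \;=\; -\tfrac{1}{2}\|\nabla f(x^i)\|^2 - \tfrac{1}{2}\|\bar g_{f_1}^i\|^2 + \tfrac{1}{2}\|\nabla f(x^i) - \bar g_{f_1}^i\|^2 ,
\]
which produces the leading $-\tfrac{\alpha_i}{2}\|\nabla f(x^i)\|^2$ and the negative $-\tfrac{\alpha_i}{2}\|\bar g_{f_1}^i\|^2$ contributions, leaving the bias term $\tfrac{\alpha_i}{2}\|\nabla f(x^i) - \bar g_{f_1}^i\|^2$. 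The crux is to split this bias as
\[
\nabla f(x^i) - \bar g_{f_1}^i \;=\; \big[\nabla f(x^i) - \nabla f(x^i,y^{i+1},z^{i+1})\big] + \big[\nabla f(x^i,y^{i+1},z^{i+1}) - \bar g_{f_1}^i\big],
\]
where $\nabla f(x^i,y^{i+1},z^{i+1})$ denotes the deterministic adjoint~\eqref{adjoint} evaluated at the inexact iterates. Using $\|a+b\|^2\leq 2\|a\|^2+2\|b\|^2$, the first bracket is controlled by Lipschitz continuity of the trilevel adjoint gradient in $(y,z)$ (constant $L_{F_{yz}}$) and, since $y(x^i)$ and $z(x^i)$ are the exact minimizers, yields the stated $\alpha_i L_{F_{yz}}^2(\mathbb{E}[\|y(x^i)-y^{i+1}\|^2]+\mathbb{E}[\|z(x^i)-z^{i+1}\|^2])$ term. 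The second bracket is the stochastic bias of the adjoint estimator at a fixed evaluation point, which by Assumption~\ref{as:TSG_bounded_var} is of order $\mathcal{O}(\theta)$; its squared, $\tfrac{\alpha_i}{2}$-weighted contribution is $\mathcal{O}(\alpha_i^2)$ and is likewise folded into $\tilde\omega\alpha_i^2$. Taking total expectation by the tower property and collecting the variance and squared-bias constants into $\tilde\omega$ (as defined in~\eqref{eq:1020} of Appendix~\ref{app:lem:TSG_UL_descent}) then gives the claim.

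The main obstacle is controlling the second bracket, i.e., the stochastic bias of the inexact adjoint gradient. Because the trilevel adjoint~\eqref{adjoint} multiplies the inverted Hessians $\nabla_{zz}^2 f_3^{-1}$ and $\nabla_{yy}^2\Bar{f}^{-1}$ against stochastic gradients, Hessians, and third-order tensors, the conditional expectation of the product is not the product of deterministic factors --- the inverse of a mean differs from the mean of an inverse. Quantifying this bias requires combining Assumption~\ref{as:TSG_bounded_var} (the $\mathcal{O}(\theta)$ bias of each inverted stochastic Hessian) with the uniform norm bounds of Assumptions~\ref{as:strong_conv_f3_z}--\ref{as:bound_on_hess_inv_true_MLP} and the Lipschitz bounds, so that every remaining factor is bounded in expectation and the overall bias stays $\mathcal{O}(\theta)$. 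Establishing the composite Lipschitz constant $L_{F_{yz}}$ for the adjoint formula in $(y,z)$ is similarly delicate, since $\nabla f$ is a product of many Hessian-inverse and tensor terms; however, this property is pre-established in Appendix~\ref{app:lipschitz_properties} and may be invoked directly.
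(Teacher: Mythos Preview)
Your proposal is correct and follows essentially the same approach as the paper: the $L_F$-smoothness descent lemma, conditioning on $\mathcal{F}_i$, the polarization identity for the inner product, the variance decomposition $\mathbb{E}[\|\tilde g_{f_1}^i\|^2\mid\mathcal{F}_i]=\|\bar g_{f_1}^i\|^2+\mathbb{E}[\|\tilde g_{f_1}^i-\bar g_{f_1}^i\|^2\mid\mathcal{F}_i]$, and the same add-and-subtract of $\nabla f(x^i,y^{i+1},z^{i+1})$ to split the bias into an $L_{F_{yz}}$-Lipschitz piece and an $\mathcal{O}(\theta_i)$ stochastic-bias piece. The paper packages the variance bound $\tau$ and the bias bound $\omega\theta_i$ in Lemma~\ref{lem:TSG_new_inexact_variance_bound} (rather than invoking Assumption~\ref{as:TSG_bounded_var} directly), and uses $\theta_i^2\leq\alpha_i$ to collect both contributions into $\tilde\omega=\omega^2+\tfrac{\tau L_F}{2}$, exactly as you outline.
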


	\begin{lemma}[Error bounds of the trilevel LL problem]\label{lem:TSG_LL_descent}
		Suppose that Assumptions~\ref{as:tri_lip_cont}--\ref{as:TSG_bounded_var} hold. Then, choosing the~LL step-size~$\gamma_i$ such that~$\gamma_i\leq \frac{1}{\mu_z + L_{\nabla f_3}}$, there exists the positive constant~$\rho_{f_3}$, given by~\eqref{eq:1060} in Appendix~\ref{app:lem:TSG_LL_descent}, and 
		a positive quantity $\kappa_i$, such that
		\begin{alignat}{3}
			&\mathbb{E}[ \| z^{i,j+1} - z(x^i)\|^2 ] \;&&\leq\; \left( 1 - \gamma_i \rho_{f_3} \right)^K\mathbb{E}[ \| z^{i,j} - z(x^i) \|^2 ] + K\gamma_i^2\sigma_{\nabla f_3}^2,\label{eq:TSG_LLP_bound_1}\\
			&\mathbb{E}[\| z^{i+1} - z(x^i)\|^2] \;&&\leq\; \left( 1 - \gamma_i \rho_{f_3} \right)^{JK}\mathbb{E}[\| z^{i} - z(x^i) \|^2] + J K\gamma_i^2\sigma_{\nabla f_3}^2,\label{eq:TSG_LLP_bound_2}\\
			&\mathbb{E}[\| z^{i+1}-z(x^{i+1})\|^2] \;&&\leq\; \left( 1 + 2\kappa_i + \frac{L_{\nabla z}\alpha_i^2\zeta}{2} \right) \mathbb{E}[ \| z^{i+1} - z(x^i) \|^2]\nonumber\\
			&&&\quad+ \left( 2L_z^2 + \frac{L_z^2}{2\kappa_i} + \frac{L_{\nabla z}}{2} \right)\alpha_i^2 \mathbb{E}[\| \bar g_{f_1}^i \|^2] + \left( 2L_z^2 + \frac{ L_{\nabla z}}{2} \right)\tau\alpha_i^2.\label{eq:TSG_LLP_bound_3}
		\end{alignat}
	\end{lemma}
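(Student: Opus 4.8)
The plan is to base all three estimates on a single stochastic contraction inequality for the LL recursion $z^{i,j,k+1}=z^{i,j,k}-\gamma_i g_{f_3}^{i,j,k}$. Fixing $(x^i,y^{i,j})$, let $z^\star := z(x^i,y^{i,j})$ be the corresponding LL minimizer, so that $\nabla_z f_3(x^i,y^{i,j},z^\star)=0$. Because $f_3$ is $\mu_z$-strongly convex in $z$ (Assumption~\ref{as:strong_conv_f3_z}) and $\nabla_z f_3$ is $L_{\nabla f_3}$-Lipschitz (Assumption~\ref{as:tri_lip_cont}), the gradient is co-coercive, i.e. $\langle\nabla_z f_3(z^{i,j,k}),z^{i,j,k}-z^\star\rangle \ge \tfrac{\mu_z L_{\nabla f_3}}{\mu_z+L_{\nabla f_3}}\|z^{i,j,k}-z^\star\|^2 + \tfrac{1}{\mu_z+L_{\nabla f_3}}\|\nabla_z f_3(z^{i,j,k})\|^2$. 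First I would expand $\|z^{i,j,k+1}-z^\star\|^2$, take $\mathbb{E}[\,\cdot\mid\mathcal F_{i,j,k}]$, and use unbiasedness with the variance bound $\sigma_{\nabla f_3}^2$ (Assumption~\ref{as:TSG_unbiased_estimators}) to write $\mathbb{E}[\|g_{f_3}^{i,j,k}\|^2\mid\mathcal F_{i,j,k}]\le\|\nabla_z f_3\|^2+\sigma_{\nabla f_3}^2$. The step-size restriction $\gamma_i\le 1/(\mu_z+L_{\nabla f_3})$ then lets the $\|\nabla_z f_3\|^2$ contributions cancel, leaving the per-step bound $\mathbb{E}[\|z^{i,j,k+1}-z^\star\|^2\mid\mathcal F_{i,j,k}]\le(1-\gamma_i\rho_{f_3})\|z^{i,j,k}-z^\star\|^2+\gamma_i^2\sigma_{\nabla f_3}^2$ with $\rho_{f_3}=2\mu_z L_{\nabla f_3}/(\mu_z+L_{\nabla f_3})$.

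Second, unrolling this per-step recursion over the $K$ inner iterations of a single ML sub-cycle and passing to the total expectation gives the contraction factor $(1-\gamma_i\rho_{f_3})^K$ and accumulated noise $\sigma_{\nabla f_3}^2\gamma_i^2\sum_{\ell=0}^{K-1}(1-\gamma_i\rho_{f_3})^\ell$, which I bound by $K\gamma_i^2\sigma_{\nabla f_3}^2$ using $(1-\gamma_i\rho_{f_3})^\ell\le 1$; this is \eqref{eq:TSG_LLP_bound_1}. Iterating the single-cycle estimate across the $J$ sub-cycles of the ML loop (whose LL iterates warm-start one another) compounds the factor to $(1-\gamma_i\rho_{f_3})^{JK}$ and, since $\sum_{j=0}^{J-1}(1-\gamma_i\rho_{f_3})^{Kj}\le J$, the additive noise becomes $JK\gamma_i^2\sigma_{\nabla f_3}^2$, yielding \eqref{eq:TSG_LLP_bound_2}.

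Third, \eqref{eq:TSG_LLP_bound_3} compares the LL error against the two consecutive true solutions $z(x^i)$ and $z(x^{i+1})$, which differ only through the UL step $x^{i+1}-x^i=-\alpha_i\tilde g_{f_1}^i$. Writing $z^{i+1}-z(x^{i+1})=\bigl(z^{i+1}-z(x^i)\bigr)-\bigl(z(x^{i+1})-z(x^i)\bigr)$ and expanding the square, I would split $z(x^{i+1})-z(x^i)$ into a first-order contribution controlled by the Lipschitz constant $L_z$ of $z(\cdot)$ and a second-order remainder bounded by $\tfrac{L_{\nabla z}}{2}\alpha_i^2\|\tilde g_{f_1}^i\|^2$ via Lipschitz continuity of $\nabla z$. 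Applying Young's inequality with the free parameter $\kappa_i$ to the cross terms, taking $\mathbb{E}[\,\cdot\mid\mathcal F_i]$, and using $\mathbb{E}[\tilde g_{f_1}^i\mid\mathcal F_i]=\bar g_{f_1}^i$ together with the size bound $\zeta$ and variance bound $\tau$ on the inexact UL direction, the three stated coefficients multiplying $\mathbb{E}[\|z^{i+1}-z(x^i)\|^2]$, $\mathbb{E}[\|\bar g_{f_1}^i\|^2]$, and the constant term fall out after collecting like terms.

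The main obstacle is the moving fixed point. The LL recursion at sub-cycle $(i,j)$ contracts toward $z(x^i,y^{i,j})$, which drifts with $y^{i,j}$, whereas \eqref{eq:TSG_LLP_bound_1}--\eqref{eq:TSG_LLP_bound_2} are phrased against the fixed target $z(x^i)$; reconciling the two without destroying the clean geometric form requires controlling the displacement $\|z(x^i,y^{i,j})-z(x^i)\|$ through the Lipschitz-in-$y$ property of $z$ and coupling it to the ML error, so that the drift is absorbed rather than left as an extra additive term. A secondary difficulty is careful bookkeeping of the nested $\sigma$-algebras $\mathcal F_{i,j,k}\subset\mathcal F_{i,j}\subset\mathcal F_i$, so that unbiasedness and the variance and size bounds are invoked at exactly the conditioning level at which the relevant iterate is measurable before the total expectation is taken.
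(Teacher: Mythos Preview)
Your plan for \eqref{eq:TSG_LLP_bound_3} matches the paper's argument closely: decompose $\|z^{i+1}-z(x^{i+1})\|^2$ through the intermediate point $z(x^i)$, control the shift $z(x^i)-z(x^{i+1})$ by $L_z$ and its second-order remainder by $L_{\nabla z}$, and handle the cross terms with Young's inequality parametrized by $\kappa_i$, invoking the size bound $\zeta$ (Lemma~\ref{lem:TSG_size_inexact_g}) and the variance bound $\tau$ (Lemma~\ref{lem:TSG_new_inexact_variance_bound}) on the UL direction.

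For \eqref{eq:TSG_LLP_bound_1}--\eqref{eq:TSG_LLP_bound_2}, however, your route diverges from the paper's in a way that matters. You anchor the per-step contraction at the moving target $z^\star=z(x^i,y^{i,j})$, where $\nabla_z f_3(x^i,y^{i,j},\cdot)$ genuinely vanishes, and you correctly flag that this is not the stated target $z(x^i)=z(x^i,y(x^i))$. But your proposed fix---controlling the displacement $\|z(x^i,y^{i,j})-z(x^i)\|$ via the Lipschitz-in-$y$ property and ``absorbing'' it---would not give the bounds as written: it would inject terms of order $L_{z_y}\|y^{i,j}-y(x^i)\|$ into the right-hand sides, and nothing in this lemma alone removes them. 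Those ML-error couplings are precisely what Lemma~\ref{lem:TSG_aux_LL_descent} is for; they are not part of \eqref{eq:TSG_LLP_bound_1}--\eqref{eq:TSG_LLP_bound_2}.

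The paper sidesteps the moving-target issue entirely by applying the Nesterov co-coercivity inequality \emph{directly with the fixed anchor $z(x^i)$}, writing
\[
(z^{i,j,k}-z(x^i))^\top\nabla_z f_3^{i,j,k}\;\ge\;\tfrac{\mu_z L_{\nabla f_3}}{\mu_z+L_{\nabla f_3}}\|z^{i,j,k}-z(x^i)\|^2+\tfrac{1}{\mu_z+L_{\nabla f_3}}\|\nabla_z f_3^{i,j,k}\|^2,
\]
without ever passing through $z(x^i,y^{i,j})$. The resulting per-step bound $(1-\gamma_i\rho_{f_3})\|z^{i,j,k}-z(x^i)\|^2+\gamma_i^2\sigma_{\nabla f_3}^2$ carries no explicit dependence on $y^{i,j}$, and the paper exploits exactly this---``since this upper-bound is not dependent on $y$''---to iterate across all $J$ ML sub-cycles as well as the $K$ inner steps, obtaining \eqref{eq:TSG_LLP_bound_2} directly. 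Your moving-target decomposition is the more natural way to think about the LL contraction, but it is not the device the paper uses, and it will not reproduce the lemma in its stated clean form.
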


	\begin{lemma}[Auxiliary error bounds of the trilevel LL problem]\label{lem:TSG_aux_LL_descent}
		Suppose that Assumptions~\ref{as:tri_lip_cont}--\ref{as:TSG_bounded_var} hold. Then, choosing the LL step-size $\gamma_i$ such that $\gamma_i\leq \frac{1}{\mu_z + L_{\nabla f_3}}$, there exist positive quantities~$\eta_i$ and~$\hat{\eta}_i$ such that
		\begin{alignat}{3}
			&\mathbb{E}[ \| z^{i,j+1} - z(x^i,y^{i,j+1}) \|^2 ] \;&&\leq\; (\left( 1 - \gamma_i \rho_{f_3} \right)^K + \eta_i) \mathbb{E}[\| z^{i,j} - z(x^i,y^{i,j})\|^2] + \hat{\eta_i}L_{z_y}^2\Upsilon\beta_i^2 + K\gamma_i^2\sigma_{\nabla f_3}^2,\label{eq:TSG_aux_LL_bound_1}\\
			&\mathbb{E}[ \| z^{i,j+1} - z(x^i,y^{i}) \|^2 ] \;&&\leq\; \left( 1 - \gamma_i \rho_{f_3} \right)^{K}\mathbb{E}[\| z^{i} - z(x^i,y^i) \|^2] + K\gamma_i^2\sigma_{\nabla f_3}^2,\label{eq:TSG_aux_LL_bound_2}\\
			&\mathbb{E}[ \| z^{i+1} - z(x^i,y^{i}) \|^2 ] \;&&\leq\; \left( 1 - \gamma_i \rho_{f_3} \right)^{J K}\mathbb{E}[\| z^{i} - z(x^i,y^i) \|^2] + J K\gamma_i^2\sigma_{\nabla f_3}^2,\label{eq:TSG_aux_LL_bound_3}\\
			&\mathbb{E}[ \| z^{i+1} - z( x^{i+1}, y^{i+1} ) \|^2 ]
			\;&&\leq\; 2\mathbb{E}[ \| z^{i+1} - z(x^i,y^{i}) \|^2 ] + 4L_{z_{xy}}^2 \alpha_i^2 ( \mathbb{E}[\|\bar g_{f_1}^{i}\|^2] + \tau) + 2J^2\Upsilon L_{z_{xy}}^2\beta_i^2,\label{eq:TSG_aux_LL_bound_4}\\
			&\mathbb{E}[ \| z^{i,j+1} - z(x^i,y^{i,j}) \|^2 ] \;&&\leq\; \left( 1 - \gamma_i \rho_{f_3} \right)^{K}\mathbb{E}[\| z^{i,j} - z(x^i,y^{i,j}) \|^2] + K\gamma_i^2\sigma_{\nabla f_3}^2.\label{eq:TSG_aux_LL_bound_5}
		\end{alignat}
	\end{lemma}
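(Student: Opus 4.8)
The plan is to derive all five estimates from a single building block: the one-step contraction inequality for unconstrained stochastic gradient descent applied to the block objective $f_3(x^i,y^{i,j},\cdot)$, which is $\mu_z$-strongly convex (Assumption~\ref{as:strong_conv_f3_z}) with $L_{\nabla f_3}$-Lipschitz gradient (Assumption~\ref{as:tri_lip_cont}). Conditioning on the current LL iterate and invoking unbiasedness and bounded variance of the LL stochastic gradient (Assumption~\ref{as:TSG_unbiased_estimators}), one gets, under the step-size restriction $\gamma_i\leq 1/(\mu_z+L_{\nabla f_3})$ already imposed in Lemma~\ref{lem:TSG_LL_descent}, the per-step bound $\mathbb{E}[\|z^{i,j,k+1}-z^\star\|^2\mid\cdot]\leq(1-\gamma_i\rho_{f_3})\|z^{i,j,k}-z^\star\|^2+\gamma_i^2\sigma_{\nabla f_3}^2$, where $z^\star=z(x^i,y^{i,j})$ is the \emph{minimizer of the block objective} and $\rho_{f_3}$ is the constant from \eqref{eq:TSG_LLP_bound_1}. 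This is the same one-step inequality that underlies Lemma~\ref{lem:TSG_LL_descent}; since $(x^i,y^{i,j})$ is frozen throughout the inner loop, unrolling over the $K$ inner iterations and crudely bounding the geometric variance sum by $K\gamma_i^2\sigma_{\nabla f_3}^2$ yields \eqref{eq:TSG_aux_LL_bound_5} immediately. Estimate \eqref{eq:TSG_aux_LL_bound_2} is this contraction specialized to the first inner block, where the block minimizer coincides with $z(x^i,y^i)$, and \eqref{eq:TSG_aux_LL_bound_3} follows by composing the single-block contraction across the inner blocks of a full UL step, exactly as \eqref{eq:TSG_LLP_bound_2} is obtained from \eqref{eq:TSG_LLP_bound_1}, compounding the factor to $(1-\gamma_i\rho_{f_3})^{JK}$ and the variances to $JK\gamma_i^2\sigma_{\nabla f_3}^2$.

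The two genuinely new estimates, \eqref{eq:TSG_aux_LL_bound_1} and \eqref{eq:TSG_aux_LL_bound_4}, come from switching the reference minimizer and paying a controlled drift. For \eqref{eq:TSG_aux_LL_bound_1} I would start from \eqref{eq:TSG_aux_LL_bound_5} and insert $z(x^i,y^{i,j+1})$ via Young's inequality $\|a-c\|^2\le(1+\epsilon)\|a-b\|^2+(1+\epsilon^{-1})\|b-c\|^2$ with $a=z^{i,j+1}$, $b=z(x^i,y^{i,j})$, $c=z(x^i,y^{i,j+1})$. The cross term is handled by Lipschitz continuity of $z(x,\cdot)$ in $y$, i.e.\ $\|z(x^i,y^{i,j})-z(x^i,y^{i,j+1})\|^2\le L_{z_y}^2\|y^{i,j}-y^{i,j+1}\|^2=L_{z_y}^2\beta_i^2\|\tilde g_{f_2}^{i,j}\|^2$, and taking total expectations with the uniform second-moment bound $\mathbb{E}[\|\tilde g_{f_2}^{i,j}\|^2]\le\Upsilon$ produces the additive term $\hat\eta_i L_{z_y}^2\Upsilon\beta_i^2$ and the inflated contraction $(1-\gamma_i\rho_{f_3})^K+\eta_i$, with $\eta_i$ and $\hat\eta_i$ being precisely the Young weights $\epsilon(1-\gamma_i\rho_{f_3})^K$ and $1+\epsilon^{-1}$.

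For \eqref{eq:TSG_aux_LL_bound_4} I would apply Young's inequality with $\epsilon=1$ (hence the leading factor $2$) to pass from $z(x^i,y^i)$ to $z(x^{i+1},y^{i+1})$, and then bound the drift $\|z(x^i,y^i)-z(x^{i+1},y^{i+1})\|^2$ by the joint Lipschitz constant $L_{z_{xy}}$ of $z(\cdot,\cdot)$, splitting it into an $x$-displacement $\|x^{i+1}-x^i\|^2=\alpha_i^2\|\tilde g_{f_1}^{i}\|^2$ and a $y$-displacement $\|y^{i+1}-y^i\|^2=\beta_i^2\|\sum_{j=0}^{J-1}\tilde g_{f_2}^{i,j}\|^2\le J\beta_i^2\sum_{j}\|\tilde g_{f_2}^{i,j}\|^2$. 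Taking expectations and using the variance decomposition $\mathbb{E}[\|\tilde g_{f_1}^{i}\|^2]\le\mathbb{E}[\|\bar g_{f_1}^{i}\|^2]+\tau$ together with $\mathbb{E}[\|\tilde g_{f_2}^{i,j}\|^2]\le\Upsilon$ recovers the $\alpha_i^2(\mathbb{E}[\|\bar g_{f_1}^{i}\|^2]+\tau)$ and $J^2\Upsilon\beta_i^2$ contributions with the stated $L_{z_{xy}}^2$ prefactors.

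I expect the main obstacle to be the probabilistic bookkeeping rather than any single inequality. In \eqref{eq:TSG_aux_LL_bound_1} the target $z(x^i,y^{i,j+1})$ depends on $y^{i,j+1}=y^{i,j}-\beta_i\tilde g_{f_2}^{i,j}$, and $\tilde g_{f_2}^{i,j}$ is built from the very LL randomness that produced $z^{i,j+1}$; the conditional contraction and the drift term therefore live on different $\sigma$-algebras and must be combined through the total expectation and the \emph{uniform} moment bound $\Upsilon$, not a conditional one. The second delicate point is selecting the free parameters $\epsilon$ (equivalently $\eta_i,\hat\eta_i$) so that the inflated factor $(1-\gamma_i\rho_{f_3})^K+\eta_i$ stays strictly contractive while the drift prefactor $\hat\eta_i$ remains bounded, in a way compatible with the telescoping of the Lyapunov function \eqref{eq:lyapunov} in the proof of Theorem~\ref{th:TSG_convergence}; this is where the adaptation of the bilevel analysis of~\cite{TChen_YSun_WYin_2021_closingGap} to the extra level is most demanding. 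Finally, \eqref{eq:TSG_aux_LL_bound_4} presupposes the joint Lipschitz property of $z(\cdot,\cdot)$ with constant $L_{z_{xy}}$, which must first be established from the implicit-function and strong-convexity machinery.
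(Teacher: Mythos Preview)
Your plan is correct and tracks the paper's argument closely: the per-step SGD contraction (via Nesterov's co-coercivity) is the single building block, and \eqref{eq:TSG_aux_LL_bound_2}, \eqref{eq:TSG_aux_LL_bound_3}, \eqref{eq:TSG_aux_LL_bound_5} all follow by unrolling exactly as you describe. Your derivation of \eqref{eq:TSG_aux_LL_bound_4} is identical to the paper's (same $\|a+b\|^2\le 2\|a\|^2+2\|b\|^2$ split around $z(x^i,y^i)$, same Lipschitz bound with $L_{z_{xy}}$, same moment bounds $\tau$ and $\Upsilon$).

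The one substantive difference is \eqref{eq:TSG_aux_LL_bound_1}. You first establish \eqref{eq:TSG_aux_LL_bound_5} and then apply Young's inequality on the \emph{left-hand side} to swap the target from $z(x^i,y^{i,j})$ to $z(x^i,y^{i,j+1})$. The paper proceeds in the opposite order: it first reruns the contraction argument of Lemma~\ref{lem:TSG_LL_descent} with $z(x^i,y^{i,j+1})$ as the anchor, obtaining $\mathbb{E}[\|z^{i,j+1}-z(x^i,y^{i,j+1})\|^2]\le(1-\gamma_i\rho_{f_3})^K\|z^{i,j}-z(x^i,y^{i,j+1})\|^2+K\gamma_i^2\sigma_{\nabla f_3}^2$, and only then inserts $z(x^i,y^{i,j})$ on the \emph{right} to split $\|z^{i,j}-z(x^i,y^{i,j+1})\|^2$ via Young with parameter $\eta_i$, yielding $\hat\eta_i=1+1/\eta_i$. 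The paper's order keeps the variance term at exactly $K\gamma_i^2\sigma_{\nabla f_3}^2$, whereas your route multiplies it by $(1+\epsilon)$, so your constants do not reproduce the stated bound verbatim. Conversely, your route has the conceptual advantage that the contraction step is always towards the genuine block minimizer $z(x^i,y^{i,j})$ of $f_3(x^i,y^{i,j},\cdot)$; the paper's first step anchors at $z(x^i,y^{i,j+1})$, which is \emph{not} the minimizer of the objective the inner SGD is descending, so the appeal to Nesterov's inequality there needs additional justification that the paper leaves implicit.
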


	\begin{lemma}[Error bounds of the trilevel ML problem]\label{lem:TSG_ML_descent}
		Suppose that Assumptions~\ref{as:tri_lip_cont}--\ref{as:TSG_bounded_var} hold. Then, choosing the ML step-size~$\beta_i$ such that~$\beta_i \leq \frac{1}{\mu_y + L_{\nabla \bar f}}$ and~$\beta_i \leq \frac{\rho}{2\hat\omega^2+1}$ as well as choosing the LL step-size~$\gamma_i$ such that $\gamma_i\leq\frac{1}{\mu_z + L_{\nabla f_3}}$, there are positive quantities~$\psi_i$ and~$\phi_i$ such that
		\begin{alignat}{3}
			&\mathbb{E}[ \| y^{i+1} - y(x^i)\|^2 ] &&\;\leq\; \left( 1 - \psi_i\beta_i \right)^J \mathbb{E}[ \| y^{i} - y(x^i) \|^2 ]  + \left( 1 + \frac{1}{2}(J-1)\hat{\eta_i} L_{z_y}^2 \right)J\Upsilon \beta_i^2 \nonumber\\
			&&&\quad + \left( 1 - \gamma_i \rho_{f_3} \right)^K J\mathbb{E}[ \| z^{i} - z(x^i,y^{i}) \|^2 ] + \frac{J+1}{2} JK\gamma_i^2\sigma_{\nabla f_3}^2,\label{eq:TSG_ML_bound_1}\\
			&\mathbb{E}[\| y^{i+1}-y(x^{i+1})\|^2] && \;\leq\; \left( 1 + 2\phi_i + \frac{L_{\nabla y}\alpha_i^2 \zeta}{2}\right)\mathbb{E}[ \| y^{i+1} - y(x^i) \|^2 ]\nonumber \\
			&&&\quad+ \left( 2L_y^2 + \frac{ L_y^2}{2 \phi_i} + \frac{L_{\nabla y}}{2} \right)\alpha_i^2\mathbb{E}[ \| \bar g_{f_1}^{i} \|^2 ] + \left( 2L_y^2 + \frac{ L_{\nabla y}}{2} \right)\tau\alpha_i^2,\label{eq:TSG_ML_bound_2}
		\end{alignat}
		where $\rho$ is given by~\eqref{eq:2020} in Appendix~\ref{app:lem:TSG_ML_descent}.
		Specifically, $\psi_i$ is a function of~$\theta_i$ given by~\eqref{eq:2020} in Appendix~\ref{app:lem:TSG_ML_descent}.
	\end{lemma}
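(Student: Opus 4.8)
The plan is to establish \eqref{eq:TSG_ML_bound_1} as a strongly convex contraction for the inner ML iteration, unrolled over $j=0,\dots,J-1$, and \eqref{eq:TSG_ML_bound_2} as a drift estimate for the optimal map $y(\cdot)$, in direct analogy with Lemma~\ref{lem:TSG_LL_descent}. For the first bound I would expand $\|y^{i,j+1}-y(x^i)\|^2$ after the update $y^{i,j+1}=y^{i,j}-\beta_i\tilde g_{f_2}^{i,j}$, take $\mathbb{E}[\cdot\mid\mathcal{F}_{i,j}]$, and split the stochastic direction as $\tilde g_{f_2}^{i,j}=\bar g_{f_2}^{i,j}+(\tilde g_{f_2}^{i,j}-\bar g_{f_2}^{i,j})$ with $\bar g_{f_2}^{i,j}=\mathbb{E}[\tilde g_{f_2}^{i,j}\mid\mathcal{F}_{i,j}]$, so that the mean-zero noise drops out of the linear cross term. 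Using $\mu_y$-strong convexity of $\bar f$ (Assumption~\ref{as:strong_conv_fbar_y}) together with the Lipschitz gradient to handle the exact descent part $\langle\nabla_y\bar f(x^i,y^{i,j}),\,y^{i,j}-y(x^i)\rangle$, the step-size restriction $\beta_i\le 1/(\mu_y+L_{\nabla\bar f})$ produces a contraction factor $1-\psi_i\beta_i$.

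The remaining first-order piece is the inexactness bias $\bar g_{f_2}^{i,j}-\nabla_y\bar f(x^i,y^{i,j})$, which by the adjoint form \eqref{eq:adjoint_ML} and Assumptions~\ref{as:bound_on_hess_inv_true_MLP}--\ref{as:TSG_bounded_var} is controlled by the LL error $\|z^{i,j+1}-z(x^i,y^{i,j})\|$ up to an $\mathcal{O}(\theta)$ Hessian-inverse bias, while the second-order term $\beta_i^2\mathbb{E}[\|\tilde g_{f_2}^{i,j}\|^2\mid\mathcal{F}_{i,j}]$ is bounded through the ML variance estimate supplying $\hat\omega$ and $\Upsilon$. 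The extra restriction $\beta_i\le\rho/(2\hat\omega^2+1)$ is precisely what lets the $\hat\omega^2\beta_i^2$ contribution be absorbed while keeping $\psi_i>0$; since the absorbed bias contributions scale with $\theta$, the resulting rate $\psi_i$ comes out as a function of $\theta_i$, matching the statement. Collecting terms gives a one-step recursion of the form $\mathbb{E}[\|y^{i,j+1}-y(x^i)\|^2]\le(1-\psi_i\beta_i)\mathbb{E}[\|y^{i,j}-y(x^i)\|^2]+c\,\mathbb{E}[\|z^{i,j+1}-z(x^i,y^{i,j})\|^2]+\Upsilon\beta_i^2$.

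To unroll, I would replace the LL-error term using the auxiliary estimates \eqref{eq:TSG_aux_LL_bound_5} and \eqref{eq:TSG_aux_LL_bound_1}, rewriting everything in terms of $\|z^i-z(x^i,y^i)\|^2$, the per-step bias $\hat\eta_i L_{z_y}^2\Upsilon\beta_i^2$, and the LL variance $K\gamma_i^2\sigma_{\nabla f_3}^2$, and then sum over $j$. The geometric factor $(1-\psi_i\beta_i)^J$ emerges on the initial term, summing the compounding bias contributions of \eqref{eq:TSG_aux_LL_bound_1} yields the coefficient $1+\tfrac12(J-1)\hat\eta_i L_{z_y}^2$ on $J\Upsilon\beta_i^2$, and the initial LL error and its variance accumulate into $(1-\gamma_i\rho_{f_3})^K J\,\mathbb{E}[\|z^i-z(x^i,y^i)\|^2]$ and $\tfrac{J+1}{2}JK\gamma_i^2\sigma_{\nabla f_3}^2$, respectively. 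For \eqref{eq:TSG_ML_bound_2} the argument copies the proof of \eqref{eq:TSG_LLP_bound_3}: split $y^{i+1}-y(x^{i+1})=(y^{i+1}-y(x^i))+(y(x^i)-y(x^{i+1}))$, apply Young's inequality with parameter $\phi_i$ to obtain the factor $1+2\phi_i$, bound the increment $\|y(x^i)-y(x^{i+1})\|$ by Lipschitz continuity of $y(\cdot)$ and of $\nabla y$ (constants $L_y$, $L_{\nabla y}$) with $\|x^{i+1}-x^i\|=\alpha_i\|\tilde g_{f_1}^i\|$, and finally replace $\mathbb{E}[\|\tilde g_{f_1}^i\|^2]$ by $\mathbb{E}[\|\bar g_{f_1}^i\|^2]+\tau$.

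I expect the main obstacle to be the bookkeeping in the unrolling of \eqref{eq:TSG_ML_bound_1}: because the auxiliary LL bound \eqref{eq:TSG_aux_LL_bound_1} itself grows by $\hat\eta_i L_{z_y}^2\Upsilon\beta_i^2$ at each ML step, this inexactness compounds with the ML contraction, so extracting the exact coefficients $\tfrac12(J-1)\hat\eta_i L_{z_y}^2$ and $\tfrac{J+1}{2}$ requires carefully tracking two interleaved geometric sums while confirming that the imposed step-size conditions keep every contraction factor in $(0,1)$.
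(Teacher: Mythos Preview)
Your proposal is correct and follows essentially the same route as the paper. In particular, for \eqref{eq:TSG_ML_bound_1} the paper derives exactly the one-step recursion you describe (using Nesterov's strongly-convex inequality, splitting the bias via $\nabla_y\bar f(x^i,y^{i,j},z^{i,j+1})$ and $\nabla_y\bar f(x^i,y^{i,j})$, and absorbing the $\hat\omega$ term into $\psi_i=\rho-2\hat\omega^2\theta_i^2-\beta_i$), then unrolls using \eqref{eq:TSG_aux_LL_bound_5} and bounds $\sum_{j}\mathbb{E}[\|z^{i,j}-z(x^i,y^{i,j})\|^2]$ via iterated applications of \eqref{eq:TSG_aux_LL_bound_1}; the bookkeeping you anticipate is handled by summing the resulting geometric series in $\vartheta_i=(1-\gamma_i\rho_{f_3})^K+\eta_i$ and bounding it by $J-1$ via L'H\^opital, which produces the $\tfrac12(J-1)\hat\eta_iL_{z_y}^2$ and $\tfrac{J+1}{2}$ coefficients. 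For \eqref{eq:TSG_ML_bound_2} the paper indeed mirrors the proof of \eqref{eq:TSG_LLP_bound_3}, using the exact expansion with cross term (rather than a direct Young split), then linearizing via $\nabla y(x^i)(x^{i+1}-x^i)$ and applying Young's inequality with parameter $\phi_i$ on the linear part and the $L_{\nabla y}$ Lipschitz bound on the remainder.
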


	\section{Convergence theory proofs}\label{app:conv_theory_proofs}
	
	This appendix contains the proofs of Lemmas~\ref{lem:TSG_UL_descent}--\ref{lem:TSG_ML_descent} (which are utilized to bound the terms in the Lyapunov function given by~\eqref{eq:lyapunov_diff}) as well as the proof of Theorem~\ref{th:TSG_convergence}.

	\subsection{Proof of Lemma~\ref{lem:TSG_UL_descent}}\label{app:lem:TSG_UL_descent}
	\begin{proof}
		From the Lipschitz property of $\nabla f$ (equation~\eqref{eq:lip_prop_11}), taking expectation conditioned on $\mathcal{F}_i$, and letting $\bar g_{f_1}^{i}=\mathbb{E}[ \tilde g_{f_1}^{i} \vert \mathcal{F}_i]$, we have
		\begin{alignat*}{2}
			\mathbb{E}[ f(x^{i+1}) \vert \mathcal{F}_i] - \mathbb{E}[ f(x^i) \vert \mathcal{F}_i] & \leq \mathbb{E}[ \nabla f(x^i)^\top (x^{i+1} - x^i) \vert \mathcal{F}_i] + \frac{L_F}{2}\mathbb{E}[ \| x^{i+1} - x^i \|^2 \vert \mathcal{F}_i] \\
			& = \mathbb{E}[ \nabla f(x^i)^\top (x^{i} - \alpha_i \tilde g_{f_1}^{i} - x^i) \vert \mathcal{F}_i] + \frac{L_F}{2}\mathbb{E}[ \| x^{i} - \alpha_i \tilde g_{f_1}^{i} - x^i \|^2 \vert \mathcal{F}_i ] \\
			& = -\alpha_i \nabla f(x^i)^\top \bar g_{f_1}^{i} + \frac{L_F}{2}\alpha_i^2\mathbb{E}[ \| \tilde g_{f_1}^{i} \|^2 \vert \mathcal{F}_i ],
		\end{alignat*}
		Using the fact that $2a^\top b = \|a\|^2 + \|b\|^2 - \|a-b\|^2$ twice, with~$a$ and~$b$ real-valued vectors, yields
		\begin{alignat*}{2}
			\mathbb{E}[ f(x^{i+1}) \vert \mathcal{F}_i] - \mathbb{E}[ f(x^i) \vert \mathcal{F}_i] & \leq -\frac{\alpha_i}{2} \| \nabla f(x^i) \|^2 - \frac{\alpha_i}{2} \| \bar g_{f_1}^{i} \|^2 + \frac{\alpha_i}{2} \| \nabla f(x^i) - \bar g_{f_1}^{i} \|^2 + \frac{L_F}{2}\alpha_i^2\mathbb{E}[ \| \tilde g_{f_1}^{i} \|^2 \vert \mathcal{F}_i ]\\
			& = -\frac{\alpha_i}{2} \| \nabla f(x^i) \|^2 - \frac{\alpha_i}{2} \| \bar g_{f_1}^{i} \|^2 + \frac{\alpha_i}{2} \| \nabla f(x^i) - \bar g_{f_1}^{i} \|^2\\
			&\quad+ \frac{L_F\alpha_i^2}{2}\mathbb{E}[ 2 \left( \tilde g_{f_1}^{i} \right)^\top \bar g_{f_1}^{i}  - \| \bar g_{f_1}^{i} \|^2 + \| \tilde g_{f_1}^{i} - \bar g_{f_1}^{i} \|^2 \vert \mathcal{F}_i]\\
			& = -\frac{\alpha_i}{2} \| \nabla f(x^i) \|^2 - \frac{\alpha_i}{2} \| \bar g_{f_1}^{i} \|^2 + \frac{\alpha_i}{2} \| \nabla f(x^i) - \bar g_{f_1}^{i} \|^2\\
			&\quad+  \frac{L_F\alpha_i^2}{2}\mathbb{E}[ \| \bar g_{f_1}^{i} \|^2 \vert \mathcal{F}_i] + \frac{L_F\alpha_i^2}{2}\mathbb{E}[ \| \tilde g_{f_1}^{i} - \bar g_{f_1}^{i} \|^2 \vert \mathcal{F}_i].
		\end{alignat*}
		Utilizing Lemma~\ref{lem:TSG_new_inexact_variance_bound} and realizing that $\mathbb{E}[ \| \bar g_{f_1}^{i} \|^2 \vert \mathcal{F}_i ] = \| \bar g_{f_1}^{i} \|^2$, we have
		\[\mathbb{E}[ f(x^{i+1}) \vert \mathcal{F}_i ] - \mathbb{E}[ f(x^i) \vert \mathcal{F}_i ] \leq -\frac{\alpha_i}{2} \| \nabla f(x^i) \|^2 - \left( \frac{\alpha_i}{2} - \frac{L_F\alpha_i^2}{2} \right) \| \bar g_{f_1}^{i} \|^2 + \frac{\alpha_i}{2} \| \nabla f(x^i) - \bar g_{f_1}^{i} \|^2 + \frac{\tau L_F\alpha_i^2}{2}.\]
		Further, we decompose the gradient bias term by adding and subtracting $\nabla f (x^i,y^{i+1},z^{i+1})$, using the fact that $\|a+b\|^2 \leq 2\|a\|^2 + 2\|b\|^2$, with~$a$ and~$b$ real-valued vectors, yielding
		\begin{alignat*}{2}
			\| \nabla f(x^i) - \bar g_{f_1}^{i} \|^2 
			& \leq 2\| \nabla f(x^i,y(x^i),z(x^i)) - \nabla f (x^i,y^{i+1},z^{i+1}) \|^2 + 2\| \nabla f (x^i,y^{i+1},z^{i+1}) - \bar g_{f_1}^{i} \|^2\\
			& \leq 2L_{F_{yz}}^2 \| ( y(x^i),z(x^i) ) - ( y^{i+1}, z^{i+1} ) \|^2 + 2\omega^2\theta_i^2\\
			& \leq 2L_{F_{yz}}^2( \| y(x^i) - y^{i+1} \|^2 + \| z(x^i) - z^{i+1} \|^2 ) + 2\omega^2\alpha_i,
		\end{alignat*}
		where the second inequality follows from~\eqref{eq:lip_prop_12} and Lemma~\ref{lem:TSG_new_inexact_variance_bound}, and the last inequality follows from the fact that $\theta_i=\alpha_i\beta_i\gamma_i\leq\alpha_i$ and $0<\alpha_i^2\leq\alpha_i\leq1$. Putting this all together, we have
		\begin{alignat}{2}
			&\mathbb{E}[ f(x^{i+1}) \vert \mathcal{F}_i] - \mathbb{E}[ f(x^i) \vert \mathcal{F}_i ]\nonumber\\ 
			& \leq -\frac{\alpha_i}{2} \| \nabla f(x^i) \|^2 - \left( \frac{\alpha_i}{2} - \frac{L_F\alpha_i^2}{2} \right) \| \bar g_{f_1}^{i} \|^2 + \alpha_iL_{F_{yz}}^2( \| y(x^i) - y^{i+1} \|^2 + \| z(x^i) - z^{i+1} \|^2 ) + \tilde\omega\alpha_i^2,\nonumber\\
			&\quad\text{where}\quad \tilde{\omega} \; := \; \left(\omega^2 + \frac{\tau L_F}{2} \right).\label{eq:1020}
		\end{alignat}
		Taking total expectation, we obtain the final bound, completing the proof.
	\end{proof}

	\subsection{Proof of Lemma~\ref{lem:TSG_LL_descent}}\label{app:lem:TSG_LL_descent}
	\begin{proof}
		To derive the error bound defined by~\eqref{eq:TSG_LLP_bound_3}, we start by decomposing the error of the LL variables by adding and subtracting $z(x^i)$ in the following way:
		\begin{alignat}{2}
			\mathbb{E}[\| z^{i+1}-z(x^{i+1})\|^2] &= \underbrace{\mathbb{E}[\| z^{i+1} - z(x^i)\|^2]}_{A^{(1)}_1} +  \underbrace{\mathbb{E}[\|z(x^i) - z(x^{i+1})\|^2]}_{A^{(1)}_2}\nonumber\\
			&\quad+ 2\underbrace{\mathbb{E}[( z^{i+1} - z(x^i) )^\top ( z(x^i) - z(x^{i+1}) )]}_{A^{(1)}_3}.\label{eq:TSG_LLP_error_intermediate_1}
		\end{alignat}
		
		\textbf{$(\text{Analysis of }A^{(1)}_1)$:} To derive an upper-bound on $A^{(1)}_1$ in~\eqref{eq:TSG_LLP_error_intermediate_1}, recall that $z^{i+1} = z^{i+1,0,0} = z^{i,J,K}$ and $g_{f_3}^{i,j,k} = \nabla_z f_3(x^i,y^{i,j},z^{i,j,k};\xi^{i,j,k})$. Further, notice that there will be a total of $J K$ updates to the LL variables starting from $z^i$ to obtain $z^{i+1}$. Thus, in general, taking expectation conditioned on~$\mathcal{F}_{i,j,k}$, we have
		\begin{alignat*}{2}
			\mathbb{E}[ \| z^{i,j,k+1} - z(x^i)\|^2 \vert \mathcal{F}_{i,j,k} ] & \;=\; \mathbb{E}[ \| z^{i,j,k} - \gamma_i g_{f_3}^{i,j,k} - z(x^i)\|^2 \vert \mathcal{F}_{i,j,k} ]\\
			&\;=\; \| z^{i,j,k} - z(x^i) \|^2 - 2\gamma_i( z^{i,j,k} - z(x^i) )^\top \nabla_z f_3^{i,j,k} + \gamma_i^2 \mathbb{E}[ \|g_{f_3}^{i,j,k}\|^2 \vert \mathcal{F}_{i,j,k} ],
		\end{alignat*}
		where the last equality follows from the unbiasedness of the stochastic estimates (Assumption~\ref{as:TSG_unbiased_estimators}). Using the fact that Var$[X|Y] = \mathbb{E}[X^2|Y] - \mathbb{E}[ X |Y]^2$, where~$X$ and~$Y$ are random variables, along with Assumption~\ref{as:TSG_unbiased_estimators}, we have
		\begin{alignat*}{2}
			\mathbb{E}[ \| z^{i,j,k+1} - z(x^i)\|^2 \vert \mathcal{F}_{i,j,k} ] 
			& \;\leq \;\| z^{i,j,k} - z(x^i) \|^2 - 2\gamma_i( z^{i,j,k} - z(x^i) )^\top \nabla_z f_3^{i,j,k} + \gamma_i^2  \|\nabla_z f_3^{i,j,k} \|^2 + \gamma_i^2\sigma_{\nabla f_3}^2.
		\end{alignat*}
		Now, utilizing~\cite[Theorem 2.1.12]{YNesterov_2018},
		which follows from the strong convexity and Lipschitz continuity of $f_3$ (Assumptions~\ref{as:tri_lip_cont} and~\ref{as:strong_conv_f3_z}, respectively), we have
		\begin{alignat*}{2}
			&\mathbb{E}[ \| z^{i,j,k+1} - z(x^i)\|^2 \vert \mathcal{F}_{i,j,k} ] \\
			& \;\leq\; \| z^{i,j,k} - z(x^i) \|^2 - 2\gamma_i\left( \frac{\mu_z L_{\nabla f_3}}{\mu_z + L_{\nabla f_3}}\| z^{i,j,k} - z(x^i) \|^2 + \frac{1}{\mu_z + L_{\nabla f_3}}\|\nabla_z f_3^{i,j,k} \|^2 \right) + \gamma_i^2  \|\nabla_z f_3^{i,j,k} \|^2 + \gamma_i^2\sigma_{\nabla f_3}^2\\
			& \;= \; \left( 1 - \frac{2\gamma_i\mu_z L_{\nabla f_3}}{\mu_z + L_{\nabla f_3}} \right)\| z^{i,j,k} - z(x^i) \|^2 + \gamma_i\left( \gamma_i - \frac{2}{\mu_z + L_{\nabla f_3}} \right)\|\nabla_z f_3^{i,j,k} \|^2 + \gamma_i^2\sigma_{\nabla f_3}^2\\
			& \;\leq\; \left( 1 - \gamma_i \rho_{f_3} \right)\| z^{i,j,k} - z(x^i) \|^2 + \gamma_i^2\sigma_{\nabla f_3}^2,
		\end{alignat*}
		where the last inequality follows from the assumption that~$\gamma_i\leq \frac{1}{\mu_z + L_{\nabla f_3}}$ and by letting 
		\begin{equation}\label{eq:1060}
			\rho_{f_3} \; := \; \frac{2\mu_z L_{\nabla f_3}}{\mu_z + L_{\nabla f_3}}.
		\end{equation}
		Using induction over~$K$ and taking total expectation, we obtain the bound~\eqref{eq:TSG_LLP_bound_1}.
		
		At this point, there would be an update in the ML variables $y$, i.e., \linebreak$(x^i,y^{i,j}, z^{i,j,K})\rightarrow(x^i,y^{i,j+1}, z^{i,j,K})$. However, since this upper-bound is not dependent on $y$, we can use induction over all $J$ iterations (each consisting of $K$ iterations), which yields the bound~\eqref{eq:TSG_LLP_bound_2}.
		These results follow by ensuring that $0\leq 1 - \gamma_i \rho_{f_3}\leq 1$, which is satisfied by the assumption~$\gamma_i\leq \frac{1}{\mu_z + L_{\nabla f_3}}$ and recalling that~$\gamma_i$ and~$\rho_{f_3}$ are positive.
		
		\textbf{$(\text{Analysis of }A^{(1)}_2)$:} Taking expectation conditioned on~$\mathcal{F}_{i}$ and applying~\eqref{eq:lip_prop_1} yields
		\begin{alignat*}{2}
			\mathbb{E}[ \|z(x^i) - z(x^{i+1})\|^2 \vert \mathcal{F}_{i} ]& \;\leq\; L_z^2\mathbb{E}[ \|x^i - x^{i+1}\|^2 \vert \mathcal{F}_{i} ]
			\;=\; L_z^2\alpha_i^2\mathbb{E}[ \| \tilde g_{f_1}^i\|^2 \vert \mathcal{F}_{i} ].
		\end{alignat*}
		Adding and subtracting $\bar g_{f_1}^i = \mathbb{E}[\tilde g_{f_1}^i \vert \mathcal{F}_{i}]$ followed by using the fact that $\|a+b\|^2 \leq2\left(\|a\|^2+\|b\|^2\right)$, with~$a$ and~$b$ real-valued vectors, along with Lemma~\ref{lem:TSG_new_inexact_variance_bound}, we have
		\begin{alignat*}{2}
			\mathbb{E}[ \|z(x^i) - z(x^{i+1})\|^2 \vert \mathcal{F}_{i} ] & \;\leq\; L_z^2\alpha_i^2\mathbb{E}[ \| \tilde g_{f_1}^i - \bar g_{f_1}^i + \bar g_{f_1}^i \|^2 \vert \mathcal{F}_{i} ]
			\;\leq\;  2L_z^2\alpha_i^2\left(\mathbb{E}[ \| \bar g_{f_1}^i \|^2 \vert \mathcal{F}_{i} ] + \tau \right).
		\end{alignat*}
		Lastly, taking total expectation, we obtain the bound
		\[\mathbb{E}[ \|z(x^i) - z(x^{i+1})\|^2 ] \;\leq\; 2L_z^2\alpha_i^2(\mathbb{E}[ \| \bar g_{f_1}^i \|^2 ] + \tau ).\]
		
		\textbf{$(\text{Analysis of }A^{(1)}_3)$:} Taking expectation conditioned on~$\mathcal{F}_{i}$ followed by adding and subtracting $\nabla_xz\left(x^i\right)^\top\left(x^{i+1} - x^i\right)$ in the following way:
		\begin{alignat}{2}
			&\mathbb{E}[ ( z^{i+1} - z(x^i) )^\top ( z(x^i) - z(x^{i+1}) ) \vert \mathcal{F}_{i} ]\nonumber\\
			& \;=\; -\mathbb{E}[ ( z^{i+1} - z(x^i) )^\top ( \nabla_xz(x^i)^\top(x^{i+1} - x^i) + z(x^{i+1}) - z(x^i) - \nabla_xz(x^i)^\top(x^{i+1} - x^i) ) \vert \mathcal{F}_{i} ]\nonumber\\
			& \;=\; \underbrace{-\mathbb{E}[ ( z^{i+1} - z(x^i) )^\top ( \nabla_xz(x^i)^\top(x^{i+1} - x^i) ) \vert \mathcal{F}_{i} ]}_{B^{(1)}_1} \nonumber\\
			& \quad\quad \underbrace{- \mathbb{E}[ ( z^{i+1} - z(x^i) )^\top ( z(x^{i+1}) - z(x^i) - \nabla_xz(x^i)^\top(x^{i+1} - x^i) ) \vert \mathcal{F}_{i} ]}_{B^{(1)}_2}.\label{eq:TSG_LLP_error_intermediate_2}
		\end{alignat}
		
		\textbf{$(\text{Analysis of }B^{(1)}_1)$:} Utilizing the update $x^{i+1} = x^i - \alpha_ig_{f_1}^i$, the fact that $\mathbb{E}[X]=\mathbb{E}[\mathbb{E}[X|Y]]$, along with the Cauchy-Schwarz inequality, yields
		\begin{alignat*}{2}
			B^{(1)}_1 & \;\leq\; \alpha_i\mathbb{E}[ \| z^{i+1} - z(x^i) \|  \| \nabla_xz(x^i)^\top \bar g_{f_1}^i \| \vert \mathcal{F}_{i}]\\
			& \;\leq\; \alpha_iL_z\mathbb{E}[ \| z^{i+1} - z(x^i) \|  \| \bar g_{f_1}^i \| \vert \mathcal{F}_{i}]\\
			& \;\leq\; \kappa_i\mathbb{E}[ \| z^{i+1} - z(x^i) \|^2 \vert \mathcal{F}_{i}] + \frac{\alpha_i^2L_z^2}{4\kappa_i} \mathbb{E}[\| \bar g_{f_1}^i \|^2 \vert \mathcal{F}_{i}],
		\end{alignat*}
		where the second inequality comes from~\eqref{eq:lip_prop_1}, and the last inequality comes from using Young's inequality (i.e., $ab\leq \frac{\epsilon a^2}{2} + \frac{b^2}{2\epsilon}$ for $\epsilon>0$), where $\epsilon = 2\kappa_i$ for some $\kappa_i>0$, and where $a=\| z^{i+1} - z(x^i) \|$ and $b=\alpha_iL_z\| \bar g_{f_1}^i \|$.
		
		\textbf{$(\text{Analysis of }B^{(1)}_2)$:} Now, we can bound the term $B^{(1)}_2$ in~\eqref{eq:TSG_LLP_error_intermediate_2} by using the Cauchy-Schwarz inequality and applying the Lipschitz property of~\eqref{eq:lip_prop_5} (i.e, $z(x^{i+1}) - z(x^i) - \nabla z(x^i)(x^{i+1} - x^i) \leq \frac{L_{\nabla z}}{2}\| x^{i+1} - x^i \|^2$) to obtain:
		\begin{alignat*}{2}
			& - \mathbb{E}[ ( z^{i+1} - z(x^i) )^\top ( z(x^{i+1}) - z(x^i) - \nabla_xz(x^i)^\top(x^{i+1} - x^i) ) \vert \mathcal{F}_{i} ]\\
			& \;\leq\; \frac{L_{\nabla z}}{2}\mathbb{E}[ \| z^{i+1} - z(x^i) \| \| x^{i+1} - x^i \| \| x^{i+1} - x^i \| \vert \mathcal{F}_{i}].
		\end{alignat*}
		Further, using Young's inequality with $a=\| z^{i+1} - z(x^i) \| \| x^{i+1} - x^i \|$ and $b=\| x^{i+1} - x^i \|$ such that $ab\leq \frac{a^2}{2} + \frac{b^2}{2}$, along with the update $x^{i+1} = x^i - \alpha_i \tilde g_{f_1}^i$ and the fact that $\mathbb{E}[X]=\mathbb{E}[\mathbb{E}[X|Y]]$, we have
		\begin{alignat*}{2}
			& - \mathbb{E}[ ( z^{i+1} - z(x^i) )^\top ( z(x^{i+1}) - z(x^i) - \nabla_xz(x^i)^\top(x^{i+1} - x^i) ) \vert \mathcal{F}_{i} ]\\
			& \;\leq\; \frac{L_{\nabla z}}{2}\left(\frac{1}{2}\mathbb{E}[ \| z^{i+1} - z(x^i) \|^2 \| x^{i+1} - x^i \|^2 \vert \mathcal{F}_{i}] + \frac{1}{2}\mathbb{E}[\| x^{i+1} - x^i \|^2 \vert \mathcal{F}_{i}] \right)\\
			& \;\leq\; \frac{L_{\nabla z}\alpha_i^2\zeta}{4}\mathbb{E}[ \| z^{i+1} - z(x^i) \|^2 \vert \mathcal{F}_{i}] + \frac{L_{\nabla z}\alpha_i^2}{4}\mathbb{E}[ \| \tilde g_{f_1}^i \|^2 \vert \mathcal{F}_{i} ]\\
			& \leq \frac{L_{\nabla z}\alpha_i^2\zeta}{4}\mathbb{E}[ \| z^{i+1} - z(x^i) \|^2 \vert \mathcal{F}_{i}] + \frac{L_{\nabla z}\alpha_i^2}{4}( \mathbb{E}[ \| \bar g_{f_1}^i \|^2 \vert \mathcal{F}_{i} ] + \tau ).
		\end{alignat*}
		where the second inequality follows by applying Lemma~\ref{lem:TSG_size_inexact_g} and the last follows by applying the definition of variance along with Lemma~\ref{lem:TSG_new_inexact_variance_bound}. 
		
		Substituting these bounds for $B^{(1)}_1$ and $B^{(1)}_2$ back into~\eqref{eq:TSG_LLP_error_intermediate_2} and taking total expectation, we obtain the bound on the term $A^{(1)}_3$ as
		\begin{alignat*}{2}
			&\mathbb{E}[ ( z^{i+1} - z(x^i) )^\top ( z(x^i) - z(x^{i+1}) ) ]\\
			& \leq \left( \kappa_i + \frac{L_{\nabla z}\alpha_i^2\zeta}{4} \right) \mathbb{E}[ \| z^{i+1} - z(x^i) \|^2] + \left( \frac{\alpha_i^2L_z^2}{4\kappa_i} + \frac{L_{\nabla z}\alpha_i^2}{4} \right) \mathbb{E}[\| \bar g_{f_1}^i \|^2] + \frac{\tau L_{\nabla z}}{4}\alpha_i^2.
		\end{alignat*}
		Finally, substituting these bounds for $A^{(1)}_1$, $A^{(1)}_2$, and $A^{(1)}_3$ back into~\eqref{eq:TSG_LLP_error_intermediate_1}, we obtain the desired upper-bound on $\mathbb{E}[\| z^{i+1}-z(x^{i+1})\|^2]$, completing the proof.
	\end{proof}

	\subsection{Proof of Lemma~\ref{lem:TSG_aux_LL_descent}}\label{app:lem:TSG_aux_LL_descent}
	\begin{proof}
		To derive the error bound defined by~\eqref{eq:TSG_aux_LL_bound_1}, recall that $z^{i+1} = z^{i+1,0,0} = z^{i,J,K}$ and $g_{f_3}^{i,j,k} = \nabla_z f_3(x^i,y^{i,j},z^{i,j,k};\xi^{i,j,k})$ and notice that there will be a total of $K$ updates to the LL variables starting from $z^{i,j}$ to obtain $z^{i,j+1}$. Further, following the exact same steps utilized in Lemma~\ref{lem:TSG_LL_descent} to derive bound~\eqref{eq:TSG_LLP_bound_1} (only with $z(x^i)$ replaced with $z(x^i,y^{i,j+1})$), we have
		\[\mathbb{E}[ \| z^{i,j+1} - z(x^i,y^{i,j+1})\|^2 ] \;\leq\; \left( 1 - \gamma_i \rho_{f_3} \right)^K \| z^{i,j} - z(x^i,y^{i,j+1}) \|^2 + K\gamma_i^2\sigma_{\nabla f_3}^2.\]
		Now, adding and subtracting $z(x^i,y^{i,j})$ in the norm, followed by using the fact that $\|a+b\|^2 = \|a\|^2 + \|b\|^2 + 2a^\top b$, with~$a$ and~$b$ real-valued vectors, the Cauchy-Schwarz inequality, and the fact that $\left( 1 - \gamma_i \rho_{f_3} \right)^K\leq 1$ which is satisfied by our choice of~$\gamma_i\leq \frac{1}{\mu_z + L_{\nabla f_3}}$, we have
		\begin{alignat*}{2}
			&\mathbb{E}[ \| z^{i,j+1} - z(x^i,y^{i,j+1})\|^2 ]\\
			&\;\leq\; \left( 1 - \gamma_i \rho_{f_3} \right)^K \| z^{i,j} - z(x^i,y^{i,j})\|^2 + \left( 1 - \gamma_i \rho_{f_3} \right)^K\|z(x^i,y^{i,j}) - z(x^i,y^{i,j+1}) \|^2 + K\gamma_i^2\sigma_{\nabla f_3}^2\\
			&\quad + 2\|z^{i,j} - z(x^i,y^{i,j})\|  \|z(x^i,y^{i,j}) - z(x^i,y^{i,j+1})\|\\
			&\;\leq\; \left( 1 - \gamma_i \rho_{f_3} \right)^K \| z^{i,j} - z(x^i,y^{i,j})\|^2 + \left( 1 - \gamma_i \rho_{f_3} \right)^K\|z(x^i,y^{i,j}) - z(x^i,y^{i,j+1}) \|^2 + K\gamma_i^2\sigma_{\nabla f_3}^2\\
			&\quad + \eta_i\|z^{i,j} - z(x^i,y^{i,j})\|^2 + \frac{1}{\eta_i}\|z(x^i,y^{i,j}) - z(x^i,y^{i,j+1})\|^2\\
			&\;\leq\; (\left( 1 - \gamma_i \rho_{f_3} \right)^K + \eta_i) \| z^{i,j} - z(x^i,y^{i,j})\|^2 + \left(\left( 1 - \gamma_i \rho_{f_3} \right)^K +\frac{1}{\eta_i}\right)L_{z_y}^2\|y^{i,j+1} - y^{i,j} \|^2 + K\gamma_i^2\sigma_{\nabla f_3}^2\\
			&\;=\; (\left( 1 - \gamma_i \rho_{f_3} \right)^K + \eta_i) \| z^{i,j} - z(x^i,y^{i,j})\|^2 + \left(\left( 1 - \gamma_i \rho_{f_3} \right)^K +\frac{1}{\eta_i}\right)L_{z_y}^2\|y^{i,j} - \beta_i \tilde g_{f_2}^{i,j} - y^{i,j} \|^2 \\
			&\quad + K\gamma_i^2\sigma_{\nabla f_3}^2\\
			&\;\leq\; (\left( 1 - \gamma_i \rho_{f_3} \right)^K + \eta_i) \| z^{i,j} - z(x^i,y^{i,j})\|^2 + \hat{\eta_i}L_{z_y}^2\beta_i^2 \|\tilde g_{f_2}^{i,j}\|^2 + K\gamma_i^2\sigma_{\nabla f_3}^2,
		\end{alignat*}
		where the second inequality follows from applying Young's inequality (i.e., $ab\leq\frac{\epsilon a^2}{2}+\frac{b^2}{2\epsilon}$ for $\epsilon>0$) with $\epsilon=\eta_i$ for some $\eta_i>0$ (notice that $a=\|z^{i,j} - z(x^i,y^{i,j})\|$ and $b=\|z(x^i,y^{i,j}) - z(x^i,y^{i,j+1})\|$ here), the third inequality follows from applying~\eqref{eq:lip_prop_3}, and the last inequality follows from the fact that $0\leq 1 - \gamma_i \rho_{f_3}\leq1$ (where we define $\hat{\eta_i}:=1 +\frac{1}{\eta_i}$). Lastly, taking total expectation and using the fact that $\mathbb{E}[X]=\mathbb{E}[\mathbb{E}[X|Y]]$, we will obtain the bound~\eqref{eq:TSG_aux_LL_bound_1}
		\begin{alignat*}{2}
			&\mathbb{E}[ \| z^{i,j+1} - z(x^i,y^{i,j+1})\|^2 ] \\
			&\;\leq\; (\left( 1 - \gamma_i \rho_{f_3} \right)^K + \eta_i) \mathbb{E}[\| z^{i,j} - z(x^i,y^{i,j})\|^2] + \hat{\eta_i}L_{z_y}^2\beta_i^2 \mathbb{E}[\mathbb{E}[\|\tilde g_{f_2}^{i,j}\|^2 | \mathcal{F}_{i,j}]] + K\gamma_i^2\sigma_{\nabla f_3}^2\\
			&\;\leq\; (\left( 1 - \gamma_i \rho_{f_3} \right)^K + \eta_i) \mathbb{E}[\| z^{i,j} - z(x^i,y^{i,j})\|^2] + \hat{\eta_i}L_{z_y}^2\beta_i^2 \Upsilon + K\gamma_i^2\sigma_{\nabla f_3}^2,
		\end{alignat*}
		where the last inequality follows by applying Lemma~\ref{lem:TSG_size_inexact_g_ML}.

		Now, to derive results~\eqref{eq:TSG_aux_LL_bound_2}, \eqref{eq:TSG_aux_LL_bound_3}, and~\eqref{eq:TSG_aux_LL_bound_4}, we start by decomposing the expected error of the LL variables by adding and subtracting $z(x^i,y^i)$ followed by using the fact that $\|a+b\|^2\leq2(\|a\|^2+\|b\|^2)$ with $a$ and $b$ real-valued vectors:
		\begin{alignat}{2}
			\mathbb{E}[\| z^{i+1}-z(x^{i+1},y^{i+1})\|^2] &\leq 2\underbrace{\mathbb{E}[\| z^{i+1} - z(x^i,y^i)\|^2]}_{A^{(2)}_1} + 2\underbrace{\mathbb{E}[\|z(x^i,y^i) - z(x^{i+1},y^{i+1})\|^2]}_{A^{(2)}_2}.\label{eq:TST_intermidiate_result_for_LL_proof}
		\end{alignat}
		
		\textbf{$(\text{Analysis of }A^{(2)}_1)$:} To derive an upper-bound on $A^{(2)}_1$ in~\eqref{eq:TST_intermidiate_result_for_LL_proof}, we can follow the exact same steps that were utilized in Lemma~\ref{lem:TSG_LL_descent} to derive bound~\eqref{eq:TSG_LLP_bound_1} (only with $z(x^i)$ replaced with $z(x^i,y^{i})$), which will yield the bound~\eqref{eq:TSG_aux_LL_bound_2}.
		Further, using induction over $J$ (each consisting of $K$ iterations) will yield the following bound on $A^{(2)}_1$ in~\eqref{eq:TST_intermidiate_result_for_LL_proof} (which is the bound~\eqref{eq:TSG_aux_LL_bound_3}).
		Notice that this induction result again follows by ensuring that $0\leq 1 - \gamma_i \rho_{f_3}\leq 1$, which is satisfied by the assumption~$\gamma_i\leq \frac{1}{\mu_z + L_{\nabla f_3}}$ and recalling that~$\gamma_i$ and~$\rho_{f_3}$ are positive.
		
		\textbf{$(\text{Analysis of }A^{(2)}_2)$:} Now, the upper-bound on $A^{(2)}_2$ in~\eqref{eq:TST_intermidiate_result_for_LL_proof} can be derived by taking total expectation, using the fact that $\mathbb{E}[X]=\mathbb{E}[\mathbb{E}[X|Y]]$, applying~\eqref{eq:lip_prop_2}, and recursively using the fact that $y^{i,j+1} = y^{i,j} - \beta_i\tilde g_{f_2}^{i,j}$ (while recalling that $y^{i+1}=y^{i,J}$ and $x^{i+1}=x^{i}-\alpha_i\tilde g_{f_1}^{i}$):
		\begin{alignat*}{2}
			\mathbb{E}[ \|z(x^i,y^i) - z(x^{i+1},y^{i+1})\|^2 ] 
			& \leq L_{z_{xy}}^2 \mathbb{E}[\| x^i - x^{i+1}\|^2 ] + L_{z_{xy}}^2\mathbb{E}[\mathbb{E}[\| y^i - y^{i+1} \|^2 \vert \mathcal{F}_{i,j}]]\\
			& = L_{z_{xy}}^2 \alpha_i^2\mathbb{E}[\|\tilde g_{f_1}^{i}\|^2] + L_{z_{xy}}^2\mathbb{E}[\mathbb{E}[\| y^{i} - \sum_{j=0}^{J-1} \beta_i\tilde g_{f_2}^{i,j} - y^i \|^2 \vert \mathcal{F}_{i,j}]]\\
			&\;\leq\; L_{z_{xy}}^2 \alpha_i^2\mathbb{E}[\|\tilde g_{f_1}^{i}\|^2] + JL_{z_{xy}}^2\beta_i^2\sum_{j=0}^{J-1}\mathbb{E}[\mathbb{E}[\| \tilde g_{f_2}^{i,j} \|^2 \vert \mathcal{F}_{i,j}]]\\
			&\;\leq\; L_{z_{xy}}^2 \alpha_i^2\mathbb{E}[\|\tilde g_{f_1}^{i}\|^2] + J^2\Upsilon L_{z_{xy}}^2\beta_i^2,
		\end{alignat*}
		where the second inequality follows from using the fact that $\|\sum_{i=1}^N a_i\|^2 \leq N\sum_{i=1}^N\|a_i\|^2$ (for some $a\in\mathbb{R}^N$) and the last inequality follows from applying Lemma~\ref{lem:TSG_size_inexact_g_ML}. Now, using the fact that $\mathbb{E}[X]=\mathbb{E}[\mathbb{E}[X|Y]]$, adding and subtracting $\bar g_{f_1}^{i}$ in the norm, followed by using the fact that $\|a+b\|^2 \leq 2\left(\|a\|^2+\|b\|^2\right)$, and applying Lemma~\ref{lem:TSG_new_inexact_variance_bound}, we have
		\begin{alignat*}{2}
			\mathbb{E}[ \|z(x^i,y^i) - z(x^{i+1},y^{i+1})\|^2 ] & = L_{z_{xy}}^2 \alpha_i^2\mathbb{E}[\mathbb{E}[\|\tilde g_{f_1}^{i}\|^2 \vert \mathcal{F}_{i}]] + J^2\Upsilon L_{z_{xy}}^2\beta_i^2\\
			& \;\leq\; 2L_{z_{xy}}^2 \alpha_i^2(\mathbb{E}[\|\bar g_{f_1}^{i}\|^2] + \tau) + J^2\Upsilon L_{z_{xy}}^2\beta_i^2.
		\end{alignat*}
		Notice in the inequality that $\mathbb{E}[\|\bar g_{f_1}^{i}\|^2] = \mathbb{E}[\mathbb{E}[\|\bar g_{f_1}^{i}\|^2 \vert \mathcal{F}_{i}]] = \|\bar g_{f_1}^{i}\|^2$ since $\bar g_{f_1}^{i}$ is deterministic.
		Finally, substituting these bounds for $A^{(2)}_1$ and $A^{(2)}_2$ back into~\eqref{eq:TST_intermidiate_result_for_LL_proof}, we can obtain the desired upper-bound on $\mathbb{E}[ \| z^{i+1} - z\left( x^{i+1}, y^{i+1} \right) \|^2 ]$ defined by~\eqref{eq:TSG_aux_LL_bound_4}.
		
		Lastly, to derive the upper-bound~\eqref{eq:TSG_aux_LL_bound_5}, we can follow the exact same steps that were utilized in Lemma~\ref{lem:TSG_LL_descent} to derive bound~\eqref{eq:TSG_LLP_bound_1} (only with $z(x^i)$ replaced with $z(x^i,y^{i,j})$).
		Notice that this induction result again follows by ensuring that $0\leq 1 - \gamma_i \rho_{f_3}\leq 1$, which is satisfied by the assumption of~$\gamma_i\leq \frac{1}{\mu_z + L_{\nabla f_3}}$ and recalling that~$\gamma_i$ and~$\rho_{f_3}$ are positive.
	\end{proof}

	\subsection{Proof of Lemma~\ref{lem:TSG_ML_descent}}\label{app:lem:TSG_ML_descent}
	\begin{proof}
		To derive the error bound defined by~\eqref{eq:TSG_ML_bound_2}, we start by decomposing the expected error of the~LL variables by adding and subtracting $y(x^i)$ in the following way:
		\begin{alignat}{2}
			\mathbb{E}[\| y^{i+1}-y(x^{i+1})\|^2] &= \underbrace{\mathbb{E}[\| y^{i+1} - y(x^i)\|^2]}_{A^{(3)}_1} + \underbrace{\mathbb{E}[\|y(x^i) - y(x^{i+1})\|^2]}_{A^{(3)}_2}\nonumber\\
			&\quad+ 2\underbrace{\mathbb{E}[( y^{i+1} - y(x^i) )^\top ( y(x^i) - y(x^{i+1}) )]}_{A^{(3)}_3}.\label{eq:TSG_MLP_error_intermediate_1}
		\end{alignat}
		
		\textbf{$(\text{Analysis of }A^{(3)}_1)$:} To derive an upper-bound on~$A^{(3)}_1$ in~\eqref{eq:TSG_MLP_error_intermediate_1}, recall that~$y^{i+1} = y^{i,J}$ and $\tilde g_{f_2}^{i,j} = \nabla_y \Bar{f}(x^i,y^{i,j},z^{i,j+1};\xi^{i,j})$. Further, notice that there will be a total of~$J$ updates to the~ML variables starting from~$y^i$ to obtain~$y^{i+1}$. Thus, in general, taking expectation conditioned on~$\mathcal{F}_{i,j}$ and applying Lemma~\ref{lem:TSG_size_inexact_g_ML}, we have
		\begin{alignat*}{2}
			\mathbb{E}[ \| y^{i,j+1} - y(x^i)\|^2 \vert \mathcal{F}_{i,j} ] & \;=\; \mathbb{E}[ \| y^{i,j} - \beta_i \tilde g_{f_2}^{i,j} - y(x^i)\|^2 \vert \mathcal{F}_{i,j} ]\\
			&\;\leq\; \| y^{i,j} - y(x^i) \|^2 - 2\beta_i( y^{i,j} - y(x^i) )^\top \bar g_{f_2}^{i,j} + \Upsilon \beta_i^2\\
			& \;=\; \| y^{i,j} - y(x^i) \|^2 + \Upsilon \beta_i^2 - 2\beta_i( y^{i,j} - y(x^i) )^\top \nabla_y \Bar{f}(x^i,y^{i,j})\\
			&\quad - 2\beta_i( y^{i,j} - y(x^i) )^\top (\bar g_{f_2}^{i,j} - \nabla_y \Bar{f}(x^i,y^{i,j}) ),
		\end{alignat*}
		where the last equality follows from adding and subtracting $\nabla_y \Bar{f}(x^i,y^{i,j})$ to the $\bar g_{f_2}^{i,j}$ term in the cross-product.
		Now, under the strong convexity of~$\bar f$ (Assumption~\ref{as:strong_conv_fbar_y}) and the Lipschitz continuity of~$\nabla_y\bar f$ in~$y$ (equation~\eqref{eq:lip_prop_7}), we can utilize~\cite[Theorem 2.1.12]{YNesterov_2018}, 
		yielding
		\begin{alignat*}{2}
			\mathbb{E}[ \| y^{i,j+1} - y(x^i)\|^2 \vert \mathcal{F}_{i,j} ] 
			& \;\leq\; \| y^{i,j} - y(x^i) \|^2 + \Upsilon \beta_i^2 \\
			&\quad- 2\beta_i \left( \frac{\mu_y L_{\nabla \bar f}}{\mu_y + L_{\nabla \bar f}}\| y^{i,j} - y(x^i) \|^2 + \frac{1}{\mu_y + L_{\nabla \bar f}}\| \nabla_y \Bar{f}(x^i,y^{i,j}) \|^2 \right)\\
			&\quad + 2\beta_i\| y^{i,j} - y(x^i) \|^2\| \bar g_{f_2}^{i,j} - \nabla_y \Bar{f}(x^i,y^{i,j},z^{i,j+1}) \|^2\\
			&\quad + 2\beta_i\| y^{i,j} - y(x^i) \| \| \nabla_y \Bar{f}(x^i,y^{i,j},z^{i,j+1}) - \nabla_y \Bar{f}(x^i,y^{i,j}) \|,
		\end{alignat*}
		where the last two added terms come from adding and subtracting $\nabla_y \Bar{f}(x^i,y^{i,j},z^{i,j+1})$ to the $\bar g_{f_2}^{i,j} - \nabla_y \Bar{f}(x^i,y^{i,j})$ term in the cross product followed by applying the Cauchy Schwarz inequality.
		Now, utilizing the Lipschitz continuity of $\nabla_y \Bar{f}$ in $z$ (equation~\eqref{eq:lip_prop_8}), the bound on the biasedness of $\tilde g_{f_2}$ (Lemma~\ref{lem:TSG_var_bound_ML_dir}), and the fact that $\frac{2\beta_i}{\mu_y + L_{\nabla \bar f}}\| \nabla_y \Bar{f}(x^i,y^{i,j}) \|^2$ is non-negative, we have
		\begin{alignat*}{2}
			&\mathbb{E}[ \| y^{i,j+1} - y(x^i)\|^2 \vert \mathcal{F}_{i,j} ]\\ 
			& \;\leq\;  \left( 1 - \beta_i\left(\frac{2\mu_y L_{\nabla \bar f}}{\mu_y + L_{\nabla \bar f}} - 2\hat\omega^2\theta_i^2\right) \right)\| y^{i,j} - y(x^i) \|^2 + 2\beta_i\| y^{i,j} - y(x^i) \| \| z^{i,j+1} - z(x^i,y^{i,j}) \| + \Upsilon \beta_i^2\\
			& \leq \left( 1 - \beta_i\left(\frac{2\mu_y L_{\nabla \bar f}}{\mu_y + L_{\nabla \bar f}} - 2\hat\omega^2\theta_i^2 - \beta_i \right) \right)\| y^{i,j} - y(x^i) \|^2 + \| z^{i,j+1} - z(x^i,y^{i,j}) \|^2 + \Upsilon \beta_i^2\\
			& = \left( 1 - \psi_i\beta_i \right)\| y^{i,j} - y(x^i) \|^2 + \| z^{i,j+1} - z(x^i,y^{i,j}) \|^2 + \Upsilon \beta_i^2,
		\end{alignat*}
		where the last inequality follows from the fact that~$2ab \leq a^2 + b^2$ ($a$ and~$b$ positive scalars) where
		\begin{equation}\label{eq:2020}
			\psi_i \;:=\; \rho - 2\hat\omega^2\theta_i^2 - \beta_i \quad \text{ and } \quad \rho \;:=\; \frac{2\mu_y L_{\nabla \bar f}}{\mu_y + L_{\nabla \bar f}}.
		\end{equation}
		Taking total expectation and using bound~\eqref{eq:TSG_aux_LL_bound_5} from Lemma~\ref{lem:TSG_aux_LL_descent}, we have
		\begin{alignat}{2}
			&\mathbb{E}[ \| y^{i,j+1} - y(x^i)\|^2 ]\\ 
			& \leq \left( 1 - \psi_i\beta_i \right)\mathbb{E}[ \| y^{i,j} - y(x^i) \|^2 ] + \Upsilon \beta_i^2 + \left( 1 - \gamma_i \rho_{f_3} \right)^K\mathbb{E}[ \| z^{i,j} - z(x^i,y^{i,j}) \|^2 ] + K\gamma_i^2\sigma_{\nabla f_3}^2\nonumber\\
			& \;\leq\; \left( 1 - \psi_i\beta_i \right)^J \mathbb{E}[ \| y^{i} - y(x^i) \|^2 ] + \left( 1 - \gamma_i \rho_{f_3} \right)^K\sum_{j=0}^{J-1}\mathbb{E}[ \| z^{i,j} - z(x^i,y^{i,j}) \|^2 ] + J\Upsilon \beta_i^2 + JK\gamma_i^2\sigma_{\nabla f_3}^2,\label{eq:TSG_intermidiate_result_for_ML_proof_1}
		\end{alignat}
		where the last inequality follows by using induction over $J$. Notice that this result follows by ensuring that~$0\leq 1 - \psi_i\beta_i\leq1$, which holds when choosing~$\beta_i$ such that $\beta_i \leq \frac{1}{\mu_y + L_{\nabla \bar f}}$ and~$\beta_i \leq \frac{\rho}{2\hat\omega^2+1}$. In other words, to show that~$0\leq 1 - \psi_i\beta_i$, we have
		\[
		\psi_i\beta_i = \beta_i( \rho - 2\hat\omega^2\theta_i^2 - \beta_i ) < \beta_i \rho \leq \frac{2\mu_y L_{\nabla \bar f}}{(\mu_y + L_{\nabla \bar f})^2} \leq 1,
		\]
		where the first inequality follows by observing that $- 2\hat\omega^2\theta_i^2\beta_i - \beta_i^2 < 0$, the second inequality follows by choosing $\beta_i \leq \frac{1}{\mu_y + L_{\nabla \bar f}}$ along with the definition of $\rho$, and the third inequality follows from the fact that $2ab \le (a+b)^2$, with~$a$ and~$b$ positive scalars. Notice that showing that $1 - \psi_i\beta_i \leq 1$ is equivalent to showing that $\psi_i \geq0$, i.e., using the fact that $0<\theta_i^2\leq\theta_i\leq 1$ along with $\theta_i = \alpha_i\beta_i\gamma_i \leq \beta_i$, we have
		\[
		\rho - 2\hat\omega^2\theta_i^2 - \beta_i \geq 0 \quad \Rightarrow \quad 2\hat\omega^2\beta_i + \beta_i \leq \rho \quad \Rightarrow \quad \beta_i \leq \frac{\rho}{2\hat\omega^2+1}. \]
		Now, looking at the $\sum_{j=0}^{J-1}\mathbb{E}[ \| z^{i,j} - z(x^i,y^{i}) \|^2 ]$ term in~\eqref{eq:TSG_intermidiate_result_for_ML_proof_1} and defining $\Theta_i:=\hat{\eta_i}L_{z_y}^2\Upsilon\beta_i^2~+~K\gamma_i^2\sigma_{\nabla f_3}^2$, we have
		\begin{alignat}{2}
			&\sum_{j=0}^{J-1}\mathbb{E}[ \| z^{i,j} - z(x^i,y^{i,j}) \|^2 ]\nonumber\\
			& = \mathbb{E}[ \| z^{i,0} - z(x^i,y^{i,0}) \|^2 ] + \mathbb{E}[ \| z^{i,1} - z(x^i,y^{i,1}) \|^2 ] + \cdots + \mathbb{E}[ \| z^{i,J-1} - z(x^i,y^{i,J-1}) \|^2 ]\nonumber\\
			& = \mathbb{E}[ \| z^{i} - z(x^i,y^{i}) \|^2 ]\nonumber\\
			&\quad + \mathbb{E}[ \| z^{i,1} - z(x^i,y^{i,1}) \|^2 ] \;\;\;\;\;\;\;\;\;\;\longrightarrow\;\;\; \left( \leq (\left( 1 - \gamma_i \rho_{f_3} \right)^K + \eta_i) \mathbb{E}[\| z^{i} - z(x^i,y^{i})\|^2] + \Theta_i \right)\nonumber\\
			&\quad + \mathbb{E}[ \| z^{i,2} - z(x^i,y^{i,2}) \|^2 ] \;\;\;\;\;\;\;\;\;\;\longrightarrow\;\;\; \left( \leq (\left( 1 - \gamma_i \rho_{f_3} \right)^K + \eta_i)^2 \mathbb{E}[\| z^{i} - z(x^i,y^{i})\|^2] + 2\Theta_i \right)\nonumber\\
			&\quad \;\vdots\nonumber\\
			&\quad + \mathbb{E}[ \| z^{i,J-1} - z(x^i,y^{i,J-1}) \|^2 ] \;\;\;\longrightarrow\;\;\; \left( \leq (\left( 1 - \gamma_i \rho_{f_3} \right)^K + \eta_i)^{J-1} \mathbb{E}[\| z^{i} - z(x^i,y^{i})\|^2] + (J-1)\Theta_i \right)\nonumber\\
			&\leq \mathbb{E}[ \| z^{i} - z(x^i,y^{i}) \|^2 ] + \sum_{j=1}^{J-1}(\left( 1 - \gamma_i \rho_{f_3} \right)^K + \eta_i)^j\mathbb{E}[ \| z^{i} - z(x^i,y^{i}) \|^2 ] + \Theta_i\sum_{j=1}^{J-1}j,\label{eq:TSG_intermidiate_result_for_ML_proof_2}
		\end{alignat}
		where the intermediate inequalities follow from applying equation~\eqref{eq:TSG_aux_LL_bound_1} from Lemma~\ref{lem:TSG_aux_LL_descent} repeatedly while choosing $\eta_i$ such that $\eta_i\leq 1 - \left( 1 - \gamma_i \rho_{f_3} \right)^K$ (which will ensure that \linebreak$0\leq \left( 1 - \gamma_i \rho_{f_3} \right)^K + \eta_i \leq 1$ when considering the fact that $0\leq \left( 1 - \gamma_i \rho_{f_3} \right)^K\leq 1$ which is satisfied by our choice of~$\gamma_i\leq \frac{1}{\mu_z + L_{\nabla f_3}}$ and recalling that~$\gamma_i$, $\rho_{f_3}$, and~$\eta_i$ are positive).
		Now, looking at the $\sum_{j=1}^{J-1}(\left( 1 - \gamma_i \rho_{f_3} \right)^K + \eta_i)^j$ term in~\eqref{eq:TSG_intermidiate_result_for_ML_proof_2}, we have
		\begin{alignat*}{2}
			\sum_{j=1}^{J-1}(\left( 1 - \gamma_i \rho_{f_3} \right)^K + \eta_i)^j &= \left( \frac{(\left( 1 - \gamma_i \rho_{f_3} \right)^K + \eta_i) -(\left( 1 - \gamma_i \rho_{f_3} \right)^K + \eta_i)^{J}}{1-(\left( 1 - \gamma_i \rho_{f_3} \right)^K + \eta_i)} \right) = \left( \frac{\vartheta_i -\vartheta_i^{J}}{1-\vartheta_i} \right),
		\end{alignat*}
		where the last equality follows by using the geometric series $\sum_{j=1}^{J-1}a^j=\frac{a-a^{J}}{1-a}$ when~$a\in[0,1]$ and defining $\vartheta_i:=\left( 1 - \gamma_i \rho_{f_3} \right)^K + \eta_i$ for ease of notation. Now, using the partial sum $\sum_{j=1}^{J-1}j = \frac{J(J-1)}{2}$, we can see that the bound~\eqref{eq:TSG_intermidiate_result_for_ML_proof_2} on the expression $\sum_{j=0}^{J-1}\mathbb{E}[ \| z^{i,j} - z(x^i,y^{i,j}) \|^2 ]$ is given by
		\begin{equation}
			\sum_{j=0}^{J-1}\mathbb{E}[ \| z^{i,j} - z(x^i,y^{i,j}) \|^2 ] \leq \left( 1 + \left( \frac{\vartheta_i -\vartheta_i^{J}}{1-\vartheta_i} \right) \right)\mathbb{E}[ \| z^{i} - z(x^i,y^{i}) \|^2 ] + \frac{J(J-1)}{2} \Theta_i. \label{eq:intermediary_bound_on_sum_w_vartheta_1}
		\end{equation}
		
		Now, we wish to analyze the limiting behavior of the term $\frac{\vartheta_i -\vartheta_i^{J}}{1-\vartheta_i}$ as $\vartheta\rightarrow0$ and $\vartheta\rightarrow1$ in order to obtain an upper-bound. Starting by analyzing the limiting behavior as $\vartheta\rightarrow0$, we have
		\[\lim_{\vartheta_i\rightarrow0} \frac{\vartheta_i(1 -\vartheta_i^{J-1})}{1-\vartheta_i} = \frac{0\cdot1}{1} = 0.\]
		Further, when $\vartheta_i\rightarrow1$, we can analyze the limiting behavior via L'Hopital's rule to obtain
		\[\lim_{\vartheta_i\rightarrow1} \frac{\vartheta_i -\vartheta_i^{J}}{1-\vartheta_i} = \lim_{\vartheta_i\rightarrow1} \frac{\frac{d}{d\vartheta_i}(\vartheta_i -\vartheta_i^{J})}{\frac{d}{d\vartheta_i}(1-\vartheta_i)} = \lim_{\vartheta_i\rightarrow1} -(1 - J\vartheta_i^{J-1}) = J-1.\]
		Therefore, we can see that (since $1\leq J\in\mathbb{N}$)
		\begin{equation}
			0 \leq \frac{\vartheta_i -\vartheta_i^{J}}{1-\vartheta_i} \leq J-1.\label{eq:vartheta_term_UB}
		\end{equation}
		Utilizing the upper-bound of~\eqref{eq:vartheta_term_UB} in~\eqref{eq:intermediary_bound_on_sum_w_vartheta_1} yields
		\begin{equation}
			\sum_{j=0}^{J-1}\mathbb{E}[ \| z^{i,j} - z(x^i,y^{i,j}) \|^2 ] \leq J\mathbb{E}[ \| z^{i} - z(x^i,y^{i}) \|^2 ] + \frac{J(J-1)}{2} \Theta_i. \label{eq:intermediary_bound_on_sum_w_vartheta_2}
		\end{equation}
		
		Now, substituting~\eqref{eq:intermediary_bound_on_sum_w_vartheta_2} back into equation~\eqref{eq:TSG_intermidiate_result_for_ML_proof_1} and using the fact that $0\leq1 - \gamma_i \rho_{f_3}\leq1$, which is satisfied by our choice of~$\gamma_i\leq\frac{1}{\mu_z + L_{\nabla f_3}}$ and recalling that~$\gamma_i$ and~$\rho_{f_3}$ are positive, yields
		\begin{alignat*}{2}
			\mathbb{E}[ \| y^{i+1} - y(x^i)\|^2 ] 
			& \leq \left( 1 - \psi_i\beta_i \right)^J \mathbb{E}[ \| y^{i} - y(x^i) \|^2 ]  + J\Upsilon \beta_i^2 + JK\gamma_i^2\sigma_{\nabla f_3}^2 + \frac{J(J-1)}{2}\Theta_i\\
			&\quad + \left( 1 - \gamma_i \rho_{f_3} \right)^K J\mathbb{E}[ \| z^{i} - z(x^i,y^{i}) \|^2 ],
		\end{alignat*}
		Further simplifying this expression, we obtain the bound~\eqref{eq:TSG_ML_bound_1}.

		\textbf{$(\text{Analysis of }A^{(3)}_2)$:} The derivation of the upper-bound on $A^{(3)}_2$ in~\eqref{eq:TSG_MLP_error_intermediate_1} follows the exact same steps that were used to derive the upper-bound on the term $A^{(1)}_2$ in Lemma~\ref{lem:TSG_LL_descent} (only with using~\eqref{eq:lip_prop_4} instead of~\eqref{eq:lip_prop_1}), from which we have
		\[\mathbb{E}[ \| y(x^i) - y(x^{i+1}) \|^2 ] \leq 2L_y^2\alpha_i^2( \mathbb{E}[ \| \bar g_{f_1}^{i} \|^2 ] + \tau ).\]
		
		\textbf{$(\text{Analysis of }A^{(3)}_3)$:} The term $A^{(3)}_3$ in~\eqref{eq:TSG_MLP_error_intermediate_1} can be bounded by taking expectation conditioned on~$\mathcal{F}_i$ followed by adding and subtracting $\nabla y(x^i)(x^{i+1} - x^i)$ in the following way:
		\begin{alignat}{2}
			&\mathbb{E}[ ( y^{i+1} - y(x^i) )^\top ( y(x^i) - y(x^{i+1}) ) \vert \mathcal{F}_i ] \nonumber\\ 
			& = - \mathbb{E}[ ( y^{i+1} - y(x^i) )^\top ( \nabla y(x^i)(x^{i+1} - x^i) + y(x^{i+1}) - y(x^i) - \nabla y(x^i)(x^{i+1} - x^i) ) \vert \mathcal{F}_i]\nonumber\\
			& = \underbrace{- \mathbb{E}[ ( y^{i+1} - y(x^i) )^\top ( \nabla y(x^i)(x^{i+1} - x^i) ) \vert \mathcal{F}_i ]}_{B^{(3)}_1}\nonumber\\
			&\quad \underbrace{- \mathbb{E}[ ( y^{i+1} - y(x^i) )^\top ( y(x^{i+1}) - y(x^i) - \nabla y(x^i)(x^{i+1} - x^i) ) \vert \mathcal{F}_i ]}_{B^{(3)}_2}. \label{eq:TSG_intermidiate_result_for_ML_proof_3}
		\end{alignat}
		
		\textbf{$(\text{Analysis of }B^{(3)}_1)$:} The derivation of the upper-bound on $B^{(3)}_1$ in~\eqref{eq:TSG_intermidiate_result_for_ML_proof_3} follows the exact same steps that were used to derive the upper-bound on the term $B^{(1)}_1$ in Lemma~\ref{lem:TSG_LL_descent} (only with using~\eqref{eq:lip_prop_4} instead of~\eqref{eq:lip_prop_1}), from which, for some $\phi_i>0$, we have
		\[- \mathbb{E}[ ( y^{i+1} - y(x^i) )^\top ( \nabla y(x^i)(x^{i+1} - x^i) ) \vert \mathcal{F}_i ] \leq \phi_i\mathbb{E}[ \| y^{i+1} - y(x^i) \|^2 \vert \mathcal{F}_i ] + \frac{\alpha_i^2 L_y^2}{4 \phi_i}\mathbb{E}[ \| \bar g_{f_1}^{i} \|^2 \vert \mathcal{F}_i ].\]
		
		\textbf{$(\text{Analysis of }B^{(3)}_2)$:} The derivation of the upper-bound on $B^{(3)}_2$ in~\eqref{eq:TSG_intermidiate_result_for_ML_proof_3} follows the exact same steps that were used to derive the upper-bound on the term $B^{(1)}_2$ in Lemma~\ref{lem:TSG_LL_descent} (only with using~\eqref{eq:lip_prop_13} instead of~\eqref{eq:lip_prop_5}), from which we have
		\begin{alignat*}{2}
			&- \mathbb{E}[ ( y^{i+1} - y(x^i) )^\top ( y(x^{i+1}) - y(x^i) - \nabla y(x^i)(x^{i+1} - x^i) ) \vert \mathcal{F}_i ]\\
			& \leq \frac{L_{\nabla y}\alpha_i^2 \zeta}{4} \mathbb{E}[ \| y^{i+1} - y(x^i) \|^2 \vert \mathcal{F}_i ] + \frac{L_{\nabla y}\alpha_i^2}{4} ( \mathbb{E}[ \| \bar g_{f_1}^{i} \|^2 \vert \mathcal{F}_i ] + \tau ).
		\end{alignat*}

		Finally, substituting these bounds for $B^{(3)}_1$ and $B^{(3)}_2$ back into~\eqref{eq:TSG_intermidiate_result_for_ML_proof_3} and taking total expectation, we obtain the bound on the term $A^{(3)}_3$ as
		\begin{alignat*}{2}
			\mathbb{E}[ ( y^{i+1} - y(x^i) )^\top ( y(x^i) - y(x^{i+1}) ) ]
			&\leq \left( \phi_i + \frac{L_{\nabla y}\alpha_i^2 \zeta}{4}\right)\mathbb{E}[ \| y^{i+1} - y\left(x^i\right) \|^2 ] \\
			&\quad+ \left( \frac{\alpha_i^2 L_y^2}{4 \phi_i} + \frac{L_{\nabla y}\alpha_i^2}{4} \right)\mathbb{E}[ \| \bar g_{f_1}^{i} \|^2 ] + \frac{\tau L_{\nabla y}}{4}\alpha_i^2.
		\end{alignat*}
		Finally, substituting these bounds for $A^{(3)}_1$, $A^{(3)}_2$, and $A^{(3)}_3$ back into~\eqref{eq:TSG_MLP_error_intermediate_1}, we obtain the desired upper-bound on $\mathbb{E}[\| y^{i+1}-y(x^{i+1})\|^2]$, completing the proof.
	\end{proof}

	\subsection{Proof of Theorem~\ref{th:TSG_convergence}}\label{app:th:TSG_convergence}
	\begin{proof}
		To begin, using Lemmas~\ref{lem:TSG_UL_descent}, \ref{lem:TSG_LL_descent}, \ref{lem:TSG_aux_LL_descent}, and~\ref{lem:TSG_ML_descent}, we can bound the two Lyapunov difference terms (defined in~\eqref{eq:lyapunov_diff}) by taking total expectation in the following way:
		\begin{alignat*}{2}
			&\mathbb{E}[ \mathbb{V}^{i+1} ] - \mathbb{E}[ \mathbb{V}^i ]\\
			& = \underbrace{\mathbb{E}[ f(x^{i+1}) ] - \mathbb{E}[ f(x^i) ]}_{\text{Lemma~\ref{lem:TSG_UL_descent}}} + \underbrace{\mathbb{E}[ \| y^{i+1} - y( x^{i+1} ) \|^2 ]}_{\text{Lemma~\ref{lem:TSG_ML_descent}}} - \mathbb{E}[ \| y^{i} - y( x^{i} ) \|^2 ] \\
			&\quad+  \underbrace{\mathbb{E}[ \| z^{i+1} - z( x^{i+1} ) \|^2 ]}_{\text{Lemma~\ref{lem:TSG_LL_descent}}} - \mathbb{E}[ \| z^{i} - z( x^{i} ) \|^2 ] + 
			\underbrace{\mathbb{E}[ \| z^{i+1} - z( x^{i+1}, y^{i+1} ) \|^2 ]}_{\text{Lemma~\ref{lem:TSG_aux_LL_descent}}} - \mathbb{E}[ \| z^{i} - z( x^{i}, y^{i} ) \|^2 ]\\
			& \leq -\frac{\alpha_i}{2} \mathbb{E}[ \| \nabla f(x^i) \|^2 ] - \left( \frac{\alpha_i}{2} - \frac{L_F\alpha_i^2}{2} \right) \mathbb{E}[ \| \bar g_{f_1}^{i} \|^2 ] + \tilde\omega\alpha_i^2\\
			&\quad+  \alpha_iL_{F_{yz}}^2\mathbb{E}[ \| y(x^i) - y^{i+1} \|^2 ] + \alpha_iL_{F_{yz}}^2\mathbb{E}[ \| z(x^i) - z^{i+1} \|^2 ]\\
			&\quad+ \left( 1 + 2\phi_i + \frac{L_{\nabla y}\alpha_i^2 \zeta}{2}\right)\mathbb{E}[ \| y^{i+1} - y(x^i) \|^2 ] \\
			&\quad+ \left( 2L_y^2 + \frac{ L_y^2}{2 \phi_i} + \frac{L_{\nabla y}}{2} \right)\alpha_i^2\mathbb{E}[ \| \bar g_{f_1}^{i} \|^2 ] + \left( 2L_y^2 + \frac{ L_{\nabla y}}{2} \right)\tau\alpha_i^2\\
			&\quad+ \left( 1 + 2\kappa_i + \frac{L_{\nabla z}\alpha_i^2\zeta}{2} \right) \mathbb{E}[ \| z^{i+1} - z(x^i) \|^2]\\
			&\quad+ \left( 2L_z^2 + \frac{L_z^2}{2\kappa_i} + \frac{L_{\nabla z}}{2} \right)\alpha_i^2 \mathbb{E}[\| \bar g_{f_1}^i \|^2] + \left( 2L_z^2 + \frac{ L_{\nabla z}}{2} \right)\tau\alpha_i^2\\
			&\quad+ 2\mathbb{E}[ \| z^{i+1} - z(x^i,y^{i}) \|^2 ] + 4L_{z_{xy}}^2 \alpha_i^2 \mathbb{E}[\|\bar g_{f_1}^{i}\|^2] + 4L_{z_{xy}}^2 \alpha_i^2 \tau + 2J^2\Upsilon L_{z_{xy}}^2\beta_i^2\\
			&\quad - \mathbb{E}[ \| y^{i} - y( x^{i} ) \|^2 ] - \mathbb{E}[ \| z^{i} - z( x^{i} ) \|^2 ] - \mathbb{E}[ \| z^{i} - z( x^{i}, y^{i} ) \|^2 ].
		\end{alignat*}
		Simplifying, we have
		\begin{alignat}{2}
			\mathbb{E}[ \mathbb{V}^{i+1} ] - \mathbb{E}[ \mathbb{V}^i ] & \leq -\frac{\alpha_i}{2} \mathbb{E}[ \| \nabla f(x^i) \|^2 ] - \left( \frac{\alpha_i}{2} - \frac{L_F\alpha_i^2}{2} \right) \mathbb{E}[ \| \bar g_{f_1}^{i} \|^2 ]\nonumber\\
			&\quad+ \left( 1 + \alpha_iL_{F_{yz}}^2 + 2\phi_i + \frac{L_{\nabla y}\alpha_i^2 \zeta}{2}\right) \underbrace{\mathbb{E}[ \| y^{i+1} - y(x^i) \|^2 ]}_{\text{Lemma~\ref{lem:TSG_ML_descent}}} \label{eq:7010}\\
			&\quad+ \left( 1 + \alpha_iL_{F_{yz}}^2 + 2\kappa_i + \frac{L_{\nabla z}\alpha_i^2\zeta}{2} \right) \underbrace{\mathbb{E}[ \| z^{i+1} - z(x^i) \|^2]}_{\text{Lemma~\ref{lem:TSG_LL_descent}}}\label{eq:7020}\\
			&\quad+ 2 \underbrace{\mathbb{E}[\| z^{i+1} - z(x^i,y^i) \|^2]}_{\text{Lemma~\ref{lem:TSG_aux_LL_descent}}}\nonumber\\
			&\quad+ \left( 2L_y^2 + \frac{ L_y^2}{2 \phi_i} + \frac{L_{\nabla y}}{2} + 2L_z^2 + \frac{L_z^2}{2\kappa_i} + \frac{L_{\nabla z}}{2} + 4L_{z_{xy}}^2 \right)\alpha_i^2\mathbb{E}[ \| \bar g_{f_1}^{i} \|^2 ]\label{eq:7040}\\
			&\quad+ \left(\left( 2L_y^2 + \frac{ L_{\nabla y}}{2} + 2L_z^2 + \frac{ L_{\nabla z}}{2} + 4L_{z_{xy}}^2 \right)\tau + \tilde\omega \right)\alpha_i^2\label{eq:7050}\\
			&\quad+ 2J^2\Upsilon L_{z_{xy}}^2\beta_i^2\nonumber\\
			&\quad - \mathbb{E}[ \| y^{i} - y( x^{i} ) \|^2 ] - \mathbb{E}[ \| z^{i} - z( x^{i} ) \|^2 ] - \mathbb{E}[ \| z^{i} - z( x^{i}, y^{i} ) \|^2 ].\nonumber
		\end{alignat}
		Now, for ease of notation, we denote the coefficients in~\eqref{eq:7010}--\eqref{eq:7050} as follows:
		\begin{alignat}{3}
			&G_1^i \;&&:=\; \left( 1 + \alpha_iL_{F_{yz}}^2 + 2\phi_i + \frac{L_{\nabla y}\alpha_i^2 \zeta}{2}\right),\label{eq:TSG_def_G1i}\\
			&G_2^i\;&&:=\; \left( 1 + \alpha_iL_{F_{yz}}^2 + 2\kappa_i + \frac{L_{\nabla z}\alpha_i^2\zeta}{2} \right),\label{eq:TSG_def_G2i}\\
			&G_3^i \;&&:=\; \left( 2L_y^2 + \frac{ L_y^2}{2 \phi_i} + \frac{L_{\nabla y}}{2} + 2L_z^2 + \frac{L_z^2}{2\kappa_i} + \frac{L_{\nabla z}}{2} + 4L_{z_{xy}}^2 \right),\label{eq:TSG_def_G4i}\\
			&\Phi \;&&:=\; \left(\left( 2L_y^2 + \frac{ L_{\nabla y}}{2} + 2L_z^2 + \frac{ L_{\nabla z}}{2} + 4L_{z_{xy}}^2 \right)\tau + \tilde\omega \right).\label{eq:TSG_def_Phi}
		\end{alignat}
		Then, using these definitions and applying Lemmas~\ref{lem:TSG_LL_descent}, \ref{lem:TSG_aux_LL_descent}, and~\ref{lem:TSG_ML_descent}, we have
		\begin{alignat*}{2}
			\mathbb{E}[ \mathbb{V}^{i+1} ] - \mathbb{E}[ \mathbb{V}^i ] & \leq -\frac{\alpha_i}{2} \mathbb{E}[ \| \nabla f(x^i) \|^2 ] - \left( \frac{\alpha_i}{2} - \frac{L_F\alpha_i^2}{2} - G_3^i \alpha_i^2 \right) \mathbb{E}[ \| \bar g_{f_1}^{i} \|^2 ] + \Phi \alpha_i^2\\
			&\quad+ G_1^i\left( 1 - \psi_i\beta_i \right)^J \mathbb{E}[ \| y^{i} - y(x^i) \|^2 ]  + G_1^i\left( 1 + \frac{1}{2}(J-1)\hat{\eta_i} L_{z_y}^2 \right)J\Upsilon\beta_i^2 \\
			&\quad + G_1^i\frac{J+1}{2}JK\gamma_i^2\sigma_{\nabla f_3}^2 + G_1^i\left( 1 - \gamma_i \rho_{f_3} \right)^K J\mathbb{E}[ \| z^{i} - z(x^i,y^{i}) \|^2 ]\\
			&\quad+ G_2^i \left( 1 - \gamma_i \rho_{f_3} \right)^{JK}\mathbb{E}[\| z^{i} - z(x^i) \|^2] + G_2^iJ K\gamma_i^2\sigma_{\nabla f_3}^2\\
			&\quad+ 2 \left( 1 - \gamma_i \rho_{f_3} \right)^{J K}\mathbb{E}[\| z^{i} - z(x^i,y^i) \|^2] + 2J K\gamma_i^2\sigma_{\nabla f_3}^2 +  2J^2\Upsilon L_{z_{xy}}^2\beta_i^2\\
			&\quad - \mathbb{E}[ \| y^{i} - y( x^{i} ) \|^2 ] - \mathbb{E}[ \| z^{i} - z( x^{i} ) \|^2 ] - \mathbb{E}[ \| z^{i} - z( x^{i}, y^{i} ) \|^2 ].
		\end{alignat*}
		Simplifying once again while using the fact that~$\left( 1 - \gamma_i \rho_{f_3} \right)^{J K}\leq\left( 1 - \gamma_i \rho_{f_3} \right)^{K}$ (recalling that~$\rho_{f_3}$ from Lemma~\ref{lem:TSG_LL_descent} and~$\gamma_i$ are positive) as well as $J-1\leq J$, we have
		\begin{alignat}{2}
			&\mathbb{E}[ \mathbb{V}^{i+1} ] - \mathbb{E}[ \mathbb{V}^i ]\nonumber\\
			& \leq -\frac{\alpha_i}{2} \mathbb{E}[ \| \nabla f(x^i) \|^2 ] + \Phi \alpha_i^2 - \underbrace{\left( \frac{\alpha_i}{2} - \frac{L_F\alpha_i^2}{2} - G_3^i \alpha_i^2 \right)}_{A_1} \mathbb{E}[ \| \bar g_{f_1}^{i} \|^2 ] \nonumber\\
			&\quad+ \underbrace{((G_1^i J + 2)\left( 1 - \gamma_i \rho_{f_3} \right)^K - 1)}_{A_2}\mathbb{E}[ \| z^{i} - z(x^i,y^{i}) \|^2 ]\nonumber\\
			&\quad+ \underbrace{(G_1^i \left( 1 - \psi_i\beta_i \right)^J - 1)}_{A_3} \mathbb{E}[ \| y^{i} - y(x^i) \|^2 ] + \underbrace{(G_2^i \left( 1 - \gamma_i \rho_{f_3} \right)^{JK} - 1)}_{A_4}\mathbb{E}[\| z^{i} - z(x^i) \|^2]\nonumber\\
			&\quad+ \left( 2JL_{z_{xy}}^2 + \left( 1 + \frac{1}{2}J\hat{\eta_i} L_{z_y}^2 \right)G_1^i \right)J\Upsilon\beta_i^2 + \left(G_1^i\frac{J+1}{2} + G_2^i + 2\right) JK\gamma_i^2\sigma_{\nabla f_3}^2.\label{eq:TSG_final_theorem_intermediate_1}
		\end{alignat}
		
		\textbf{$(\text{Choice of step sizes})$:} In the proof of this theorem, we choose the UL, ML, and LL step sizes to be the following:
		\begin{equation}\label{eq:alpha_i_choice}
			\alpha_i := \frac{1}{\sqrt{I}},
		\end{equation}
		\begin{equation}\label{eq:beta_i_choice}
			\beta_i := \frac{1}{\sqrt{J}}\alpha_i = \frac{1}{\sqrt{I}\sqrt{J}},
		\end{equation}
		\begin{equation}\label{eq:gamma_i_choice}
			\gamma_i := \frac{1}{\sqrt{J}\sqrt{K}}\alpha_i = \frac{1}{\sqrt{I}\sqrt{J}\sqrt{K}}.
		\end{equation}

		\textbf{$(\text{Analysis of }A_1)$:} Now, consider the coefficient $A_1$ of the $\mathbb{E}[ \| \bar g_{f_1}^{i} \|^2 ]$ term in~\eqref{eq:TSG_final_theorem_intermediate_1}. We wish to determine an appropriate bound on $\alpha_i$ (in terms of $I$) such that this term in non-negative. To that end, we wish to ensure that $A_1\geq0$, which is true if
		
		\[
		\frac{1}{2} - \frac{L_y^2\alpha_i}{2 \phi_i} - \frac{L_z^2\alpha_i}{2\kappa_i} - \left( \frac{L_F}{2} + 2L_y^2 + \frac{L_{\nabla y}}{2} + 2L_z^2 + \frac{L_{\nabla z}}{2} + 4L_{z_{xy}}^2 \right) \alpha_i \geq 0.
		\]
		Now, choosing 
		\begin{equation}\label{eq:70500}
			\phi_i = 4L_y^2\alpha_i \quad \text{and} \quad \kappa_i = 4L_z^2\alpha_i,
		\end{equation}
		we have
		\begin{equation}\label{eq:TSG_UB_alpha}
			\alpha_i \leq \frac{\frac{1}{4}}{\frac{L_F}{2} + 2L_y^2 + \frac{L_{\nabla y}}{2} + 2L_z^2 + \frac{L_{\nabla z}}{2} + 4L_{z_{xy}}^2} = \frac{1}{2(L_F + 4L_y^2 + L_{\nabla y} + 4L_z^2 + L_{\nabla z} + 8L_{z_{xy}}^2)}.
		\end{equation}
		Now, recalling our choice for $\alpha_i$ given by~\eqref{eq:alpha_i_choice}, then from~\eqref{eq:TSG_UB_alpha} we see that we must choose $I\in\mathbb{N}$ such that
		\begin{equation}\label{eq:I_LB}
			4(L_F + 4L_y^2 + L_{\nabla y} + 4L_z^2 + L_{\nabla z} + 8L_{z_{xy}}^2)^{2} \leq I.
		\end{equation}
		Therefore, when choosing $I$ such that the inequality~\eqref{eq:I_LB} is satisfied, the coefficient~$A_1$ of the~$\mathbb{E}[ \| \bar g_{f_1}^{i} \|^2 ]$ term in~\eqref{eq:TSG_final_theorem_intermediate_1} will be non-negative.
		
		\textbf{$(\text{Analysis of }A_2)$:} Now, consider the coefficient~$A_2$ of the~$\mathbb{E}[ \| z^{i} - z(x^i,y^{i}) \|^2 ]$ term in~\eqref{eq:TSG_final_theorem_intermediate_1}. We wish to determine an appropriate bound on $\gamma_i$ (in terms of $I$, $J$, and $K$) such that this term is non-positive. Now, recall from Lemma~\ref{lem:TSG_LL_descent} that~$\rho_{f_3} = \frac{2\mu_z L_{\nabla f_3}}{\mu_z + L_{\nabla f_3}}$ (see~\eqref{eq:1060} in Appendix~\ref{app:lem:TSG_LL_descent}) as well as the assumed bound (imposed in Lemmas~\ref{lem:TSG_LL_descent}, \ref{lem:TSG_aux_LL_descent}, and~\ref{lem:TSG_ML_descent})
		\begin{equation}\label{eq:TSG_UB1_gamma}
			\gamma_i\leq\frac{1}{\mu_z + L_{\nabla f_3}}.
		\end{equation}
		Utilizing our choice of $\gamma_i$ given by~\eqref{eq:gamma_i_choice}, this can be satisfied by choosing $I$, $J$, and $K$ such that
		\begin{equation}\label{eq:IJK_LB}
			(\mu_z + L_{\nabla f_3})^2 \leq IJK,
		\end{equation}
		
		Recall the fact that~$\gamma_i$ and~$\rho_{f_3}$ are positive, along with~\eqref{eq:TSG_UB1_gamma}, which ensures that \linebreak$0\leq 1 - \gamma_i\rho_{f_3} \leq 1$. 
		With this, to guarantee that~$A_2$ is non-positive, we wish to ensure that
		\begin{equation}\label{eq:TSG_eq_to_bound_1}
			( G_1^i J + 2)\left( 1 - \gamma_i \rho_{f_3} \right)^K \leq 1,
		\end{equation}
		Now, recall the fact that $1+a \leq e^a$ for any $a\in\mathbb{R}$. Multiplying both sides of this equation by the quantity $\left(1-\frac{a}{K}\right)^{K}$, we can see that
		\begin{equation}\label{eq:8040}
			(1+a)\left(1-\frac{a}{K}\right)^{K} \leq e^a\left(1-\frac{a}{K}\right)^{K} \leq e^a\left(e^{-\frac{a}{K}}\right)^{K} = e^ae^{-a} = 1.
		\end{equation}
		
		Now, to ensure that~\eqref{eq:TSG_eq_to_bound_1} holds, applying~\eqref{eq:8040} with~$a = K \gamma_i \rho_{f_3}$, yields the new inequality we wish to satisfy given by
		\begin{alignat}{2}
			G_1^i J + 2 \;&\leq\; 1 + K\gamma_i \rho_{f_3}\label{eq:8010}.
		\end{alignat}
		Now, using the fact that $\alpha_i\leq 1$ along with the choice $\phi_i=4L_y^2\alpha_i$, we can upper-bound $G_1^i$ as
		\begin{equation}
			G_1^i = 1 + \alpha_iL_{F_{yz}}^2 + 2\phi_i + \frac{L_{\nabla y}\alpha_i^2 \zeta}{2} \leq 1 + L_{F_{yz}}^2 + 8L_y^2 + \frac{L_{\nabla y} \zeta}{2} := g_1.\label{eq:G_1^i_UB}
		\end{equation}
		Thus, utilizing~\eqref{eq:G_1^i_UB}, we can guarantee~\eqref{eq:8010} if $Jg_1 + 1 \leq K\gamma_i \rho_{f_3}$ is satisfied. Now, utilizing the choice of $\gamma_i$ given by~\eqref{eq:gamma_i_choice}, we have
		\begin{equation}
			\frac{Jg_1 + 1}{\rho_{f_3}} \leq \frac{K}{\sqrt{I}\sqrt{J}\sqrt{K}} \quad \Rightarrow \quad \frac{IJ(Jg_1 + 1)^2}{\rho_{f_3}^2} \leq K.\label{eq:K_LB_1}
		\end{equation}
		Therefore, when choosing $I$, $J$, and $K$ such that the inequality~\eqref{eq:K_LB_1} is satisfied, the coefficient~$A_2$ of the~$\mathbb{E}[ \| z^{i} - z(x^i,y^{i}) \|^2 ]$ term in~\eqref{eq:TSG_final_theorem_intermediate_1} will be non-positive.

		\textbf{$(\text{Analysis of }A_3)$:} Now, consider the coefficient~$A_3$ of the~$\mathbb{E}[ \| y^{i} - y(x^i) \|^2 ]$ term in~\eqref{eq:TSG_final_theorem_intermediate_1}. We wish to determine an appropriate bound on~$\beta_i$ (in terms of $I$ and $J$) such that this term is non-positive. Recall from the proof of Lemma~\ref{lem:TSG_ML_descent} that~$\rho=\frac{2\mu_y L_{\nabla \bar f}}{\mu_y + L_{\nabla \bar f}}$ (see~\eqref{eq:2020} in Appendix~\ref{app:lem:TSG_ML_descent}) and that
		\begin{equation}\label{eq:TSG_UB_beta}
			\beta_i \;\leq\; \frac{1}{\mu_y + L_{\nabla \bar{f}}}, \quad\quad \beta_i \;\leq\; \frac{\rho}{2\hat\omega^2+1}.
		\end{equation}
		Utilizing our choice of $\beta_i$ given by~\eqref{eq:beta_i_choice}, this can be satisfied by choosing $I$ and $J$ such that
		\begin{equation}\label{eq:IJ_LB}
			\max\left\{ \mu_y + L_{\nabla \bar{f}}, \frac{2\hat\omega^2+1}{\rho} \right\}^2 \leq IJ.
		\end{equation}
		
		Further, recall from Lemma~\ref{lem:TSG_ML_descent} that these two upper-bounds ensure that $0\leq 1 - \psi_i\beta_i \leq1$, where $\psi_i = \rho - 2\hat\omega^2\theta_i^2 - \beta_i$. With this, we wish to ensure that $G_1^i \left( 1 - \psi_i\beta_i \right)^J \leq 1$. Now, once again using the fact that $(1+a)\left(1-\frac{a}{J}\right)^{J}\leq 1$ as discussed for the analysis of~$A_2$, we need to choose an $a$ such that $0\leq\frac{a}{J}\leq 1$. Choosing $a=J\psi_i\beta_i$, we have $\frac{a}{J} = \frac{J\psi_i\beta_i}{J} = \psi_i\beta_i$, which from Lemma~\ref{lem:TSG_ML_descent}, we know that $0\leq \psi_i\beta_i\leq 1$, and by extension that $0\leq \frac{a}{J}\leq1$. Thus, we have
		\begin{equation}\label{eq:TSG_LB1_beta}
			G_1^i \leq 1 + J\psi_i\beta_i \quad \Rightarrow \quad \alpha_iL_{F_{yz}}^2 + 2\phi_i + \frac{L_{\nabla y}\alpha_i^2 \zeta}{2} \leq J\psi_i\beta_i.
		\end{equation}
		Notice that from equation~\eqref{eq:TSG_UB_beta}, $\beta_i$ is upper-bounded by the constant $\bar\beta_1$ (defined as the largest value that $\beta_i$ can take) given by
		\begin{equation}\label{eq:TSG_bar_beta_1}
			\bar\beta_1 := \min\left\{ 1, \frac{1}{\mu_y + L_{\nabla \bar{f}}}, \frac{\rho}{2\hat\omega^2+1} \right\}.
		\end{equation}
		Using the fact that $\theta_i = \alpha_i\beta_i\gamma_i\leq\beta_i$ (by $\alpha_i\leq1$ and $\gamma_i\leq1$) and~\eqref{eq:TSG_bar_beta_1} in the definition of $\psi_i=\rho - 2\hat\omega^2\theta_i^2 - \beta_i$, we can define the new lower-bounding constant $\Gamma$ as
		\begin{equation}\label{eq:4010_v2}
			\Gamma \;:=\; \rho - 2\hat\omega^2\bar\beta_1^2 - \bar\beta_1.
		\end{equation}
		Notice that $0\leq\Gamma\leq\psi_i$ for all feasible values of $\theta_i$ and $\beta_i$ in $\psi_i$. Now, using this definition of $\Gamma$, the fact that $\alpha_i\leq 1$, and the choice $\phi_i=4L_y^2\alpha_i$, we have that the following implies~\eqref{eq:TSG_LB1_beta}:
		\[\alpha_i\left(L_{F_{yz}}^2 + 8L_y^2 + \frac{L_{\nabla y} \zeta}{2}\right) \leq J\Gamma\beta_i.\]
		Utilizing the choices for $\alpha_i$ and $\beta_i$ given by~\eqref{eq:alpha_i_choice} and~\eqref{eq:beta_i_choice}, respectively, it follows that the bound
		\begin{equation}\label{eq:J_LB_1}
			\frac{\left(L_{F_{yz}}^2 + 8L_y^2 + \frac{L_{\nabla y} \zeta}{2}\right)^2}{\Gamma^2} \leq J,
		\end{equation}
		implies that~\eqref{eq:TSG_LB1_beta} is satisfied. Therefore, when choosing $J$ such that the inequality~\eqref{eq:J_LB_1} is satisfied, the coefficient $A_3$ of the $\mathbb{E}[ \| y^{i} - y(x^i) \|^2 ]$ term in~\eqref{eq:TSG_final_theorem_intermediate_1} will be non-positive.

		\textbf{$(\text{Analysis of }A_4)$:} Now, consider the coefficient $A_4$ of the $\mathbb{E}[\| z^{i} - z(x^i) \|^2]$ term in~\eqref{eq:TSG_final_theorem_intermediate_1}. We wish to determine an appropriate bound on~$\gamma_i$ (in terms of $I$, $J$, and $K$) such that this term is non-positive. That is, we wish to show $G_2^i\left(1 - \gamma_i\rho_{f_3}  \right)^{JK} \leq 1$. Now, recall that equation~\eqref{eq:TSG_UB1_gamma} ensures $0\leq \left(1 - \gamma_i\rho_{f_3}  \right)^{JK} \leq1$. With this, and using the same reasoning that was used earlier, we need to show that
		\begin{equation}\label{eq:TSG_LB_gamma}
			G_2^i \leq 1 + JK\gamma_i\rho_{f_3} \quad \Rightarrow \quad \alpha_iL_{F_{yz}}^2 + 2\kappa_i + \frac{L_{\nabla z}\alpha_i^2\zeta}{2} \leq JK\gamma_i\rho_{f_3}.
		\end{equation}
		Now, using the fact that $\alpha_i \leq 1$ along with the choice $\kappa_i = 4L_z^2\alpha_i$, we can see that~\eqref{eq:TSG_LB_gamma} is satisfied if
		\[\alpha_i\left(L_{F_{yz}}^2 + 8L_z^2 + \frac{L_{\nabla z}\zeta}{2}\right) \leq JK\gamma_i\rho_{f_3}.\]
		Utilizing the choices for $\alpha_i$ and $\gamma_i$ given by~\eqref{eq:alpha_i_choice} and~\eqref{eq:gamma_i_choice}, respectively, it follows that the bound
		\begin{equation}\label{eq:JK_LB_1}
			\frac{\left(L_{F_{yz}}^2 + 8L_z^2 + \frac{L_{\nabla z}\zeta}{2}\right)^2}{\rho_{f_3}^2} \leq JK,
		\end{equation}
		implies~\eqref{eq:TSG_LB_gamma}. Therefore, when choosing $J$ and $K$ such that the inequality~\eqref{eq:JK_LB_1} is satisfied, the coefficient $A_4$ of the $\mathbb{E}[\| z^{i} - z(x^i) \|^2]$ term in~\eqref{eq:TSG_final_theorem_intermediate_1} will be non-positive.

		\textbf{$(\text{Upper-bounding }\hat{\eta_i})$:} We need an upper-bound on the positive quantity $\hat{\eta_i}$ in the second to last term of~\eqref{eq:TSG_final_theorem_intermediate_1}. Specifically, we wish to upper bound the term given by
		\begin{equation}\label{eq:eta_hat_hat_intermediate}
			\hat{\eta_i} = 1 + \frac{1}{\eta_i}.
		\end{equation}
		Further, recall that $0\leq \left( 1 - \gamma_i \rho_{f_3} \right)^K + \eta_i \leq 1$ from the assumed bound (imposed in Lemma~\ref{lem:TSG_ML_descent})
		\begin{equation}\label{eq:UB_hat_eta_i}
			\eta_i\leq 1 - \left( 1 - \gamma_i \rho_{f_3} \right)^K,
		\end{equation}
		on the positive quantity $\eta_i>0$. To ensure that bound~\eqref{eq:UB_hat_eta_i} is always satisfied, we can start by choosing $\eta_i$ to be
		\begin{equation}\label{eq:hat_eta_i_choice}
			\eta_i:= \mathcal{E}(1-(1-\gamma_i\rho_{f_3})^K),
		\end{equation}
		for some constant $0<\mathcal{E}<1$. When utilizing the choice of $\gamma_i$ given by~\eqref{eq:gamma_i_choice}, we have
		\[\eta_i:= \mathcal{E}\left(1-\left(1-\frac{\rho_{f_3}}{\sqrt{I}\sqrt{J}\sqrt{K}}\right)^K\right).\]
		Thus, we want to derive an upper-bound on the term $1/\eta_i$. Recall that $1+a\leq e^a$ for all $a\in\mathbb{R}$. Letting $\hat{a}:=\frac{\rho_{f_3}}{\sqrt{I}\sqrt{J}\sqrt{K}}$, we have that $(1-\hat{a})^K \leq e^{-K\hat{a}}$. For simplification, let $\bar{a} = K\hat{a}=\frac{\sqrt{K}}{\sqrt{I}\sqrt{J}}\rho_{f_3}$. Further, multiplying both sides of the inequality by $-1$ and adding $1$ to both sides, 
		we obtain $1 - (1-\hat{a})^K \geq 1 - e^{-\bar{a}}$. Lastly, multiplying by $\mathcal{E}$ and inverting, we obtain the inequality
		\begin{equation}\label{eq:invs_hat_eta_i_UB_1}
			\eta_i = \mathcal{E}(1 - (1-\hat{a})^K) \geq \mathcal{E}(1 - e^{-\bar{a}}) \quad\quad \Longrightarrow \quad\quad \frac{1}{\eta_i} \leq \frac{1}{\mathcal{E}(1 - e^{-\bar{a}})}.
		\end{equation}
		It is clear that as $\bar{a}\rightarrow\infty$ (i.e., $\sqrt{K}$ approaches infinity faster than $\sqrt{I}\sqrt{J}$) then $\lim_{\bar a\rightarrow\infty}e^{-\bar{a}}=0$, leading to the lower-bounding limit of
		\[\lim_{K\rightarrow\infty}\frac{1}{\mathcal{E}(1 - e^{-\bar{a}})} = \frac{1}{\mathcal{E}} \quad\quad \Longrightarrow \quad\quad \frac{1}{\mathcal{E}} \leq \frac{1}{\eta_i} \leq \frac{1}{\mathcal{E}(1 - e^{-\bar{a}})}.\]
		Now, notice that the expression $\frac{1}{\mathcal{E}(1 - e^{-\bar{a}})}$ grows toward infinity as $\bar{a}\rightarrow0^+$ (which will occur when $\sqrt{I}\sqrt{J}$ approaches infinity faster than $\sqrt{K}$), since $\lim_{\bar a\rightarrow0^+}e^{-\bar a}=1$. Therefore, to prevent the term $\bar{a}$ from approaching $0$, we can impose the bound
		\begin{equation}
			IJ\leq K.\label{eq:K_LB_2}
		\end{equation}
		Thus, when imposing bound~\eqref{eq:K_LB_2} and considering that $I\geq1$, $J\geq1$, and $K\geq1$, we can see that $\bar{a} = \frac{\sqrt{K}}{\sqrt{I}\sqrt{J}}\rho_{f_3}$ is bounded by
		\begin{equation}\label{eq:bounds_on_bar_a}
			\rho_{f_3} \leq \bar a.
		\end{equation}
		Therefore, utilizing the lower-bound in~\eqref{eq:bounds_on_bar_a} will yield the desired upper-bound on $1/\eta_i$ of
		\begin{equation}\label{eq:invs_hat_eta_i_UB_final}
			\frac{1}{\eta_i} \leq \frac{1}{\mathcal{E}(1 - e^{-\rho_{f_3}})}.
		\end{equation}

		\textbf{(Consolidation of bounds):} To summarize, we choose the step-sizes~$\alpha_i$, $\beta_i$, and~$\gamma_i$ according to~\eqref{eq:alpha_i_choice}, \eqref{eq:beta_i_choice}, and~\eqref{eq:gamma_i_choice}, respectively, as well as impose the following bounds on $I$, $J$, and $K$ (defined by~\eqref{eq:I_LB}, \eqref{eq:IJK_LB}, \eqref{eq:K_LB_1}, \eqref{eq:IJ_LB}, \eqref{eq:J_LB_1}, \eqref{eq:JK_LB_1}, and lastly~\eqref{eq:K_LB_2}, respectively), restated here for convenience: 
		\[4(L_F + 4L_y^2 + L_{\nabla y} + 4L_z^2 + L_{\nabla z} + 8L_{z_{xy}}^2)^{2} \leq I,\]
		\[\left(\mu_z + L_{\nabla f_3}\right)^2 \leq IJK, \quad\quad\quad \frac{IJ(Jg_1 + 1)^2}{\rho_{f_3}^2} \leq K, \quad\quad\quad \max\left\{ \mu_y + L_{\nabla \bar{f}}, \frac{2\hat\omega^2+1}{\rho} \right\}^2 \leq IJ,\]
		\[\frac{\left(L_{F_{yz}}^2 + 8L_y^2 + \frac{L_{\nabla y} \zeta}{2}\right)^2}{\Gamma^2} \leq J,  \quad\quad\quad \frac{\left(L_{F_{yz}}^2 + 8L_z^2 + \frac{L_{\nabla z}\zeta}{2}\right)^2}{\rho_{f_3}^2} \leq JK, \quad\quad\quad IJ\leq K.\]
		We can denote the constant lower-bound on $J$ given by~\eqref{eq:J_LB_1} as
		\begin{equation}\label{eq:varsigma_choice}
			J\geq\varsigma := \frac{\left(L_{F_{yz}}^2 + 8L_y^2 + \frac{L_{\nabla y} \zeta}{2}\right)^2}{\Gamma^2}.
		\end{equation}
		Using~\eqref{eq:varsigma_choice}, the bounds~\eqref{eq:I_LB} and~\eqref{eq:IJ_LB} are implied by the consolidated bound
		\begin{equation}\label{eq:consolidated_LB_I}
			\varpi \leq I,
		\end{equation}
		where the constant $\varpi$ is defined as
		\begin{equation}\label{eq:varpi_def}
			\varpi := \max\left\{ 4(L_F + 4L_y^2 + L_{\nabla y} + 4L_z^2 + L_{\nabla z} + 8L_{z_{xy}}^2)^{2}, \frac{\max\left\{ \mu_y + L_{\nabla \bar{f}}, \frac{2\hat\omega^2+1}{\rho} \right\}^2}{\varsigma}\right\}.
		\end{equation}
		Similarly, using~\eqref{eq:varsigma_choice} and~\eqref{eq:varpi_def}, we can see that the bounds~\eqref{eq:IJK_LB}, \eqref{eq:K_LB_1}, \eqref{eq:JK_LB_1}, and~\eqref{eq:K_LB_2} are implied by the following consolidated bound
		\begin{equation}\label{eq:consolidated_LB_K}
			\Xi(I,J)\leq K,
		\end{equation}
		where the function $\Xi:\mathbb{N}_+\times \mathbb{N}_+\rightarrow\mathbb{R}_+$ is defined as
		\begin{equation}\label{eq:Xi_def}
			\Xi(I,J) := \max\left\{\frac{(\mu_z + L_{\nabla f_3})^2}{\varpi\varsigma}, \frac{IJ(Jg_1 + 1)^2}{\rho_{f_3}^2}, \frac{\left(L_{F_{yz}}^2 + 8L_z^2 + \frac{L_{\nabla z}\zeta}{2}\right)^2}{\varsigma\rho_{f_3}^2}, IJ \right\},
		\end{equation}
		from which it is immediately clear that $K\geq\mathcal{O}(J^3I)$.

		\textbf{$(\text{Upper-bounding the remaining terms in }\eqref{eq:TSG_final_theorem_intermediate_1})$:} When choosing the step-sizes~$\alpha_i$, $\beta_i$, and~$\gamma_i$ according to~\eqref{eq:alpha_i_choice}, \eqref{eq:beta_i_choice}, and~\eqref{eq:gamma_i_choice}, respectively, as well as choosing $I$, $J$, and $K$ according to~\eqref{eq:consolidated_LB_I}, \eqref{eq:varsigma_choice}, and~\eqref{eq:consolidated_LB_K}, respectively, it follows that~$A_1$ is non-negative while~$A_2$, $A_3$, and~$A_4$ are non-positive in~\eqref{eq:TSG_final_theorem_intermediate_1}. Thus, we can simplify inequality~\eqref{eq:TSG_final_theorem_intermediate_1} to
		\begin{alignat}{2}
			\mathbb{E}[ \mathbb{V}^{i+1} ] - \mathbb{E}[ \mathbb{V}^i ] & \leq -\frac{\alpha_i}{2} \mathbb{E}[ \| \nabla f(x^i) \|^2 ] + \Phi \alpha_i^2 + \left(G_1^i\frac{J+1}{2} + G_2^i + 2\right) JK\gamma_i^2\sigma_{\nabla f_3}^2\nonumber\\
			&\quad + \left(2JL_{z_{xy}}^2 + \left( 1 + J\left( \frac{1}{\mathcal{E}\left(1 - e^{-\rho_{f_3}}\right)} \right) \frac{L_{z_y}^2}{2} \right)G_1^i \right) J\Upsilon\beta_i^2\nonumber \\
			& \leq -\frac{\alpha_i}{2} \mathbb{E}[ \| \nabla f(x^i) \|^2 ] + \left(\Phi + c_1 + c_2J\right)\alpha_i^2,\label{eq:TSG_final_theorem_intermediate_2}
		\end{alignat}
		where the last inequality follows from utilizing the step-sizes~$\alpha_i$, $\beta_i$, and~$\gamma_i$ according to~\eqref{eq:alpha_i_choice}, \eqref{eq:beta_i_choice}, and~\eqref{eq:gamma_i_choice}, respectively, as well as the inequality~\eqref{eq:G_1^i_UB}, recalling that $g_1 = 1 + L_{F_{yz}}^2 + 8L_y^2 + \frac{L_{\nabla y} \zeta}{2}$, and defining the upper-bound on $G_2^i$ of $G_2^i\leq 1 + L_{F_{yz}}^2 + 8L_z^2 + \frac{L_{\nabla z}\zeta}{2} := g_2$ (obtained from~\eqref{eq:TSG_def_G2i} by using $\alpha_i\leq1$ and $\kappa_i=4L_z^2\alpha_i$). Further, the constants $c_1$ and $c_2$ are defined as
		\[c_1 := \sigma_{\nabla_{f_3}}^2\left( \frac{g_1}{2} + g_2 + 2 \right) + g_1\Upsilon, \quad\quad c_2 := 2L_{z_{xy}}^2\Upsilon + \frac{g_1 \sigma_{\nabla_{f_3}}^2}{2} + \frac{g_1 L_{z_y}^2\Upsilon}{2}\left( \frac{1}{\mathcal{E}\left(1 - e^{-\rho_{f_3}}\right)} \right).\]

		\textbf{$(\text{Telescoping})$:} Now, rearranging~\eqref{eq:TSG_final_theorem_intermediate_2} and telescoping over $i=0,1,...,I-1$ leads to
		\begin{equation}\label{eq:9010}
			\frac{1}{2}\sum_{i=0}^{I-1}\alpha_i \mathbb{E}[ \| \nabla f(x^i) \|^2 ] \leq \mathbb{V}^0 - \mathbb{V}^I + \sum_{i=0}^{I-1}(\Phi + c_1 + c_2J)\alpha_i^2.
		\end{equation}
		Note that $\alpha_i$ is a constant that does not depend on~$i$ given by~\eqref{eq:alpha_i_choice}. Thus, dividing both sides of~\eqref{eq:9010} by~$\frac{1}{2}I\alpha_i$, while noting that $\sum_{i=0}^{I-1}\alpha_i = I\alpha_i$, and considering that~$0\leq\mathbb{V}^i$ for all~$i\in\{0,1,...,I-1\}$, we have
		\[\frac{1}{I}\sum_{i=0}^{I-1} \mathbb{E}[ \| \nabla f(x^i) \|^2 ] \;\leq\; \frac{\mathbb{V}^0 + (\Phi + c_1 + c_2J)\sum_{i=0}^{I-1}\alpha_i^2}{\frac{1}{2}I\alpha_i} \;=\; \frac{2\mathbb{V}^0 + 2(\Phi + c_1 + c_2J)}{\sqrt{I}}.
		\]
		Therefore, we have obtained the desired convergence result, completing the proof. 
	\end{proof}

	\section{Bounds on bias, variance, and inexactness}\label{app:conv_theory_proofsi_var_inex}
	
	This appendix contains derivations of results that yield bounds on the biasedness and variance of stochastic terms as well as bounds on the sizes, inexactness, and variances of the UL and ML search directions. For ease of notation, since all expectations that are present in the proofs of Lemmas~\ref{lem:TSG_derived_unbiasedness}, \ref{lem:TSG_derived_variance_bounds_of_f_bar}, and \ref{lem:TSG_new_inexact_variance_bound} are conditioned on~$\mathcal{F}_\xi$, we utilize the short-hand notation of $\mathbb{E}[\cdot]:=\mathbb{E}[\cdot\vert\mathcal{F}_\xi]$, unless stated otherwise.

	\begin{lemma}[Bounds on bias of $\nabla z$ and $\nabla^2\Bar{f}$]\label{lem:TSG_derived_unbiasedness}
		Under Assumptions~\ref{as:tri_lip_cont}, \ref{as:strong_conv_f3_z}, \ref{as:TSG_unbiased_estimators}, \ref{as:bound_on_hess_inv_true_MLP}, \linebreak and~\ref{as:TSG_bounded_var}, the stochastic terms $\nabla_x z^\xi$, $\nabla_y z^\xi$, $\nabla_{xy}^2\Bar{f}^\xi$, and $\nabla_{yy}^2\Bar{f}^\xi$ estimate $\nabla_x z$, $\nabla_y z$, $\nabla_{xy}^2\Bar{f}$, and $\nabla_{yy}^2\Bar{f}$, respectively, with biases that are bounded on the order of~$\mathcal{O}(\theta)$, i.e., there exist positive constants~$U_x$, $U_y$, $U_{xy}$, and~$U_{yy}$ such that
		\begin{alignat}{3}
			&\| \nabla_x z(x,y)^\top - \mathbb{E}[ \nabla_x z(x,y;\xi)^\top \vert \mathcal{F}_\xi ] \| \;&&\leq\; U_x\theta,\label{eq:TSG_derived_unbiasedness_x}\\
			&\| \nabla_y z(x,y)^\top - \mathbb{E}[ \nabla_y z(x,y;\xi)^\top \vert \mathcal{F}_\xi] \| \;&&\leq\; U_y\theta,\label{eq:TSG_derived_unbiasedness_y}\\
			&\| \nabla_{xy}^2\Bar{f}(x,y,z) - \mathbb{E}[ \nabla_{xy}^2\Bar{f}(x,y,z;\xi) \vert \mathcal{F}_\xi] \| \;&&\leq\; U_{xy}\theta,\label{eq:TSG_derived_unbiasedness_xy} \\
			&\| \nabla_{yy}^2\Bar{f}(x,y,z) - \mathbb{E}[ \nabla_{yy}^2\Bar{f}(x,y,z;\xi) \vert \mathcal{F}_\xi] \| \;&&\leq\; U_{yy}\theta.\label{eq:TSG_derived_unbiasedness_yy}
		\end{alignat}
	\end{lemma}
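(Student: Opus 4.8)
The plan is to prove the two Jacobian bounds \eqref{eq:TSG_derived_unbiasedness_x}--\eqref{eq:TSG_derived_unbiasedness_y} first, since they serve as building blocks, and then obtain the Hessian bounds \eqref{eq:TSG_derived_unbiasedness_xy}--\eqref{eq:TSG_derived_unbiasedness_yy} by reducing the expressions in \eqref{hess_yx_fbar}--\eqref{eq:partial_y} to finite sums of products of stochastic factors. Throughout, I rely on the convention (supported by the i.i.d.\ sampling in Assumption~\ref{as:TSG_unbiased_estimators}) that each distinct derivative factor is drawn with its own independent sample, so that the conditional expectation of a product of such factors factorizes into the product of their conditional expectations given $\mathcal{F}_\xi$.

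For \eqref{eq:TSG_derived_unbiasedness_x}, I start from $\nabla_x z^\top = -\nabla_{zz}^2 f_3^{-1}\nabla_{zx}^2 f_3$ in \eqref{eq:jacobian_z_ret_x} and its stochastic counterpart $-[\nabla_{zz}^2 f_3^\xi]^{-1}\nabla_{zx}^2 f_3^\xi$. Writing $H = \nabla_{zz}^2 f_3$ and $C = \nabla_{zx}^2 f_3$, I add and subtract $\mathbb{E}[[H^\xi]^{-1}\,|\,\mathcal{F}_\xi]\,C$ to split the bias as
\[
H^{-1}C - \mathbb{E}[[H^\xi]^{-1}C^\xi\,|\,\mathcal{F}_\xi] = (H^{-1} - \mathbb{E}[[H^\xi]^{-1}\,|\,\mathcal{F}_\xi])C + \mathbb{E}[[H^\xi]^{-1}(C - C^\xi)\,|\,\mathcal{F}_\xi].
\]
By independence of the samples for $H^\xi$ and $C^\xi$ together with the unbiasedness $\mathbb{E}[C^\xi\,|\,\mathcal{F}_\xi]=C$ (Assumption~\ref{as:TSG_unbiased_estimators}), the second term vanishes. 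The first term is bounded by the inverse-Hessian bias bound $\|H^{-1} - \mathbb{E}[[H^\xi]^{-1}\,|\,\mathcal{F}_\xi]\| \leq W_{zz}\theta$ from Assumption~\ref{as:TSG_bounded_var} and by $\|C\| \leq L_{\nabla f_3}$ (a sub-block of $\nabla^2 f_3$, whose norm is controlled by the Lipschitz continuity of $\nabla f_3$ in Assumption~\ref{as:tri_lip_cont}), giving $U_x := W_{zz}L_{\nabla f_3}$. The bound \eqref{eq:TSG_derived_unbiasedness_y} follows identically with $C = \nabla_{zy}^2 f_3$.

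For the Hessian bounds I expand \eqref{hess_yx_fbar} (and \eqref{hess_yy_fbar}) using \eqref{eq:partial_x} (and \eqref{eq:partial_y}) into a finite sum of products of uniformly bounded factors. Boundedness is established factor-by-factor: all derivatives of $f_2$ and $f_3$ are bounded by the Lipschitz constants of Assumption~\ref{as:tri_lip_cont}; the deterministic, stochastic, and expected inverse Hessians are bounded by $1/\mu_z$, $b_{zz}$, and $b_{zz}$ (by Assumption~\ref{as:strong_conv_f3_z}, Assumption~\ref{as:bound_on_hess_inv_true_MLP}, and Jensen's inequality, respectively); and the compound factors $\nabla_x z^\top$, $\nabla_y z^\top$ are bounded as products of the preceding bounds. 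Each atomic factor is then classified as either unbiased (every derivative of $f_2$ and $f_3$) or $\mathcal{O}(\theta)$-biased (each inverse Hessian by Assumption~\ref{as:TSG_bounded_var}, and each $\nabla_x z^\top$, $\nabla_y z^\top$ by \eqref{eq:TSG_derived_unbiasedness_x}--\eqref{eq:TSG_derived_unbiasedness_y}).

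The core estimate is a telescoping bound for the bias of a product. For a product $A_1\cdots A_n$ with independently sampled stochastic factors, factorization gives $\mathbb{E}[A_1^\xi\cdots A_n^\xi\,|\,\mathcal{F}_\xi] = \prod_k \mathbb{E}[A_k^\xi\,|\,\mathcal{F}_\xi]$, and I write
\[
A_1\cdots A_n - \prod_{k}\mathbb{E}[A_k^\xi\,|\,\mathcal{F}_\xi] = \sum_{k=1}^n \Big(\prod_{i<k}A_i\Big)\big(A_k - \mathbb{E}[A_k^\xi\,|\,\mathcal{F}_\xi]\big)\Big(\prod_{i>k}\mathbb{E}[A_i^\xi\,|\,\mathcal{F}_\xi]\Big).
\]
In each summand the single difference factor contributes $\mathcal{O}(\theta)$ (or $0$ when $A_k$ is unbiased) while the remaining factors are uniformly bounded, so every summand is $\mathcal{O}(\theta)$; summing over the finitely many factors and then over the finitely many product terms in \eqref{hess_yx_fbar}--\eqref{eq:partial_y} yields the $\mathcal{O}(\theta)$ bound, with $U_{xy}$ (resp.\ $U_{yy}$) defined as the resulting finite sum of products of Lipschitz constants, inverse-Hessian bounds, and the constants $W_{zz}, U_x, U_y$. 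I expect the main obstacle to be organizational rather than analytical: the expansion of \eqref{eq:partial_x}--\eqref{eq:partial_y} contains products with as many as five or six factors (including two nested inverse Hessians and the compound $\nabla z$ terms), so verifying that every factor is either bounded or $\mathcal{O}(\theta)$-biased, and that the independence-based factorization is legitimate at each appearance of a repeated derivative, is the delicate part.
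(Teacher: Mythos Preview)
Your proposal is correct and follows essentially the same approach as the paper: both arguments factorize the conditional expectation of each product term via the i.i.d.\ sampling convention, then use the add-and-subtract (telescoping) trick to isolate a single $\mathcal{O}(\theta)$ bias factor in each summand while bounding the remaining factors by the Lipschitz constants, $1/\mu_z$, and $b_{zz}$. The only difference is presentational---the paper carries out the telescoping explicitly on each of the six product terms arising from \eqref{hess_yx_fbar} and \eqref{eq:partial_x}, whereas you state the telescoping identity once in general form and apply it uniformly.
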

	\begin{proof}
		For this proof, we will omit the point $(x,y,z)$ that the terms are evaluated at; we will simply use a $\xi$-superscript as short-hand to indicate any random terms. 
		We can obtain the bound on the biasedness of the estimator $\nabla_x z(x,y;\xi)$ in~equation~\eqref{eq:TSG_derived_unbiasedness_x} by utilizing the consistency of norms along with~\eqref{eq:jacobian_z_ret_x} and Assumption~\ref{as:TSG_unbiased_estimators} to obtain
		\begin{alignat}{2}
			\| \nabla_x z(x,y)^\top - \mathbb{E}[ \nabla_x z(x,y;\xi)^\top ] \| & = \| [ \nabla_{zz}^2 f_3 ]^{-1} \nabla_{zx}^2 f_3 - \mathbb{E}[ [ \nabla_{zz}^2 f_3^\xi  ]^{-1} ] \nabla_{zx}^2 f_3 \| \nonumber\\
			& \leq \| \nabla_{zx}^2 f_3 \|  \| [ \nabla_{zz}^2 f_3 ]^{-1} - \mathbb{E}[ [ \nabla_{zz}^2 f_3^\xi  ]^{-1} ] \|\nonumber\\
			& \leq L_{\nabla f_3}W_{zz} \theta := U_x \theta, \label{eq:1010}
		\end{alignat}
		where the last inequality follows from applying Assumptions~\ref{as:tri_lip_cont} and~\ref{as:TSG_bounded_var}.
		The proof of biasedness for the estimator $\nabla_y z(x,y;\xi)$ in equation~\eqref{eq:TSG_derived_unbiasedness_y} can be established following identical arguments.
		
		Now, to prove the biasedness of the estimator $\nabla_{xy}^2\Bar{f}(x,y,z;\xi)$, referencing equations~\eqref{hess_yx_fbar} and~\eqref{eq:partial_x}, utilizing Assumption~\ref{as:TSG_unbiased_estimators}, applying the triangle inequality along with the consistency of matrix norms, we have
		\begin{alignat}{2}
			&\| \nabla_{xy}^2\Bar{f} - \mathbb{E}[ \nabla_{xy}^2\Bar{f}^\xi ] \|\nonumber\\
			& \leq \| \nabla_{yzx}^3 f_3 \| \| \nabla_z f_2 \|  \|  [\nabla_{zz}^2 f_3]^{-1}  - \mathbb{E}[ [ \nabla_{zz}^2 f_3^\xi ]^{-1} ] \| \label{eq:TSG_biased_1}\\
			&\quad+ \| \nabla_{yzz}^3 f_3 \| \| \nabla_z f_2 \| \| \nabla_x z^\top [\nabla_{zz}^2 f_3]^{-1} - \mathbb{E}[ \nabla_x z^{^\xi\top}  [ \nabla_{zz}^2 f_3^\xi ]^{-1} ] \| \label{eq:TSG_biased_2}\\
			&\quad+ \| \nabla_{yz}^2f_3 \| \| \nabla_z f_2 \| \|  [ \nabla_{zz}^2 f_3 ]^{-1}  \nabla_{zzx}^3 f_3  [ \nabla_{zz}^2 f_3 ]^{-1} - \mathbb{E}[ [ \nabla_{zz}^2 f_3^\xi ]^{-1}  \nabla_{zzx}^3 f_3^\xi [ \nabla_{zz}^2 f_3^\xi ]^{-1} ]\| \label{eq:TSG_biased_3}\\
			&\quad+ \| \nabla_{yz}^2f_3 \| \| \nabla_z f_2 \| \| [ \nabla_{zz}^2 f_3 ]^{-1} \nabla_{zzz}^3 f_3 \nabla_x z^\top [ \nabla_{zz}^2 f_3 ]^{-1} - \mathbb{E}[ [ \nabla_{zz}^2 f_3^\xi ]^{-1} \nabla_{zzz}^3 f_3^\xi \nabla_x z^{^\xi\top} [ \nabla_{zz}^2 f_3^\xi ]^{-1} ] \| \label{eq:TSG_biased_4}\\
			&\quad+ \| \nabla_{yz}^2 f_3  \| \| \nabla_{zx}^2 f_2 \| \| [ \nabla_{zz}^2f_3 ]^{-1} - \mathbb{E}[ [ \nabla_{zz}^2f_3^\xi ]^{-1} ]\| \label{eq:TSG_biased_5}\\
			&\quad+ \| \nabla_{yz}^2 f_3 \| \| [ \nabla_{zz}^2f_3 ]^{-1} \nabla_{zz}^2 f_2 \nabla_x z^\top - \mathbb{E}[ [ \nabla_{zz}^2f_3^\xi ]^{-1} \nabla_{zz}^2 f_2^\xi \nabla_x z^{^\xi\top} ] \|. \label{eq:TSG_biased_6}
		\end{alignat}
		
		Notice that there are six difference terms here. Applying Assumption~\ref{as:tri_lip_cont} and~\ref{as:TSG_bounded_var}, we can bound equations~\eqref{eq:TSG_biased_1} and~\eqref{eq:TSG_biased_5} in the following way:
		\begin{equation}\label{eq:TSG_biased_1_bound}
			\| \nabla_{yzx}^3 f_3 \| \| \nabla_z f_2 \|  \|  [\nabla_{zz}^2 f_3]^{-1}  - \mathbb{E}[ [ \nabla_{zz}^2 f_3^\xi ]^{-1} ] \| \leq L_{\nabla^2 f_3} L_{f_2} W_{zz} \theta,
		\end{equation}
		\begin{equation}\label{eq:TSG_biased_5_bound}
			\| \nabla_{yz}^2 f_3  \| \| \nabla_{zx}^2 f_2 \| \| [ \nabla_{zz}^2f_3 ]^{-1} - \mathbb{E}[ [ \nabla_{zz}^2f_3^\xi ]^{-1} ]\| \leq L_{\nabla f_3} L_{\nabla f_2} W_{zz} \theta.
		\end{equation}
		
		Now, looking at equation~\eqref{eq:TSG_biased_2}, applying Assumption~\ref{as:tri_lip_cont}, adding and subtracting \linebreak$\nabla_x z^\top \mathbb{E}[  [ \nabla_{zz}^2 f_3^\xi ]^{-1} ]$, and applying the triangle inequality, we have
		\begin{alignat}{2}
			&\| \nabla_{yzz}^3 f_3 \| \| \nabla_z f_2 \| \| \nabla_x z^\top [\nabla_{zz}^2 f_3]^{-1} - \mathbb{E}[ \nabla_x z^{^\xi\top}  [ \nabla_{zz}^2 f_3^\xi ]^{-1} ] \|\nonumber\\
			& \leq L_{\nabla^2 f_3} L_{f_2} \| \nabla_x z^\top [\nabla_{zz}^2 f_3]^{-1} - \mathbb{E}[ \nabla_x z^{^\xi\top} ] \mathbb{E}[  [ \nabla_{zz}^2 f_3^\xi ]^{-1} ] \|\nonumber \\
			& \leq L_{\nabla^2 f_3} L_{f_2} \| \nabla_x z^\top [\nabla_{zz}^2 f_3]^{-1} - \nabla_x z^\top \mathbb{E}[  [ \nabla_{zz}^2 f_3^\xi ]^{-1} ] \|\nonumber\\
			&\quad+ L_{\nabla^2 f_3} L_{f_2}\| \nabla_x z^\top \mathbb{E}[  [ \nabla_{zz}^2 f_3^\xi ]^{-1} ] - \mathbb{E}[ \nabla_x z^{^\xi\top} ] \mathbb{E}[  [ \nabla_{zz}^2 f_3^\xi ]^{-1} ] \|\nonumber\\
			& \leq L_{\nabla^2 f_3} L_{f_2} \| \nabla_x z^\top \|  \| [\nabla_{zz}^2 f_3]^{-1} - \mathbb{E}[  [ \nabla_{zz}^2 f_3^\xi ]^{-1} ] \| + L_{\nabla^2 f_3} L_{f_2}\| \mathbb{E}[  [ \nabla_{zz}^2 f_3^\xi ]^{-1} ] \|  \| \nabla_x z^\top - \mathbb{E}[ \nabla_x z^{^\xi\top} ] \|\nonumber\\
			&\leq L_{\nabla^2 f_3} L_{f_2} \frac{L_{\nabla f_3}}{\mu_z} W_{zz} \theta + L_{\nabla^2 f_3} L_{f_2} b_{zz} U_x \theta \;=\; L_{\nabla^2 f_3} L_{f_2}\left( \frac{W_{zz} L_{\nabla f_3}}{\mu_z} + b_{zz} U_x\right)  \theta,\label{eq:TSG_biased_2_bound}
		\end{alignat}
		where the second-to-last inequality follows from the consistency of norms, and the last inequality follows from applying the derived bound~\eqref{eq:1010}, equation~\eqref{eq:aux_lip_z_result}, and Assumptions~\ref{as:bound_on_hess_inv_true_MLP} and~\ref{as:TSG_bounded_var}.
		
		Now, looking at~\eqref{eq:TSG_biased_6}, applying Assumptions~\ref{as:tri_lip_cont} and~\ref{as:TSG_unbiased_estimators}, adding and subtracting the term~$[ \nabla_{zz}^2f_3 ]^{-1} \nabla_{zz}^2 f_2 \mathbb{E}[ \nabla_x z^{^\xi\top} ]$, applying the triangle inequality, and using the consistency of matrix norms, we have
		\begin{alignat*}{2}
			&\| \nabla_{yz}^2 f_3 \| \| [ \nabla_{zz}^2f_3 ]^{-1} \nabla_{zz}^2 f_2 \nabla_x z^\top - \mathbb{E}[ [ \nabla_{zz}^2f_3^\xi ]^{-1}  \nabla_{zz}^2 f_2^\xi \nabla_x z^{^\xi\top} ] \|\\
			&\leq L_{\nabla f_3} \| [ \nabla_{zz}^2f_3 ]^{-1} \nabla_{zz}^2 f_2 \nabla_x z^\top - [ \nabla_{zz}^2f_3 ]^{-1} \nabla_{zz}^2 f_2 \mathbb{E}[ \nabla_x z^{^\xi\top} ] \|\\
			&\quad+ L_{\nabla f_3} \| [ \nabla_{zz}^2f_3 ]^{-1} \nabla_{zz}^2 f_2 \mathbb{E}[ \nabla_x z^{^\xi\top} ] - \mathbb{E}[ [ \nabla_{zz}^2f_3^\xi ]^{-1} ] \nabla_{zz}^2 f_2 \mathbb{E}[ \nabla_x z^{^\xi\top} ] \|\\
			& \leq L_{\nabla f_3} \| [ \nabla_{zz}^2f_3 ]^{-1} \|  \| \nabla_{zz}^2 f_2 \|  \| \nabla_x z^\top - \mathbb{E}[ \nabla_x z^{^\xi\top} ] \|\\
			&\quad+ L_{\nabla f_3} \| \nabla_{zz}^2 f_2 \|  \| \mathbb{E}[ \nabla_x z^{^\xi\top} ] \|  \| [ \nabla_{zz}^2f_3 ]^{-1} - \mathbb{E}[ [ \nabla_{zz}^2f_3^\xi ]^{-1} ] \|.
		\end{alignat*}
		Now, applying Assumptions~\ref{as:tri_lip_cont}, \ref{as:strong_conv_f3_z}, and~\ref{as:TSG_bounded_var}, along with the derived bound~\eqref{eq:1010}, we have
		\begin{alignat}{2}
			&\| \nabla_{yz}^2 f_3 \| \| [ \nabla_{zz}^2f_3 ]^{-1} \nabla_{zz}^2 f_2 \nabla_x z^\top - \mathbb{E}[ [ \nabla_{zz}^2f_3^\xi ]^{-1}  \nabla_{zz}^2 f_2^\xi \nabla_x z^{^\xi\top} ] \|\nonumber\\
			&\leq \frac{L_{\nabla f_3} L_{\nabla f_2}U_x}{\mu_z} \theta + L_{\nabla f_3} L_{\nabla f_2} b_{zz} L_{\nabla f_3} W_{zz} \theta \;=\; L_{\nabla f_3} L_{\nabla f_2}\left(\frac{U_x}{\mu_z} + b_{zz} L_{\nabla f_3}W_{zz} \right) \theta,\label{eq:TSG_biased_6_bound}
		\end{alignat}
		where the last inequality follows from $\| \mathbb{E}[ \nabla_x z^{^\xi\top} ] \| = \| -\mathbb{E}[[ \nabla_{zz}^2 f_3^\xi ]^{-1}]  \nabla_{zx}^2 f_3 \| \leq b_{zz} L_{\nabla f_3}$ (from Assumptions~\ref{as:tri_lip_cont}, \ref{as:TSG_unbiased_estimators}, and~\ref{as:bound_on_hess_inv_true_MLP}).
		
		Now, looking at~\eqref{eq:TSG_biased_3}, applying Assumptions~\ref{as:tri_lip_cont} and~\ref{as:TSG_unbiased_estimators}, adding and subtracting the term $[ \nabla_{zz}^2 f_3 ]^{-1}  \nabla_{zzx}^3 f_3 \mathbb{E}[[ \nabla_{zz}^2 f_3^\xi ]^{-1} ]$, applying the triangle inequality, and using the consistency of matrix norms, we have
		\begin{alignat}{2}
			&\| \nabla_{yz}^2f_3 \| \| \nabla_z f_2 \| \|  [ \nabla_{zz}^2 f_3 ]^{-1}  \nabla_{zzx}^3 f_3  [ \nabla_{zz}^2 f_3 ]^{-1} - \mathbb{E}[ [ \nabla_{zz}^2 f_3^\xi ]^{-1}  \nabla_{zzx}^3 f_3^\xi [ \nabla_{zz}^2 f_3^\xi ]^{-1} ]\|\nonumber\\
			& \leq L_{\nabla f_3} L_{f_2} \|  [ \nabla_{zz}^2 f_3 ]^{-1}  \nabla_{zzx}^3 f_3  [ \nabla_{zz}^2 f_3 ]^{-1} - [ \nabla_{zz}^2 f_3 ]^{-1}  \nabla_{zzx}^3 f_3 \mathbb{E}[[ \nabla_{zz}^2 f_3^\xi ]^{-1} ]\|\nonumber\\
			&\quad+ L_{\nabla f_3} L_{f_2} \| [ \nabla_{zz}^2 f_3 ]^{-1}  \nabla_{zzx}^3 f_3 \mathbb{E}[[ \nabla_{zz}^2 f_3^\xi ]^{-1} ] - \mathbb{E}[ [ \nabla_{zz}^2 f_3^\xi ]^{-1}]  \nabla_{zzx}^3 f_3 \mathbb{E}[[ \nabla_{zz}^2 f_3^\xi ]^{-1} ]\|\nonumber\\
			&\leq L_{\nabla f_3} L_{f_2} \| [ \nabla_{zz}^2 f_3 ]^{-1} \|  \| \nabla_{zzx}^3 f_3 \|  \| [ \nabla_{zz}^2 f_3 ]^{-1} -  \mathbb{E}[[ \nabla_{zz}^2 f_3^\xi ]^{-1} ] \|\nonumber\\
			&\quad+ L_{\nabla f_3} L_{f_2} \| \nabla_{zzx}^3 f_3 \|  \| \mathbb{E}[[ \nabla_{zz}^2 f_3^\xi ]^{-1} ] \|  \| [ \nabla_{zz}^2 f_3 ]^{-1} - \mathbb{E}[ [ \nabla_{zz}^2 f_3^\xi ]^{-1}] \|\nonumber\\
			& \leq L_{\nabla f_3} L_{f_2} \frac{1}{\mu_z} L_{\nabla^2 f_3} W_{zz} \theta + L_{\nabla f_3} L_{f_2} L_{\nabla^2 f_3} b_{zz} W_{zz} \theta \;=\; L_{\nabla f_3} L_{f_2} L_{\nabla^2 f_3} W_{zz}\left( \frac{1}{\mu_z} + b_{zz}\right)  \theta,\label{eq:TSG_biased_3_bound}
		\end{alignat}
		where the last inequality follows from applying Assumptions~\ref{as:tri_lip_cont}, \ref{as:strong_conv_f3_z}, \ref{as:bound_on_hess_inv_true_MLP}, and~\ref{as:TSG_bounded_var}.
		
		Now, looking at~\eqref{eq:TSG_biased_4}, applying Assumptions~\ref{as:tri_lip_cont} and~\ref{as:TSG_unbiased_estimators}, adding and subtracting the term $[ \nabla_{zz}^2 f_3 ]^{-1} \nabla_{zzz}^3 f_3 \nabla_x z^\top \mathbb{E}[ [ \nabla_{zz}^2 f_3^\xi ]^{-1} ]$, applying the triangle inequality, and using the consistency of matrix norms, we have
		\begin{alignat*}{2}
			&\| \nabla_{yz}^2f_3 \| \| \nabla_z f_2 \| \| [ \nabla_{zz}^2 f_3 ]^{-1} \nabla_{zzz}^3 f_3 \nabla_x z^\top [ \nabla_{zz}^2 f_3 ]^{-1} - \mathbb{E}[ [ \nabla_{zz}^2 f_3^\xi ]^{-1} \nabla_{zzz}^3 f_3^\xi \nabla_x z^{^\xi\top} [ \nabla_{zz}^2 f_3^\xi ]^{-1} ] \|\\
			& \leq L_{\nabla f_3} L_{f_2} \| [ \nabla_{zz}^2 f_3 ]^{-1} \nabla_{zzz}^3 f_3 \nabla_x z^\top [ \nabla_{zz}^2 f_3 ]^{-1} - [ \nabla_{zz}^2 f_3 ]^{-1} \nabla_{zzz}^3 f_3 \nabla_x z^\top \mathbb{E}[ [ \nabla_{zz}^2 f_3^\xi ]^{-1} ] \|\\
			&\quad+ L_{\nabla f_3} L_{f_2} \| [ \nabla_{zz}^2 f_3 ]^{-1} \nabla_{zzz}^3 f_3 \nabla_x z^\top \mathbb{E}[ [ \nabla_{zz}^2 f_3^\xi ]^{-1} ] - \mathbb{E}[ [ \nabla_{zz}^2 f_3^\xi ]^{-1} ] \nabla_{zzz}^3 f_3 \mathbb{E}[\nabla_x z^{^\xi\top}] \mathbb{E}[ [ \nabla_{zz}^2 f_3^\xi ]^{-1} ] \|\\
			& \leq L_{\nabla f_3} L_{f_2} \| [ \nabla_{zz}^2 f_3 ]^{-1} \|  \| \nabla_{zzz}^3 f_3 \|  \| \nabla_x z^\top \|  \| [ \nabla_{zz}^2 f_3 ]^{-1} -  \mathbb{E}[ [ \nabla_{zz}^2 f_3^\xi ]^{-1} ] \|\\
			&\quad+ L_{\nabla f_3} L_{f_2} \| \mathbb{E}[ [ \nabla_{zz}^2 f_3^\xi ]^{-1} ] \|  \| [ \nabla_{zz}^2 f_3 ]^{-1} \nabla_{zzz}^3 f_3 \nabla_x z^\top  - \mathbb{E}[ [ \nabla_{zz}^2 f_3^\xi ]^{-1} ] \nabla_{zzz}^3 f_3 \mathbb{E}[\nabla_x z^{^\xi\top}] \|\\
			&\leq \frac{L_{\nabla f_3}^2 L_{f_2} L_{\nabla^2 f_3}}{\mu_z^2} W_{zz} \theta + L_{\nabla f_3} L_{f_2} b_{zz} \| [ \nabla_{zz}^2 f_3 ]^{-1} \nabla_{zzz}^3 f_3 \nabla_x z^\top  - \mathbb{E}[ [ \nabla_{zz}^2 f_3^\xi ]^{-1} ] \nabla_{zzz}^3 f_3 \mathbb{E}[\nabla_x z^{^\xi\top}] \|,
		\end{alignat*}
		where the last inequality follows from applying Assumptions~\ref{as:tri_lip_cont}, \ref{as:strong_conv_f3_z}, \ref{as:bound_on_hess_inv_true_MLP}, and~\ref{as:TSG_bounded_var}, along with equation~\eqref{eq:aux_lip_z_result}. Now, using nearly identical arguments to those that were used in deriving the bound on~\eqref{eq:TSG_biased_6}, we have
		\begin{alignat}{2}
			&\| \nabla_{yz}^2f_3 \| \| \nabla_z f_2 \| \| [ \nabla_{zz}^2 f_3 ]^{-1} \nabla_{zzz}^3 f_3 \nabla_x z^\top [ \nabla_{zz}^2 f_3 ]^{-1} - \mathbb{E}[ [ \nabla_{zz}^2 f_3^\xi ]^{-1} \nabla_{zzz}^3 f_3^\xi \nabla_x z^{^\xi\top} [ \nabla_{zz}^2 f_3^\xi ]^{-1} ] \|\nonumber\\
			& \leq \frac{L_{\nabla f_3}^2 L_{f_2} L_{\nabla^2 f_3}}{\mu_z^2} W_{zz} \theta + L_{\nabla f_3} L_{f_2} b_{zz} \left( L_{\nabla^2 f_3}\left(\frac{U_x}{\mu_z} + b_{zz} L_{\nabla f_3}W_{zz} \right) \theta \right)\nonumber\\
			& = L_{\nabla f_3}L_{f_2}L_{\nabla^2 f_3}\left(\frac{L_{\nabla f_3}}{\mu_z^2} W_{zz} +  b_{zz} \left(\frac{U_x}{\mu_z} + b_{zz} L_{\nabla f_3}W_{zz} \right) \right) \theta.\label{eq:TSG_biased_4_bound}
		\end{alignat}
		Finally, substituting the newly-derived upper-bounds~\eqref{eq:TSG_biased_1_bound}--\eqref{eq:TSG_biased_4_bound} in for the terms~\eqref{eq:TSG_biased_1}--\eqref{eq:TSG_biased_6}, we have the desired upper bound~\eqref{eq:TSG_derived_unbiasedness_xy} as
		\begin{alignat*}{2}
			\| \nabla_{xy}^2\Bar{f} - \mathbb{E}[ \nabla_{xy}^2\Bar{f}^\xi ] \| &\leq L_{\nabla^2 f_3} L_{f_2} W_{zz} \theta + L_{\nabla^2 f_3} L_{f_2}\left( \frac{W_{zz} L_{\nabla f_3}}{\mu_z} + b_{zz} U_x\right) \theta + L_{\nabla f_3} L_{\nabla f_2} W_{zz} \theta\\
			&\quad + L_{\nabla f_3}L_{f_2}L_{\nabla^2 f_3}\left(\frac{L_{\nabla f_3}}{\mu_z^2} W_{zz} +  b_{zz} \left(\frac{U_x}{\mu_z} + b_{zz} L_{\nabla f_3}W_{zz} \right) \right) \theta\\
			&\quad+  L_{\nabla f_3} L_{\nabla f_2}\left(\frac{U_x}{\mu_z} + b_{zz} L_{\nabla f_3}W_{zz} \right) \theta  + L_{\nabla f_3} L_{f_2} L_{\nabla^2 f_3} W_{zz}\left( \frac{1}{\mu_z} + b_{zz}\right) \theta\\
			&\;=\; U_{xy} \theta,
		\end{alignat*}
		where
		\begin{equation}\label{eq:0030}
			\begin{split}
				U_{xy} & \;:=\; L_{\nabla^2 f_3} L_{f_2} \left( W_{zz}  + b_{zz} U_x + L_{\nabla f_3} W_{zz}b_{zz} + 2\frac{W_{zz} L_{\nabla f_3}}{\mu_z}\right)\\
				&\quad + L_{\nabla f_3} \left( L_{f_2}L_{\nabla^2 f_3}\left(\frac{L_{\nabla f_3}}{\mu_z^2} W_{zz} +  b_{zz} \left(\frac{U_x}{\mu_z} + b_{zz} L_{\nabla f_3}W_{zz} \right) \right)  +  L_{\nabla f_2}\left(\frac{U_x}{\mu_z} + b_{zz} L_{\nabla f_3}W_{zz} + W_{zz}\right)\right).
			\end{split}
		\end{equation}
		
		The proof of the biasedness errors for the estimator~$\nabla_{xy}^2\Bar{f}(x,y,z;\xi)$ in equation~\eqref{eq:TSG_derived_unbiasedness_yy} can be established following nearly identical arguments.
	\end{proof}

	\begin{lemma}[Bounds on variance of $\nabla^2\Bar{f}$]\label{lem:TSG_derived_variance_bounds_of_f_bar}
		Under Assumptions~\ref{as:tri_lip_cont}, \ref{as:TSG_unbiased_estimators}, and~\ref{as:bound_on_hess_inv_true_MLP}, the variances of the stochastic matrices~$\nabla_{xy}^2\Bar{f}^{\xi}$ and~$\nabla_{yy}^2\Bar{f}^{\xi}$ are bounded, i.e., there exist positive constants~$V_{xy}$ and $V_{yy}$ such that
		\[
		\mathbb{E}[\|\nabla_{xy}^2\Bar{f}(x,y,z;\xi) - \mathbb{E}\left[ \nabla_{xy}^2\Bar{f}(x,y,z;\xi) \vert \mathcal{F}_\xi] \|^2 \vert \mathcal{F}_\xi\right] \;\leq\; V_{xy},
		\]
		\[
		\mathbb{E}[\| \nabla_{yy}^2\Bar{f}(x,y,z;\xi) - \mathbb{E}[ \nabla_{yy}^2\Bar{f}(x,y,z;\xi) \vert \mathcal{F}_\xi ] \|^2 \vert \mathcal{F}_\xi] \;\leq\; V_{yy}.
		\]
	\end{lemma}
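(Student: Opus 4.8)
The plan is to bound the variance by the (simpler) conditional second moment and then invoke uniform almost-sure bounds on every factor appearing in the stochastic Hessian of $\bar{f}$. First I would use the elementary identity $\mathbb{E}[\|A^\xi - \mathbb{E}[A^\xi\mid\mathcal{F}_\xi]\|^2\mid\mathcal{F}_\xi] = \mathbb{E}[\|A^\xi\|^2\mid\mathcal{F}_\xi] - \|\mathbb{E}[A^\xi\mid\mathcal{F}_\xi]\|^2 \le \mathbb{E}[\|A^\xi\|^2\mid\mathcal{F}_\xi]$, applied with $A^\xi = \nabla_{xy}^2\bar{f}^\xi$. Thus it suffices to bound $\mathbb{E}[\|\nabla_{xy}^2\bar{f}^\xi\|^2\mid\mathcal{F}_\xi]$, and the $\mathcal{O}(\theta)$ cancellations that were central to the bias bound of Lemma~\ref{lem:TSG_derived_unbiasedness} are not needed here: a crude uniform bound will do.

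Next I would expand $\nabla_{xy}^2\bar{f}^\xi$ using its explicit form, namely~\eqref{hess_yx_fbar} together with~\eqref{eq:partial_x} and the Jacobian expression~\eqref{eq:jacobian_z_ret_x}, reading every derivative of $f_2$ and $f_3$ and every inverse Hessian as its stochastic ($\xi$-superscripted) counterpart. This writes $\nabla_{xy}^2\bar{f}^\xi$ as a finite sum of a fixed number $N$ of terms, each a product of operator-norm factors. Applying $\|\sum_{i=1}^N B_i\|^2 \le N\sum_{i=1}^N\|B_i\|^2$ and submultiplicativity of the spectral norm reduces the task to bounding $\mathbb{E}[\|B_i\|^2\mid\mathcal{F}_\xi]$ for each product term $B_i$. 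The key observation is that every factor is uniformly bounded almost surely: Lipschitz continuity in Assumption~\ref{as:tri_lip_cont} bounds the gradient $\nabla_z f_2^\xi$ by $L_{f_2}$, the Hessians of $f_2$ and $f_3$ by $L_{\nabla f_2}$ and $L_{\nabla f_3}$, and the third-order tensors of $f_3$ by $L_{\nabla^2 f_3}$, while Assumption~\ref{as:bound_on_hess_inv_true_MLP} bounds each inverse factor $[\nabla_{zz}^2 f_3^\xi]^{-1}$ by $b_{zz}$ (so that $(\nabla_x z^\xi)^\top$ is bounded by $b_{zz}L_{\nabla f_3}$ via~\eqref{eq:jacobian_z_ret_x}). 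Consequently each $B_i$ is bounded a.s.\ by a product of these constants, whence $\|\nabla_{xy}^2\bar{f}^\xi\| \le C_{xy}$ a.s.\ for an explicit constant $C_{xy}$ assembled from $L_{f_2}$, $L_{\nabla f_2}$, $L_{\nabla f_3}$, $L_{\nabla^2 f_3}$, and $b_{zz}$; taking $V_{xy} := C_{xy}^2$ yields the claim. The bound for $\nabla_{yy}^2\bar{f}^\xi$ follows identically from~\eqref{hess_yy_fbar} and~\eqref{eq:partial_y}, producing $V_{yy}$.

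The main obstacle is less an analytical difficulty than careful bookkeeping: one must enumerate all the product terms in~\eqref{eq:partial_x}, track the tensor--matrix--vector contraction conventions of the footnote so that each operator-norm factor is correctly identified, and verify that the Lipschitz constants of Assumption~\ref{as:tri_lip_cont} indeed furnish almost-sure bounds on the stochastic derivatives (i.e., that the per-sample functions inherit those constants). In this decomposition Assumption~\ref{as:TSG_unbiased_estimators} enters only to guarantee that the estimators are well defined and that $\mathbb{E}[\nabla_{xy}^2\bar{f}^\xi\mid\mathcal{F}_\xi]$ exists; the sole care beyond that is to keep $C_{xy}$ finite by leaning on the $b_{zz}$ bound for every appearance of the inverse Hessian, including the two inverse factors hidden inside each $(\nabla_x z^\xi)^\top$.
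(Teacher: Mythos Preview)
Your reduction to the second moment $\mathbb{E}[\|\nabla_{xy}^2\bar f^\xi\|^2\mid\mathcal F_\xi]$ is fine, and the expansion via~\eqref{hess_yx_fbar}--\eqref{eq:partial_x} is the right starting point. The gap is in the ``key observation'': Assumption~\ref{as:tri_lip_cont} is a Lipschitz assumption on the \emph{deterministic} functions $f_2,f_3$ and their derivatives, so it yields $\|\nabla_z f_2\|\le L_{f_2}$, $\|\nabla_{yz}^2 f_3\|\le L_{\nabla f_3}$, etc., but it says nothing about the per-sample stochastic estimates $\nabla_z f_2^\xi$, $\nabla_{yz}^2 f_3^\xi$, $\nabla_{yzx}^3 f_3^\xi$. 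Assumption~\ref{as:TSG_unbiased_estimators} only gives unbiasedness and bounded variance for these, not almost-sure bounds. (Assumption~\ref{as:bound_on_hess_inv_true_MLP} does give an a.s.\ bound, but only on the inverse-Hessian factors.) So the claimed a.s.\ bound $\|\nabla_{xy}^2\bar f^\xi\|\le C_{xy}$ does not follow from the stated hypotheses, and your proposed verification step ``that the per-sample functions inherit those constants'' would fail in general.

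The paper's proof avoids this by working directly with second moments of the individual factors: each stochastic derivative satisfies $\mathbb{E}[\|X^\xi\|^2\mid\mathcal F_\xi]=\operatorname{Var}[X^\xi\mid\mathcal F_\xi]+\|\mathbb{E}[X^\xi\mid\mathcal F_\xi]\|^2\le \sigma_X^2+L_X^2$ by Assumptions~\ref{as:TSG_unbiased_estimators} and~\ref{as:tri_lip_cont} combined (variance bound plus Lipschitz bound on the mean). The expectation of a product of squared norms is then factored into a product of these second moments, using the i.i.d.\ clause of Assumption~\ref{as:TSG_unbiased_estimators} across the different stochastic derivative evaluations. Your argument can be repaired along exactly these lines; the bookkeeping is the same as you outlined, but every ``$\le L$ a.s.'' must become ``$\mathbb{E}[\|\cdot\|^2]\le \sigma^2+L^2$'' together with the independence needed to split the product.
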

	\begin{proof}
		For this proof, we will omit the point $(x,y,z)$ that the terms are evaluated at; we will simply use a $\xi$-superscript as short-hand to indicate any random terms. 
		We can obtain the bound on the variance of~$\nabla_{xy}^2\Bar{f}^{\xi}$ by first referencing~\eqref{hess_yx_fbar} and applying the fact that~$\|\sum_{i=1}^N a_i\|^2 \leq N\sum_{i=1}^N\|a_i\|^2$ (for some $a\in\mathbb{R}^N$) to the two initial difference terms as well as all of the resulting terms (leading to a total of~12 terms), Assumption~\ref{as:TSG_unbiased_estimators}, and the consistency of matrix norms, to obtain
		\begin{alignat*}{2}
			&\mathbb{E}[\|\nabla_{xy}^2\Bar{f}^{\xi} - \mathbb{E}[ \nabla_{xy}^2\Bar{f}^{\xi} ] \|^2]\\
			&\leq 12\mathbb{E}[\| \nabla_{yzx}^3 f_3^{\xi}\|^2] \mathbb{E}[\| [\nabla_{zz}^2 f_3^{\xi}]^{-1}\|^2] \mathbb{E}[\| \nabla_z f_2^{\xi} \|^2] \;+\; 12\|\nabla_{yzx}^3 f_3\|^2\|\mathbb{E}[ [ \nabla_{zz}^2 f_3^{\xi} ]^{-1} ]\|^2\| \nabla_z f_2 \|^2\\
			&\quad+ 12\mathbb{E}[\| \nabla_{yzz}^3 f_3^{\xi} \|^2] \mathbb{E}[\| \nabla_x z^{{\xi}\top} \|^2] \mathbb{E}[\| [\nabla_{zz}^2 f_3^{\xi}]^{-1} \|^2] \mathbb{E}[\| \nabla_z f_2^{\xi}\|^2]\\
			&\quad+ 12\|\nabla_{yzz}^3 f_3 \|^2 \| \mathbb{E}[\nabla_x z^{{\xi}\top} ] \|^2 \| \mathbb{E}[[ \nabla_{zz}^2 f_3^{\xi} ]^{-1}] \|^2 \| \nabla_z f_2 \|^2\\
			&\quad+ 12\mathbb{E}[\| \nabla_{yz}^2f_3^{\xi}\|^2] \mathbb{E}[\| [ \nabla_{zz}^2 f_3^{\xi} ]^{-1} \|^2] \mathbb{E}[\| \nabla_{zzx}^3 f_3^{\xi} \|^2] \mathbb{E}[\| [ \nabla_{zz}^2 f_3^{\xi} ]^{-1} \|^2] \mathbb{E}[\| \nabla_z f_2^\xi \|^2]\\
			&\quad+ 12\| \nabla_{yz}^2f_3\|^2 \|\mathbb{E}[  [ \nabla_{zz}^2 f_3^{\xi} ]^{-1} ]\|^2 \| \nabla_{zzx}^3 f_3\|^2 \|\mathbb{E}[ [ \nabla_{zz}^2 f_3^{\xi} ]^{-1} ]\|^2 \| \nabla_z f_2\|^2\\
			&\quad+ 12\mathbb{E}[\| \nabla_{yz}^2f_3^{\xi}\|^2] \mathbb{E}[\| [ \nabla_{zz}^2 f_3^{\xi} ]^{-1} \|^2] \mathbb{E}[\| \nabla_{zzz}^3 f_3^{\xi} \|^2] \mathbb{E}[\| \nabla_x z^{{\xi}\top} \|^2] \mathbb{E}[\| [ \nabla_{zz}^2 f_3^{\xi} ]^{-1} \|^2] \mathbb{E}[\| \nabla_z f_2^{\xi}\|^2]\\
			&\quad+ 12\| \nabla_{yz}^2f_3\|^2 \|\mathbb{E}[ [ \nabla_{zz}^2 f_3^{\xi} ]^{-1} ]\|^2 \| \nabla_{zzz}^3 f_3\|^2 \|\mathbb{E}[ \nabla_x z^{{\xi}\top} ]\|^2 \|\mathbb{E}[ [ \nabla_{zz}^2 f_3^{\xi} ]^{-1} ]\|^2 \| \nabla_z f_2 \|^2\\
			&\quad + 12\mathbb{E}[\| \nabla_{yz}^2 f_3^{\xi}\|^2] \mathbb{E}[\| [ \nabla_{zz}^2f_3^{\xi} ]^{-1} \|^2] \mathbb{E}[\| \nabla_{zx}^2 f_2^{\xi}\|^2] \;+\; 12\| \nabla_{yz}^2 f_3\|^2 \|\mathbb{E}[ [ \nabla_{zz}^2f_3^{\xi} ]^{-1} ]\|^2 \| \nabla_{zx}^2 f_2\|^2\\
			&\quad+ 12\mathbb{E}[\| \nabla_{yz}^2 f_3^{\xi}\|^2] \mathbb{E}[\| [ \nabla_{zz}^2f_3^{\xi} ]^{-1} \|^2] \mathbb{E}[\| \nabla_{zz}^2 f_2^{\xi} \|^2] \mathbb{E}[\| \nabla_x z^{{\xi}\top}\|^2] \\
			&\quad+ 12\| \nabla_{yz}^2 f_3\|^2 \|\mathbb{E}[ [ \nabla_{zz}^2f_3^{\xi} ]^{-1} ]\|^2 \| \nabla_{zz}^2 f_2\|^2 \|\mathbb{E}[ \nabla_x z^{{\xi}\top} ] \|^2.
		\end{alignat*}
		Now, using the result that $\|\mathbb{E}[ \nabla_x z^{{\xi}\top}] \|^2 \leq \|\mathbb{E}[ [ \nabla_{zz}^2 f_3^{\xi}  ]^{-1} ]\|^2 \| \nabla_{zx}^2 f_3\|^2 \leq b_{zz}^2 L_{\nabla f_3}^2$ (from Assumptions~\ref{as:tri_lip_cont}, \ref{as:TSG_unbiased_estimators}, and~\ref{as:bound_on_hess_inv_true_MLP} along with the consistency of matrix norms), the result that $\mathbb{E}[\| \nabla_x z^{{\xi}\top} \|^2] \leq\linebreak \mathbb{E}[\| [ \nabla_{zz}^2 f_3^{\xi}  ]^{-1} \|^2] \mathbb{E}[\| \nabla_{zx}^2 f_3{\xi}\|^2] \leq b_{zz}^2\mathbb{E}[\| \nabla_{zx}^2 f_3{\xi}\|^2]$ (from Assumptions~\ref{as:TSG_unbiased_estimators} and~\ref{as:bound_on_hess_inv_true_MLP} along with the consistency of matrix norms), and applying Assumptions~\ref{as:tri_lip_cont} and~\ref{as:bound_on_hess_inv_true_MLP}, we have
		\begin{alignat*}{2}
			&\mathbb{E}[\|\nabla_{xy}^2\Bar{f}^{\xi} - \mathbb{E}[ \nabla_{xy}^2\Bar{f}^{\xi} ] \|^2]\\
			&\leq 12\mathbb{E}[\| \nabla_{yzx}^3 f_3^{\xi}\|^2] b_{zz}^2 \mathbb{E}[\| \nabla_z f_2^{\xi} \|^2] + 12L_{\nabla^2 f_3} ^2 b_{zz}^2 L_{f_2}^2 + 12\mathbb{E}[\| \nabla_{yzz}^3 f_3^{\xi} \|^2] b_{zz}^2\mathbb{E}[\| \nabla_{zx}^2 f_3^{\xi}\|^2] b_{zz}^2 \mathbb{E}[\| \nabla_z f_2^{\xi}\|^2]\\
			&\quad+ 12L_{\nabla^2 f_3}^2 b_{zz}^2 L_{\nabla f_3}^2 b_{zz}^2 L_{f_2}^2 + 12\mathbb{E}[\| \nabla_{yz}^2f_3^{\xi}\|^2] b_{zz}^2 \mathbb{E}[\| \nabla_{zzx}^3 f_3^{\xi} \|^2] b_{zz}^2 \mathbb{E}[\| \nabla_z f_2^{\xi} \|^2]\\
			&\quad+ 12L_{\nabla f_3}^2 b_{zz}^2 L_{\nabla^2 f_3}^2 b_{zz}^2 L_{f_2}^2 + 12\mathbb{E}[\| \nabla_{yz}^2f_3^{\xi}\|^2] b_{zz}^2 \mathbb{E}[\| \nabla_{zzz}^3 f_3^{\xi} \|^2] b_{zz}^2\mathbb{E}[\| \nabla_{zx}^2 f_3^{\xi}\|^2] b_{zz}^2 \mathbb{E}[\| \nabla_z f_2^{\xi}\|^2]\\
			&\quad+ 12L_{\nabla f_3}^2 b_{zz}^2 L_{\nabla^2 f_3}^2 b_{zz}^2 L_{\nabla f_3}^2 b_{zz}^2 L_{f_2}^2 + 12\mathbb{E}[\| \nabla_{yz}^2 f_3^{\xi}\|^2] b_{zz}^2 \mathbb{E}[\| \nabla_{zx}^2 f_2^{\xi}\|^2] + 12L_{\nabla f_3}^2 b_{zz}^2 L_{\nabla f_2}^2\\
			&\quad+ 12\mathbb{E}[\| \nabla_{yz}^2 f_3^{\xi}\|^2] b_{zz}^2 \mathbb{E}[\| \nabla_{zz}^2 f_2^{\xi} \|^2] b_{zz}^2\mathbb{E}[\| \nabla_{zx}^2 f_3^{\xi}\|^2] + 12L_{\nabla f_3}^2 b_{zz}^2 L_{\nabla f_2}^2 b_{zz}^2 L_{\nabla f_3}^2.
		\end{alignat*}
		Finally, with all of the remaining expectation terms, we can apply the definition of variance (i.e., $\mathbb{E}[ X^2 ] = \text{Var}[ X ] + \mathbb{E}[ X ]^2$) followed by Assumption~\ref{as:TSG_unbiased_estimators} to upper-bound the variance term along with Assumptions~\ref{as:tri_lip_cont} and~\ref{as:TSG_unbiased_estimators} to upper-bound the $\mathbb{E}[ X ]^2$ term. These bounds are given as: 
		\begin{alignat}{7}
			&\mathbb{E}[\| \nabla_{yzx}^3 f_3^{\xi}\|^2] &&\leq \sigma^2_{\nabla^3 f_3} + L_{\nabla^2 f_3}^2, \quad &&\mathbb{E}[\| \nabla_{yzz}^3 f_3^{\xi}\|^2] &&\leq \sigma^2_{\nabla^3 f_3} + L_{\nabla^2 f_3}^2, \nonumber\\
			&\mathbb{E}[\| \nabla_{zzx}^3 f_3^{\xi}\|^2] &&\leq \sigma^2_{\nabla^3 f_3} + L_{\nabla^2 f_3}^2, \quad 
			&&\mathbb{E}[\| \nabla_{zzz}^3 f_3^{\xi}\|^2] &&\leq \sigma^2_{\nabla^3 f_3} + L_{\nabla^2 f_3}^2, \nonumber\\
			&\mathbb{E}[\| \nabla_{zx}^2 f_3^{\xi}\|^2] &&\leq \sigma^2_{\nabla^2 f_3} + L_{\nabla f_3}^2, \quad &&\mathbb{E}[\| \nabla_{yz}^2 f_3^{\xi}\|^2] &&\leq \sigma^2_{\nabla^2 f_3} + L_{\nabla f_3}^2,\nonumber\\ 
			&\mathbb{E}[\| \nabla_{zz}^2 f_3^{\xi}\|^2] &&\leq \sigma^2_{\nabla^2 f_3} + L_{\nabla f_3}^2, \quad &&\mathbb{E}[\| \nabla_z f_2^{\xi} \|^2] &&\leq \sigma^2_{\nabla f_2} + L_{f_2}^2. \nonumber
		\end{alignat}
		Applying these bounds, we will obtain
		\begin{alignat}{2}
			&\mathbb{E}[\|\nabla_{xy}^2\Bar{f}^{\xi} - \mathbb{E}[ \nabla_{xy}^2\Bar{f}^{\xi} ] \|^2]\nonumber\\
			&\leq 12( \sigma^2_{\nabla^3 f_3} + L_{\nabla^2 f_3}^2 ) b_{zz}^2 ( \sigma^2_{\nabla f_2} + L_{f_2}^2 ) + 12L_{\nabla^2 f_3} ^2 b_{zz}^2 L_{f_2}^2\nonumber\\
			&\quad + 12( \sigma^2_{\nabla^3 f_3} + L_{\nabla^2 f_3}^2 ) b_{zz}^2( \sigma^2_{\nabla^2 f_3} + L_{\nabla f_3}^2 ) b_{zz}^2 ( \sigma^2_{\nabla f_2} + L_{f_2}^2 )\nonumber\\
			&\quad+ 12L_{\nabla^2 f_3}^2 b_{zz}^2 L_{\nabla f_3}^2 b_{zz}^2 L_{f_2}^2 + 12( \sigma^2_{\nabla^2 f_3} + L_{\nabla f_3}^2 ) b_{zz}^2 ( \sigma^2_{\nabla^3 f_3} + L_{\nabla^2 f_3}^2 ) b_{zz}^2 ( \sigma^2_{\nabla f_2} + L_{f_2}^2 )\nonumber\\
			&\quad+ 12L_{\nabla f_3}^2 b_{zz}^2 L_{\nabla^2 f_3}^2 b_{zz}^2 L_{f_2}^2 + 12( \sigma^2_{\nabla^2 f_3} + L_{\nabla f_3}^2 ) b_{zz}^2 ( \sigma^2_{\nabla^3 f_3} + L_{\nabla^2 f_3}^2 ) b_{zz}^2( \sigma^2_{\nabla^2 f_3} + L_{\nabla f_3}^2 ) b_{zz}^2 ( \sigma^2_{\nabla f_2} + L_{f_2}^2 )\nonumber\\
			&\quad+ 12L_{\nabla f_3}^2 b_{zz}^2 L_{\nabla^2 f_3}^2 b_{zz}^2 L_{\nabla f_3}^2 b_{zz}^2 L_{f_2}^2 + 12( \sigma^2_{\nabla^2 f_3} + L_{\nabla f_3}^2 ) b_{zz}^2 ( \sigma^2_{\nabla^2 f_3} + L_{\nabla f_3}^2 ) + 12L_{\nabla f_3}^2 b_{zz}^2 L_{\nabla f_2}^2\nonumber\\
			&\quad+ 12( \sigma^2_{\nabla^2 f_3} + L_{\nabla f_3}^2 ) b_{zz}^2 ( \sigma^2_{\nabla^2 f_3} + L_{\nabla f_3}^2 ) b_{zz}^2( \sigma^2_{\nabla^2 f_3} + L_{\nabla f_3}^2 ) + 12L_{\nabla f_3}^2 b_{zz}^2 L_{\nabla f_2}^2 b_{zz}^2 L_{\nabla f_3}^2\nonumber\\
			& := V_{xy}.\label{eq:0040}
		\end{alignat}
		This completes the proof for the variance bound on $\nabla_{xy}^2\Bar{f}^{\xi}$.
		
		The proof of the variance bound on $\nabla_{yy}^2\Bar{f}^{\xi}$ follows nearly identical arguments.
	\end{proof}

	\begin{lemma}[Bounds on bias and variance of UL direction]\label{lem:TSG_new_inexact_variance_bound}
		Recalling the definition of~$\tilde g_{f_1}^{i}$ in equation~\eqref{eq:TSG_UL_stoch_dir}, define~$\bar g_{f_1}^{i} = \mathbb{E} [ \tilde g_{f_1}^{i} \vert \mathcal{F}_{i} ]$. Then, under Assumptions~\ref{as:tri_lip_cont}, \ref{as:strong_conv_f3_z}, \ref{as:TSG_unbiased_estimators}, \ref{as:bound_on_hess_inv_true_MLP}, and~\ref{as:TSG_bounded_var}, there exist positive constants~$\omega$ and~$\tau$  such that
		\begin{alignat}{2}
			\|\nabla f (x^i,y^{i+1},z^{i+1}) - \bar g_{f_1}^{i} \| \;&\leq\; \omega\theta_i \quad\quad \text{and} \quad\quad \mathbb{E}[ \| \tilde g_{f_1}^{i} - \bar g_{f_1}^{i} \|^2 \vert \mathcal{F}_i ] \;&\leq\; \tau.\nonumber
		\end{alignat}
	\end{lemma}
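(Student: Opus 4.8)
The plan is to exploit the fact that $\nabla f(x^i,y^{i+1},z^{i+1})$ and $\bar g_{f_1}^i = \mathbb{E}[\tilde g_{f_1}^i \vert \mathcal{F}_i]$ are evaluated at the \emph{same} $\mathcal{F}_i$-measurable point $(x^i,y^{i+1},z^{i+1})$, so the entire discrepancy between them is stochastic bias coming from the nonlinearity of the adjoint formula~\eqref{adjoint}, not from inexact inner solutions. First I would record the uniform norm bounds available for free: since $f_1$ is $L_{f_1}$-Lipschitz and $f_2$ is $L_{f_2}$-Lipschitz with Lipschitz gradients and Hessians, the first- and second-order derivatives appearing in~\eqref{adjoint} are bounded in norm by the corresponding Lipschitz constants (e.g. $\|\nabla f_1\|\le L_{f_1}$, $\|\nabla^2 f_2\|\le L_{\nabla f_2}$), and the inverted stochastic Hessians are bounded by $b_{zz}$ and $b_{yy}$ through Assumption~\ref{as:bound_on_hess_inv_true_MLP}. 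These bounds let me treat every factor in the formula as a uniformly bounded building block.

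For the bias bound I would decompose~\eqref{adjoint} into the ``decoupled'' first group $p := \nabla_x f_1 - \nabla_{xz}^2 f_3 [\nabla_{zz}^2 f_3]^{-1}\nabla_z f_1$, the inner group $q := \nabla_y f_1 - \nabla_{yz}^2 f_3 [\nabla_{zz}^2 f_3]^{-1}\nabla_z f_1$, and the outer product $-\nabla_{xy}^2\bar f [\nabla_{yy}^2\bar f]^{-1} q$, then match each piece against its stochastic counterpart via repeated add-and-subtract of deterministic intermediates, exactly as in the proof of Lemma~\ref{lem:TSG_derived_unbiasedness}. The key observation is that the \emph{only} biased factors are the inverted Hessians $[\nabla_{zz}^2 f_3^\xi]^{-1}$ and $[\nabla_{yy}^2\bar f^\xi]^{-1}$ (biases $W_{zz}\theta_i$ and $W_{yy}\theta_i$ by Assumption~\ref{as:TSG_bounded_var}) together with the assembled matrices $\nabla_{xy}^2\bar f^\xi$ and $\nabla_{yy}^2\bar f^\xi$ (biases $U_{xy}\theta_i$, $U_{yy}\theta_i$ by Lemma~\ref{lem:TSG_derived_unbiasedness}), while all remaining factors $\nabla_x f_1^\xi$, $\nabla_z f_1^\xi$, $\nabla_y f_1^\xi$, $\nabla_{xz}^2 f_3^\xi$, $\nabla_{yz}^2 f_3^\xi$, $\nabla_{zz}^2 f_3^\xi$ are unbiased by Assumption~\ref{as:TSG_unbiased_estimators}. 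Consequently each telescoped term is a product of uniformly bounded factors multiplying a single $\mathcal{O}(\theta_i)$ bias, so the total is bounded by $\omega\theta_i$ with $\omega$ collecting the Lipschitz constants together with $b_{zz}$, $b_{yy}$, $W_{zz}$, $W_{yy}$, $U_{xy}$, $U_{yy}$.

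For the variance bound I would use $\mathbb{E}[\|\tilde g_{f_1}^i - \bar g_{f_1}^i\|^2\vert\mathcal{F}_i]\le \mathbb{E}[\|\tilde g_{f_1}^i\|^2\vert\mathcal{F}_i]$ and then expand the product structure of $\tilde g_{f_1}^i$, applying $\|\sum_{\ell} a_\ell\|^2 \le N\sum_\ell \|a_\ell\|^2$ and the consistency of matrix norms to reduce to products of second moments of the individual stochastic factors. Each such second moment is controlled by $\mathbb{E}[\|M^\xi\|^2] = \mathrm{Var}[M^\xi] + \|\mathbb{E}[M^\xi]\|^2 \le \sigma^2 + L^2$, using Assumption~\ref{as:TSG_unbiased_estimators} for the gradients and Hessians of $f_1,f_2,f_3$, Assumption~\ref{as:bound_on_hess_inv_true_MLP} for the inverse Hessians, and Lemma~\ref{lem:TSG_derived_variance_bounds_of_f_bar} for $\nabla_{xy}^2\bar f^\xi$ and $\nabla_{yy}^2\bar f^\xi$; the resulting bounded product yields the constant $\tau$. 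This is structurally the same computation as in Lemma~\ref{lem:TSG_derived_variance_bounds_of_f_bar}, merely applied to the full adjoint expression rather than to a single Hessian block.

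The hard part will be the bias of the nested outer product $\nabla_{xy}^2\bar f^\xi [\nabla_{yy}^2\bar f^\xi]^{-1} q^\xi$, where three biased or compound blocks are multiplied and the telescoping must be arranged so that each difference isolates exactly one $\mathcal{O}(\theta_i)$ factor while the remaining factors are replaced by deterministic (or conditional-expectation) surrogates that are still bounded. One must check that no cross term between correlated stochastic factors secretly produces an $\mathcal{O}(1)$ contribution, and keep the bookkeeping honest across the two nested levels of inverse-Hessian bias (one from $f_3$ inside $q$, one from $\bar f$ in the outer product). Everything else reduces to the already-established bounds.
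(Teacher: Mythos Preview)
Your proposal is correct and, for the bias bound, essentially identical to the paper's argument: the paper also writes out $\bar g_{f_1}^i$ explicitly (using the i.i.d.\ part of Assumption~\ref{as:TSG_unbiased_estimators} to factor the expectations of independent stochastic blocks), then telescopes the difference $\nabla f - \bar g_{f_1}^i$ so that each summand isolates exactly one biased factor ($W_{zz}\theta_i$, $W_{yy}\theta_i$, or $U_{xy}\theta_i$ via Lemma~\ref{lem:TSG_derived_unbiasedness}) multiplied by uniformly bounded quantities. One cosmetic point: the paper never invokes $U_{yy}$ here, since the inverse $[\nabla_{yy}^2\bar f]^{-1}$ appears directly and its bias is already $W_{yy}\theta_i$ by assumption; your plan would still go through, just with a slightly different constant.

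For the variance bound your route is a legitimate shortcut but differs from the paper. You bound $\mathbb{E}[\|\tilde g - \bar g\|^2\mid\mathcal{F}_i]\le \mathbb{E}[\|\tilde g\|^2\mid\mathcal{F}_i]$ and then control the second moment by expanding the adjoint formula into a sum of products and bounding each factor's second moment by $\sigma^2 + L^2$. The paper instead works directly with $\tilde g - \bar g$, splits it into four pieces via $\|a+b+c+d\|^2\le 4(\|a\|^2+\cdots)$, and for each piece refactors $A^\xi B^\xi C^\xi - \mathbb{E}[A^\xi]\mathbb{E}[B^\xi]\mathbb{E}[C^\xi]$ as a telescoping sum of single-factor differences, bounding each via the variance assumptions and Lemma~\ref{lem:TSG_derived_variance_bounds_of_f_bar}. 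Your approach is shorter and avoids the refactoring bookkeeping; the paper's approach yields a somewhat tighter constant (genuine variance rather than raw second moment) and keeps the variance lemma logically prior to the separate boundedness result (Lemma~\ref{lem:TSG_size_inexact_g}), which in the paper's ordering depends on $\tau$. Either way the existence of a finite $\tau$ follows.
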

	\begin{proof}
		For this proof, we will omit the point~$(x^i,y^{i+1},z^{i+1})$ that the terms are evaluated at; we will simply use a $\xi^i$-superscript as short-hand to indicate any random terms. Similarly, we will use the notation $(\cdot)^{\xi^i}$ to denote that every term in the parenthesis is a random variable. To prove the upper-bound on the biasedness of~$\tilde g_{f_1}^{i}$, we can begin by referring to~\eqref{adjoint} and applying Assumption~\ref{as:TSG_unbiased_estimators}, yielding
		\begin{alignat*}{2}
			\bar g_{f_1}^{i} = \mathbb{E}[ \tilde g_{f_1}^{i} ] & = \mathbb{E}[\nabla f (x^i,y^{i+1},z^{i+1};\xi^i) ]\\
			& =  \nabla_x f_1 - \nabla_{xz}^2 f_3 \mathbb{E}[ [ \nabla_{zz}^2 f_3^{\xi^i} ]^{-1} ] \nabla_z f_1 - \mathbb{E}[ \nabla_{xy}^2\Bar{f}^{\xi^i} ] \mathbb{E}[ [ \nabla_{yy}^2 \Bar{f}^{\xi^i} ]^{-1} ] \nabla_y f_1\\
			&\quad+ \mathbb{E}[ \nabla_{xy}^2\Bar{f}^{\xi^i} ] \mathbb{E}[[ \nabla_{yy}^2 \Bar{f}^{\xi^i} ]^{-1}] \nabla_{yz}^2 f_3 \mathbb{E}[ [ \nabla_{zz}^2 f_3^{\xi^i} ]^{-1} ] \nabla_z f_1.
		\end{alignat*}
		Now, to derive a bound on the biasedness $\|\nabla f (x^i,y^{i+1},z^{i+1}) - \bar g_{f_1}^{i} \|$, we can begin by utilizing the triangle inequality, the consistency of matrix norms, and Assumption~\ref{as:tri_lip_cont} and~\ref{as:TSG_bounded_var}, yielding
		\begin{alignat}{2}
			&\|\nabla f (x^i,y^{i+1},z^{i+1}) - \bar g_{f_1}^{i} \| \nonumber\\
			&\leq L_{\nabla f_3} L_{f_1} W_{zz} \theta_i + \underbrace{L_{f_1} \| \mathbb{E}[ \nabla_{xy}^2\Bar{f}^{\xi^i}] \mathbb{E}[ [ \nabla_{yy}^2 \Bar{f}^{\xi^i} ]^{-1} ] - \nabla_{xy}^2\Bar{f} [ \nabla_{yy}^2 \Bar{f} ]^{-1} \|}_{T^{(1)}_1}\nonumber\\
			&\quad+ \underbrace{L_{f_1} \| \nabla_{xy}^2\Bar{f} [ \nabla_{yy}^2 \Bar{f} ]^{-1} \nabla_{yz}^2 f_3 [ \nabla_{zz}^2 f_3 ]^{-1} - \mathbb{E}[ \nabla_{xy}^2\Bar{f}^{\xi^i}] \mathbb{E}[[ \nabla_{yy}^2 \Bar{f}^{\xi^i} ]^{-1}] \nabla_{yz}^2 f_3 \mathbb{E}[ [ \nabla_{zz}^2 f_3^{\xi^i} ]^{-1} ] \|}_{T^{(1)}_2}, \label{eq:TSG_g1_biasedness_eq_to_bound_1}
		\end{alignat}
		
		\textbf{$( \text{Analysis of } T^{(1)}_1 )$:} Now, to upper-bound $T^{(1)}_1$ in~\eqref{eq:TSG_g1_biasedness_eq_to_bound_1}, we begin by adding and subtracting the term $\nabla_{xy}^2\Bar{f} \mathbb{E}[ [ \nabla_{yy}^2 \Bar{f}^{\xi^i} ]^{-1} ]$, applying the triangle inequality, and utilizing the consistency of matrix norms to obtain
		\begin{alignat}{2}
			L_{f_1} T^{(1)}_1 &\leq L_{f_1} \| \mathbb{E}[ [ \nabla_{yy}^2 \Bar{f}^{\xi^i} ]^{-1} ] \|  \| \mathbb{E}[ \nabla_{xy}^2\Bar{f}^{\xi^i}] - \nabla_{xy}^2\Bar{f} \| + L_{f_1} \| \nabla_{xy}^2\Bar{f} \|  \| \mathbb{E}[ [ \nabla_{yy}^2 \Bar{f}^{\xi^i} ]^{-1} ] - [ \nabla_{yy}^2 \Bar{f} ]^{-1} \|\nonumber\\
			&\leq \left(b_{yy} U_{xy} + T_{xy} W_{yy}\right)L_{f_1}\theta_i,\label{eq:TSG_bound_on_T_1^{(1)}}
		\end{alignat}
		where the last inequality follows by applying Assumptions~\ref{as:tri_lip_cont}, \ref{as:bound_on_hess_inv_true_MLP}, and~\ref{as:TSG_bounded_var} along with Lemma~\ref{lem:TSG_derived_unbiasedness}, and where $\| \nabla_{xy}^2\Bar{f} \|\leq T_{xy}$, which follows from the following reasoning (applying the triangle inequality, the consistency of matrix norms, along with Assumptions~\ref{as:tri_lip_cont} and \ref{as:strong_conv_f3_z}, and equation~\eqref{eq:aux_lip_z_result}):
		\begin{alignat}{2}
			&\| \nabla_{xy}^2\Bar{f} \|\nonumber\\
			&\leq \|\nabla_{yx}^2 f_2\| + \|\nabla_{yz}^2 f_2 \nabla_x z^\top\| + \| \nabla_{yzx}^3 f_3 \nabla_{zz}^2 f_3^{-1} \nabla_z f_2 \| + \| \nabla_{yzz}^3 f_3 \nabla_x z^\top \nabla_{zz}^2 f_3^{-1} \nabla_z f_2 \| \nonumber\\
			&\quad+ \| \nabla_{yz}^2f_3 [ \nabla_{zz}^2 f_3 ]^{-1}  \nabla_{zzx}^3 f_3  [ \nabla_{zz}^2 f_3 ]^{-1} \nabla_z f_2 \| + \| \nabla_{yz}^2f_3 [ \nabla_{zz}^2 f_3 ]^{-1} \nabla_{zzz}^3 f_3 \nabla_x z^\top [ \nabla_{zz}^2 f_3 ]^{-1} \nabla_z f_2 \|\nonumber\\
			&\quad + \| \nabla_{yz}^2 f_3 [ \nabla_{zz}^2f_3 ]^{-1} \nabla_{zx}^2 f_2\| + \| \nabla_{yz}^2 f_3 [ \nabla_{zz}^2f_3 ]^{-1} \nabla_{zz}^2 f_2 \nabla_x z^\top \|\nonumber\\
			&\leq \left( L_{\nabla f_2} + \frac{L_{\nabla^2 f_3} L_{f_2}}{\mu_z} \right) \left(1 + \frac{2 L_{\nabla f_3}}{\mu_z} + \frac{L_{\nabla f_3}^2}{\mu_z^2} \right) \;:=\; T_{xy}.\label{eq:TSG_UB_on_norm_hess_xy_f_bar}
		\end{alignat}
		
		\textbf{$( \text{Analysis of } T^{(1)}_2 )$:} Now, to upper-bound $T^{(1)}_2$ in~\eqref{eq:TSG_g1_biasedness_eq_to_bound_1}, we begin by adding and subtracting the term $\nabla_{xy}^2\Bar{f} [ \nabla_{yy}^2 \Bar{f} ]^{-1} \nabla_{yz}^2 f_3 \mathbb{E}[ [ \nabla_{zz}^2 f_3^{\xi^i} ]^{-1} ]$, applying the triangle inequality, along with the consistency of matrix norms to obtain
		\begin{alignat}{2}
			L_{f_1} T^{(1)}_2 &\leq L_{f_1} \| \nabla_{xy}^2\Bar{f} \|  \| [ \nabla_{yy}^2 \Bar{f} ]^{-1} \|  \| \nabla_{yz}^2 f_3 \|  \| [ \nabla_{zz}^2 f_3 ]^{-1} - \mathbb{E}[ [ \nabla_{zz}^2 f_3^{\xi^i} ]^{-1} ] \| \nonumber\\
			&\quad+ L_{f_1} \| \nabla_{yz}^2 f_3 \|  \| \mathbb{E}[ [ \nabla_{zz}^2 f_3^{\xi^i} ]^{-1} ] \|  \| \nabla_{xy}^2\Bar{f} [ \nabla_{yy}^2 \Bar{f} ]^{-1} - \mathbb{E}[ \nabla_{xy}^2\Bar{f}^{\xi^i}] \mathbb{E}[[ \nabla_{yy}^2 \Bar{f}^{\xi^i} ]^{-1}] \| \nonumber\\
			&\leq L_{f_1}L_{\nabla f_3}\left( \frac{T_{xy} W_{zz}}{\mu_y} + b_{zz} \left(b_{yy} U_{xy} + T_{xy} W_{yy}\right)\right)\theta_i, \label{eq:TSG_bound_on_T_2^{(1)}}
		\end{alignat}
		where the last inequality follows from applying Assumptions~\ref{as:tri_lip_cont}, \ref{as:strong_conv_fbar_y}, \ref{as:bound_on_hess_inv_true_MLP}, and~\ref{as:TSG_bounded_var}, the bound $\| \nabla_{xy}^2\Bar{f} \|\leq T_{xy}$ we derived in~\eqref{eq:TSG_UB_on_norm_hess_xy_f_bar}, and the bound we derived on the term $T_1^{(1)}$ in~\eqref{eq:TSG_bound_on_T_1^{(1)}}.
		
		Finally, substituting the bounds~\eqref{eq:TSG_bound_on_T_1^{(1)}} and~\eqref{eq:TSG_bound_on_T_2^{(1)}} on the terms $T_1^{(1)}$ and $T_2^{(1)}$, respectively, back into~\eqref{eq:TSG_g1_biasedness_eq_to_bound_1}, we obtain the desired bound on the biasedness as
		\begin{alignat}{2}
			&\|\nabla f (x^i,y^{i+1},z^{i+1}) - \bar g_{f_1}^{i} \|\nonumber\\
			& \leq L_{f_1}\left(L_{\nabla f_3} W_{zz} + b_{yy} U_{xy} + T_{xy} W_{yy} + L_{\nabla f_3}\left( \frac{T_{xy} W_{zz}}{\mu_y} + b_{zz} \left(b_{yy} U_{xy} + T_{xy} W_{yy}\right)\right)\right)\theta_i \;:=\; \omega \theta_i. \label{eq:0050}
		\end{alignat}
		
		Now, to bound the variance of $\tilde g_{f_1}^{i}$, we can begin by using the fact that $\|a + b + c + d\|^2\leq 4\left(\|a\|^2 + \|b\|^2 + \|c\|^2 + \|d\|^2\right)$, with~$a$, $b$, $c$, and~$d$ real-valued vectors, to obtain (it bears mentioning that for ease of notation, we will use $(\cdot)^{\xi^i}$ to denote that all terms in the parenthesis are random variables)
		\begin{alignat}{2}
			&\mathbb{E}[ \| \tilde g_{f_1}^{i} - \bar g_{f_1}^{i} \|^2 ] = \mathbb{E}[ \| \tilde g_{f_1}^{i} - \mathbb{E}[ \tilde g_{f_1}^{i} \vert \mathcal{F}_{i} ] \|^2 ]\nonumber\\
			& \leq \underbrace{4\mathbb{E}[\| \nabla_x f_1^{\xi^i} - \mathbb{E}[\nabla_x f_1^{\xi^i}] \|^2]}_{T^{(2)}_1} + \underbrace{4\mathbb{E}[\| \mathbb{E}[\left(\nabla_{xz}^2 f_3 [ \nabla_{zz}^2 f_3 ]^{-1} \nabla_z f_1\right)^{\xi^i} ] - \left(\nabla_{xz}^2 f_3 [ \nabla_{zz}^2 f_3 ]^{-1} \nabla_z f_1\right)^{\xi^i}\|^2]}_{T^{(2)}_2}\nonumber\\
			&\quad + \underbrace{4\mathbb{E}[\| \mathbb{E}[ \left( \nabla_{xy}^2\Bar{f} [ \nabla_{yy}^2 \Bar{f} ]^{-1} \nabla_y f_1\right)^{\xi^i} ]  - \left( \nabla_{xy}^2\Bar{f} [ \nabla_{yy}^2 \Bar{f} ]^{-1} \nabla_y f_1\right)^{\xi^i} \|^2]}_{T^{(2)}_3}\nonumber\\
			&\quad+ \underbrace{4\mathbb{E}[\| \left(\nabla_{xy}^2\Bar{f} [ \nabla_{yy}^2 \Bar{f} ]^{-1} \nabla_{yz}^2 f_3 [ \nabla_{zz}^2 f_3 ]^{-1} \nabla_z f_1 \right)^{\xi^i} - \mathbb{E}[ \left(\nabla_{xy}^2\Bar{f} [ \nabla_{yy}^2 \Bar{f} ]^{-1} \nabla_{yz}^2 f_3 [ \nabla_{zz}^2 f_3 ]^{-1} \nabla_z f_1 \right)^{\xi^i} ] \|^2]}_{T^{(2)}_4}.\label{eq:TSG_g1_biasedness_eq_to_bound_2}
		\end{alignat}
		
		\textbf{$( \text{Analysis of } T^{(2)}_1 )$:} Notice that the term $T^{(2)}_1$ in~\eqref{eq:TSG_g1_biasedness_eq_to_bound_2} can be bounded by Assumption~\ref{as:TSG_unbiased_estimators}
		\begin{equation}\label{eq:TSG_bound_on_T_1^{(2)}}
			4\mathbb{E}[\| \nabla_x f_1^{\xi^i} - \mathbb{E}[\nabla_x f_1^{\xi^i}] \|^2] \;\leq\; 4\sigma^2_{\nabla f_1} := \tau_1.
		\end{equation}

		\textbf{$( \text{Analysis of } T^{(2)}_2 )$:} Now dealing with the contents of the term $T_2^{(2)}$, we can apply Assumption~\ref{as:TSG_unbiased_estimators} and re-factorize to obtain
		\begin{alignat*}{2}
			&\mathbb{E}[\left(\nabla_{xz}^2 f_3 [ \nabla_{zz}^2 f_3 ]^{-1} \nabla_z f_1\right)^{\xi^i} ] - \left(\nabla_{xz}^2 f_3 [ \nabla_{zz}^2 f_3 ]^{-1} \nabla_z f_1\right)^{\xi^i} \\
			&=  \nabla_{xz}^2 f_3 \mathbb{E}[[ \nabla_{zz}^2 f_3^{\xi^i} ]^{-1}] \nabla_z f_1 - \nabla_{xz}^2 f_3^{\xi^i} [ \nabla_{zz}^2 f_3^{\xi^i} ]^{-1} \nabla_z f_1^{\xi^i}\\
			& = ( \nabla_{xz}^2 f_3 - \nabla_{xz}^2 f_3^{\xi^i} ) \mathbb{E}[[ \nabla_{zz}^2 f_3^{\xi^i} ]^{-1}] \nabla_z f_1\\
			&\quad+ \nabla_{xz}^2 f_3^{\xi^i} ( \mathbb{E}[[ \nabla_{zz}^2 f_3^{\xi^i} ]^{-1}] - [ \nabla_{zz}^2 f_3^{\xi^i} ]^{-1} ) \nabla_z f_1\\
			&\quad+ \nabla_{xz}^2 f_3^{\xi^i} [ \nabla_{zz}^2 f_3^{\xi^i} ]^{-1} ( \nabla_z f_1 - \nabla_z f_1^{\xi^i} ).
		\end{alignat*}
		By using this, the fact that $\|a+b+c\|^2 \leq 3\left( \|a\|^2+\|b\|^2+\|c\|^2 \right)$, with~$a$, $b$, and~$c$ real-valued vectors, along with the consistency of matrix norms, and Assumptions~\ref{as:tri_lip_cont}, \ref{as:bound_on_hess_inv_true_MLP}, and~\ref{as:TSG_unbiased_estimators}, we can see that the term $T^{(2)}_2$ is upper-bounded by
		\begin{alignat}{2}
			& 4\mathbb{E}[\| \mathbb{E}[\left(\nabla_{xz}^2 f_3 [ \nabla_{zz}^2 f_3 ]^{-1} \nabla_z f_1\right)^{\xi^i} ] - \left(\nabla_{xz}^2 f_3 [ \nabla_{zz}^2 f_3 ]^{-1} \nabla_z f_1\right)^{\xi^i}\|^2]\nonumber\\
			&\leq 12 \sigma_{\nabla^2 f_3}^2 b_{zz}^2 L_{f_1}^2 + 12\mathbb{E}[\|\nabla_{xz}^2 f_3^{\xi^i} \|^2] \mathbb{E}[\| \mathbb{E}[[ \nabla_{zz}^2 f_3^{\xi^i} ]^{-1}] - [ \nabla_{zz}^2 f_3^{\xi^i} ]^{-1} \|^2] L_{f_1}^2\nonumber\\
			&\quad+ 12\mathbb{E}[\|\nabla_{xz}^2 f_3^{\xi^i} \|^2] b_{zz}^2 \sigma_{\nabla f_1}^2.\label{eq:TSG_g1_biasedness_eq_to_bound_1_for_T_2}
		\end{alignat}
		Consider the term $\mathbb{E}[\|\nabla_{xz}^2 f_3^{\xi^i} \|^2]$ in~\eqref{eq:TSG_g1_biasedness_eq_to_bound_1_for_T_2}. Using the definition of variance (i.e., $\mathbb{E}[ X^2 ] = \text{Var}[ X ] + \mathbb{E}[ X ]^2$) along with Assumptions~\ref{as:TSG_unbiased_estimators} and~\ref{as:tri_lip_cont}, we have
		\begin{equation}\label{eq:TSG_g1_biasedness_aux_bound_1_for_T_2}
			\mathbb{E}[\| \nabla_{xz}^2 f_3^{\xi^i} \|^2] =  \mathbb{E}[\| \nabla_{xz}^2 f_3^{\xi^i} - \mathbb{E}[ \nabla_{xz}^2 f_3^{\xi^i} ]\|^2] + \mathbb{E}[\| \mathbb{E}[ \nabla_{xz}^2 f_3^{\xi^i} ] \|^2] \leq \sigma^2_{\nabla^2 f_3} + L_{\nabla f_3}^2.
		\end{equation}
		Consider the term $\mathbb{E}[\| [ \nabla_{zz}^2 f_3^{\xi^i} ]^{-1} - \mathbb{E}[[ \nabla_{zz}^2 f_3^{\xi^i} ]^{-1}] \|^2]$ in~\eqref{eq:TSG_g1_biasedness_eq_to_bound_1_for_T_2}. Using the fact that $\|a+b\|^2 \leq 2\left(\|a\|^2 + \|b\|^2\right)$, with~$a$ and~$b$ real-valued vectors, and applying Assumption~\ref{as:bound_on_hess_inv_true_MLP}, we have
		\begin{alignat}{2}
			\mathbb{E}[\| [ \nabla_{zz}^2 f_3^{\xi^i} ]^{-1} - \mathbb{E}[[ \nabla_{zz}^2 f_3^{\xi^i} ]^{-1}] \|^2] &\leq 2\mathbb{E}[\| [ \nabla_{zz}^2 f_3^{\xi^i} ]^{-1}\|^2] + 2\mathbb{E}[\|\mathbb{E}[[ \nabla_{zz}^2 f_3^{\xi^i} ]^{-1}] \|^2] \leq 4b_{zz}^2.\label{eq:TSG_g1_biasedness_aux_bound_2_for_T_2}
		\end{alignat}
		Now substituting the bounds~\eqref{eq:TSG_g1_biasedness_aux_bound_1_for_T_2} and~\eqref{eq:TSG_g1_biasedness_aux_bound_2_for_T_2} back into~\eqref{eq:TSG_g1_biasedness_eq_to_bound_1_for_T_2}, we obtain the bound on the term $T_2^{(2)}$ as
		\begin{alignat}{2}
			& 4\mathbb{E}[\| \mathbb{E}[\left(\nabla_{xz}^2 f_3 [ \nabla_{zz}^2 f_3 ]^{-1} \nabla_z f_1\right)^{\xi^i} ] - \left(\nabla_{xz}^2 f_3 [ \nabla_{zz}^2 f_3 ]^{-1} \nabla_z f_1\right)^{\xi^i}\|^2]\nonumber\\
			& \leq 12 \sigma_{\nabla^2 f_3}^2 b_{zz}^2 L_{f_1}^2 + 48( \sigma^2_{\nabla^2 f_3} + L_{\nabla f_3}^2 ) b_{zz}^2 L_{f_1}^2 + 12( \sigma^2_{\nabla^2 f_3} + L_{\nabla f_3}^2 ) b_{zz}^2 \sigma_{\nabla f_1}^2 \;:=\; \tau_2. \label{eq:TSG_bound_on_T_2^{(2)}}
		\end{alignat}
		
		\textbf{$( \text{Analysis of } T^{(2)}_3 )$:} Applying similar reasoning that was used in bounding the term $T^{(2)}_2$, along with utilizing Lemma~\ref{lem:TSG_derived_variance_bounds_of_f_bar} and Assumptions~\ref{as:tri_lip_cont}, \ref{as:bound_on_hess_inv_true_MLP}, and~\ref{as:TSG_unbiased_estimators}, we have
		\begin{alignat}{2}
			&4\mathbb{E}[\| \mathbb{E}[ \left( \nabla_{xy}^2\Bar{f} [ \nabla_{yy}^2 \Bar{f} ]^{-1} \nabla_y f_1\right)^{\xi^i} ]  - \left( \nabla_{xy}^2\Bar{f} [ \nabla_{yy}^2 \Bar{f} ]^{-1} \nabla_y f_1\right)^{\xi^i} \|^2]\nonumber\\
			&\leq 12V_{xy} b_{yy}^2 L_{f_1}^2 + 12\mathbb{E}[\|\nabla_{xy}^2\Bar{f}^{\xi^i} \|^2] \mathbb{E}[\| \mathbb{E}[[ \nabla_{yy}^2 \Bar{f}^{\xi^i} ]^{-1}] - [ \nabla_{yy}^2 \Bar{f}^{\xi^i} ]^{-1} \|^2] L_{f_1}^2\nonumber\\
			&\quad+ 12 \mathbb{E}[\|\nabla_{xy}^2\Bar{f}^{\xi^i} \|^2] b_{yy}^2 \sigma_{\nabla f_1}^2. \label{eq:TSG_g1_biasedness_eq_to_bound_1_for_T_3}
		\end{alignat}
		
		Consider the term $\mathbb{E}[\| [ \nabla_{yy}^2 \Bar{f}^{\xi^i} ]^{-1} - \mathbb{E}[[ \nabla_{yy}^2 \Bar{f}^{\xi^i} ]^{-1}] \|^2]$ in~\eqref{eq:TSG_g1_biasedness_eq_to_bound_1_for_T_3}. Applying the same reasoning that was used to derive~\eqref{eq:TSG_g1_biasedness_aux_bound_2_for_T_2}, we have
		\begin{equation}\label{eq:TSG_g1_biasedness_aux_bound_1_for_T_3}
			\mathbb{E}[\| [ \nabla_{yy}^2 \Bar{f}^{\xi^i} ]^{-1} - \mathbb{E}[[ \nabla_{yy}^2 \Bar{f}^{\xi^i} ]^{-1}] \|^2] \;\leq\; 4 b_{yy}^2.
		\end{equation}
		
		Consider the term $\mathbb{E}[\|\nabla_{xy}^2\Bar{f}^{\xi^i} \|^2]$ in~\eqref{eq:TSG_g1_biasedness_eq_to_bound_1_for_T_3}. Using the definition of variance (i.e., $\mathbb{E}[ X^2 ] = \text{Var}[ X ] + \mathbb{E}[ X ]^2$) and applying Lemma~\ref{lem:TSG_derived_variance_bounds_of_f_bar}, we have
		\begin{equation}\label{eq:TSG_g1_biasedness_eq_to_bound_2_for_T_3}
			\mathbb{E}[\|\nabla_{xy}^2\Bar{f}^{\xi^i} \|^2] \;\leq\; V_{xy} + \|\mathbb{E}[\nabla_{xy}^2\Bar{f}^{\xi^i}] \|^2.
		\end{equation}
		Now, consider the $\|\mathbb{E}[\nabla_{xy}^2\Bar{f}^{\xi^i}] \|^2$ term in~\eqref{eq:TSG_g1_biasedness_eq_to_bound_2_for_T_3}. Noticing that $\mathbb{E}[ \nabla_x z^{\xi^i\top}] = \mathbb{E}[ [ \nabla_{zz}^2 f_3^{\xi^i}  ]^{-1} ] \nabla_{zx}^2 f_3$ (from Assumption~\ref{as:TSG_unbiased_estimators}), we can apply the triangle inequality along with the consistency of matrix norms and Assumptions~\ref{as:tri_lip_cont}, \ref{as:TSG_unbiased_estimators}, and~\ref{as:bound_on_hess_inv_true_MLP} to obtain
		\begin{alignat}{2}
			\| \mathbb{E}[ \nabla_{xy}^2\Bar{f}^{\xi^i} ] \| &\leq L_{\nabla f_2} + b_{zz} ( L_{\nabla f_2}^2 + L_{f_2} L_{\nabla^2 f_3} ( 1 + 2 b_{zz} L_{\nabla f_3} + b_{zz}^2 L_{\nabla f_3}^2 ) + L_{\nabla f_3} L_{\nabla f_2} ( 1 + b_{zz} L_{\nabla f_3} ) ) \nonumber\\
			&:= \tilde T_{xy}.\label{eq:TSG_derived_bound_tilde_T_xy}
		\end{alignat}
		Finally, squaring both sides of this inequality, we have
		\begin{equation}\label{eq:TSG_g1_biasedness_aux_bound_2_for_T_3}
			\| \mathbb{E}[ \nabla_{xy}^2\Bar{f}^{\xi^i} ] \|^2 \;\leq\; \tilde T_{xy}^2.
		\end{equation}
		Thus, substituting~\eqref{eq:TSG_g1_biasedness_aux_bound_2_for_T_3} back into~\eqref{eq:TSG_g1_biasedness_eq_to_bound_2_for_T_3}, we have $\mathbb{E}[\|\nabla_{xy}^2\Bar{f}^{\xi^i} \|^2] \leq V_{xy} + \tilde T_{xy}^2$. Finally, substituting this and bound~\eqref{eq:TSG_g1_biasedness_aux_bound_1_for_T_3} back into~\eqref{eq:TSG_g1_biasedness_eq_to_bound_1_for_T_3}, we obtain the bound on the term $T_3^{(2)}$ as
		\begin{alignat}{2}
			&4\mathbb{E}[\| \mathbb{E}[ \left( \nabla_{xy}^2\Bar{f} [ \nabla_{yy}^2 \Bar{f} ]^{-1} \nabla_y f_1\right)^{\xi^i} ]  - \left( \nabla_{xy}^2\Bar{f} [ \nabla_{yy}^2 \Bar{f} ]^{-1} \nabla_y f_1\right)^{\xi^i} \|^2]\nonumber\\
			&\leq 12V_{xy} b_{yy}^2 L_{f_1}^2 + 48( V_{xy} + \tilde T_{xy}^2 )b_{yy}^2 L_{f_1}^2 + 12 ( V_{xy} + \tilde T_{xy}^2 ) b_{yy}^2 \sigma_{\nabla f_1}^2 \;:=\; \tau_3. \label{eq:TSG_bound_on_T_3^{(2)}}
		\end{alignat}

		\textbf{$( \text{Analysis of } T^{(2)}_4 )$:} Now, dealing with the contents of the norm in term $T^{(2)}_4$, we can apply Assumption~\ref{as:TSG_unbiased_estimators} and re-factorize to obtain
		\begin{alignat}{2}
			& \left(\nabla_{xy}^2\Bar{f} [ \nabla_{yy}^2 \Bar{f} ]^{-1} \nabla_{yz}^2 f_3 [ \nabla_{zz}^2 f_3 ]^{-1} \nabla_z f_1 \right)^{\xi^i} - \mathbb{E}[ \left(\nabla_{xy}^2\Bar{f} [ \nabla_{yy}^2 \Bar{f} ]^{-1} \nabla_{yz}^2 f_3 [ \nabla_{zz}^2 f_3 ]^{-1} \nabla_z f_1 \right)^{\xi^i} ]\nonumber\\
			& = ( \nabla_{xy}^2\Bar{f}^{\xi^i} - \mathbb{E}[ \nabla_{xy}^2\Bar{f}^{\xi^i} ] ) [ \nabla_{yy}^2 \Bar{f}^{\xi^i} ]^{-1} \nabla_{yz}^2 f_3^{\xi^i} [ \nabla_{zz}^2 f_3^{\xi^i} ]^{-1} \nabla_z f_1^{\xi^i}\nonumber\\
			&\quad+ \mathbb{E}[ \nabla_{xy}^2\Bar{f}^{\xi^i} ] \underbrace{( [ \nabla_{yy}^2 \Bar{f}^{\xi^i} ]^{-1} \nabla_{yz}^2 f_3^{\xi^i} [ \nabla_{zz}^2 f_3^{\xi^i} ]^{-1} - \mathbb{E}[[ \nabla_{yy}^2 \Bar{f}^{\xi^i} ]^{-1}] \nabla_{yz}^2 f_3 \mathbb{E}[[ \nabla_{zz}^2 f_3^{\xi^i} ]^{-1}] )}_{\hat T^{(2)}_4} \nabla_z f_1^{\xi^i}\nonumber\\
			&\quad+ \mathbb{E}[ \nabla_{xy}^2\Bar{f}^{\xi^i} ] \mathbb{E}[[ \nabla_{yy}^2 \Bar{f}^{\xi^i} ]^{-1}] \nabla_{yz}^2 f_3 \mathbb{E}[[ \nabla_{zz}^2 f_3^{\xi^i} ]^{-1}] ( \nabla_z f_1^{\xi^i} - \nabla_z f_1 ).\label{eq:TSG_g1_biasedness_eq_to_bound_3}
		\end{alignat}
		We can further re-factorize the term $\hat T^{(2)}_4$ in~\eqref{eq:TSG_g1_biasedness_eq_to_bound_3} to obtain
		\begin{alignat}{2}
			\hat T^{(2)}_4 & = ( [ \nabla_{yy}^2 \Bar{f}^{\xi^i} ]^{-1} - \mathbb{E}[[ \nabla_{yy}^2 \Bar{f}^{\xi^i} ]^{-1}] ) \nabla_{yz}^2 f_3^{\xi^i} [ \nabla_{zz}^2 f_3^{\xi^i} ]^{-1} + \mathbb{E}[[ \nabla_{yy}^2 \Bar{f}^{\xi^i} ]^{-1}] ( \nabla_{yz}^2 f_3^{\xi^i} - \nabla_{yz}^2 f_3 ) [ \nabla_{zz}^2 f_3^{\xi^i} ]^{-1}\nonumber\\
			&\quad+ \mathbb{E}[[ \nabla_{yy}^2 \Bar{f}^{\xi^i} ]^{-1}] \nabla_{yz}^2 f_3 ( [ \nabla_{zz}^2 f_3^{\xi^i} ]^{-1} - \mathbb{E}[[ \nabla_{zz}^2 f_3^{\xi^i} ]^{-1}] ).\label{eq:TSG_bound_on_hat_T_4^{(2)}}
		\end{alignat}
		Substituting~\eqref{eq:TSG_bound_on_hat_T_4^{(2)}} for $\hat T^{(2)}_4$ in~\eqref{eq:TSG_g1_biasedness_eq_to_bound_3}, we have
		\begin{alignat}{2}
			& (\nabla_{xy}^2\Bar{f} [ \nabla_{yy}^2 \Bar{f} ]^{-1} \nabla_{yz}^2 f_3 [ \nabla_{zz}^2 f_3 ]^{-1} \nabla_z f_1 )^{\xi^i} - \mathbb{E}[ (\nabla_{xy}^2\Bar{f} [ \nabla_{yy}^2 \Bar{f} ]^{-1} \nabla_{yz}^2 f_3 [ \nabla_{zz}^2 f_3 ]^{-1} \nabla_z f_1 )^{\xi^i} ]\nonumber\\
			& = ( \nabla_{xy}^2\Bar{f}^{\xi^i} - \mathbb{E}[ \nabla_{xy}^2\Bar{f}^{\xi^i} ] ) [ \nabla_{yy}^2 \Bar{f}^{\xi^i} ]^{-1} \nabla_{yz}^2 f_3^{\xi^i} [ \nabla_{zz}^2 f_3^{\xi^i} ]^{-1} \nabla_z f_1^{\xi^i}\nonumber\\
			&\quad+ \mathbb{E}[ \nabla_{xy}^2\Bar{f}^{\xi^i} ] ( [ \nabla_{yy}^2 \Bar{f}^{\xi^i} ]^{-1} - \mathbb{E}[[ \nabla_{yy}^2 \Bar{f}^{\xi^i} ]^{-1}] ) \nabla_{yz}^2 f_3^{\xi^i} [ \nabla_{zz}^2 f_3^{\xi^i} ]^{-1} \nabla_z f_1^{\xi^i} \nonumber\\
			&\quad+ \mathbb{E}[ \nabla_{xy}^2\Bar{f}^{\xi^i} ] \mathbb{E}[[ \nabla_{yy}^2 \Bar{f}^{\xi^i} ]^{-1}] ( \nabla_{yz}^2 f_3^{\xi^i} - \nabla_{yz}^2 f_3 ) [ \nabla_{zz}^2 f_3^{\xi^i} ]^{-1} \nabla_z f_1^{\xi^i}\nonumber\\
			&\quad+ \mathbb{E}[ \nabla_{xy}^2\Bar{f}^{\xi^i} ] \mathbb{E}[[ \nabla_{yy}^2 \Bar{f}^{\xi^i} ]^{-1}] \nabla_{yz}^2 f_3 ( [ \nabla_{zz}^2 f_3^{\xi^i} ]^{-1} - \mathbb{E}[[ \nabla_{zz}^2 f_3^{\xi^i} ]^{-1}] ) \nabla_z f_1^{\xi^i}\nonumber\\
			&\quad+ \mathbb{E}[ \nabla_{xy}^2\Bar{f}^{\xi^i} ] \mathbb{E}[[ \nabla_{yy}^2 \Bar{f}^{\xi^i} ]^{-1}] \nabla_{yz}^2 f_3 \mathbb{E}[[ \nabla_{zz}^2 f_3^{\xi^i} ]^{-1}] ( \nabla_z f_1^{\xi^i} - \nabla_z f_1 ). \label{eq:TSG_g1_biasedness_inside_of_norm_of_T_4^{(2)}}
		\end{alignat}
		Finally, substituting~\eqref{eq:TSG_g1_biasedness_inside_of_norm_of_T_4^{(2)}} back into the norm for $T_4^{(2)}$ in~\eqref{eq:TSG_g1_biasedness_eq_to_bound_2} and using the fact that $\|a+b+c+d+e\|^2 \leq 5\left(\|a\|^2+\|b\|^2+\|c\|^2+\|d\|^2+\|e\|^2\right)$, with~$a$, $b$, $c$, $d$, and~$e$ real-valued vectors, along with the consistency of matrix norms, and applying Assumptions~\ref{as:tri_lip_cont}, \ref{as:TSG_unbiased_estimators}, \ref{as:bound_on_hess_inv_true_MLP}, along with Lemma~\ref{lem:TSG_derived_variance_bounds_of_f_bar} and bounds~\eqref{eq:TSG_g1_biasedness_aux_bound_2_for_T_2}, \eqref{eq:TSG_g1_biasedness_aux_bound_1_for_T_3}, and~\eqref{eq:TSG_g1_biasedness_aux_bound_2_for_T_3}, to obtain
		\begin{alignat}{2}
			T_4^{(2)} &\leq 20 b_{yy}^2 b_{zz}^2 V_{xy} \mathbb{E}[\| \nabla_{yz}^2 f_3^{\xi^i} \|^2]  \mathbb{E}[\| \nabla_z f_1^{\xi^i}\|^2] + 80 b_{zz}^2 \tilde T_{xy}^2 b_{yy}^2 \mathbb{E}[\| \nabla_{yz}^2 f_3^{\xi^i} \|^2]  \mathbb{E}[\| \nabla_z f_1^{\xi^i} \|^2]\nonumber\\
			&\quad+ 20 b_{yy}^2 \sigma^2_{\nabla^2 f_3} b_{zz}^2 \tilde T_{xy}^2 \mathbb{E}[\| \nabla_z f_1^{\xi^i} \|^2] + 80 b_{yy}^2 L_{\nabla f_3}^2 b_{zz}^2 \tilde T_{xy}^2 \mathbb{E}[\| \nabla_z f_1^{\xi^i} \|^2] + 20 b_{yy}^2 L_{\nabla f_3}^2 b_{zz}^2 \sigma_{\nabla f_1}^2 \tilde T_{xy}^2. \label{eq:TSG_g1_biasedness_eq_to_bound_4}
		\end{alignat}
		
		Consider the terms $\mathbb{E}[\| \nabla_{yz}^2 f_3^{\xi^i} \|^2]$ and $\mathbb{E}[\| \nabla_z f_1^{\xi^i}\|^2]$ in~\eqref{eq:TSG_g1_biasedness_eq_to_bound_4}. Applying nearly identical reasoning that was used to derive~\eqref{eq:TSG_g1_biasedness_aux_bound_1_for_T_2}, we have
		\begin{alignat}{3}
			&\mathbb{E}[\| \nabla_{yz}^2 f_3^{\xi^i} \|^2] \;&&\leq\; \sigma^2_{\nabla^2 f_3} + L_{\nabla f_3}^2,\label{eq:TSG_g1_biasedness_aux_bound_1}\\
			&\mathbb{E}[\| \nabla_z f_1^{\xi^i}\|^2] \;&&\leq\;  \sigma^2_{\nabla f_1} + L_{f_1}^2.\label{eq:TSG_g1_biasedness_aux_bound_2}
		\end{alignat}
		
		Now substituting the bounds~\eqref{eq:TSG_g1_biasedness_aux_bound_1} and \eqref{eq:TSG_g1_biasedness_aux_bound_2} back into~\eqref{eq:TSG_g1_biasedness_eq_to_bound_4}, we obtain the bound on the term $T_4^{(2)}$ as
		\begin{alignat}{2}
			& 4\mathbb{E}[\| (\nabla_{xy}^2\Bar{f} [ \nabla_{yy}^2 \Bar{f} ]^{-1} \nabla_{yz}^2 f_3 [ \nabla_{zz}^2 f_3 ]^{-1} \nabla_z f_1 )^{\xi^i} - \mathbb{E}[ (\nabla_{xy}^2\Bar{f} [ \nabla_{yy}^2 \Bar{f} ]^{-1} \nabla_{yz}^2 f_3 [ \nabla_{zz}^2 f_3 ]^{-1} \nabla_z f_1 )^{\xi^i} ] \|^2]\nonumber\\
			&\leq 20 b_{yy}^2 b_{zz}^2 V_{xy} ( \sigma^2_{\nabla^2 f_3} + L_{\nabla f_3}^2 ) ( \sigma^2_{\nabla f_1} + L_{f_1}^2 ) + 80 b_{zz}^2 \tilde T_{xy}^2 b_{yy}^2 ( \sigma^2_{\nabla^2 f_3} + L_{\nabla f_3}^2 ) ( \sigma^2_{\nabla f_1} + L_{f_1}^2 )\nonumber\\
			&\quad+ 20 b_{yy}^2 \sigma^2_{\nabla^2 f_3} b_{zz}^2 \tilde T_{xy}^2 ( \sigma^2_{\nabla f_1} + L_{f_1}^2 ) + 80 b_{yy}^2 L_{\nabla f_3}^2 \tilde T_{xy}^2 b_{zz}^2 ( \sigma^2_{\nabla f_1} + L_{f_1}^2 ) + 20 b_{yy}^2 L_{\nabla f_3}^2 b_{zz}^2 \sigma_{\nabla f_1}^2 \tilde T_{xy}^2\nonumber\\
			& := \tau_4. \label{eq:TSG_bound_on_T_4^{(2)}}
		\end{alignat}
		
		The proof is completed by substituting the derived bounds for $T_1^{(2)}$, $T_2^{(2)}$, $T_3^{(2)}$, and $T_4^{(2)}$ (bounds~\eqref{eq:TSG_bound_on_T_1^{(2)}}, \eqref{eq:TSG_bound_on_T_2^{(2)}}, \eqref{eq:TSG_bound_on_T_3^{(2)}}, and~\eqref{eq:TSG_bound_on_T_4^{(2)}}, respectively) back into~\eqref{eq:TSG_g1_biasedness_eq_to_bound_2}, yielding the desired variance bound (including the omitted $\sigma$-algebra $\mathcal{F}_i$ that the expectation is conditioned on):
		\begin{equation}\label{eq:0060}
			\mathbb{E}[ \| \tilde g_{f_1}^{i} - \bar g_{f_1}^{i} \|^2 \vert \mathcal{F}_i ] \;\leq\; \tau, \quad \text{where} \quad \tau\;:=\;\tau_1+\tau_2+\tau_3+\tau_4.
		\end{equation}
	\end{proof}

	\begin{lemma}[Boundedness of UL direction]\label{lem:TSG_size_inexact_g}
		Under Assumptions~\ref{as:tri_lip_cont}, \ref{as:strong_conv_f3_z}, \ref{as:TSG_unbiased_estimators}, \ref{as:bound_on_hess_inv_true_MLP}, and~\ref{as:TSG_bounded_var}, there exists a positive constant~$\zeta$ such that
		\[
		\mathbb{E}[ \| \tilde g^i_{f_1} \|^2 \vert \mathcal{F}_i ] \;\leq\; \zeta.
		\]
	\end{lemma}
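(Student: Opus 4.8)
The plan is to exploit the standard bias--variance decomposition of the inexact UL direction, which reduces the claim to two ingredients already established earlier. Recalling $\bar g_{f_1}^{i} = \mathbb{E}[\tilde g_{f_1}^{i}\vert\mathcal{F}_i]$ and noting that $\bar g_{f_1}^{i}$ is $\mathcal{F}_i$-measurable, the cross term vanishes and I obtain
\[
\mathbb{E}[\,\|\tilde g_{f_1}^{i}\|^2\,\vert\,\mathcal{F}_i\,] \;=\; \|\bar g_{f_1}^{i}\|^2 + \mathbb{E}[\,\|\tilde g_{f_1}^{i} - \bar g_{f_1}^{i}\|^2\,\vert\,\mathcal{F}_i\,].
\]
The variance term on the right is bounded above by the constant $\tau$ from Lemma~\ref{lem:TSG_new_inexact_variance_bound}, so the only remaining work is to bound $\|\bar g_{f_1}^{i}\|$ by a constant.

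For that, I would add and subtract the deterministic adjoint gradient $\nabla f(x^i,y^{i+1},z^{i+1})$ and use the triangle inequality,
\[
\|\bar g_{f_1}^{i}\| \;\leq\; \|\nabla f(x^i,y^{i+1},z^{i+1})\| + \|\nabla f(x^i,y^{i+1},z^{i+1}) - \bar g_{f_1}^{i}\|.
\]
The second summand is at most $\omega\theta_i \leq \omega$ (using $\theta_i\in(0,1]$) by the bias bound of Lemma~\ref{lem:TSG_new_inexact_variance_bound}. The first summand is the norm of the deterministic adjoint gradient~\eqref{adjoint}, which I would bound block-by-block via the consistency of matrix norms: the gradient blocks $\nabla_x f_1$, $\nabla_y f_1$, $\nabla_z f_1$ are bounded by $L_{f_1}$ and the Hessian blocks $\nabla_{xz}^2 f_3$, $\nabla_{yz}^2 f_3$ by $L_{\nabla f_3}$ (Assumption~\ref{as:tri_lip_cont}); the inverted Hessians $[\nabla_{zz}^2 f_3]^{-1}$ and $[\nabla_{yy}^2\bar f]^{-1}$ are bounded by $b_{zz}$ and $b_{yy}$ (Assumption~\ref{as:bound_on_hess_inv_true_MLP}); and $\|\nabla_{xy}^2\bar f\|$ is controlled by the already-derived constant $T_{xy}$ from~\eqref{eq:TSG_UB_on_norm_hess_xy_f_bar}. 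Chaining these estimates through the two bracketed factors of~\eqref{adjoint} yields
\[
\|\nabla f(x^i,y^{i+1},z^{i+1})\| \;\leq\; L_{f_1}\bigl(1 + b_{zz}L_{\nabla f_3}\bigr)\bigl(1 + T_{xy}\,b_{yy}\bigr),
\]
a finite constant.

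Combining the three displays, $\|\bar g_{f_1}^{i}\|$ is bounded by a constant, hence so is $\|\bar g_{f_1}^{i}\|^2$, and adding $\tau$ produces the desired uniform bound; concretely I would set $\zeta := \bigl(L_{f_1}(1+b_{zz}L_{\nabla f_3})(1+T_{xy}b_{yy}) + \omega\bigr)^2 + \tau$. I expect no genuinely new difficulty here: the only mildly delicate point is that the compound term involving $\nabla_{xy}^2\bar f$ conceals third-order derivative tensors, but this is exactly what the bound $T_{xy}$ in~\eqref{eq:TSG_UB_on_norm_hess_xy_f_bar} already absorbs, so no fresh tensor estimates are required. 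The ``hard part'' is therefore merely careful bookkeeping of which preceding bound supplies each factor, rather than any new analytic estimate.
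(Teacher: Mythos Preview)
Your argument is correct and shares the paper's skeleton: the same bias--variance decomposition $\mathbb{E}[\|\tilde g_{f_1}^i\|^2\mid\mathcal{F}_i]=\|\bar g_{f_1}^i\|^2+\mathbb{E}[\|\tilde g_{f_1}^i-\bar g_{f_1}^i\|^2\mid\mathcal{F}_i]$, with the variance term handled by the $\tau$ bound of Lemma~\ref{lem:TSG_new_inexact_variance_bound}. The only difference is in how $\|\bar g_{f_1}^i\|$ is controlled. The paper expands $\bar g_{f_1}^i=\mathbb{E}[\tilde g_{f_1}^i\mid\mathcal{F}_i]$ directly from the stochastic adjoint formula and bounds each factor with the \emph{stochastic} constants $b_{zz}$, $b_{yy}$, and $\tilde T_{xy}$ from~\eqref{eq:TSG_derived_bound_tilde_T_xy}, obtaining $\|\bar g_{f_1}^i\|\le L_{f_1}(1+L_{\nabla f_3}b_{zz})(1+\tilde T_{xy}b_{yy})$ without any detour. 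You instead route through the deterministic $\nabla f(x^i,y^{i+1},z^{i+1})$, picking up the extra additive $\omega$ from the bias bound and using the deterministic $T_{xy}$ of~\eqref{eq:TSG_UB_on_norm_hess_xy_f_bar}. Both routes are valid; the paper's is marginally more direct, while yours recycles more of Lemma~\ref{lem:TSG_new_inexact_variance_bound}.

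One small bookkeeping caveat: you invoke Assumption~\ref{as:bound_on_hess_inv_true_MLP} to bound the \emph{deterministic} inverses $[\nabla_{zz}^2 f_3]^{-1}$ and $[\nabla_{yy}^2\bar f]^{-1}$, but that assumption concerns the stochastic inverses. For the $zz$ block you can simply use $1/\mu_z$ from Assumption~\ref{as:strong_conv_f3_z}; for the $yy$ block, since Assumption~\ref{as:strong_conv_fbar_y} is not among the lemma's hypotheses, combine Assumptions~\ref{as:bound_on_hess_inv_true_MLP} and~\ref{as:TSG_bounded_var} via $\|[\nabla_{yy}^2\bar f]^{-1}\|\le\|\mathbb{E}[[\nabla_{yy}^2\bar f^{\xi}]^{-1}]\|+W_{yy}\theta_i\le b_{yy}+W_{yy}$. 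This is a cosmetic fix and does not affect the structure of your argument.
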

	\begin{proof}
		For this proof, we may omit the point $(x^i,y^{i+1},z^{i+1})$ that the terms are evaluated at; we will simply use a $\xi^i$-superscript as short-hand to indicate any random terms. From the definition of variance along with using Lemma~\ref{lem:TSG_new_inexact_variance_bound}, we have
		\begin{equation}\label{eq:TSG_size_inexact_g_aux_eq_to_bound_1}
			\mathbb{E}[ \| \tilde g^i_{f_1} \|^2 | \mathcal{F}_i ] = \| \bar g_{f_1}^{i} \|^2 + \mathbb{E}[ \| \tilde g_{f_1}^{i} - \bar g_{f_1}^{i} \|^2 | \mathcal{F}_i ] \leq \| \bar g_{f_1}^{i} \|^2 + \tau.
		\end{equation}
		Now, considering the $\| \bar g_{f_1}^{i} \|$ term in~\eqref{eq:TSG_size_inexact_g_aux_eq_to_bound_1}, we can apply the triangle inequality, Assumption~\ref{as:TSG_unbiased_estimators}, along with the consistency of matrix norms, to obtain
		\begin{alignat*}{2}
			\| \bar g_{f_1}^{i} \|& \leq \| \nabla_x f_1 \| + \| \nabla_{xz}^2 f_3 \mathbb{E}[ [ \nabla_{zz}^2 f_3^{\xi^i} ]^{-1} \vert \mathcal{F}_i ] \nabla_z f_1 \| + \| \mathbb{E}[ \nabla_{xy}^2\Bar{f}^{\xi^i} \vert \mathcal{F}_i] \mathbb{E}[ [ \nabla_{yy}^2 \Bar{f}^{\xi^i} ]^{-1} \vert \mathcal{F}_i ] \nabla_y f_1 \|\\
			&\quad+ \| \mathbb{E}[ \nabla_{xy}^2\Bar{f}^{\xi^i} \vert \mathcal{F}_i ] \mathbb{E}[[ \nabla_{yy}^2 \Bar{f}^{\xi^i} ]^{-1} \vert \mathcal{F}_i] \nabla_{yz}^2 f_3 \mathbb{E}[[ \nabla_{zz}^2 f_3^{\xi^i} ]^{-1} \vert \mathcal{F}_i] \nabla_z f_1 \|\\
			&\leq L_{f_1} + L_{\nabla f_3} L_{f_1}\| \mathbb{E}[ [ \nabla_{zz}^2 f_3^{\xi^i} ]^{-1} \vert \mathcal{F}_i ] \| + L_{f_1}\| \mathbb{E}[ \nabla_{xy}^2\Bar{f}^{\xi^i} \vert \mathcal{F}_i]\| \| \mathbb{E}[ [ \nabla_{yy}^2 \Bar{f}^{\xi^i} ]^{-1} \vert \mathcal{F}_i ] \|\\
			&\quad+ L_{\nabla f_3}L_{f_1}\| \mathbb{E}[ \nabla_{xy}^2\Bar{f}^{\xi^i} \vert \mathcal{F}_i]\|  \| \mathbb{E}[[ \nabla_{yy}^2 \Bar{f}^{\xi^i} ]^{-1} \vert \mathcal{F}_i]\| \| \mathbb{E}[[ \nabla_{zz}^2 f_3^{\xi^i} ]^{-1} \vert \mathcal{F}_i] \|\\
			&\leq L_{f_1} + L_{\nabla f_3} L_{f_1} b_{zz} + L_{f_1} \tilde T_{xy} b_{yy} + L_{\nabla f_3}L_{f_1} \tilde T_{xy} b_{yy} b_{zz},
		\end{alignat*}
		where the second inequality follows from applying Assumption~\ref{as:tri_lip_cont} and the last inequality follows from applying Assumption~\ref{as:bound_on_hess_inv_true_MLP} along with the derived bound~\eqref{eq:TSG_derived_bound_tilde_T_xy} from Lemma~\ref{lem:TSG_new_inexact_variance_bound}. Further, squaring both sides, we have the bound $\| \bar g_{f_1}^{i} \|^2 \;\leq\; (L_{f_1} + L_{\nabla f_3} L_{f_1} b_{zz} + L_{f_1} \tilde T_{xy} b_{yy} + L_{\nabla f_3}L_{f_1} \tilde T_{xy} b_{yy} b_{zz})^2$.
		Substituting this back into~\eqref{eq:TSG_size_inexact_g_aux_eq_to_bound_1}, we obtain the bound
		\begin{equation}\label{eq:0070}
			\mathbb{E}[ \| \tilde g^i_{f_1} \|^2 | \mathcal{F}_i ] \;\leq\; \zeta, \quad\text{where} \quad \zeta \;:=\; (L_{f_1} + L_{\nabla f_3} L_{f_1} b_{zz} + L_{f_1} \tilde T_{xy} b_{yy} + L_{\nabla f_3}L_{f_1} \tilde T_{xy} b_{yy} b_{zz})^2 + \tau.
		\end{equation}
	\end{proof}

	\begin{lemma}[Bounds on bias and variance of ML direction]\label{lem:TSG_var_bound_ML_dir}
		Recalling the definition of $\tilde g_{f_2}^{i,j}$ in equation~\eqref{eq:TSG_ML_stoch_dir}, define $\bar g_{f_2}^{i,j} = \mathbb{E}[ \tilde g_{f_2}^{i,j} \vert \mathcal{F}_{i,j} ]$. Then, under Assumptions~\ref{as:tri_lip_cont}, \ref{as:TSG_unbiased_estimators}, \ref{as:bound_on_hess_inv_true_MLP}, and~\ref{as:TSG_bounded_var}, there exist positive constants~$\hat\omega$ and~$\hat\tau$ such that 
		\begin{alignat}{2}
			\| \nabla_y \Bar{f}(x^i,y^{i,j},z^{i,j+1}) - \bar g_{f_2}^{i,j} \| \;&\leq\; \hat\omega\theta_i \quad\quad \text{and} \quad\quad \mathbb{E}[ \| \tilde g_{f_2}^{i,j} - \bar g_{f_2}^{i,j} \|^2 \vert \mathcal{F}_{i,j} ] \;&\leq\; \hat \tau.\nonumber
		\end{alignat}
	\end{lemma}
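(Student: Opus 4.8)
The plan is to treat $\tilde g_{f_2}^{i,j}$ as a simpler, ``bilevel'' analogue of the UL direction $\tilde g_{f_1}^{i}$ handled in Lemma~\ref{lem:TSG_new_inexact_variance_bound}, since by~\eqref{eq:TSG_ML_stoch_dir} it equals $\tilde g_{f_2}^{i,j} = \nabla_y f_2^{\xi} - \nabla_{yz}^2 f_3^{\xi}[\nabla_{zz}^2 f_3^{\xi}]^{-1}\nabla_z f_2^{\xi}$, which involves only a single inverted Hessian $[\nabla_{zz}^2 f_3^{\xi}]^{-1}$ (no $[\nabla_{yy}^2\bar f^{\xi}]^{-1}$ factor and no third-order tensors). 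First I would take the conditional expectation and, invoking the unbiasedness of Assumption~\ref{as:TSG_unbiased_estimators} together with the same factorization-of-expectations convention used in Lemma~\ref{lem:TSG_new_inexact_variance_bound}, obtain $\bar g_{f_2}^{i,j} = \nabla_y f_2 - \nabla_{yz}^2 f_3\,\mathbb{E}[[\nabla_{zz}^2 f_3^{\xi}]^{-1}]\,\nabla_z f_2$. Subtracting this from the exact ML adjoint gradient~\eqref{eq:adjoint_ML}, the $\nabla_y f_2$ term and the outer factors $\nabla_{yz}^2 f_3$ and $\nabla_z f_2$ cancel, leaving $\nabla_y\bar f - \bar g_{f_2}^{i,j} = \nabla_{yz}^2 f_3\big([\nabla_{zz}^2 f_3]^{-1} - \mathbb{E}[[\nabla_{zz}^2 f_3^{\xi}]^{-1}]\big)\nabla_z f_2$. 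Applying the consistency of matrix norms, the gradient/Hessian bounds from Assumption~\ref{as:tri_lip_cont} (namely $\|\nabla_{yz}^2 f_3\|\le L_{\nabla f_3}$ and $\|\nabla_z f_2\|\le L_{f_2}$), and the $\mathcal{O}(\theta)$ bias bound of Assumption~\ref{as:TSG_bounded_var} then yields $\|\nabla_y\bar f - \bar g_{f_2}^{i,j}\| \le L_{\nabla f_3}L_{f_2}W_{zz}\theta_i =: \hat\omega\theta_i$.

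For the variance bound, I would split $\tilde g_{f_2}^{i,j} - \bar g_{f_2}^{i,j}$ into the gradient part $\nabla_y f_2^{\xi} - \nabla_y f_2$ and the product part $\nabla_{yz}^2 f_3^{\xi}[\nabla_{zz}^2 f_3^{\xi}]^{-1}\nabla_z f_2^{\xi} - \mathbb{E}[\cdot]$, and apply $\|a+b\|^2 \le 2\|a\|^2 + 2\|b\|^2$. The first piece is bounded by $2\sigma_{\nabla f_2}^2$ directly through Assumption~\ref{as:TSG_unbiased_estimators}. The second piece is exactly the object analyzed as $T^{(2)}_2$ in the proof of Lemma~\ref{lem:TSG_new_inexact_variance_bound}, only with $\nabla_{xz}^2 f_3$ and $\nabla_z f_1$ replaced by $\nabla_{yz}^2 f_3$ and $\nabla_z f_2$; I would re-use that re-factorization verbatim, adding and subtracting to write it as a sum of three terms isolating the fluctuation of $\nabla_{yz}^2 f_3^{\xi}$, of $[\nabla_{zz}^2 f_3^{\xi}]^{-1}$, and of $\nabla_z f_2^{\xi}$, respectively, then bounding via $\|a+b+c\|^2 \le 3(\|a\|^2+\|b\|^2+\|c\|^2)$, the inverse-Hessian bound $b_{zz}$ (Assumption~\ref{as:bound_on_hess_inv_true_MLP}), the variance bound $\mathbb{E}[\|[\nabla_{zz}^2 f_3^{\xi}]^{-1} - \mathbb{E}[[\nabla_{zz}^2 f_3^{\xi}]^{-1}]\|^2]\le 4b_{zz}^2$ established as~\eqref{eq:TSG_g1_biasedness_aux_bound_2_for_T_2}, and the second-moment bound $\mathbb{E}[\|\nabla_{yz}^2 f_3^{\xi}\|^2] \le \sigma_{\nabla^2 f_3}^2 + L_{\nabla f_3}^2$ obtained from the variance decomposition $\mathbb{E}[X^2]=\mathrm{Var}[X]+\mathbb{E}[X]^2$ as in~\eqref{eq:TSG_g1_biasedness_aux_bound_1_for_T_2}. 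Collecting the resulting constants gives $\mathbb{E}[\|\tilde g_{f_2}^{i,j}-\bar g_{f_2}^{i,j}\|^2\mid\mathcal{F}_{i,j}] \le \hat\tau$ for an explicit $\hat\tau$ built from $\sigma_{\nabla f_2}^2$, $\sigma_{\nabla^2 f_3}^2$, $L_{f_2}$, $L_{\nabla f_3}$, and $b_{zz}$.

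I do not expect any genuine obstacle here: unlike the UL direction, the ML direction carries no $[\nabla_{yy}^2\bar f^{\xi}]^{-1}$ factor and no third-order tensor terms, so none of the auxiliary bias/variance lemmas for $\nabla^2\bar f$ (Lemmas~\ref{lem:TSG_derived_unbiasedness} and~\ref{lem:TSG_derived_variance_bounds_of_f_bar}) are required, and the nested-expectation difficulties that forced the many add-and-subtract layers in the $T^{(2)}_3$ and $T^{(2)}_4$ analyses disappear entirely. The only point requiring a little care is the same one as in the UL case, namely ensuring that the factorization of $\mathbb{E}[\nabla_{yz}^2 f_3^{\xi}[\nabla_{zz}^2 f_3^{\xi}]^{-1}\nabla_z f_2^{\xi}]$ into a product involving $\mathbb{E}[[\nabla_{zz}^2 f_3^{\xi}]^{-1}]$ is justified by the sampling model (independent samples per factor, following the paper's stated convention), after which every bound follows mechanically from the consistency of matrix norms together with Assumptions~\ref{as:tri_lip_cont}, \ref{as:TSG_unbiased_estimators}, \ref{as:bound_on_hess_inv_true_MLP}, and~\ref{as:TSG_bounded_var}.
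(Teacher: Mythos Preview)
Your proposal is correct and follows essentially the same approach as the paper: the bias part is identical (yielding $\hat\omega = L_{\nabla f_3}L_{f_2}W_{zz}$), and for the variance you use the same two-term split and the same three-term re-factorization of the product piece. The only minor discrepancy is in how the middle deviation $\mathbb{E}[\|\mathbb{E}[[\nabla_{zz}^2 f_3^{\xi}]^{-1}] - [\nabla_{zz}^2 f_3^{\xi}]^{-1}\|^2]$ is bounded: you invoke the $4b_{zz}^2$ bound~\eqref{eq:TSG_g1_biasedness_aux_bound_2_for_T_2} (as was done for $T^{(2)}_2$ in Lemma~\ref{lem:TSG_new_inexact_variance_bound}), whereas the paper's own proof uses $W_{zz}^2\theta_i^2$ via Assumption~\ref{as:TSG_bounded_var} and then absorbs $\theta_i^2\le 1$; either route yields a valid constant $\hat\tau$, and yours is arguably the more internally consistent choice.
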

	\begin{proof}
		For this proof, we may omit the point $(x^i,y^{i,j},z^{i,j+1})$ that the terms are evaluated at; we will simply use a $\xi^{i,j}$-superscript as short-hand to indicate any random terms.
		From Assumption~\ref{as:TSG_unbiased_estimators}, we have
		\begin{alignat*}{2}
			\| \nabla_y \Bar{f} - \bar g_{f_2}^{i,j} \| & = \| \nabla_y f_2 - \nabla_{yz}^2 f_3 [ \nabla_{zz}^2 f_3 ]^{-1}  \nabla_z f_2 - ( \nabla_y f_2 - \nabla_{yz}^2 f_3 \mathbb{E}[ [ \nabla_{zz}^2 f_3^{\xi^{i,j}} ]^{-1} \vert \mathcal{F}_{i,j} ]  \nabla_z f_2 ) \|\\
			& = \| \nabla_{yz}^2 f_3 ( \mathbb{E}[ [ \nabla_{zz}^2 f_3^{\xi^{i,j}} ]^{-1} \vert \mathcal{F}_{i,j} ] - [ \nabla_{zz}^2 f_3 ]^{-1} )  \nabla_z f_2 \|\\
			& \leq L_{\nabla f_3}L_{f_2}\| \mathbb{E}[ [ \nabla_{zz}^2 f_3^{\xi^{i,j}} ]^{-1} \vert \mathcal{F}_{i,j} ] - [ \nabla_{zz}^2 f_3 ]^{-1} \|,
		\end{alignat*}
		where the inequality follows from Assumption~\ref{as:tri_lip_cont} along with the consistency of matrix norms. Utilizing Assumption~\ref{as:TSG_bounded_var}, we obtain the desired first result of
		\begin{equation}\label{eq:1050}
			\| \bar g_{f_2}^{i,j} - \nabla_y \Bar{f} \| \leq \hat\omega \theta_i, \quad\text{where}\quad \hat\omega \; := \; L_{\nabla f_3}L_{f_2}W_{zz}.
		\end{equation}
		
		Now, to estimate the variance of $\tilde g_{f_2}^{i,j}$, we can apply Assumption~\ref{as:TSG_unbiased_estimators} and the fact that $\|a + b\|^2 \leq 2\|a\|^2 + 2\|b\|^2$, with~$a$ and~$b$ real-valued vectors, yielding
		\begin{alignat}{2}
			& \mathbb{E}[ \| \tilde g_{f_2}^{i,j} - \bar g_{f_2}^{i,j} \|^2 \vert \mathcal{F}_{i,j} ] = \mathbb{E}[ \| \tilde g_{f_2}^{i,j} - \mathbb{E}[ \tilde g_{f_2}^{i,j} \vert \mathcal{F}_{i,j} ] \|^2 \vert \mathcal{F}_{i,j} ]\nonumber\\
			& = \mathbb{E}[ \| \nabla_y f_2^{\xi^{i,j}} - \nabla_y f_2 + \nabla_{yz}^2 f_3 \mathbb{E}[ [ \nabla_{zz}^2 f_3^{\xi^{i,j}} ]^{-1} \vert \mathcal{F}_{i,j} ]  \nabla_z f_2 - \nabla_{yz}^2 f_3^{\xi^{i,j}} [ \nabla_{zz}^2 f_3^{\xi^{i,j}} ]^{-1}  \nabla_z f_2^{\xi^{i,j}} \|^2 \vert \mathcal{F}_{i,j} ]\nonumber\\
			&\leq 2\sigma^2_{\nabla f_2} + 2\mathbb{E}[ \| \nabla_{yz}^2 f_3 \mathbb{E}[ [ \nabla_{zz}^2 f_3^{\xi^{i,j}} ]^{-1} \vert \mathcal{F}_{i,j} ]  \nabla_z f_2 - \nabla_{yz}^2 f_3^{\xi^{i,j}} [ \nabla_{zz}^2 f_3^{\xi^{i,j}} ]^{-1}  \nabla_z f_2^{\xi^{i,j}} \|^2 \vert \mathcal{F}_{i,j} ],\label{eq:TSG_lem_bias_f2_intermediate_1}
		\end{alignat}
		Now, dealing with the contents of the norm in the right-most term of~\eqref{eq:TSG_lem_bias_f2_intermediate_1}, we have
		\begin{alignat*}{2}
			&\nabla_{yz}^2 f_3 \mathbb{E}[ [ \nabla_{zz}^2 f_3^{\xi^{i,j}} ]^{-1} \vert \mathcal{F}_{i,j} ]  \nabla_z f_2 - \nabla_{yz}^2 f_3^{\xi^{i,j}} [ \nabla_{zz}^2 f_3^{\xi^{i,j}} ]^{-1}  \nabla_z f_2^{\xi^{i,j}}\\
			& = ( \nabla_{yz}^2 f_3 - \nabla_{yz}^2 f_3^{\xi^{i,j}} ) \mathbb{E}[ [ \nabla_{zz}^2 f_3^{\xi^{i,j}} ]^{-1} \vert \mathcal{F}_{i,j}] \nabla_z f_2 \;+\; \nabla_{yz}^2 f_3^{\xi^{i,j}} ( \mathbb{E}[ [ \nabla_{zz}^2 f_3^{\xi^{i,j}} ]^{-1} \vert \mathcal{F}_{i,j} ] - [ \nabla_{zz}^2 f_3^{\xi^{i,j}} ]^{-1} ) \nabla_z f_2\\
			&\quad+ \nabla_{yz}^2 f_3^{\xi^{i,j}} [ \nabla_{zz}^2 f_3^{\xi^{i,j}} ]^{-1} ( \nabla_z f_2 - \nabla_z f_2^{\xi^{i,j}} ).
		\end{alignat*}
		Using this, the fact that $\|a+b+c\|^2 \leq 3\left(\|a\|^2 + \|b\|^2 + \|c\|^2\right)$, with~$a$, $b$, and~$c$ real-valued vectors, along with the consistency of matrix norms, and applying Assumptions~\ref{as:tri_lip_cont}, \ref{as:bound_on_hess_inv_true_MLP}, \ref{as:TSG_unbiased_estimators}, and~\ref{as:TSG_bounded_var}, we can see that the norm term in~\eqref{eq:TSG_lem_bias_f2_intermediate_1} can be bounded as
		\begin{alignat*}{2}
			&\mathbb{E}[ \| \nabla_{yz}^2 f_3 \mathbb{E}[ [ \nabla_{zz}^2 f_3^{\xi^{i,j}} ]^{-1} \vert \mathcal{F}_{i,j} ]  \nabla_z f_2 - \nabla_{yz}^2 f_3^{\xi^{i,j}} [ \nabla_{zz}^2 f_3^{\xi^{i,j}} ]^{-1}  \nabla_z f_2^{\xi^{i,j}}  \|^2 \vert \mathcal{F}_{i,j} ]\\
			&\leq 3 \sigma^2_{\nabla^2 f_3} b_{zz}^2 L_{f_2}^2 + 3 \mathbb{E}[ \|\nabla_{yz}^2 f_3^{\xi^{i,j}} \|^2 \vert \mathcal{F}_{i,j} ] W_{zz}^2 \theta_i^2 L_{f_2}^2 + 3 \mathbb{E}[ \|\nabla_{yz}^2 f_3^{\xi^{i,j}} \|^2 \vert \mathcal{F}_{i,j} ] \mathbb{E}[ \| [ \nabla_{zz}^2 f_3^{\xi^{i,j}} ]^{-1} \|^2 \vert \mathcal{F}_{i,j} ] \sigma^2_{\nabla f_2} \\
			& \leq 3 \sigma^2_{\nabla^2 f_3} b_{zz}^2 L_{f_2}^2 + 3 ( \sigma^2_{\nabla^2 f_3} + L^2_{\nabla f_3} ) W_{zz}^2 \theta_i^2 L_{f_2}^2 + 3 ( \sigma^2_{\nabla^2 f_3} + L^2_{\nabla f_3} ) ( W_{zz}^2\theta_i^2 + b_{zz}^2 ) \sigma^2_{\nabla f_2},
		\end{alignat*}
		where the last inequality follows from using
		$\mathbb{E}[ \|\nabla_{yz}^2 f_3^{\xi^{i,j}} \|^2 \vert \mathcal{F}_{i,j}] = \text{Var}[ \nabla_{yz}^2 f_3^{\xi^{i,j}} \vert \mathcal{F}_{i,j} ] \linebreak+ \|\mathbb{E}[\nabla_{yz}^2 f_3^{\xi^{i,j}} \vert \mathcal{F}_{i,j}] \|^2 \leq \sigma^2_{\nabla^2 f_3} + L^2_{\nabla f_3}$ (by the definition of variance along with Assumptions~\ref{as:tri_lip_cont} and~\ref{as:TSG_unbiased_estimators}) and by using $\mathbb{E}[ \| [ \nabla_{zz}^2 f_3^{\xi^{i,j}} ]^{-1} \|^2 \vert \mathcal{F}_{i,j} ] = \text{Var}[ [ \nabla_{zz}^2 f_3^{\xi^{i,j}} ]^{-1} \vert \mathcal{F}_{i,j} ] + \|\mathbb{E}[[ \nabla_{zz}^2 f_3^{\xi^{i,j}} ]^{-1} \vert \mathcal{F}_{i,j}] \|^2 \leq W_{zz}^2\theta_i^2 + b_{zz}^2$ (by the definition of variance along with Assumptions~\ref{as:bound_on_hess_inv_true_MLP} and~\ref{as:TSG_bounded_var}). Plugging this expression back into~\eqref{eq:TSG_lem_bias_f2_intermediate_1} and using the fact that $0\leq\theta_i^2\leq 1$, we obtain the desired result
		\begin{alignat}{2}
			&\mathbb{E}[ \| \tilde g_{f_2}^{i,j} - \bar g_{f_2}^{i,j} \|^2 \vert \mathcal{F}_{i,j} ]\nonumber\\
			& \leq 2\sigma^2_{\nabla f_2} + 6 \sigma^2_{\nabla^2 f_3} b_{zz}^2 L_{f_2}^2 + 6 ( \sigma^2_{\nabla^2 f_3} + L^2_{\nabla f_3} ) W_{zz}^2 L_{f_2}^2 + 6 ( \sigma^2_{\nabla^2 f_3} + L^2_{\nabla f_3} ) ( W_{zz}^2 + b_{zz}^2 ) \sigma^2_{\nabla f_2}\nonumber\\
			&:= \hat\tau.\label{eq:0080}
		\end{alignat}
	\end{proof}

	\begin{lemma}[Boundedness of ML direction]\label{lem:TSG_size_inexact_g_ML}
		Under Assumptions~\ref{as:tri_lip_cont}, \ref{as:strong_conv_f3_z}, \ref{as:TSG_unbiased_estimators}, \ref{as:bound_on_hess_inv_true_MLP}, and~\ref{as:TSG_bounded_var}, there exists the positive constant~$\Upsilon$ such that
		\begin{alignat*}{2}
			\mathbb{E}[ \| \tilde g^{i,j}_{f_2} \|^2 \vert \mathcal{F}_{i,j} ] \;\leq\; \Upsilon \quad\quad \text{and} \quad\quad \| \bar g^{i,j}_{f_2} \|^2 \;\leq\; \Upsilon.
		\end{alignat*}
	\end{lemma}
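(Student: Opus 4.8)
The plan is to mirror the argument used for the UL direction in Lemma~\ref{lem:TSG_size_inexact_g}, exploiting the standard variance decomposition of the second moment together with the bias/variance control already established for the ML direction. Concretely, since $\bar g_{f_2}^{i,j}=\mathbb{E}[\tilde g_{f_2}^{i,j}\vert\mathcal{F}_{i,j}]$ is $\mathcal{F}_{i,j}$-measurable, the identity $\mathbb{E}[\|X\|^2]=\|\mathbb{E}[X]\|^2+\mathbb{E}[\|X-\mathbb{E}[X]\|^2]$ gives
\[
\mathbb{E}[\|\tilde g_{f_2}^{i,j}\|^2\vert\mathcal{F}_{i,j}]=\|\bar g_{f_2}^{i,j}\|^2+\mathbb{E}[\|\tilde g_{f_2}^{i,j}-\bar g_{f_2}^{i,j}\|^2\vert\mathcal{F}_{i,j}].
\]
This single identity yields both claimed bounds at once: once the right-hand side is bounded by a constant $\Upsilon$, the first inequality is immediate, and the second follows because the variance term is nonnegative, whence $\|\bar g_{f_2}^{i,j}\|^2\leq\Upsilon$ as well.

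For the variance term I would simply invoke Lemma~\ref{lem:TSG_var_bound_ML_dir}, which already establishes $\mathbb{E}[\|\tilde g_{f_2}^{i,j}-\bar g_{f_2}^{i,j}\|^2\vert\mathcal{F}_{i,j}]\leq\hat\tau$, so no new work is needed there. The remaining step, bounding $\|\bar g_{f_2}^{i,j}\|$, is where the (modest) computation lives. Recalling from the proof of Lemma~\ref{lem:TSG_var_bound_ML_dir} that, by the unbiasedness in Assumption~\ref{as:TSG_unbiased_estimators}, the direction in~\eqref{eq:TSG_ML_stoch_dir} satisfies the explicit expression $\bar g_{f_2}^{i,j}=\nabla_y f_2-\nabla_{yz}^2 f_3\,\mathbb{E}[[\nabla_{zz}^2 f_3^{\xi^{i,j}}]^{-1}\vert\mathcal{F}_{i,j}]\,\nabla_z f_2$ (with the deterministic terms evaluated at $(x^i,y^{i,j},z^{i,j+1})$), I would apply the triangle inequality and the consistency of matrix norms to obtain
\[
\|\bar g_{f_2}^{i,j}\|\leq\|\nabla_y f_2\|+\|\nabla_{yz}^2 f_3\|\,\|\mathbb{E}[[\nabla_{zz}^2 f_3^{\xi^{i,j}}]^{-1}\vert\mathcal{F}_{i,j}]\|\,\|\nabla_z f_2\|.
\]
The gradient and Hessian factors are bounded by $L_{f_2}$ and $L_{\nabla f_3}$ from Assumption~\ref{as:tri_lip_cont}, while Jensen's inequality together with Assumption~\ref{as:bound_on_hess_inv_true_MLP} gives $\|\mathbb{E}[[\nabla_{zz}^2 f_3^{\xi^{i,j}}]^{-1}\vert\mathcal{F}_{i,j}]\|\leq\mathbb{E}[\|[\nabla_{zz}^2 f_3^{\xi^{i,j}}]^{-1}\|\vert\mathcal{F}_{i,j}]\leq b_{zz}$. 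Squaring then yields $\|\bar g_{f_2}^{i,j}\|^2\leq L_{f_2}^2(1+L_{\nabla f_3}b_{zz})^2$.

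Combining the two pieces and setting $\Upsilon:=L_{f_2}^2(1+L_{\nabla f_3}b_{zz})^2+\hat\tau$ completes the argument. I do not anticipate a genuine obstacle here: the step sizes play no role, since this is a uniform bound valid at any iterate, and the only subtlety is the routine one of passing the norm through the conditional expectation of the inverted stochastic Hessian via Jensen's inequality, which is legitimate precisely because of the uniform bound in Assumption~\ref{as:bound_on_hess_inv_true_MLP}. The heavy lifting (the full bias/variance estimates involving the second-order terms of $f_3$) was already carried out in Lemma~\ref{lem:TSG_var_bound_ML_dir}, so this lemma is essentially a short corollary of it.
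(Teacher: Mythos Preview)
Your proposal is correct and follows the same overall skeleton as the paper: apply the variance decomposition $\mathbb{E}[\|\tilde g\|^2\vert\mathcal{F}_{i,j}]=\|\bar g\|^2+\mathbb{E}[\|\tilde g-\bar g\|^2\vert\mathcal{F}_{i,j}]$, invoke Lemma~\ref{lem:TSG_var_bound_ML_dir} for the variance term, bound $\|\bar g_{f_2}^{i,j}\|$ by a constant, and read off the second claim from nonnegativity of the variance.

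The one place where you diverge is in bounding $\|\bar g_{f_2}^{i,j}\|$. You work directly with the explicit expression $\bar g_{f_2}^{i,j}=\nabla_y f_2-\nabla_{yz}^2 f_3\,\mathbb{E}[[\nabla_{zz}^2 f_3^{\xi}]^{-1}\vert\mathcal{F}_{i,j}]\,\nabla_z f_2$ and use Assumption~\ref{as:bound_on_hess_inv_true_MLP} (via Jensen) to bound the expected inverse by $b_{zz}$, arriving at $\Upsilon=L_{f_2}^2(1+L_{\nabla f_3}b_{zz})^2+\hat\tau$. The paper instead inserts the deterministic $\nabla_y\bar f(x^i,y^{i,j},z^{i,j+1})$, splits $\|\bar g\|^2\leq 2\|\bar g-\nabla_y\bar f\|^2+2\|\nabla_y\bar f\|^2$, bounds the first piece by $\hat\omega^2\theta_i^2$ using the bias estimate of Lemma~\ref{lem:TSG_var_bound_ML_dir}, and bounds the second using the \emph{deterministic} inverse $\|\nabla_{zz}^2 f_3^{-1}\|\leq 1/\mu_z$ from strong convexity, yielding $\Upsilon=2\hat\omega^2+2(L_{f_2}+L_{\nabla f_3}L_{f_2}/\mu_z)^2+\hat\tau$. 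Your route is more direct and avoids both the factor~$2$ and the extra $2\hat\omega^2$ term; the paper's detour buys a constant expressed in terms of $\mu_z$ rather than $b_{zz}$, but neither distinction matters for the downstream analysis.
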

	
	\begin{proof}
		From the definition of variance, we have $\mathbb{E}[ \| \tilde g^{i,j}_{f_2} \|^2 \vert \mathcal{F}_{i,j} ] = \| \bar g^{i,j}_{f_2} \|^2 + \mathbb{E}[ \| \tilde g^{i,j}_{f_2} - \bar g^{i,j}_{f_2} \|^2 \vert \mathcal{F}_{i,j} ]$. Now, adding and subtracting $\nabla_y \Bar{f}(x^i,y^{i,j},z^{i,j+1})$ to the first term, followed by utilizing the fact that $\|a+b\|^2 \leq2\|a\|^2+2\|b\|^2$, with~$a$ and~$b$ real-valued vectors, along with Lemma~\ref{lem:TSG_var_bound_ML_dir}, we have
		\begin{alignat}{2}
			\mathbb{E}[ \| \tilde g^{i,j}_{f_2} \|^2 \vert \mathcal{F}_{i,j} ] & \leq 2\| \bar g^{i,j}_{f_2} - \nabla_y \Bar{f}(x^i,y^{i,j},z^{i,j+1}) \|^2 + 2\|\nabla_y \Bar{f}(x^i,y^{i,j},z^{i,j+1}) \|^2 + \hat\tau. \label{eq:TSG_size_inexact_g_ML_aux_eq_1}
		\end{alignat}
		Referencing~\eqref{eq:jacobian_z_ret_y} and~\eqref{eq:exact_grad_MLP_y}, the $\|\nabla_y \Bar{f}(x^i,y^{i,j},z^{i,j+1}) \|$ term can be bounded by applying the triangle inequality, the consistency of matrix norms, and Assumptions~\ref{as:tri_lip_cont} and~\ref{as:strong_conv_f3_z}, yielding
		\begin{alignat*}{2}
			\|\nabla_y \Bar{f}(x^i,y^{i,j},z^{i,j+1}) \| 
			\leq L_{f_2} + \frac{L_{\nabla f_3}L_{f_2}}{\mu_z} \quad \Longrightarrow \quad \|\nabla_y \Bar{f}(x^i,y^{i,j},z^{i,j+1}) \|^2 \leq \left( L_{f_2} + \frac{L_{\nabla f_3}L_{f_2}}{\mu_z} \right)^2.
		\end{alignat*}
		Substituting this back into~\eqref{eq:TSG_size_inexact_g_ML_aux_eq_1}, utilizing Lemma~\ref{lem:TSG_var_bound_ML_dir}, and letting $W:=L_{f_2} + \frac{L_{\nabla f_3}L_{f_2}}{\mu_z}$, yields
		\begin{equation}\label{eq:00100}
			\mathbb{E}[ \| \tilde g^{i,j}_{f_2} \|^2 \vert \mathcal{F}_{i,j} ] 
			= 2\hat\omega^2\theta_i^2 + \hat\phi, \quad \text{where} \quad \hat\phi:= 2W^2 + \hat\tau.
		\end{equation}
		Finally, using the fact that~$0<\theta_i^2\leq 1$, it follows that
		\begin{equation}\label{eq:00110}
			\mathbb{E}[ \| \tilde g^{i,j}_{f_2} \|^2 \vert \mathcal{F}_{i,j} ] \leq \Upsilon, \quad\text{where}\quad \Upsilon:= 2\hat\omega^2 + \hat\phi. 
		\end{equation}
		
		The second result follows from the definition of variance and applying bound~\eqref{eq:00110}, yielding
		\begin{alignat*}{2}
			\| \bar g^{i,j}_{f_2} \|^2 \;=\; \mathbb{E}[ \| \tilde g^{i,j}_{f_2} \|^2 \vert \mathcal{F}_{i,j} ] - \mathbb{E}[ \| \tilde g^{i,j}_{f_2} - \bar g^{i,j}_{f_2} \|^2 \vert \mathcal{F}_{i,j} ] \;\leq\; \mathbb{E}[ \| \tilde g^{i,j}_{f_2} \|^2 \vert \mathcal{F}_{i,j} ] \;\leq\; \Upsilon.
		\end{alignat*}
	\end{proof}

	\section{Lipschitz continuity properties}\label{app:lipschitz_properties}
	
	This appendix contains all of the statements of derived Lipschitz continuity properties of the functions, gradients, Hessians, and Jacobians involved in the trilevel adjoint gradient~\eqref{adjoint}. All of their corresponding proofs are provided in Appendix~B.5 of the PhD thesis~\cite{GKent_2025} .
	
	\begin{proposition}\label{rem:lip_cont_prop_1}
		Under Assumptions~\ref{as:tri_lip_cont}--\ref{as:strong_conv_f3_z}, there exist positive constants $L_{z}$, $L_{z_{xy}}$, and $L_{z_y}$, such that the following Lipschitz continuity properties hold:
		\begin{alignat}{2}
			\| z(x_1) - z(x_2) \| &\;\leq\; L_{z}\| x_1 - x_2 \|,\label{eq:lip_prop_1}\\
			\| z(x_1,y_1) - z(x_2,y_2) \| &\;\leq\; L_{z_{xy}} \| (x_1, y_1) - (x_2, y_2) \|,\label{eq:lip_prop_2}\\
			\| z(x,y_1) - z(x,y_2) \| &\;\leq\; L_{z_y}\| y_1 - y_2 \|.\label{eq:lip_prop_3}
		\end{alignat}
	\end{proposition}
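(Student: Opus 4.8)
The plan is to treat $z(x,y)$ as the solution map of the strongly convex lower-level problem and to work directly from its first-order optimality condition $\nabla_z f_3(x,y,z(x,y)) = 0$, rather than passing through the Jacobian formulas. Two ingredients suffice for \eqref{eq:lip_prop_2} and \eqref{eq:lip_prop_3}: (i) $\mu_z$-strong convexity of $f_3$ in $z$ (Assumption~\ref{as:strong_conv_f3_z}), which makes the map $z\mapsto\nabla_z f_3(x,y,z)$ strongly monotone with modulus $\mu_z$; and (ii) Lipschitz continuity of $\nabla f_3$ with constant $L_{\nabla f_3}$ (Assumption~\ref{as:tri_lip_cont}), which controls how $\nabla_z f_3$ moves as $(x,y)$ varies. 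I expect these two estimates to do essentially all the work.

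First I would prove \eqref{eq:lip_prop_2}. Writing $w_i=(x_i,y_i)$ and $z_i=z(w_i)$, both solve $\nabla_z f_3(w_i,z_i)=0$, so strong monotonicity evaluated at the common point $w_2$ gives
\[
\mu_z\|z_1-z_2\|^2 \;\leq\; \left(\nabla_z f_3(w_2,z_1) - \nabla_z f_3(w_2,z_2)\right)^\top (z_1-z_2).
\]
Since $\nabla_z f_3(w_2,z_2)=0=\nabla_z f_3(w_1,z_1)$, I replace $\nabla_z f_3(w_2,z_1)$ by $\nabla_z f_3(w_2,z_1)-\nabla_z f_3(w_1,z_1)$ and bound it by $L_{\nabla f_3}\|w_1-w_2\|$ using Assumption~\ref{as:tri_lip_cont}, so that Cauchy--Schwarz yields
\[
\mu_z\|z_1-z_2\|^2 \;\leq\; L_{\nabla f_3}\,\|w_1-w_2\|\,\|z_1-z_2\|.
\]
Dividing by $\|z_1-z_2\|$ gives \eqref{eq:lip_prop_2} with $L_{z_{xy}}:=L_{\nabla f_3}/\mu_z$, and the special case $x_1=x_2$ of the same chain gives \eqref{eq:lip_prop_3} with $L_{z_y}:=L_{\nabla f_3}/\mu_z$. (Equivalently, one could bound the Jacobian $\nabla z(x,y)^\top=-\nabla_{zz}^2 f_3^{-1}[\nabla_{zx}^2 f_3\;\;\nabla_{zy}^2 f_3]$ from \eqref{eq:jacobian_z_ret_x}--\eqref{eq:jacobian_z_ret_y} by $L_{\nabla f_3}/\mu_z$, using $\|\nabla_{zz}^2 f_3^{-1}\|\le 1/\mu_z$ and the fact that a submatrix of $\nabla^2 f_3$ has spectral norm at most $L_{\nabla f_3}$, and then integrate along the segment joining $w_1$ and $w_2$; the monotonicity route is preferable since it avoids invoking the implicit function theorem.)

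For \eqref{eq:lip_prop_1} I would use $z(x)=z(x,y(x))$, so $x$ enters both directly and through $y(x)$. Splitting by the triangle inequality,
\[
\|z(x_1)-z(x_2)\| \;\leq\; \|z(x_1,y(x_1))-z(x_2,y(x_1))\| + \|z(x_2,y(x_1))-z(x_2,y(x_2))\|,
\]
the first term is bounded by \eqref{eq:lip_prop_2} and the second by \eqref{eq:lip_prop_3}, and then Lipschitz continuity of the middle-level solution map $x\mapsto y(x)$ with constant $L_y$ yields $L_z:=L_{z_{xy}}+L_{z_y}L_y$. This last step is the main obstacle: Lipschitz continuity of $y(\cdot)$ does not follow from Assumptions~\ref{as:tri_lip_cont}--\ref{as:strong_conv_f3_z} alone but requires strong convexity of $\bar f$ and a uniform bound on $\nabla_{yy}^2\bar f^{-1}$, so that $\nabla y(x)^\top=-\nabla_{yy}^2\bar f^{-1}\nabla_{yx}^2\bar f$ is controlled. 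I would therefore invoke the companion derived property establishing $L_y$ (or make its dependence explicit), after which \eqref{eq:lip_prop_1} closes immediately; the remaining content of the proposition reduces to the two clean monotonicity estimates above.
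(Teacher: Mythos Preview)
Your monotonicity argument for \eqref{eq:lip_prop_2} and \eqref{eq:lip_prop_3} is correct and yields $L_{z_{xy}}=L_{z_y}=L_{\nabla f_3}/\mu_z$. The paper itself defers the full proof to an external thesis, but its stated intermediary result \eqref{eq:aux_lip_z_result}, namely $\|\nabla_x z\|,\|\nabla_y z\|\le L_{\nabla f_3}/\mu_z$, indicates that the intended route is the Jacobian bound you mention parenthetically: combine $\|\nabla_{zz}^2 f_3^{-1}\|\le 1/\mu_z$ with $\|\nabla_{zx}^2 f_3\|,\|\nabla_{zy}^2 f_3\|\le L_{\nabla f_3}$ and integrate. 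The two approaches produce the same constants; yours avoids the implicit function theorem and is slightly cleaner, while the Jacobian version also delivers \eqref{eq:aux_lip_z_result} directly, which the paper uses repeatedly elsewhere (e.g.\ in Lemma~\ref{lem:TSG_derived_unbiasedness} and in bounding $T_{xy}$).

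Your observation about \eqref{eq:lip_prop_1} is well taken and worth flagging: since $z(x)=z(x,y(x))$, even the \emph{well-definedness} of $z(\cdot)$, let alone its Lipschitz continuity, requires the middle-level solution map $y(\cdot)$, which in turn needs Assumption~\ref{as:strong_conv_fbar_y}. The proposition as stated (Assumptions~\ref{as:tri_lip_cont}--\ref{as:strong_conv_f3_z} only) is therefore imprecise for \eqref{eq:lip_prop_1}; the constant $L_y$ you invoke appears only in Proposition~\ref{rem:lip_cont_prop_2}, which explicitly adds Assumption~\ref{as:strong_conv_fbar_y}. There is no circularity in the order you propose---establish \eqref{eq:lip_prop_2}--\eqref{eq:lip_prop_3} first, then $L_y$ (which depends on the Lipschitz behavior of $\nabla_y\bar f$ in $x$ and hence on $z(x,y)$ via \eqref{eq:lip_prop_2}, not on $z(x)$), and finally close \eqref{eq:lip_prop_1} with $L_z=L_{z_{xy}}+L_{z_y}L_y$---so your plan is sound once the assumption set for \eqref{eq:lip_prop_1} is corrected.
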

	
	\begin{proposition}\label{rem:lip_cont_prop_2}
		Under Assumptions~\ref{as:tri_lip_cont}--\ref{as:strong_conv_fbar_y}, there exist positive constants $L_{y}$, $L_{\nabla z}$, $L_{\bar F}$, $L_{\bar F_y}$, $L_{\bar F_z}$, $L_{\nabla_{yx}^2\Bar{f}}$, $L_{\nabla_{yy}^2\Bar{f}}$, $L_F$, $L_{F_{yz}}$, and $L_{\nabla y}$, such that the following Lipschitz properties hold:
		\begin{alignat}{2}
			\| y(x_1) - y(x_2) \| &\;\leq\; L_{y}\| x_1 - x_2 \|,\label{eq:lip_prop_4}\\
			\| \nabla z(x_1) - \nabla z(x_2) \| &\;\leq\; L_{\nabla z}\| x_1 - x_2 \|,\label{eq:lip_prop_5}\\
			\| \nabla_y \bar f(x_1) - \nabla_y \bar f(x_2) \| &\;\leq\; L_{\bar F}\| x_1 - x_2 \|,\label{eq:lip_prop_6}\\
			\| \nabla_y \bar f(x,y_1) - \nabla_y \bar f(x,y_2) \| &\;\leq\; L_{\bar F_y}\| y_1 - y_2 \|,\label{eq:lip_prop_7}\\
			\| \nabla_y \bar f(x,y,z_1) - \nabla_y \bar f(x,y,z_2) \| &\;\leq\; L_{\bar F_z}\| z_1 - z_2 \|,\label{eq:lip_prop_8}\\
			\|  \nabla_{yx}^2\Bar{f}(x_1,y(x_1))  - \nabla_{yx}^2\Bar{f}(x_2,y(x_2)) \| &\;\leq\; L_{\nabla_{yx}^2\Bar{f}}\| x_1 - x_2 \|,\label{eq:lip_prop_9}\\
			\|  \nabla_{yy}^2\Bar{f}(x_1,y(x_1))  - \nabla_{yy}^2\Bar{f}(x_2,y(x_2)) \| &\;\leq\; L_{\nabla_{yy}^2\Bar{f}}\| x_1 - x_2 \|,\label{eq:lip_prop_10}\\
			\| \nabla f(x_1) - \nabla f(x_2) \| &\;\leq\; L_F\| x_1 - x_2 \|,\label{eq:lip_prop_11}\\
			\| \nabla f(x,y_1,z_1) - \nabla f(x, y_2, z_2) \| &\;\leq\; L_{F_{yz}}\| (y_1,z_1) - (y_2,z_2) \|,\label{eq:lip_prop_12}\\
			\| \nabla y(x_1) - \nabla y(x_2) \| &\;\leq\; L_{\nabla y}\| x_1 - x_2 \|.\label{eq:lip_prop_13}
		\end{alignat}
	\end{proposition}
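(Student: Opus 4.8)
The plan is to reduce all ten properties to two reusable workhorses and then prove them in an order that respects their dependencies. The first workhorse is the matrix-inverse perturbation identity $A^{-1}-B^{-1}=A^{-1}(B-A)B^{-1}$, applied to $\nabla_{zz}^2 f_3$ and $\nabla_{yy}^2\Bar{f}$, together with the uniform bounds $\|[\nabla_{zz}^2 f_3]^{-1}\|\le 1/\mu_z$ and $\|[\nabla_{yy}^2\Bar{f}]^{-1}\|\le 1/\mu_y$ supplied by strong convexity (Assumptions~\ref{as:strong_conv_f3_z} and~\ref{as:strong_conv_fbar_y}). The second is a telescoping ``add-and-subtract'' decomposition of products of matrix/tensor factors, where each factor is uniformly bounded (Assumption~\ref{as:tri_lip_cont} plus strong convexity) and Lipschitz, so that $\|\prod A_i-\prod B_i\|$ is controlled by $\sum_i(\prod_{j<i}\|A_j\|)\|A_i-B_i\|(\prod_{j>i}\|B_j\|)$. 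A crucial prerequisite for every ``along-the-path'' property (\eqref{eq:lip_prop_5}, \eqref{eq:lip_prop_6}, \eqref{eq:lip_prop_9}--\eqref{eq:lip_prop_11}, \eqref{eq:lip_prop_13}) is that the evaluation point $(x,y(x),z(x))$ itself moves Lipschitz-continuously in $x$; this is guaranteed by Proposition~\ref{rem:lip_cont_prop_1} (for $z(x)$ and $z(x,y)$) and by property~\eqref{eq:lip_prop_4}, which must therefore be established first.

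First I would prove \eqref{eq:lip_prop_4}. From the optimality conditions $\nabla_y\Bar{f}(x_1,y(x_1))=0$ and $\nabla_y\Bar{f}(x_2,y(x_2))=0$, strong convexity of $\Bar{f}$ gives $\mu_y\|y(x_1)-y(x_2)\|^2\le (y(x_1)-y(x_2))^\top(\nabla_y\Bar{f}(x_1,y(x_1))-\nabla_y\Bar{f}(x_1,y(x_2)))$; substituting the vanishing gradients and invoking Lipschitz continuity of $\nabla_y\Bar{f}$ in $x$ yields $L_y$. Next, \eqref{eq:lip_prop_7} and \eqref{eq:lip_prop_8} follow directly from the ML adjoint formula~\eqref{eq:adjoint_ML} by a single telescoping pass, using Proposition~\ref{rem:lip_cont_prop_1} to absorb the implicit dependence of $z(x,y)$ on its arguments, and \eqref{eq:lip_prop_6} then follows by composing \eqref{eq:lip_prop_4} with the joint Lipschitz continuity of~\eqref{eq:adjoint_ML}. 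Property \eqref{eq:lip_prop_5} comes from the Jacobian formula~\eqref{eq:jacobian_z_ret_x}: apply the inverse-perturbation identity to $\nabla_{zz}^2 f_3$ and use that $\nabla^2 f_3$ is Lipschitz (Assumption~\ref{as:tri_lip_cont}) evaluated along the Lipschitz path $z(x)$.

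The remaining properties assemble these pieces. Property \eqref{eq:lip_prop_13} follows from~\eqref{eq:jacobian_y} once \eqref{eq:lip_prop_9} and \eqref{eq:lip_prop_10} are known, again via the inverse-perturbation identity. Properties \eqref{eq:lip_prop_11} and \eqref{eq:lip_prop_12} follow from the full adjoint formula~\eqref{adjoint} by the telescoping decomposition, differing only in whether the perturbation is propagated through the solution maps (for \eqref{eq:lip_prop_11}) or applied directly to $(y,z)$ (for \eqref{eq:lip_prop_12}). The main obstacle is \eqref{eq:lip_prop_9} and \eqref{eq:lip_prop_10}, on which \eqref{eq:lip_prop_11} and \eqref{eq:lip_prop_13} in turn rest: the Hessian expressions~\eqref{hess_yx_fbar} and~\eqref{hess_yy_fbar}, once expanded through~\eqref{eq:partial_x} and~\eqref{eq:partial_y}, are sums of products of up to five factors that include the third-order tensors $\nabla^3 f_3$ and two occurrences of $[\nabla_{zz}^2 f_3]^{-1}$. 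Establishing Lipschitz continuity requires a long but conceptually routine term-by-term telescoping — mirroring the bookkeeping already carried out for the bias bounds in Lemmas~\ref{lem:TSG_derived_unbiasedness} and~\ref{lem:TSG_derived_variance_bounds_of_f_bar} — in which one must verify that every factor is uniformly bounded and Lipschitz along the moving point $(x,y(x),z(x))$, and then collect the resulting constants. The hard part is managing this combinatorial explosion of terms and confirming uniform boundedness of each tensor/inverse factor; no new idea beyond the two workhorse lemmas is needed, only careful accounting.
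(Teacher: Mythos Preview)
The paper does not actually prove this proposition here --- it explicitly defers all proofs in Appendix~\ref{app:lipschitz_properties} to Appendix~B.5 of the thesis~\cite{GKent_2025} --- so a line-by-line comparison is not possible. That said, your plan is correct and is the standard route: the two workhorses you identify (the inverse-perturbation identity and the add-and-subtract telescoping of products) are exactly the devices the paper itself uses in the parallel bias/variance calculations of Lemmas~\ref{lem:TSG_derived_unbiasedness}--\ref{lem:TSG_derived_variance_bounds_of_f_bar}, and your dependency ordering (Proposition~\ref{rem:lip_cont_prop_1} first, then~\eqref{eq:lip_prop_4}, then the remaining properties built on top) is the natural one. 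One small point to make explicit: the ``Lipschitz continuity of $\nabla_y\bar f$ in $x$ for fixed $y$'' that you invoke to close~\eqref{eq:lip_prop_4} is not among the listed properties and must itself be derived as a preliminary step from formula~\eqref{eq:adjoint_ML} together with Proposition~\ref{rem:lip_cont_prop_1}; you clearly have this argument in hand (it is the same telescoping pass you describe for~\eqref{eq:lip_prop_7}--\eqref{eq:lip_prop_8}), but it should precede~\eqref{eq:lip_prop_4} rather than be ``invoked''.
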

	A useful intermediary result of Proposition~\ref{rem:lip_cont_prop_1} is the following:
	\begin{equation}\label{eq:aux_lip_z_result}
		\|\nabla_x z(x,y(x))\| \;\leq\; \frac{L_{\nabla f_3}}{\mu_z} \text{ and } \|\nabla_y z(x,y(x))\| \;\leq\; \frac{L_{\nabla f_3}}{\mu_z},
	\end{equation}
	where $\mu_z$ is the constant of the strong convexity of $f_3$ (Assumption~\ref{as:strong_conv_f3_z}).

	\section{Numerical experimental setup}\label{app:numerical_experimental_setup}
	
	\subsection{Computing the TSG adjoint gradient inexactly}\label{subsec:computing_TSG_adj_grad_inexactly}
	Let us rewrite the adjoint gradient~\eqref{adjoint} in~$x$ as follows: 
	\begin{equation}\label{eq:adjoint_blo}
		\nabla f \; = \; a - A B^{-1} b,
	\end{equation}
	where~$a =  \nabla_x f_1 - \nabla_{xz}^2 f_3 \nabla_{zz}^2 f_3^{-1} \nabla_z f_1$, $A = \nabla_{xy}^2\Bar{f}$, $B = \nabla_{yy}^2\Bar{f}$, and~$b = \nabla_y f_1  - \nabla_{yz}^2 f_3 \nabla_{zz}^2 f_3^{-1} \nabla_z f_1$. Note that this is the same structure arising in the adjoint gradient of a~BLO problem. Two approaches have been proposed in the~BLO literature to deal with~$B^{-1}$. One option is to compute the adjoint gradient by first solving the linear system given by the adjoint equation~$B \, \lambda = b$ for the adjoint variables~$\lambda$, and then calculating
	$a - A \, \lambda$. The second option is to truncate the Neumann series given by~$B^{-1} = \sum_{h=0}^{\infty} (I - B)^h$, which requires the assumption of~$\Vert B \Vert_2 < 1$ to guarantee the convergence of the series. Note that the same two approaches can be used to deal with~$\nabla_{zz}^2 f_3^{-1}$ in~$a$ and~$b$ in~\eqref{eq:adjoint_blo}, as well as in the expression for~$\nabla_y \bar{f}$, given in~\eqref{eq:nablay_barf} below. The expression for the adjoint gradient~$\nabla_y \bar{f}$ follows from~\eqref{eq:exact_grad_MLP_y} in Appendix~\ref{app:prop:adjoint_grad}, together with~\eqref{eq:jacobian_z_ret_y}:
	\begin{equation}\label{eq:nablay_barf}
		\nabla_y \Bar{f}(x,y) = \nabla_y f_2 - \nabla_{yz}^2 f_3 \nabla_{zz}^2 f_3^{-1} \nabla_z f_2,
	\end{equation}
	where all gradients and Hessians on the right-hand side are evaluated at~$(x,y,z(x,y))$.

	\subsection{TSG-N-FD}\label{subsec:TSG-N-FD}
	Our first proposed method, TSG-N-FD, solves the adjoint systems in~\eqref{adjoint} and~\eqref{eq:nablay_barf} by using an iterative method where each Hessian-vector product is approximated with an~FD scheme. In particular, let us rewrite~\eqref{adjoint} and~\eqref{eq:nablay_barf} by highlighting the adjoint systems as follows:
	\begin{equation}\label{adjoint_NFD}
		\begin{split}
			\nabla f \; = \; ( \nabla_x f_1 - \nabla_{xz}^2 f_3 \underbrace{\nabla_{zz}^2 f_3^{-1} \nabla_z f_1}_{\lambda_z} ) - \nabla_{xy}^2\Bar{f} \underbrace{\nabla_{yy}^2 \Bar{f}^{-1} ( \nabla_y f_1  - \nabla_{yz}^2 f_3 \underbrace{\nabla_{zz}^2 f_3^{-1} \nabla_z f_1}_{\lambda_z} )}_{\lambda_y},
		\end{split}
	\end{equation}
	\begin{equation}\label{eq:nablay_barf_NFD}
		\nabla_y \Bar{f} = \nabla_y f_2 - \nabla_{yz}^2 f_3 \underbrace{\nabla_{zz}^2 f_3^{-1} \nabla_z f_2}_{\bar{\lambda}_z}.
	\end{equation}
	Specifically, the adjoint systems in~\eqref{adjoint_NFD} are~$\nabla_{zz}^2 f_3 \lambda_z = \nabla_z f_1$ and~$\nabla_{yy}^2 \bar{f} \lambda_y = \nabla_y f_1  - \nabla_{yz}^2 f_3 \lambda_z$. The adjoint system in~\eqref{eq:nablay_barf_NFD} is~$\nabla_{zz}^2 f_3 \bar{\lambda}_z = \nabla_z f_2$.
	
	First, we focus on~\eqref{adjoint_NFD}. In~TSG-N-FD, the adjoint system~$\nabla_{zz}^2 f_3 \lambda_z = \nabla_z f_1$ is solved for the adjoint variables~$\lambda_z$ by using the linear~CG method, with~$\nabla_{zz}^2 f_3 \lambda_z$ being approximated as follows: 
	\begin{equation}\label{eq:TSGNFD_FD0}
		\nabla_{zz}^2 f_3(x^i,y^{i,j},z^{i,j,k}; \xi^{i,j,k}) \lambda_z \; \approx \; \frac{\nabla_{z} f_3(x^i,y^{i,j},z^{i,j,k}_+; \xi^{i,j,k}) - \nabla_{z} f_3(x^i,y^{i,j},z^{i,j,k}_-; \xi^{i,j,k})}{2 \varepsilon},
	\end{equation}
	where $z^{i,j,k}_\pm \;=\; z^{i,j,k} \pm \varepsilon \lambda_z, \mbox{ with } \varepsilon > 0$.
	Then, the adjoint equation~$\nabla_{yy}^2 \bar{f} \lambda_y = \nabla_y f_1  - \nabla_{yz}^2 f_3 \lambda_z$ is solved for the adjoint variables~$\lambda_y$ by using the linear~CG method again, with~$\nabla_{yz}^2 f_3 \lambda_z$ being approximated via an~FD scheme similar to~\eqref{eq:TSGNFD_FD0}, and~$\nabla_{yy}^2 \bar{f} \lambda_y$ being approximated as follows:
	\begin{equation}\label{eq:TSGNFD_FD1}
		\nabla_{yy}^2 \bar{f}(x^i,y^{i,j},z^{i,j+1}; \xi^{i,j}) \lambda_y \; \approx \; \frac{\nabla_{y} \bar{f}(x^i,y^{i,j}_+,z^{i,j+1}; \xi^{i,j}) - \nabla_{y} \bar{f}(x^i,y^{i,j}_-,z^{i,j+1}; \xi^{i,j})}{2 \varepsilon},
	\end{equation}
	where $y^{i,j}_\pm \;=\; y^{i,j} \pm \varepsilon \lambda_y, \mbox{ with } \varepsilon > 0$.
	Then, the adjoint gradient is calculated from
	\begin{equation}\label{eq:TSGNFD_FD2}
		\nabla f \; \approx \; ( \nabla_x f_1 - \nabla_{xz}^2 f_3 \lambda_z ) - \nabla_{xy}^2\Bar{f} \lambda_y,
	\end{equation}
	where~$\nabla_{xz}^2 f_3 \, \lambda_z$ and~$\nabla_{xy}^2 \bar{f} \, \lambda_y$ are approximated via~FD schemes similar to~\eqref{eq:TSGNFD_FD0} and~\eqref{eq:TSGNFD_FD1}, respectively.  
	
	Let us now focus on~\eqref{eq:nablay_barf_NFD}. The adjoint system~$\nabla_{zz}^2 f_3 \bar{\lambda}_z = \nabla_z f_2$ is solved for the adjoint variables~$\bar{\lambda}_z$ by using the linear~CG method, with~$\nabla_{zz}^2 f_3 \bar{\lambda}_z$ being approximated as in~\eqref{eq:TSGNFD_FD0}. Then, the adjoint gradient is calculated from 
	\begin{equation}\label{eq:TSGNFD_FD3}
		\nabla_y \bar{f} \; \approx \; \nabla_y f_2 - \nabla_{yz}^2 f_3 \bar{\lambda}_z,
	\end{equation}
	where~$\nabla_{yz}^2 f_3 \, \bar{\lambda}_z$ is approximated via an~FD scheme similar to~\eqref{eq:TSGNFD_FD0}.
	
	The schema of~BSG-N-FD is included in Algorithm~\ref{alg:TSG-N-FD}. The~``N'' in the algorithm name refers to the Newton-type system defined by the adjoint equation, while the~``FD'' refers to the finite-difference approximations we use. We set the~FD parameter value to~$\varepsilon = 0.1$.
	
	\begin{algorithm}[H]
		\caption{TSG-N-FD}\label{alg:TSG-N-FD}
		\begin{algorithmic}[1]
			\item[] TSG-N-FD is obtained from Algorithm~\ref{alg:TSG} with the following modifications: 
			\medskip
			\item[] \qquad In Step~1, replace Step~2 of Algorithm~\ref{alg:TSG_MLP} with the following:
			\item[] \qquad\qquad {\bf Step 2.} Compute an approximation $\tilde{g}_{f_2}^{i,j}$, using~\eqref{eq:TSGNFD_FD3}. 
			\medskip
			\item[] \qquad In Step~3, replace the content with the following:
			\item[] \qquad\qquad {\bf Step 3.} Compute an approximation $\tilde{g}_{f_1}^{i}$, using~\eqref{eq:TSGNFD_FD2}.
		\end{algorithmic}
	\end{algorithm}

	\subsection{TSG-AD}\label{subsec:TSG-AD}
	
	Our second proposed method, TSG-AD, is based on the truncated Neumann series approach.  
	We will illustrate such an approach by applying it to the two terms from the adjoint gradient~\eqref{adjoint} that require it, i.e.,~$\nabla_{xz}^2 f_3 \nabla_{zz}^2 f_3^{-1} \nabla_z f_1$ and~$\nabla_{xy}^2\Bar{f} \nabla_{yy}^2 \Bar{f}^{-1} b$, where~$b = \nabla_y f_1  - \nabla_{yz}^2 f_3 \nabla_{zz}^2 f_3^{-1} \nabla_z f_1$. A similar approach can be applied to handle the term~$\nabla_{yz}^2 f_3 \nabla_{zz}^2 f_3^{-1} \nabla_z f_2$ in~\eqref{eq:nablay_barf}. 
	
	Let us start with~$\nabla_{xz}^2 f_3 \nabla_{zz}^2 f_3^{-1} \nabla_z f_1$ from~\eqref{adjoint}. 
	Approximating~$\nabla_{zz}^2 f_3^{-1}$ using a Neumann series (i.e.,~$B^{-1} = \sum_{h=0}^{\infty} (I - B)^h$, where~$B$ plays the role of~$\nabla_{zz}^2 f_3$) requires~$\Vert \nabla_{zz}^2 f_3\Vert_2 < 1$, which is a strong assumption in practice. However, recall that~$f_3$ is thrice continuously differentiable and~$\nabla_z f_3$ is Lipschitz continuous in~$z$ with some constant~$C_0 > 0$ by Assumption~\ref{as:tri_lip_cont}, implying that~$\Vert \nabla_{zz}^2 f_3 \Vert < C_0$~\cite[Theorem~5.12]{ABeck_2017}. Therefore, following a common approach in the~BLO literature~\cite{KJi_JYang_YLiang_2020}, we apply the truncated Neumann series to approximate~$[(1/C_0)\nabla_{zz}^2 f_3]^{-1}$.
	
	Given an accuracy level~$Q > 0$, we can write the truncated Neumann series as~$B^{-1} \approx \sum_{h=0}^{Q} (I - B)^h = \sum_{h=0}^{Q} \prod_{\ell = Q - h + 1}^{Q} (I - B)$, where we define~$\prod_{\ell = Q + 1}^Q (\cdot) = I$ for simplicity.
	Therefore, we can approximate~$\nabla_{zz}^2 f_3^{-1} \nabla_z f_1$ as follows
	\begin{equation}\label{eq:autodiff0}
		\nabla_{zz}^2 f_3^{-1} \nabla_z f_1 \; \approx \; (1/C_0) \left(\sum_{h=0}^{Q} \prod_{\ell = Q - h + 1}^Q (I - (1/C_0) \nabla_{zz}^2 f_3(x^i,y^{i,j},z^{i,j,k}; \xi^{i,j,k}_{\ell})) \right) \nabla_z f_1,
	\end{equation}
	with~$\xi^{i,j,k}_{\ell}$ representing the~$\ell$-th sample (or batch of samples) from the sequence of random variables~$\{\xi^{i,j,k}\}$. The expression on the right-hand side of~\eqref{eq:autodiff0} can be efficiently computed using the~AD procedure detailed in Algorithm~\ref{alg:autodiff10}. Then, given~$v_z$ returned by Algorithm~\ref{alg:autodiff10}, we can compute the desired term as follows 
	\begin{equation}\label{eq:AD10}
		\nabla_{xz}^2 f_3 \nabla_{zz}^2 f_3^{-1} \nabla_z f_1 \; \approx \; \frac{d}{dx} (\nabla_z f_3(x^i,y^{i,j},z^{i,j,k}; \xi^{i,j,k})^{\top} \, v_z),
	\end{equation}
	where differentiation with respect to~$x$ is performed using~AD (note that~$\nabla_z f_3$ is a function of~$x$ and~$v_z$ is fixed).
	
	\begin{algorithm}[H]
		\caption{Automatic differentiation procedure to compute~$\nabla_{zz}^2 f_3^{-1} \nabla_z f_1$}\label{alg:autodiff10}
		\begin{algorithmic}[1]
			\item[] {\bf Input:} $(x^i, y^{i,j}, z^{i,j,k})$.
			\medskip
			\item[] {\bf For $\ell = 1, 2, \ldots, Q$ \bf do}
			\item[] \quad\quad $G_{\ell}(z^{i,j,k}) \; = \; z^{i,j,k} - (1/C_0) \nabla_z f_3(x^i,y^{i,j},z^{i,j,k}; \xi^{i,j,k}_{\ell}).$
			\item[] {\bf End}
			\item[] Set~$r_{0} = \nabla_z f_1(x^i, y^{i,j}, z^{i,j,k}; \xi^{i,j,k})$.
			\item[] {\bf For $h = 0, 1, \ldots, Q-1$ \bf do}
			\item[] \quad\quad Calculate~$r_{h+1} \; = \; \frac{d}{dz}(G_{h+1}(z^{i,j,k})^{\top} r_h) = (I - (1/C_0) \nabla_{zz}^2 f_3 (x^i,y^{i,j},z^{i,j,k}; \xi^{i,j,k}_{h+1})) \, r_h$, where differentiation with respect to~$z$ is performed using~AD (note that~$G_{h+1}$ is a function of~$z$ and~$r_h$ is fixed).
			\item[] {\bf End} 
			\medskip
			\item[] {\bf Output:} $v_z = (1/C_0) \sum_{h=0}^Q r_h$.
		\end{algorithmic}
	\end{algorithm}  
	
	Let us now focus on~$\nabla_{xy}^2\Bar{f} \nabla_{yy}^2 \Bar{f}^{-1} b$ from~\eqref{adjoint}. Recall that~$f_2$ is twice continuously differentiable and~$\nabla_y \bar{f}$ is Lipschitz continuous in~$y$ with some constant~$C_1 > 0$ as a consequence of~\eqref{eq:lip_prop_7} in Proposition~\ref{rem:lip_cont_prop_2} of Appendix~\ref{app:lipschitz_properties} (such a proposition implies that~$C_1$ is equal to~$L_{\bar{F}_y}$, but we prefer to use~$C_1$ for generality). Similar to~\eqref{eq:autodiff0}, we apply the truncated Neumann series to~$[(1/C_1)\nabla_{yy}^2\bar{f}]^{-1}$, which allows us to approximate~$\nabla_{yy}^2 \bar{f}^{-1} b$ as follows:
	\begin{equation}\label{eq:autodiff50}
		\nabla_{yy}^2 \bar{f}^{-1} b \; \approx \; (1/C_1) \left(\sum_{h=0}^{Q} \prod_{\ell = Q - h + 1}^Q (I - (1/C_1) \nabla_{yy}^2 \bar{f}(x^i,y^{i,j},z^{i,j+1}; \xi^{i,j}_{\ell}))\right) \, b,
	\end{equation}
	where~$\xi^{i,j}_{\ell}$ represents the $\ell$-th sample (or batch of samples) from the sequence of random variables~$\{\xi^{i,j}\}$. The expression on the right-hand side of~\eqref{eq:autodiff0} can be efficiently computed using the~AD procedure detailed in Algorithm~\ref{alg:autodiff11}. Then, given~$v_y$ returned by Algorithm~\ref{alg:autodiff11}, we can compute the desired term as follows 
	\begin{equation}\label{eq:AD11}
		\nabla_{xy}^2 \bar{f} \nabla_{yy}^2 \bar{f}^{-1} b \; \approx \; \frac{d}{dx} (\nabla_y \bar{f}(x^i,y^{i,j},z^{i,j+1}; \xi^{i,j})^{\top} \, v_y),
	\end{equation}
	where differentiation with respect to~$x$ is performed using~AD (note that~$\nabla_y \bar{f}$ is a function of~$x$ and~$v_y$ is fixed).
	
	\begin{algorithm}[H]
		\caption{Automatic differentiation procedure to compute~$\nabla_{yy}^2 \bar{f}^{-1} b$}\label{alg:autodiff11}
		\begin{algorithmic}[1]
			\item[] {\bf Input:} $(x^i, y^{i,j}, z^{i,j+1})$.
			\medskip
			\item[] {\bf For $\ell = 1, 2, \ldots, Q$ \bf do}
			\item[] \quad\quad $G_{\ell}(y^{i,j}) \; = \; y^{i,j} - (1/C_1) \nabla_y \bar{f}(x^i,y^{i,j},z^{i,j+1}; \xi^{i,j}_{\ell}).$
			\item[] {\bf End}
			\item[] Set~$r_{0} = b$.
			\item[] {\bf For $h = 0, 1, \ldots, Q-1$ \bf do}
			\item[] \quad\quad Calculate~$r_{h+1} \; = \; \frac{d}{dy}(G_{h+1}(y^{i,j})^{\top} r_h) = (I - (1/C_1) \nabla_{yy}^2 \bar{f} (x^i,y^{i,j},z^{i,j+1}; \xi^{i,j}_{h+1})) \, r_h$, where differentiation with respect to~$y$ is performed using~AD (note that~$G_{h+1}$ is a function of~$y$ and~$r_h$ is fixed).
			\item[] {\bf End} 
			\medskip
			\item[] {\bf Output:} $v_y = (1/C_1) \sum_{h=0}^Q r_h$.
		\end{algorithmic}
	\end{algorithm}
	
	The schema of~TSG-AD is included in Algorithm~\ref{alg:TSG-AD}. 
	
	\begin{algorithm}[H]
		\caption{TSG-AD}\label{alg:TSG-AD}
		\begin{algorithmic}[1]
			\item[] TSG-AD is obtained from Algorithm~\ref{alg:TSG} with the following modifications: 
			\medskip
			\item[] \qquad In Step~1, replace Step~2 of Algorithm~\ref{alg:TSG_MLP} with the following:
			\item[] \qquad\qquad {\bf Step 2.} Compute an approximation $\tilde{g}_{f_2}^{i,j}$ by applying to~$\nabla_{yz}^2 f_3 \nabla_{zz}^2 f_3^{-1} \nabla_z f_2$ the same approach that was used to compute~$\nabla_{xz}^2 f_3 \nabla_{zz}^2 f_3^{-1} \nabla_z f_1$ in~\eqref{eq:AD10}. 
			\medskip
			\item[] \qquad In Step~3, replace the content with the following:
			\item[] \qquad\qquad {\bf Step 3.} Compute an approximation $\tilde{g}_{f_1}^{i}$, using~\eqref{eq:AD10} and~\eqref{eq:AD11}.
		\end{algorithmic}
	\end{algorithm}

	\subsection{Synthetic trilevel problems}\label{subsec:synthetic_trilevel_problems}
	
	Given~$h_x \in \mathbb{R}^n$, $h_y \in \mathbb{R}^m$, and~$h_z \in \mathbb{R}^t$, the~UL and~ML objective functions for both the quadratic and quartic synthetic trilevel problems considered in the experiments are respectively given by 
	\begin{alignat}{2}
		f_1(x,y,z) \; &= \;  h_x^\top x + h_y^\top y + h_z^\top z + 0.5 \, x^\top H_{xx} x + x^\top H_{xy} y + x^\top H_{xz} z, \label{prob:trilevel_synthetic_ul} \\
		f_2(x,y,z) \; &= \; 0.5 \, y^\top H_{yy} y - y^\top H_{yx} x - y^\top H_{yz} z, \label{prob:trilevel_synthetic_ml} 
	\end{alignat}
	where~$H_{xx} \in \mathbb{R}^{n \times n}$ and~$H_{yy} \in \mathbb{R}^{m \times m}$ are symmetric positive definite matrices, and~$H_{xy} \in \mathbb{R}^{n \times m}$, $H_{xz} \in \mathbb{R}^{n \times t}$, $H_{yx} = H_{xy}^\top$, and~$H_{yz} \in \mathbb{R}^{m \times t}$ are arbitrary matrices.
	The~LL objective functions of the two problems are respectively defined as follows
	\begin{alignat}{2}
		f_3(x,y,z) \; &= \;  0.5 \, z^\top H_{zz} z - z^\top H_{zx} x - z^\top H_{zy} y, \label{prob:trilevel_synthetic_ll_1} \\
		f_3(x,y,z) \; &= \; 0.5 \| z^\top H_{zz} z - z^\top H_{zx} x - z^\top H_{zy} y \|^2, \label{prob:trilevel_synthetic_ll_2}
	\end{alignat}
	where~$H_{zz} \in \mathbb{R}^{t \times t}$ is a symmetric positive definite matrix, and~$H_{zx} = H_{xz}^\top$ and~$H_{zy} = H_{yz}^\top$ are arbitrary matrices.
	
	In all the numerical experiments, we considered the same dimension at all levels (i.e.,~$n = m = t = 50$) for the quadratic problem, and varying dimensions (i.e.,~$n = m = 5$ and~$t = 1$) for the quartic problem. In~\eqref{prob:trilevel_synthetic_ul}, the components of the vectors~$h_x$, $h_y$, and~$h_z$ were randomly generated from a uniform distribution between~$0$ and~10 for the quadratic problem, and between~$0$ and~0.1 for the quartic problem. We set all matrices in~\eqref{prob:trilevel_synthetic_ul}--\eqref{prob:trilevel_synthetic_ll_2} equal to identity matrices, except for~$H_{yy}$ in~\eqref{prob:trilevel_synthetic_ml}, which was set to four times the identity matrix. 
	
	When using~\eqref{prob:trilevel_synthetic_ll_1}, our choices for the matrices in~\eqref{prob:trilevel_synthetic_ul}--\eqref{prob:trilevel_synthetic_ll_1} ensure that~$f_3$, $\bar{f}$, and~$f$ have unique solutions.\footnote{We have~$z(x,y) = H_{zz}^{-1}(H_{zx}x + H_{zy}y)$, $y(x) = (H_{yy}-2H_{yz}H_{zz}^{-1}H_{zy})^{-1}(H_{yx} + H_{yz}H_{zz}^{-1}H_{zx})$, $\nabla_y \bar{f}(x,y) = H_{yy}y - H_{yx}x - H_{yz}H_{zz}^{-1}(H_{zx}x + 2H_{zy}y)$, and~$\nabla_{yy}^2 \bar{f}(x,y) = H_{yy} - 2 H_{yz} H_{zz}^{-1} H_{zy}$. We omit the expressions of~$\nabla f(x)$ of~$\nabla^2 f(x)$ for brevity.} When using~\eqref{prob:trilevel_synthetic_ll_2}, 
	the resulting~LL problem has two optimal solutions: $z(x,y) = 0$ and~$z(x,y) = H_{zx} x+ H_{zy} y$. Our choice for the initial points~$x^0$, $y^{0,0}$, and~$z^{0,0,0}$ ensures that the methods considered in the experiments converge to the~LL optimal solution~$z(x,y) = H_{zx} x+ H_{zy} y$. 
	Specifically, the components of the initial points were randomly generated from a uniform distribution over the interval~[0, 20] when using~\eqref{prob:trilevel_synthetic_ll_1}, and over the intervals~[-0.4, 0], [-0.2, 0], and~[-0.6, 0] (for the~UL, ML, and~LL variables, respectively) when using~\eqref{prob:trilevel_synthetic_ll_2}.
	
	All algorithms (i.e., TSG-H, TSG-N-FD, and~TSG-AD) were compared using a decaying step size at each level. Specifically, we used~$\alpha_i = \bar{\alpha}/i$, $\beta_j = \bar{\beta}/j$, and~$\gamma_k = \bar{\gamma}/k$, where~$\bar{\alpha}$, $\bar{\beta}$, and~$\bar{\gamma}$ are positive scalars carefully chosen to ensure good performance for each algorithm (without conducting extensive, time-consuming grid searches at all levels, as our goal is not to compare our algorithms against others). The values of~$\bar{\alpha}$, $\bar{\beta}$, and~$\bar{\gamma}$ are provided in~Table~\ref{tab:stepsizes_synthetic}.
	
	\begin{table}[h!]
		\caption{Details of the stepsizes ($\alpha_i = \bar{\alpha}/i$, $\beta_j = \bar{\beta}/j$, $\gamma_k = \bar{\gamma}/k$) used across algorithms for the synthetic quadratic and quartic trilevel problems}
		\label{tab:stepsizes_synthetic}
		\centering
		\begin{tabular}{lllllll}
			\toprule
			Problem & Algorithm & Case & $\bar{\alpha}$ & $\bar{\beta}$ & $\bar{\gamma}$ \\
			\midrule
			\multirow{6}{*}{Quadratic} & TSG-H & Deterministic   & 0.3  & 0.2   & 0.1  \\
			& TSG-N-FD & Deterministic   & 0.01 & 0.1   & 0.05 \\
			& TSG-AD & Deterministic   & 0.01 & 0.1   & 0.1  \\
			& TSG-H & Stochastic      & 0.1  & 0.1   & 0.1  \\
			& TSG-N-FD & Stochastic     & 0.01 & 0.1   & 0.1  \\
			& TSG-AD & Stochastic     & 0.01 & 0.1   & 0.1  \\
			\midrule
			\multirow{6}{*}{Quartic} & TSG-H & Deterministic   & 0.3  & 0.2   & 0.1  \\
			& TSG-N-FD & Deterministic   & 0.3  & 0.2   & 0.0001 \\
			& TSG-AD & Deterministic   & 0.3  & 0.2   & 0.0001 \\
			& TSG-H & Stochastic      & 0.3  & 0.2   & 0.1  \\
			& TSG-N-FD & Stochastic     & 0.01 & 0.01  & 0.001 \\
			& TSG-AD & Stochastic    & 0.3  & 0.2   & 0.0001 \\
			\bottomrule
		\end{tabular}
	\end{table}
	
	

	\subsubsection{Additional figures and discussion for the synthetic trilevel problems }\label{subsubsec:synthetic_trilevel_problems_additional_figs}
	
	In the deterministic case, Figures~\ref{fig:quad_prob_det_breakdown} and~\ref{fig:quart_prob_det_breakdown} break down the behavior of~TSG-H, TSG-N-FD, and~TSG-AD at the~UL, ML, and~LL levels. Specifically, such figures plot the sequence of~$f(x^i)$ values (upper plot), $\bar{f}(x^i,y^{i,j})$ values (middle plot), and~$f_3(x^i,y^{i,j},z^{i,j,k})$ values (lower plot). They also include the values~$f(x_*)$ (only for the quadratic problem, where it can be computed analytically), with~$x_*$ denoting the optimal solution of the trilevel problem, as well as $\bar{f}(x^i, y(x^i))$ and~$f_3(x^i, y^{i,j}, z(x^i, y^{i,j}))$. The goal is for the sequences of~$f$, $\bar{f}$, and~$f_3$ values to converge to their respective dashed lines. In the middle- and lower-level plots, the horizontal axis represents cumulative~ML and~LL iterations, respectively. 
	
	As evident from Figure~\ref{fig:quad_prob_det_breakdown}, for the quadratic problem, the sequences of function values at the~UL and~ML problems converge when the function values at the~ML and~LL problems, respectively, also converge. As evident from Figure~\ref{fig:quart_prob_det_breakdown}, for the quartic problem, the sequences of function values at all levels converge after a few iterations.
	
	\begin{figure}[H]
		\centering
		\includegraphics[scale=0.25]{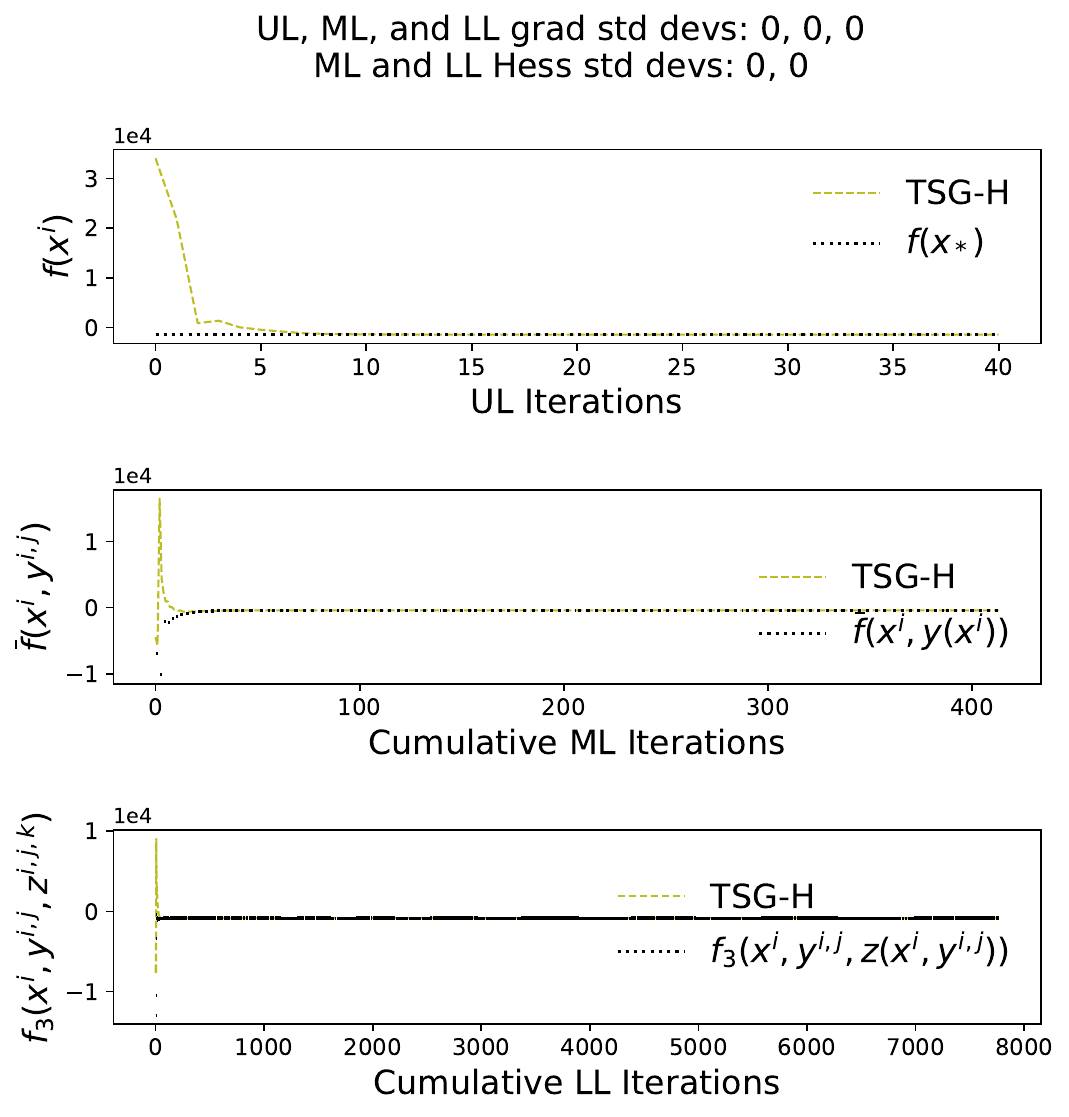}
		\includegraphics[scale=0.25]{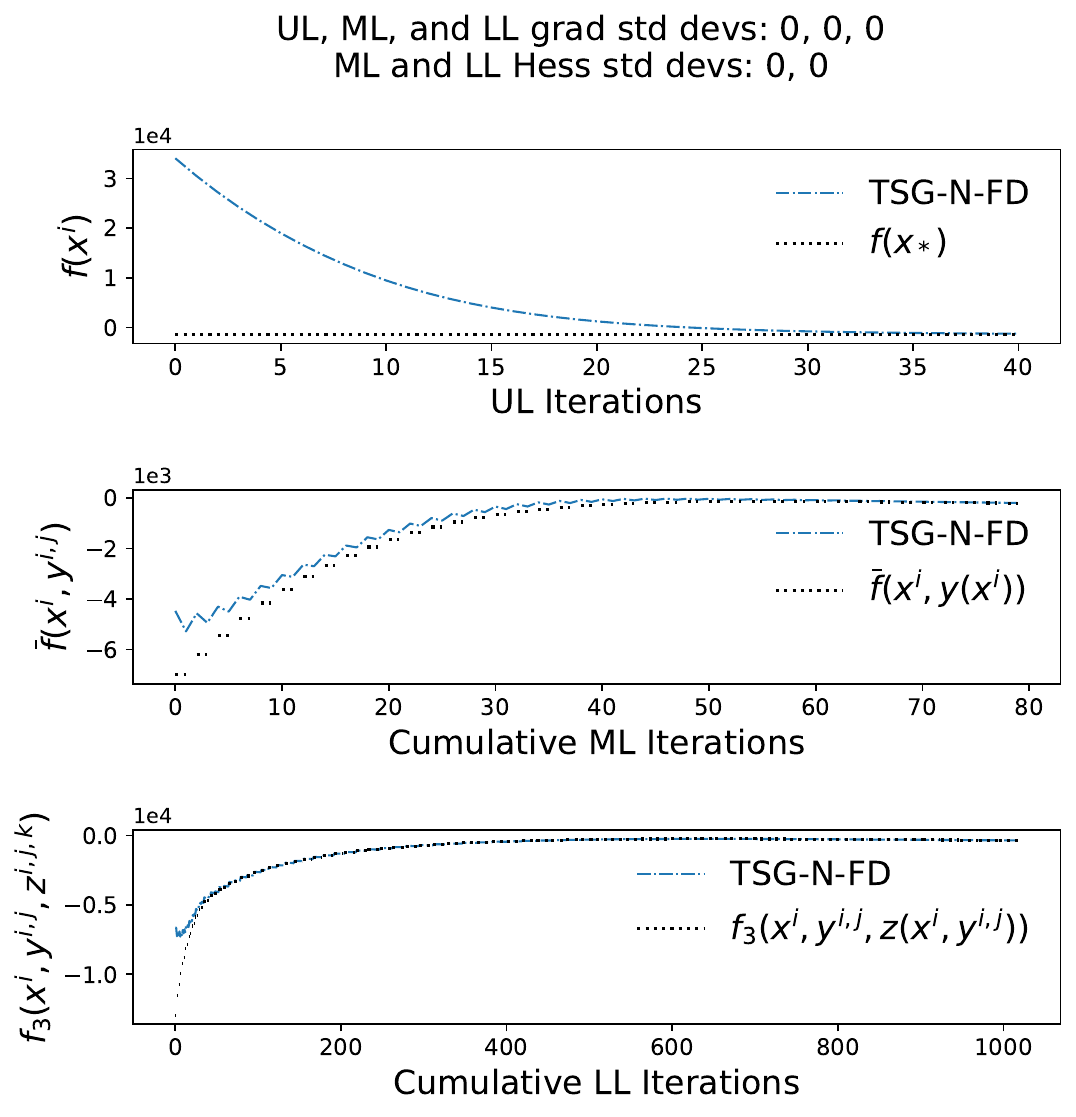}
		\includegraphics[scale=0.25]{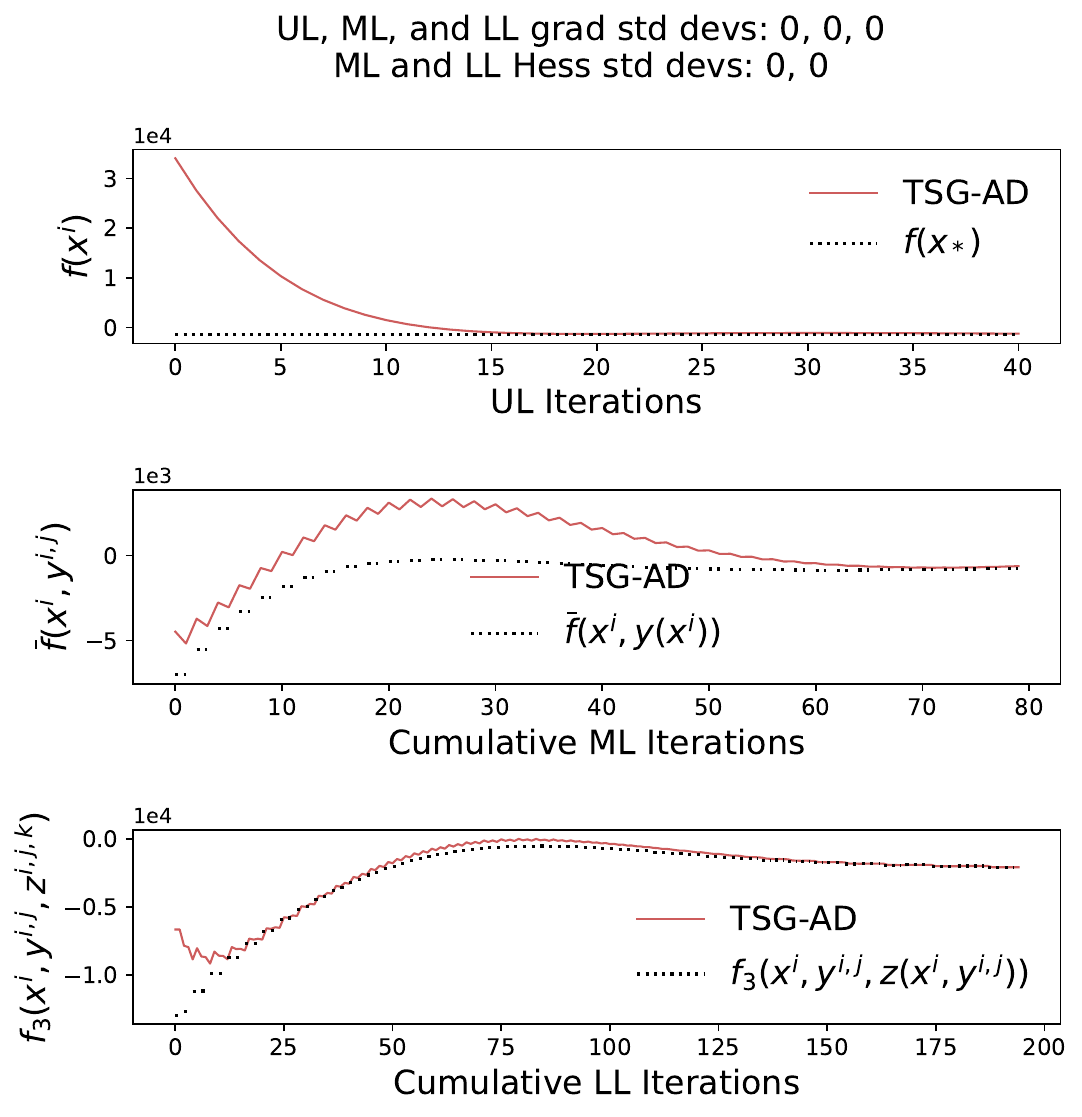}                    
		\caption{Breakdown of the algorithms, quadratic problem, deterministic case.}\label{fig:quad_prob_det_breakdown}
	\end{figure}
	
	\begin{figure}[H]
		\centering
		\includegraphics[scale=0.25]{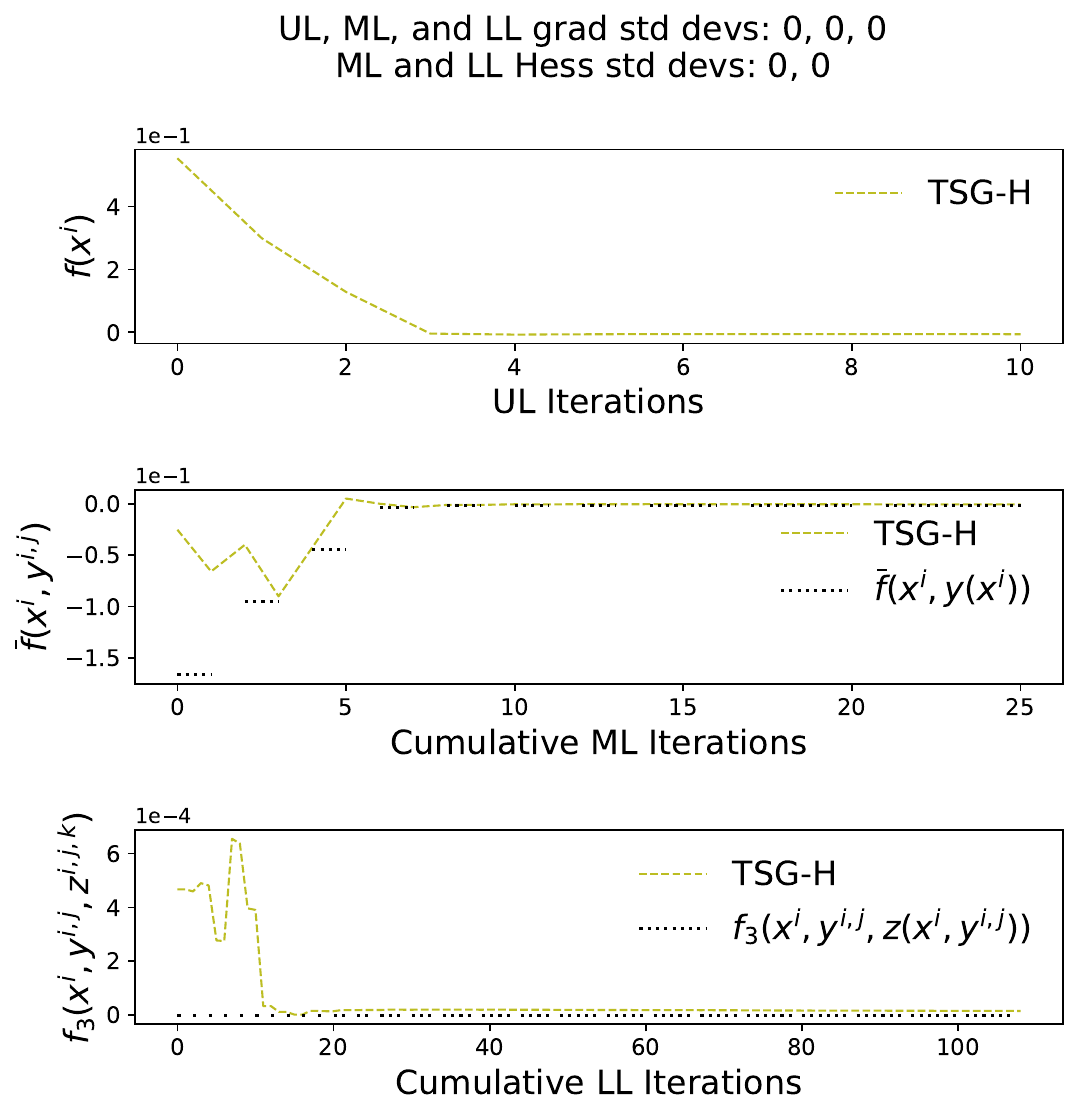}
		\includegraphics[scale=0.25]{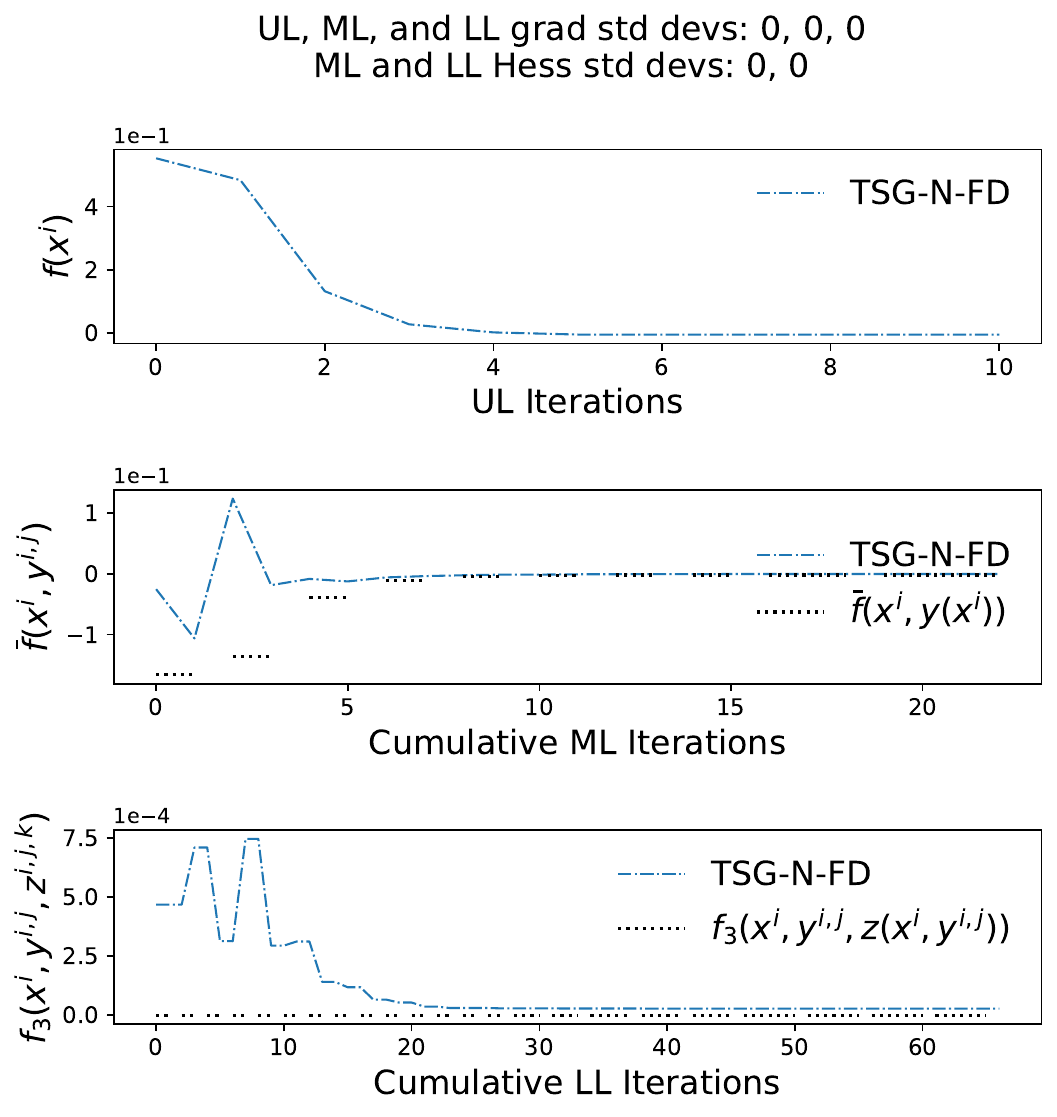}
		\includegraphics[scale=0.25]{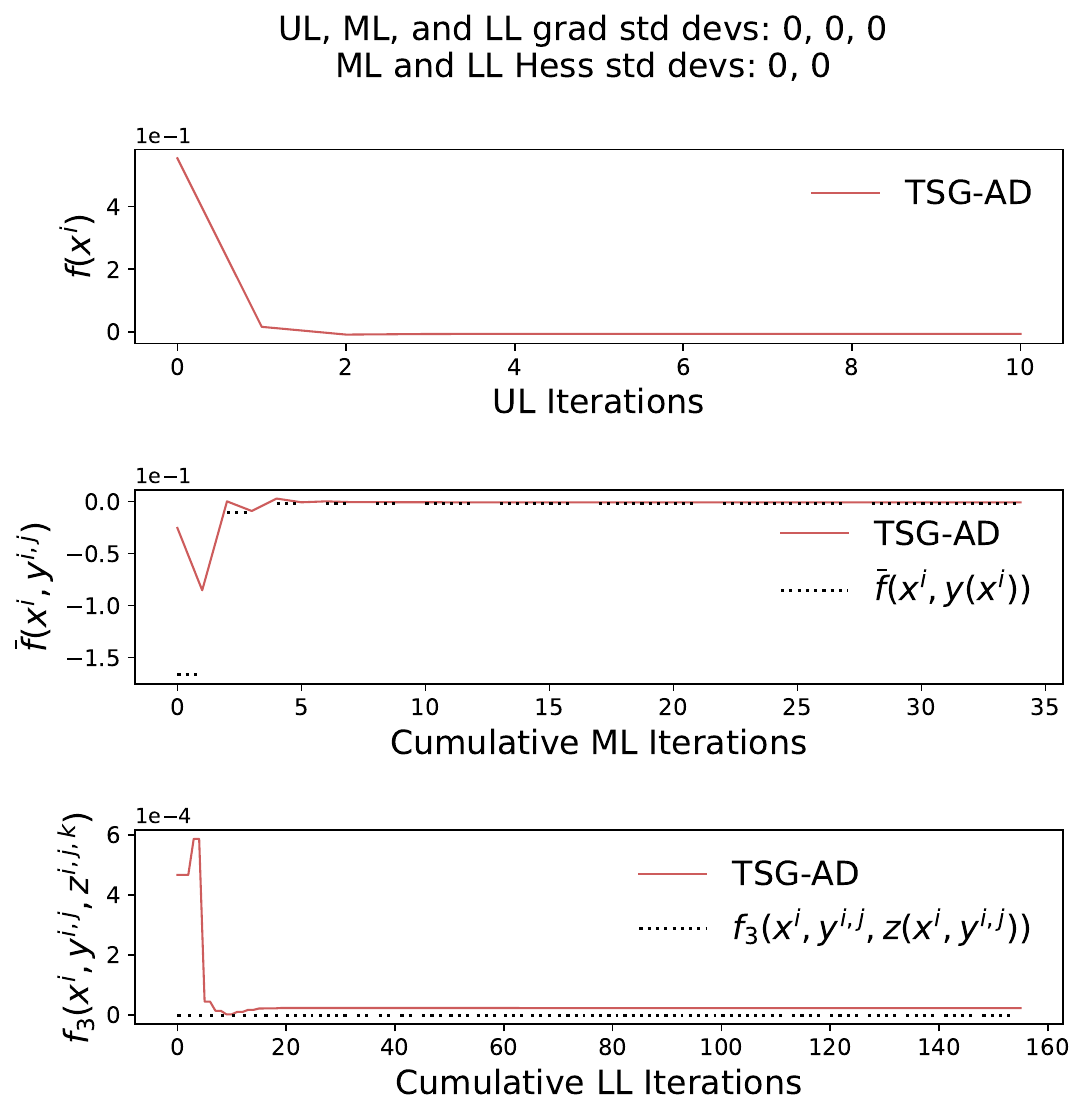}                    
		\caption{Breakdown of the algorithms, quartic problem, deterministic case.}\label{fig:quart_prob_det_breakdown}
	\end{figure}
	

	\subsection{Trilevel adversarial hyperparameter tuning}\label{subsec:trilevel_adversarial_problems}
	
	Let us denote the whole learning dataset used in the experiments by~${\cal D} = \{ (u_j, v_j), \, j \in \{ 1, \ldots , N\} \}$, which consists of~$N$ pairs given by a feature vector~$u_j$ and the corresponding true label~$v_j$. 
	We denote the datasets used for training and validation as~${D_{\train}}$ and~${D_{\val}}$, which respectively consist of~$N_{\train}$ and~$N_{\val}$ pairs extracted from the original dataset~$\cal D$ (with additional pairs set aside for testing). 
	Let~$\phi (u_j; \theta)$ be the prediction function, where~$\theta$ is a vector of parameters. 
	The adversarial training problem can be written according to the following minimax formulation (see, e.g., \cite{AMadry_etal_2017}):
	\begin{equation}\label{prob:adv_prob_minmax}
		\underset{\theta}{\operatorname{min}} \, \frac{1}{N_{\train}} \sum_{(u, v) \in D_{\train}} \max _{\|\delta_u\| \leq \epsilon} \ell(\phi (u + \delta_u; \ \theta), \ v),
	\end{equation}
	where~$\delta_u$ is a perturbation vector associated with each sample~$u$ in the training set, and~$\epsilon$ is a positive threshold. Introducing~$\delta = (\delta_u ~|~ (u,v) \in  D_{\train})$, we propose the following~TLO problem for adversarial hyperparameter tuning, inspired by~\cite{RSato_etal_2021}:
	\begin{equation}\label{prob:trilevel_adv_train}
		\begin{split}
			\min_{\lambda \in \mathbb{R}, \, \theta \in \mathbb{R}^m, \, \delta \in \mathbb{R}^t} ~~ & \frac{1}{N_{\val}} \sum_{(u, v) \in D_{\val}} \ell(\phi (u; \, \theta), \, v) \\
			\mbox{s.t.}~~ & \theta, \, \delta \in \argmin_{\theta \in \mathbb{R}^m, \, \delta\in\mathbb{R}^t} ~~ \frac{1}{N_{\train}} \sum_{(u, v) \in D_{\train}} \ell(\phi (u + \delta_u; \, \theta), \, v) + \Phi(\theta; \lambda)  \\
			& \quad\quad\mbox{s.t.}~~  \delta \in \argmax_{\delta \in \mathbb{R}^t} ~~ \frac{1}{N_{\train}} \sum_{(u, v) \in D_{\train}} \ell(\phi (u + \delta_u; \, \theta), \, v) - \Psi(\delta), \\
		\end{split}
	\end{equation}
	where~$\lambda$ is a penalty coefficient, and~$\Phi(\theta; \lambda) = (e^{\lambda}\|\theta\|_{1^\star})/m$ (with $\|\cdot\|_{1^\star}$ being a smooth approximation of the~$\ell_1$-norm~\cite[Eq.~(18) with~$\mu = 0.25$]{BSaheya_etal_2019}) and~$\Psi(\delta) = (c\|\delta\|^2)/(mN_{\train}$) (with~$c = 0.1$ being a penalty coefficient) are penalty terms that penalize large values of~$\theta$ and~$\delta$, respectively. To convert the~LL problem into a minimization problem, we switch to~$\argmin$ by multiplying the objective function by~$-1$.
	Following~\cite{RSato_etal_2021}, we use a linear prediction function and mean squared error~(MSE) as the loss function in our experiments.
	
	Regarding the datasets used in the experiments, the red and white wine quality datasets~\cite{PCortez_ALCerdeira_2009} contain~1,599 and~4,898 samples, respectively, each with~11 features, while the California housing dataset~\cite{RKPace_RBarry_1997} contains~20,640 samples and~8 features. Each dataset is split into training, validation, and test sets in proportions of~70\%, 15\%, and~15\%, respectively.
	
	For~TSG-N-FD and~TSG-AD, we use the same configuration described in Section~\ref{sec:num_results_synth_trilevel_probs}, including decaying stepsizes ($\alpha_i = \bar{\alpha}/i$, $\beta_j = \bar{\beta}/j$, and $\gamma_k = \bar{\gamma}/k$), 
	where the positive scalars $\bar{\alpha}$, $\bar{\beta}$, and $\bar{\gamma}$ are selected via grid search over the set~$\{0.1, 0.01, 0.001\}$. 
	For the~BSG-AD algorithms, which are derived from~TSG-AD to solve the~BLO problems obtained from~\eqref{prob:trilevel_adv_train}, we once again use decaying stepsizes selected via grid search over~$\{0.1, 0.01, 0.001\}$.
	Specifically, the values of~$\bar{\alpha}$, $\bar{\beta}$, and~$\bar{\gamma}$ are provided in~Table~\ref{tab:stepsizes}. In all experiments, the algorithms use a minibatch size of~64 for training, and the results presented in the figures are averaged over~10 runs.
	
	
	\begin{table}
		\caption{Details of the stepsizes ($\alpha_i = \bar{\alpha}/i$, $\beta_j = \bar{\beta}/j$, $\gamma_k = \bar{\gamma}/k$) used across algorithms, formulations, and datasets in the trilevel adversarial hyperparameter tuning experiments}
		\label{tab:stepsizes}
		\centering
		\begin{tabular}{lllllll}
			\toprule
			Algorithm & Formulation & Dataset & $\bar{\alpha}$ & $\bar{\beta}$ & $\bar{\gamma}$ \\
			\midrule
			TSG-N-FD & \cite{RSato_etal_2021} & Red Wine             & 0.1  & 0.1   & 0.1  \\
			TSG-AD   & \cite{RSato_etal_2021} & Red Wine             & 0.01 & 0.01  & 0.01 \\
			TSG-AD   & \eqref{prob:trilevel_adv_train} & Red \& White Wine   & 0.1  & 0.01  & 0.1  \\
			TSG-AD   & \eqref{prob:trilevel_adv_train} & California Housing  & 0.01 & 0.001 & 0.01 \\
			BSG-AD (without UL) & \eqref{prob:trilevel_adv_train} & Red \& White Wine  & --   & 0.01  & 0.1  \\
			BSG-AD (without UL) & \eqref{prob:trilevel_adv_train} & California Housing & --   & 0.001 & 0.1  \\
			BSG-AD (without LL) & \eqref{prob:trilevel_adv_train} & Red \& White Wine  & 0.1  & 0.01  & --   \\
			BSG-AD (without LL) & \eqref{prob:trilevel_adv_train} & California Housing & 0.1  & 0.001 & --   \\
			\bottomrule
		\end{tabular}
	\end{table}

	\subsubsection{Additional figures and discussion for trilevel adversarial hyperparameter tuning  }\label{subsubsec:trilevel_adversarial_additional_figs}
	
	In Figure~\ref{fig:Sato_redwine}, we assess the~TLO problem for adversarial hyperparameter tuning proposed in~\cite{RSato_etal_2021}, which can be obtained by swapping the~ML and~LL problems in~\eqref{prob:trilevel_adv_train}. The results on the red wine dataset demonstrate that both~TSG-N-FD and~TSG-AD exhibit essentially similar performance in terms of test~MSE. However, the test~MSE values are consistently worse or comparable to those obtained using the formulation in~\eqref{prob:trilevel_adv_train} (see Figure~\ref{fig:nonSato_redwine}), which is why we discontinued testing the formulation from~\cite{RSato_etal_2021}.  
	
	When using~\eqref{prob:trilevel_adv_train}, TSG-N-FD does not perform well and is therefore excluded from further analysis.
	This outcome is not surprising, as the results from the synthetic problems in Section~\ref{sec:num_results_synth_trilevel_probs} indicated that~TSG-N-FD is more affected by noise in~$\nabla f_3$ than~TSG-AD. In~\eqref{prob:trilevel_adv_train}, the noise is further amplified by the fact that the size of~$\delta$ corresponds to the number of rows times the number of columns of the entire dataset, making~$\nabla f_3$ more susceptible to minibatch sampling.

	\begin{figure}[H]
		\centering
		\includegraphics[scale=0.22]{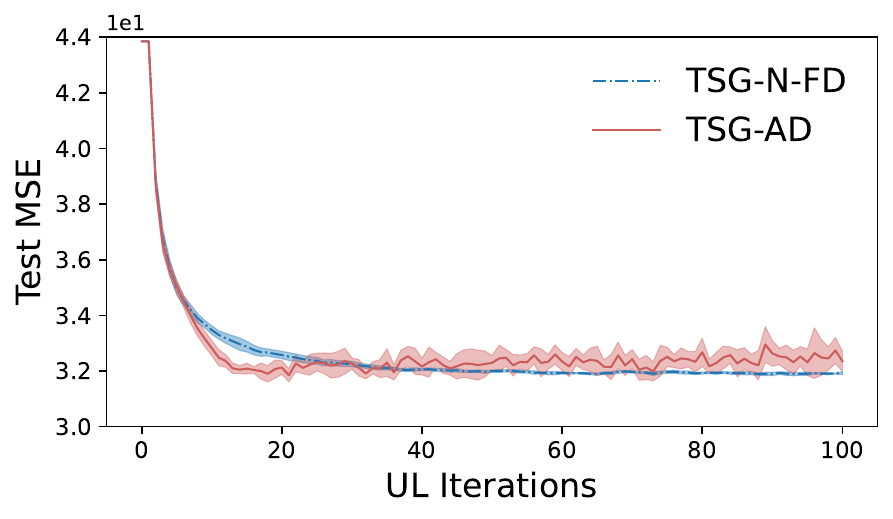}
		\includegraphics[scale=0.22]{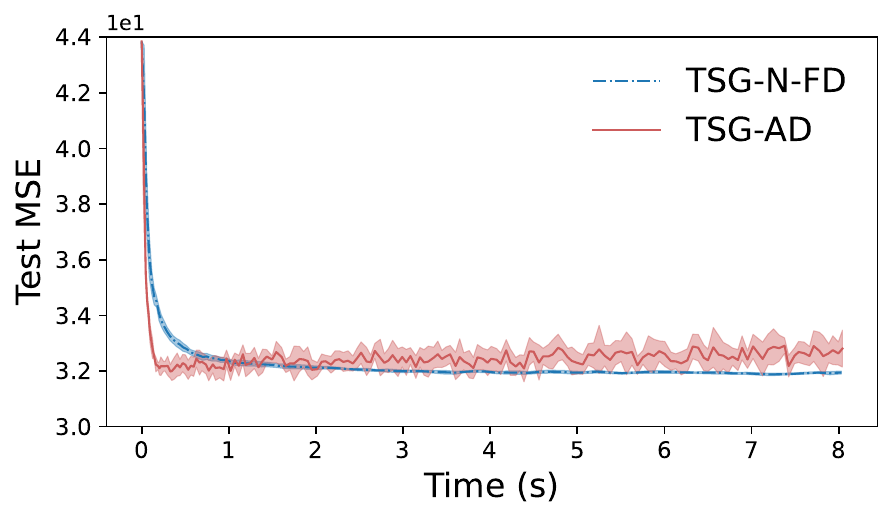}
		\vspace{0.2cm}
		\includegraphics[scale=0.22]{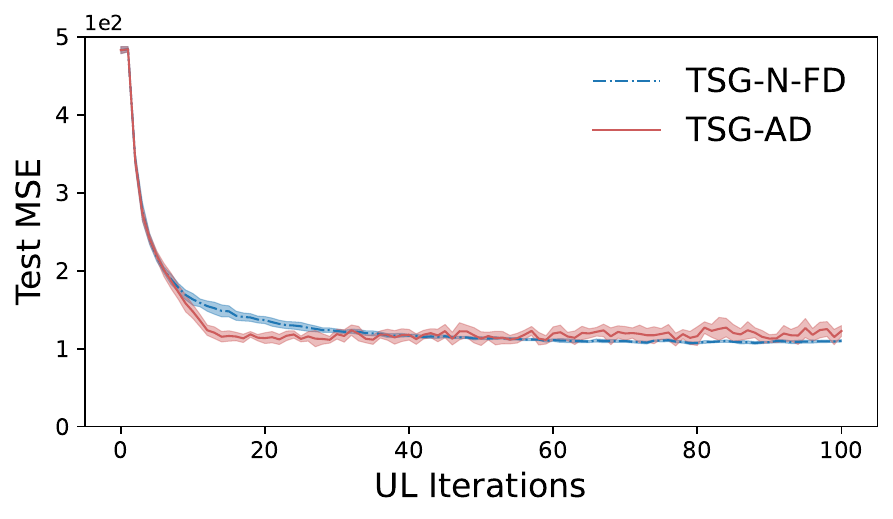}
		\includegraphics[scale=0.22]{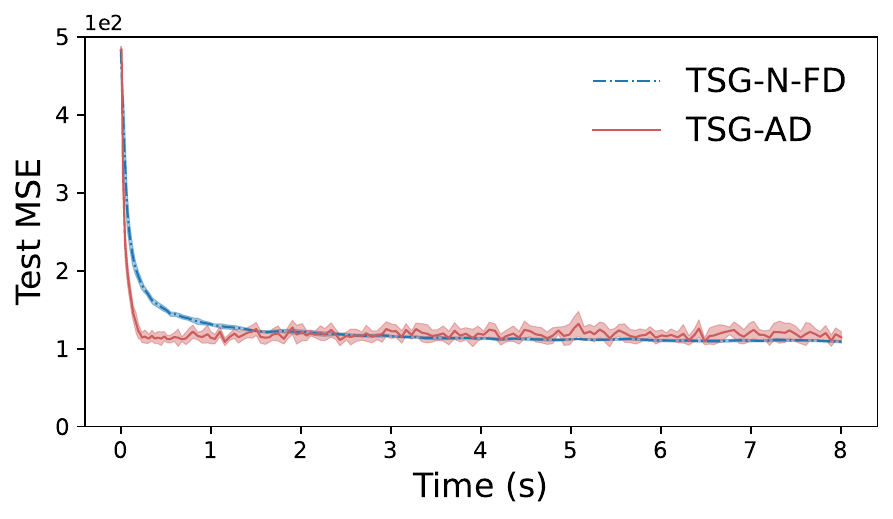}                   
		\caption{Trilevel adversarial learning formulation proposed in~\cite{RSato_etal_2021}, red wine quality dataset. The two left plots correspond to noise with standard deviation~0, and the two right plots to standard deviation~5.}\label{fig:Sato_redwine}
	\end{figure}


\begin{thebibliography}{45}
		\providecommand{\natexlab}[1]{#1}
		\providecommand{\url}[1]{\texttt{#1}}
		\expandafter\ifx\csname urlstyle\endcsname\relax
		\providecommand{\doi}[1]{doi: #1}\else
		\providecommand{\doi}{doi: \begingroup \urlstyle{rm}\Url}\fi
		
		\bibitem[Arguello et~al.(2023)Arguello, Johnson, and
		Gearhart]{BArguello_ESJohnson_JLGearhart_2023}
		B.~Arguello, E.~S. Johnson, and J.~L. Gearhart.
		\newblock A trilevel model for segmentation of the power transmission grid
		cyber network.
		\newblock \emph{IEEE Syst. J.}, 17:\penalty0 419--430, 2023.
		
		\bibitem[Bard(1984)]{JFBard_1984}
		J.~F. Bard.
		\newblock An investigation of the linear three level programming problem.
		\newblock \emph{IEEE Trans. Syst. Man Cybern.}, SMC-14:\penalty0 711--717,
		1984.
		
		\bibitem[Bard and Falk(1982)]{JFBard_JEFalk_1982}
		J.~F. Bard and J.~E. Falk.
		\newblock An explicit solution to the multi-level programming problem.
		\newblock \emph{Comput. Oper. Res.}, 9:\penalty0 77--100, 1982.
		
		\bibitem[Beck(2017)]{ABeck_2017}
		A.~Beck.
		\newblock \emph{First-Order Methods in Optimization}.
		\newblock SIAM-Society for Industrial and Applied Mathematics, Philadelphia,
		PA, 2017.
		
		\bibitem[Benson(1989)]{HPBenson_1989}
		H.~P. Benson.
		\newblock On the structure and properties of a linear multilevel programming
		problem.
		\newblock \emph{J. Optim. Theory Appl.}, 60:\penalty0 353--373, 1989.
		
		\bibitem[Blair(1992)]{CBlair_1992}
		C.~Blair.
		\newblock The computational complexity of multi-level linear programs.
		\newblock \emph{Ann. Oper. Res.}, 34:\penalty0 13--19, 1992.
		
		\bibitem[Bollapragada et~al.(2018)Bollapragada, Byrd, and
		Nocedal]{RBollapragada_etal_2018}
		R.~Bollapragada, R.~H. Byrd, and J.~Nocedal.
		\newblock {Exact and inexact subsampled Newton methods for optimization}.
		\newblock \emph{IMA Journal of Numerical Analysis}, 39:\penalty0 545--578, 04
		2018.
		
		\bibitem[Bottou et~al.(2018)Bottou, Curtis, and
		Nocedal]{LBottou_FECurtis_JNocedal_2018}
		L.~Bottou, F.~E. Curtis, and J.~Nocedal.
		\newblock Optimization methods for large-scale machine learning.
		\newblock \emph{SIAM Review}, 60:\penalty0 223--311, 2018.
		
		\bibitem[Cang and Petrusel(2010)]{LCCang_APetrusel_2010}
		L.~C. Cang and A.~Petrusel.
		\newblock Krasnoselski-{Mann} iterations for hierarchical fixed point problems
		for a finite family of nonself mappings in {Banach} spaces.
		\newblock \emph{J. Optim. Theory Appl.}, 146:\penalty0 617--639, 2010.
		
		\bibitem[Chen et~al.(2023)Chen, Chen, Ma, Liu, and Liu]{CChen_etal_2023}
		C.~Chen, X.~Chen, C.~Ma, Z.~Liu, and X~Liu.
		\newblock Gradient-based bi-level optimization for deep learning: A survey,
		2023.
		
		\bibitem[Chen et~al.(2021)Chen, Sun, and Yin]{TChen_YSun_WYin_2021_closingGap}
		T.~Chen, Y.~Sun, and W.~Yin.
		\newblock Closing the gap: Tighter analysis of alternating stochastic gradient
		methods for bilevel problems.
		\newblock In \emph{Advances in Neural Information Processing Systems},
		volume~34, pages 25294--25307. Curran Associates, Inc., 2021.
		
		\bibitem[Chen et~al.(2022)Chen, Sun, Xiao, and Yin]{TChen_YSun_WYin_2021}
		T.~Chen, Y.~Sun, Q.~Xiao, and W.~Yin.
		\newblock {A Single-Timescale Method for Stochastic Bilevel Optimization}.
		\newblock In \emph{Proceedings of The 25th International Conference on
			Artificial Intelligence and Statistics}, volume 151, pages 2466 -- 2488.
		PMLR, March 2022.
		
		\bibitem[Choe et~al.(2022)Choe, Neiswanger, Xie, and Xing]{SKeun_etal_2023}
		S.~K. Choe, W.~Neiswanger, P.~Xie, and E.~Xing.
		\newblock Betty: An automatic differentiation library for multilevel
		optimization.
		\newblock \emph{arXiv e-prints}, art. arXiv:2207.02849, July 2022.
		
		\bibitem[Cortez et~al.(2009)Cortez, Cerdeira, Almeida, Matos, and
		Reis]{PCortez_ALCerdeira_2009}
		P.~Cortez, Antonio~Lu{\'i}z Cerdeira, Fernando Almeida, Telmo Matos, and
		Jos{\'e} Reis.
		\newblock Modeling wine preferences by data mining from physicochemical
		properties.
		\newblock \emph{Decis. Support Syst.}, 47:\penalty0 547--553, 2009.
		
		\bibitem[Fathollahi-Fard et~al.(2018)Fathollahi-Fard, Hajiaghaei-Keshteli, and
		Mirjalili]{AFard_MKeshteli_SMirjalili_2018}
		A.~M. Fathollahi-Fard, M.~Hajiaghaei-Keshteli, and S.~Mirjalili.
		\newblock Hybrid optimizers to solve a tri-level programming model for a tire
		closed-loop supply chain network design problem.
		\newblock \emph{Applied Soft Computing}, 70:\penalty0 701--722, 2018.
		
		\bibitem[Franceschi et~al.(2017)Franceschi, Donini, Frasconi, and
		Pontil]{LFranceschi_etal_2017}
		L.~Franceschi, M.~Donini, P.~Frasconi, and M.~Pontil.
		\newblock Forward and reverse gradient-based hyperparameter optimization.
		\newblock In \emph{Proc. 34th Int. Conf. Mach. Learn. (ICML)}, volume~70 of
		\emph{Proceedings of Machine Learning Research}, pages 1165--1173. PMLR,
		06--11 Aug 2017.
		
		\bibitem[{Ghadimi} and {Wang}(2018)]{SGhadimi_MWang_2018}
		S.~{Ghadimi} and M.~{Wang}.
		\newblock {Approximation methods for bilevel programming}.
		\newblock \emph{arXiv e-prints}, art. arXiv:1802.02246, February 2018.
		
		\bibitem[{Giovannelli} et~al.(2022){Giovannelli}, {Kent}, and
		{Vicente}]{TGiovannelli_GKent_LNVicente_2022}
		T.~{Giovannelli}, G.~D. {Kent}, and L.~N. {Vicente}.
		\newblock {Inexact bilevel stochastic gradient methods for constrained and
			unconstrained lower-level problems}.
		\newblock \emph{ISE Technical Report 21T-025, Lehigh University}, December
		2022.
		
		\bibitem[{Giovannelli} et~al.(2024){Giovannelli}, {Kent}, and
		{Vicente}]{TGiovannelli_GKent_LNVicente_2024}
		T.~{Giovannelli}, G.~D. {Kent}, and L.~N. {Vicente}.
		\newblock {Bilevel optimization with a multi-objective lower-level problem:
			risk-neutral and risk-averse formulations}.
		\newblock \emph{Optim. Methods Softw.}, 39:\penalty0 756--778, 2024.
		
		\bibitem[Guo et~al.(2019)Guo, Yang, Xu, Liu, and Lin]{MGuo_etal_2020}
		M.~Guo, Y.~Yang, R.~Xu, Z.~Liu, and D.~Lin.
		\newblock When {NAS} meets robustness: In search of robust architectures
		against adversarial attacks.
		\newblock \emph{arXiv e-prints}, art. arXiv:1911.10695, November 2019.
		
		\bibitem[Guo et~al.(2023)Guo, Guo, and Yang]{YGuo_CGuo_JYang_2023}
		Y.~Guo, C.~Guo, and J.~Yang.
		\newblock A tri-level optimization model for power systems defense considering
		cyber-physical interdependence.
		\newblock \emph{IET Gener. Transm. Distrib.}, 17:\penalty0 1477--1490, 2023.
		
		\bibitem[{Ji} et~al.(2020){Ji}, {Yang}, and {Liang}]{KJi_JYang_YLiang_2020}
		K.~{Ji}, J.~{Yang}, and Y.~{Liang}.
		\newblock {Bilevel optimization: Convergence analysis and enhanced design}.
		\newblock \emph{arXiv e-prints}, art. arXiv:2010.07962, October 2020.
		
		\bibitem[Jiao et~al.(2023)Jiao, Yang, Wu, Jian, and Huang]{YJiao_etal_2024}
		Y.~Jiao, K.~Yang, T.~Wu, C.~Jian, and J.~Huang.
		\newblock Provably convergent federated trilevel learning.
		\newblock \emph{arXiv e-prints}, art. arXiv:2312.11835, December 2023.
		
		\bibitem[Jiao et~al.(2024)Jiao, Yang, and Jian]{YJiao_KYang_CJian_2024}
		Y.~Jiao, K.~Yang, and C.~Jian.
		\newblock Unlocking trilevel learning with level-wise zeroth order constraints:
		Distributed algorithms and provable non-asymptotic convergence.
		\newblock \emph{arXiv e-prints}, art. arXiv:2412.07138, December 2024.
		
		\bibitem[{Jin} et~al.(2019){Jin}, {Wang}, {Slocum}, {Yang}, {Dai}, {Yan}, and
		{Feng}]{XJin_etal_2019}
		X.~{Jin}, J.~{Wang}, J.~{Slocum}, M.~{Yang}, S.~{Dai}, S.~{Yan}, and J.~{Feng}.
		\newblock {{RC-DARTS}: Resource constrained differentiable architecture
			search}.
		\newblock \emph{arXiv e-prints}, art. arXiv:1912.12814, December 2019.
		
		\bibitem[Kent(2025, in preparation)]{GKent_2025}
		G.~D. Kent.
		\newblock \emph{Stochastic Methods for Multi-Level and Multi-Objective
			Optimization}.
		\newblock PhD thesis, Lehigh University, Department of Industrial and Systems
		Engineering, 2025, in preparation.
		
		\bibitem[Lai et~al.(2019)Lai, Illindala, and
		Subramaniam]{KLai_MIllindala_KSubraminiam_2019}
		K.~Lai, M.~Illindala, and K.~Subramaniam.
		\newblock A tri-level optimization model to mitigate coordinated attacks on
		electric power systems in a cyber-physical environment.
		\newblock \emph{Appl. Energy}, 235:\penalty0 204--218, 2019.
		
		\bibitem[Liduka(2011)]{HLiduka_2011}
		H.~Liduka.
		\newblock Iterative algorithm for solving triple-hierarchical constrained
		optimization problem.
		\newblock \emph{J. Optim. Theory Appl.}, 148:\penalty0 580--592, 2011.
		
		\bibitem[Liu et~al.(2019)Liu, Simonyan, and Yang]{HLiu_KSimonyan_YYang_2019}
		H.~Liu, K.~Simonyan, and Y.~Yang.
		\newblock {DARTS}: Differentiable architecture search.
		\newblock \emph{ArXiv}, arXiv:1806.09055, June 2019.
		
		\bibitem[{Liu} et~al.(2021){Liu}, {Gao}, {Zhang}, {Meng}, and
		{Lin}]{RLiu_JGao_etal_2021}
		R.~{Liu}, J.~{Gao}, J.~{Zhang}, D.~{Meng}, and Z.~{Lin}.
		\newblock {Investigating bi-Level optimization for learning and vision from a
			unified perspective: A survey and beyond}.
		\newblock \emph{arXiv e-prints}, art. arXiv:2101.11517, January 2021.
		
		\bibitem[Lu et~al.(2016)Lu, Han, Hu, and Zhang]{JLu_etal_2016}
		J.~Lu, J.~Han, Y.~Hu, and G.~Zhang.
		\newblock Multilevel decision-making: A survey.
		\newblock \emph{Information Sciences}, 346-347:\penalty0 463--487, 2016.
		
		\bibitem[{Madry} et~al.(2017){Madry}, {Makelov}, {Schmidt}, {Tsipras}, and
		{Vladu}]{AMadry_etal_2017}
		A.~{Madry}, A.~{Makelov}, L.~{Schmidt}, D.~{Tsipras}, and A.~{Vladu}.
		\newblock {Towards deep learning models resistant to adversarial attacks}.
		\newblock \emph{arXiv e-prints}, art. arXiv:1706.06083, June 2017.
		
		\bibitem[Nesterov(2018)]{YNesterov_2018}
		Y.~Nesterov.
		\newblock \emph{Lectures on Convex Optimization}.
		\newblock Springer Publishing Company, Incorporated, New York, 2nd edition,
		2018.
		
		\bibitem[Pace and Barry(1997)]{RKPace_RBarry_1997}
		R.~K. Pace and R.~Barry.
		\newblock Sparse spatial autoregressions.
		\newblock \emph{Stat. Probab. Lett.}, 33:\penalty0 291--297, 1997.
		
		\bibitem[Rahdar et~al.(2018)Rahdar, Wang, and Hu]{MRahdar_etal_2018}
		M.~Rahdar, L.~Wang, and G.~Hu.
		\newblock A tri-level optimization model for inventory control with uncertain
		demand and lead time.
		\newblock \emph{Int. J. Prod. Econ.}, 195:\penalty0 96--105, 2018.
		
		\bibitem[Rudin(1953)]{Ruding_1953}
		W.~Rudin.
		\newblock \emph{Principles of Mathematical Analysis}.
		\newblock McGraw-Hill Book Company, Inc., New York-Toronto-London, 1953.
		
		\bibitem[Saheya et~al.(2019)Saheya, Nguyen, and Chen]{BSaheya_etal_2019}
		B.~Saheya, C.~T. Nguyen, and J.-S. Chen.
		\newblock Neural network based on systematically generated smoothing functions
		for absolute value equation.
		\newblock \emph{J. Appl. Math. Comput.}, 61:\penalty0 533--558, 2019.
		
		\bibitem[Sato et~al.(2021)Sato, Tanaka, and Taked]{RSato_etal_2021}
		R.~Sato, M.~Tanaka, and A.~Taked.
		\newblock A gradient method for multilevel optimization.
		\newblock In \emph{Adv. Neural Inf. Process. Syst.}, volume~34, pages
		7522--7533. Curran Associates, Inc., 2021.
		
		\bibitem[Shafiei et~al.(2024)Shafiei, Kungurtsev, and
		Marecek]{AShafiei_etal_2024}
		A.~Shafiei, V.~Kungurtsev, and J.~Marecek.
		\newblock Trilevel and multilevel optimization using monotone operator theory.
		\newblock \emph{Math. Methods Oper. Res.}, 99:\penalty0 77--114, 2024.
		
		\bibitem[Tilahun et~al.(2012)Tilahun, Kassa, and Ong]{SLTilahun_etal_2012}
		S.~L. Tilahun, S.~M. Kassa, and H.~C. Ong.
		\newblock A new algorithm for multilevel optimization problems using
		evolutionary strategy, inspired by natural adaptation.
		\newblock In \emph{PRICAI 2012: Trends in Artificial Intelligence}, pages
		577--588. Springer Berlin Heidelberg, 2012.
		
		\bibitem[Ue-Pyng and Bialas(1986)]{WUepying_WFBialas_1986}
		W.~Ue-Pyng and W.~F. Bialas.
		\newblock The hybrid algorithm for solving the three-level linear programming
		problem.
		\newblock \emph{Comput. Oper. Res.}, 13:\penalty0 367--377, 1986.
		
		\bibitem[Vicente and Calamai(1994)]{LNVicente_PHCalamai_1994}
		L.~N. Vicente and P.~H. Calamai.
		\newblock Bilevel and multilevel programming: A bibliography review.
		\newblock \emph{J. Global Optim.}, 5:\penalty0 291--306, 1994.
		
		\bibitem[Wu and Conejo(2017)]{XWu_AJConejo_2017}
		X.~Wu and A.~J. Conejo.
		\newblock An efficient tri-level optimization model for electric grid defense
		planning.
		\newblock \emph{IEEE Trans. Power Syst.}, 32:\penalty0 2984--2994, 2017.
		
		\bibitem[Xu et~al.(2013)Xu, Meng, and Shen]{XXu_ZMeng_RShen_2013}
		X.~Xu, Z.~Meng, and R.~Shen.
		\newblock A tri-level programming model based on conditional value-at-risk for
		three-stage supply chain management.
		\newblock \emph{Comput. Ind. Eng.}, 66:\penalty0 470--475, 2013.
		
		\bibitem[Yao et~al.(2007)Yao, Edmunds, Papageorgiou, and
		Alvarez]{YYao_etal_2007}
		Y.~Yao, T.~Edmunds, D.~Papageorgiou, and R.~Alvarez.
		\newblock Trilevel optimization in power network defense.
		\newblock \emph{IEEE Trans. Syst. Man Cybern. C Appl. Rev.}, 37:\penalty0
		712--718, 2007.
		
	\end{thebibliography}
\end{document}